\newcommand{\hide}[1]{}
\DeclareMathOperator{\id}{id}
\DeclareMathOperator{\diam}{diam}
\DeclareMathOperator{\supp}{supp}
\DeclareMathOperator{\bdim}{dim_\textrm{B}}
\DeclareMathOperator{\adim}{dim_\textrm{A}}
\DeclareMathOperator{\lydim}{dim_\textrm{L}}
\def\conv{\mbox{\LARGE{$.$}}}
\def\conv{\mbox{\LARGE{$.$}}}
\numberwithin{equation}{section}
\DeclareMathOperator{\edim}{\dim_e}
\newcommand{\AAA}{\mathbb{A}}
\newcommand{\R}{\mathbb{R}}
\newcommand{\N}{\mathbb{N}}
\newcommand{\ii}{\mathbf{i}}
\newcommand{\jj}{\mathbf{j}}
\newcommand{\uu}{\mathbf{u}}
\newcommand{\II}{\mathbf{I}}
\newcommand{\JJ}{\mathbf{J}}
\newcommand{\RP}{{\mathbb{RP}^1}}
\newcommand{\UU}{\mathbf{U}}
\newcommand{\QQ}{\mathcal{Q}}
\newcommand{\DD}{\mathcal{Q}}
\newcommand{\PP}{\mathbb{P}}
\newcommand{\EE}{\mathbb{E}}
\newcommand{\m}{\nu}
\newcommand{\ep}{\varepsilon}
\theoremstyle{plain}
\newtheorem{theorem}{Theorem}[section]
\newtheorem*{thm*}{Theorem}
\newtheorem{corollary}[theorem]{Corollary}
\newtheorem{proposition}[theorem]{Proposition}
\newtheorem{lemma}[theorem]{Lemma}
\theoremstyle{remark}
\newtheorem{remark}[theorem]{Remark}
\newtheorem*{claim}{Claim}
\theoremstyle{definition}
\newtheorem{definition}[theorem]{Definition}
\begin{document}

\title[Hausdorff dimension of planar self-affine sets and measures]{Hausdorff dimension of planar self-affine sets and measures}

\author{Bal\'azs B\'ar\'any}
\address[Bal\'azs B\'ar\'any]
        {Budapest University of Technology and Economics \\
        MTA-BME Stochastics Research Group \\
        P.O.\ Box 91 \\
        1521 Budapest \\
        Hungary \&\\
        Einstein Institute of Mathematics \\
Edmond J. Safra Campus (Givat Ram) \\
The Hebrew University \\
Jerusalem 91904 \\
Israel}
\email{balubsheep@gmail.com}

\author{Michael Hochman}

\author{Ariel Rapaport}
\address[Michael Hochman, Ariel Rapaport]
        {Einstein Institute of Mathematics \\
Edmond J. Safra Campus (Givat Ram) \\
The Hebrew University \\
Jerusalem 91904 \\
Israel}
\email[Michael Hochman]{mhochman@math.huji.ac.il}

\email[Ariel Rapaport]{ariel.rapaport@math.huji.ac.il}

\subjclass[2010]{Primary 28A80; Secondary 37C45, 37F35}
\keywords{Hausdorff dimension, self-affine measure, self-affine set}
\date{\today}

\begin{abstract}
Let $X=\bigcup\varphi_{i}X$   be a strongly separated self-affine set in $\R^2$ (or one satisfying the strong open set condition). Under mild non-compactness and irreducibility assumptions on the matrix parts of the $\varphi_{i}$, we prove that $\dim X$   is equal to the affinity dimension, and similarly for self-affine measures and the Lyapunov dimension. The proof is via analysis of the dimension of the orthogonal projections of the measures, and relies on additive combinatorics methods.
\end{abstract}

\maketitle

\section{Introduction}

Computing the dimension of self-affine sets -- attractors of systems
of affine contractions of $\mathbb{R}^{d}$ -- is one of the major
open problems in fractal geometry. The corresponding problem for self-similar sets and measures is far better understood, and, when a mild separation condition
is present, completely solved \cite{Hutchinson1981,Hochman2014}. The affine case is more delicate and
less understood. Falconer established a general upper bound on the
dimension in terms of the so-called affinity dimension \cite{Falconer1988}, 
and many authors have obtained matching lower bounds in special cases,
e.g. for typical self-affine sets in which some of the maps are randomized
\cite{Falconer1988,Solomyak1998,JordanPollicottSimon2007}, or for
special classes, such as those satisfying bunching conditions \cite{HueterLalley1995,KaenmakiShmerkin2009}. It has also long been realized that the dimension is closely related to the dimension of projections. Falconer solved the problem for some self-affine sets in $\R^d$ assuming a uniform lower bound on the Lebesgue measure of projections to $d-1$-dimensional subspaces \cite{Falconer1992}, but this hypothesis rarely holds. Recently, analysis of projections was used to solve the problem under the assumption that the
Furstenberg measure of the associated matrix random walk is sufficiently
large \cite{FalconerKempton2016,Rapaport2016,Barany2015}.

All these results provide
strong evidence that the affinity dimension is typically the right
one, but one should note that the dimension is not always equal to
the affinity dimension, as demonstrated by carpet-like fractals \cite{McMullen1984, Bedford1989-box-dimension-of-repellers, GatzourasLalley1992-dim-of-certain-self-affine-fractals, Baransky2007-dim-of-limit-sets-planar-constrictions, Fraser2012-dim-of-box-like-fractals-in-R2}.
So the challenge is to find mild, general conditions under which the
affinity dimension is achieved. The purpose of this paper is to do
just that in the case of separated self-affine measures in
the plane which exhibit some mild non-compactness and irreducibility of their matrix
parts.

Throughout the paper we use the following notation, see also Section \ref{sec:notation-and-background} for further definitions. $\Phi=\{\varphi_{i}\}_{i\in\Lambda}$ be a finite system
of affine contractions of $\mathbb{R}^{2}$, with $\varphi_{i}(x)=A_{i}x+b_{i}$
for some $A_{i}\in GL_{2}(\mathbb{R})$ and $b_{i}\in\mathbb{R}^{2}$. We say that $A_i$ is the linear part of $\varphi_i$. We also write $\overline{A}_i=A_i/\sqrt{|\det A_i|} \in GL_2(\mathbb{R})$ and say that it is the normalized linear part of $\varphi_i$ (these are the matrices normalized to have determinant $\pm 1$). Let $X$ denote the attractor of $\Phi$ and write $\dim X$ and $\bdim X$ for its Hausdorff and box dimensions, and $\adim X$ for its affinity dimension; we do not define the latter because we will not
work with it directly, for a definition see \cite{Falconer1988}. The system $\Phi$ satisfies the strong open set condition (SOSC) if there is a bounded open set $U$ with $U\cap X\neq\emptyset$ such that $\varphi_i U\subseteq U$ for all $i\in\Lambda$ and the images $\varphi_i U$ are pairwise disjoint. This is satisfied, in particular, under strong separation, i.e. when the images $\varphi_iX$ are pairwise disjoint. Recall that a group
of matrices is totally irreducible if it does not preserve any finite
union of non-trivial linear spaces. 
\begin{theorem}
\label{thm:main-sets}Let $X=\bigcup\varphi_{i}X$ be a self affine set in $\mathbb{R}^{2}$ satisfying the strong open set condition, and suppose that the normalized linear
parts of $\varphi_{i}$ generate a non-compact and totally irreducible
group in $GL_{2}(\mathbb{R})$. Then $\dim X=\bdim X=\adim X$.
\end{theorem}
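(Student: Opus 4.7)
The inequality $\dim X \leq \bdim X \leq \adim X$ is Falconer's classical bound, so the content of the theorem is the reverse inequality. The plan is to prove the stronger statement that for every strictly positive probability vector $p=(p_{i})_{i\in\Lambda}$, the associated Bernoulli self-affine measure $\mu_{p}$ satisfies $\dim \mu_{p} = \min(2,\lydim(\mu_{p}))$. Because $\adim X$ is realized as $\sup_{p}\lydim(\mu_{p})$ and $\supp(\mu_{p})=X$ under SOSC, this would immediately yield $\dim X \geq \min(2,\adim X)$ and hence the theorem.

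The engine that converts the $\dim \mu$ computation into a question about projections is a planar Ledrappier--Young-type formula. Under non-compactness and total irreducibility, the normalized matrix cocycle generated by the $\overline{A}_{i}$ has two distinct Lyapunov exponents $\chi_{1}<\chi_{2}$ and an associated Furstenberg measure $\nu$ on $\RP$ which records the distribution of the slowly contracted Oseledets direction $V^{s}(\omega)$. The Ledrappier--Young formula then expresses $\dim \mu$ as an explicit function of $h(\mu)$, $\chi_{1}$, $\chi_{2}$ and of $\dim \pi_{V}\mu$ for $\nu$-typical $V$. Matching this with the Lyapunov dimension reduces the problem to showing that the projections are dimension-maximal, namely $\dim \pi_{V}\mu = \min(1,h(\mu)/\chi_{1})$ for $\nu$-a.e.\ $V\in\RP$.

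To establish maximal projected dimension I would adapt the additive-combinatorial approach developed by Hochman for self-similar measures. At scale $2^{-n}$ the measure $\pi_{V}\mu$ decomposes as an average of projections $\pi_{V}(\varphi_{\omega}\mu)$ of level-$n$ cylinders, and Hochman's inverse theorem for the entropy of convolutions implies that any dimension drop in $\pi_{V}\mu$ forces super-exponentially close concentration among the projected cylinder images. The whole argument thus hinges on ruling out such concentration: for $\nu$-a.e.\ $V$, and for a predominant set of pairs of length-$n$ words $\omega_{1}\neq\omega_{2}$, one must obtain $d(\pi_{V}\varphi_{\omega_{1}}(0),\pi_{V}\varphi_{\omega_{2}}(0)) \geq e^{-Cn}$ for some constant $C=C(V)$.

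The main obstacle is proving this exponential-separation statement using only non-compactness and total irreducibility of the group generated by the $\overline{A}_{i}$. Total irreducibility rules out the Furstenberg measure being atomic or supported on a finite union of directions, and non-compactness yields the strict gap $\chi_{1}<\chi_{2}$ and hyperbolic behavior of the cocycle. Translating these qualitative hypotheses into an effective Diophantine bound on projected coincidences, uniform enough to feed the inverse theorem across all fine scales, is the crux of the proof: one must control how mass of $\nu$ disperses along typical cocycle trajectories, exploit rigidity of the projective random walk, and then propagate the resulting separation through the multi-scale entropy increments in the Ledrappier--Young expansion. This is where I expect the bulk of the technical work to lie.
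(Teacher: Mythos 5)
There is a genuine gap, and it sits precisely at the step you dispose of in one sentence: the claim that $\adim X$ is realized as $\sup_{p}\lydim(\mu_{p})$ over Bernoulli measures on the original alphabet $\Lambda$. For genuinely affine systems this is false. The singular value function is only submultiplicative, and the measure attaining the affinity dimension in the variational sense is the K\"aenm\"aki equilibrium measure, an ergodic shift-invariant measure which in general is not Bernoulli; the supremum of $\lydim\mu_{p}$ over Bernoulli $p$ on $\Lambda$ can be strictly smaller than $\adim X$. Consequently, even a complete proof of your stronger statement $\dim\mu_{p}=\min(2,\lydim\mu_{p})$ for every Bernoulli $p$ would not yield $\dim X\geq\adim X$. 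The paper closes this gap by invoking Morris--Shmerkin: for every $\varepsilon>0$ there is a sub-IFS $\Phi'\subseteq\bigcup_{n\geq1}\Phi^{n}$ which still satisfies the SOSC, non-compactness and total irreducibility, and which carries a self-affine (Bernoulli) measure whose Lyapunov dimension is within $\varepsilon$ of $\adim X$; applying the measure theorem to these induced systems gives $\dim X\geq\adim X$, and Falconer's upper bound finishes the argument. Some substitute for this approximation step (which itself relies on the K\"aenm\"aki measure) is indispensable in your reduction.

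Beyond this, the measure-level argument is only a sketch, and the crux you single out is located in the wrong place. You hope to derive the exponential separation of projected cylinders from non-compactness and total irreducibility alone; these hypotheses give uniqueness and positive dimension of the Furstenberg measure $\eta^{*}$ and the exponent gap, but no Diophantine information whatsoever. In the paper the separation comes from the SOSC: the SOSC implies exponential separation of the compositions $\varphi_{\ii}$ in the affine group, and a covering (Hausdorff content) argument shows that for any $s>0$ the set of directions $V$ for which the projected maps $\pi_{V}\varphi_{\ii}$, $\ii\in\Xi_{n}^{V}$, fail to be $c^{n}$-separated in $\AAA_{2,1}$ has dimension at most $s$; since $\dim\eta^{*}>0$, $\eta^{*}$-a.e.\ $V$ is good. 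Note also that what is needed is separation of the projected affine maps in $\AAA_{2,1}$ (so that the discretized measures $\theta_{n}^{V}$ on maps have essentially full entropy at scale $Cn$), not merely separation of the points $\pi_{V}\varphi_{\omega}(0)$, and that feeding this into an inverse theorem in the affine setting requires the additional machinery of entropy porosity of projections, linearization of the $\AAA_{2,1}$-action, and entropy growth of convolutions, all of which your outline leaves open.
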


The theorem is proved, as is often the case, by analysis of self-affine
measures. Let $p=(p_{i})_{i\in\Lambda}$ be a strictly positive probability vector,
and let $\mu=\sum p_{i}\varphi_{i}\mu$ be the corresponding self-affine
measure for $\Phi$. Let $H(p)$ denote the entropy of $p$, and let
$\lambda_{2}\leq\lambda_{1}<0$ denote the Lyapunov exponent associated
to the i.i.d. random matrix product with $A_{i}$ chosen with probability
$p_{i}$. The Lyapunov dimension of $\mu$ (or, rather, of $\Phi$
and $p$) is the analogue of the affinity dimension for self-affine
sets, and is defined by 
\[
\lydim\mu=\left\{ \begin{array}{cc}
\frac{H(p)}{|\lambda_{1}|} & \mbox{if \ensuremath{\frac{H(p)}{|\lambda_{1}|}\leq1}}\\
1+\frac{H(p)-|\lambda_{1}|}{|\lambda_{2}|} & \mbox{otherwise}
\end{array}\right. .
\]
The pointwise dimension of $\mu$, denoted $\dim\mu$, is known to
exist, and always satisfies $\dim\mu\leq\lydim\mu$ \cite{JordanPollicottSimon2007}, but in general
there may be a strict inequality.
\begin{theorem}
\label{thm:main-measures}Let $\mu=\sum p_{i}\varphi_{i}\mu$ be a self-affine measure in $\mathbb{R}^{2}$  satisfying the strong open set condition, and suppose
that the normalized linear parts of $\varphi_{i}$ generate a non-compact
and totally irreducible subgroup of $GL_{2}(\mathbb{R})$. Then $\dim\mu=\lydim\mu$.
\end{theorem}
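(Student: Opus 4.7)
The plan is to establish the non-trivial lower bound $\dim\mu \geq \lydim\mu$, the reverse inequality being already known. The strategy rests on a Ledrappier--Young-type formula for planar self-affine measures. Under the given irreducibility and non-compactness hypotheses, Furstenberg's theorem produces a unique stationary measure $\eta$ on $\RP$ for the matrix random walk induced by the $\overline{A}_i$ with weights $p_i$, and a conditional-entropy argument along the slow-contraction fibres decomposes
\[
\dim\mu \;=\; \dim\pi_e\mu \;+\; \min\Bigl\{1,\; \frac{H(p) - |\lambda_1|\,\dim\pi_e\mu}{|\lambda_2|}\Bigr\}
\]
for $\eta$-a.e.\ $e\in\RP$, where $\pi_e:\R^2\to\R$ denotes orthogonal projection onto the line $e$. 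Inspecting the two cases $H(p)/|\lambda_1|\leq 1$ and $H(p)/|\lambda_1|>1$ shows that the right-hand side equals $\lydim\mu$ precisely when $\dim\pi_e\mu = \min\{1,\,H(p)/|\lambda_1|\}$. The problem thus reduces to proving that $\pi_e\mu$ attains this maximal possible dimension for $\eta$-typical $e$.

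To bound $\dim\pi_e\mu$ from below I would adapt the entropy-based strategy pioneered by Hochman for self-similar measures. The projection $\pi_e\mu$ is not itself self-similar, because the projection direction is transformed by the matrix action; however, by conditioning on the backward direction process one can present it as a cocycle-driven random sum, in which the $n$th-level cylinders of $\mu$ project to intervals of length $\asymp e^{n\lambda_1}$ whose positions are determined by partial sums of projected translations. Comparing the normalised dyadic-scale entropies $\tfrac{1}{n}H(\pi_e\mu,\mathcal{D}_n)$ with $H(p)/|\lambda_1|$ and invoking Hochman's inverse theorem for entropy growth under convolution, one obtains the dichotomy: either these entropies converge to $\min\{1,\,H(p)/|\lambda_1|\}$, or there is a very strong near-coincidence of distinct symbolic cylinders at essentially every dyadic scale.

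The heart of the argument, and the main technical obstacle, is to rule out this near-coincidence alternative, that is, to prove exponential separation for the projected system. For words $\ii,\jj$ of length $n$ the projections $\pi_e\varphi_\ii(x)$ and $\pi_e\varphi_\jj(x)$ coincide to within $e^{-Cn}$ only when the composed map $\varphi_\ii\varphi_\jj^{-1}$ comes exceedingly close to preserving the line $e$ with translational part nearly orthogonal to $e$. The hypotheses intervene decisively here: total irreducibility forbids the walk on $\RP$ from being supported on any finite invariant set, while non-compactness together with irreducibility gives, by Furstenberg's theorem, a strict Lyapunov gap $\lambda_2<\lambda_1$ and a non-atomic $\eta$. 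Quantitatively one must combine a large-deviations principle for the cocycle on $\RP$, a fine analysis of the random walk on $\RP\times\R$, and an algebraic step showing that any exact coincidence would force $\{\overline{A}_i\}$ to generate a subgroup preserving additional structure, contradicting either irreducibility or non-compactness.

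Once the exponential separation is secured for $\eta$-a.e.\ $e$, Hochman's inverse theorem yields $\dim\pi_e\mu=\min\{1,\,H(p)/|\lambda_1|\}$, and substitution back into the Ledrappier--Young decomposition above gives $\dim\mu=\lydim\mu$, completing the proof.
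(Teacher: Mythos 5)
Your reduction is the same as the paper's: by the B\'ar\'any--K\"aenm\"aki Ledrappier--Young formula, and using the SOSC to kill the overlap entropy, everything comes down to showing $\dim\pi_V\mu=\min\{1,\dim\mu\}$ for $\eta^*$-a.e.\ $V$ (the paper's Theorem \ref{thm:one-dimensional-projections}). But your plan for that projection statement has two genuine gaps. First, you try to extract the needed exponential separation from total irreducibility and non-compactness via an ``algebraic rigidity plus large deviations'' argument. That cannot work: those hypotheses alone do not preclude exact coincidences among the $\varphi_\ii$ (with overlaps the Lyapunov dimension genuinely can exceed $\dim\mu$), and, worse, for any fixed pair of distinct cylinders there honestly exist directions $V$ in which $\pi_V\varphi_\ii$ and $\pi_V\varphi_\jj$ nearly coincide --- there is no algebraic obstruction to rule out. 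The separation input in the paper comes from the SOSC, which gives exponential separation of $\Phi$ by a short direct argument ($d(\varphi_\ii x,\partial\varphi_\ii U)\geq\alpha^{|\ii|}d(x,\partial U)$), and is then transferred to the projected maps $\pi_V\varphi_\ii$ for a.e.\ $V$ by a covering/transversality estimate showing the exceptional set of directions has Hausdorff dimension less than $\dim\eta^*>0$ (Proposition \ref{prop:separation} and Corollary \ref{cor:sep}); the group hypotheses are used only for uniqueness, attraction, positive dimension and non-atomicity of the Furstenberg measure, not for separation.

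Second, the ``dichotomy'' you invoke from Hochman's inverse theorem is not available off the shelf here, because $\pi_V\mu$ is not self-similar and the relevant convolution is the nonlinear action of a measure $\theta_n^V$ on the space $\AAA_{2,1}$ of affine maps $\R^2\to\R$. To make the entropy-growth mechanism run one must (i) prove entropy porosity of the projections $\pi_V\mu$ at most scales, which requires the equidistribution of $A^*_{\UU(n)}V$ toward $\eta^*$ and uniform continuity across scales (Section \ref{sec:entropy-porosity}); (ii) linearize the action map and transfer entropy of $\theta$ on $\AAA_{2,1}$ to entropy of $\theta\conv x$ for $\mu$-typical $x$ (Lemmas \ref{lem:linearization} and \ref{lem:ent-on-plane-from-ent-on-aff-gr}), yielding the entropy-growth Theorem \ref{thm:inverse-theorem-for-projections}; and (iii) in the endgame, use separation to show $\frac1n H(\theta_n^V,\QQ_{Cn})\to H(p)/|\chi_1|$ while $\frac1n H(\theta_n^V,\QQ_0)\to\alpha$, so that an assumed deficiency $\alpha<\min\{1,H(p)/|\chi_1|\}$ forces positive component entropy of $\theta_n^V$ and hence a contradiction with $\pi_V\mu=\theta_n^V\conv\mu$. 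These steps are the actual heart of the proof; in your proposal they are compressed into a citation of the self-similar inverse theorem, so the main work remains undone.
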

Theorem \ref{thm:main-sets} follows from Theorem \ref{thm:main-measures}
by the work of Morris-Shmerkin \cite{MorrisShmerkin2016},
who proved that under the hypotheses of Theorem \ref{thm:main-sets}, one can find an IFS $\Phi'\subseteq\bigcup_{n=1}^\infty \Phi^n$ which satisfies the same hypotheses, and which supports a self-affine measure whose Lyapunov dimension is arbitrarily close to the affinity dimension of the original system.
In order to show this approximation property, Morris and Shmerkin used the
K\"aenm\"aki measure, which is in our setup the unique ergodic shift
invariant measure, for which $\lydim\mu=\adim X$, see \cite{kaenmaki,Kaenmaki2004}.
However, the K\"aenm\"aki measure is far from being Bernoulli measure in
general. Thus, the question still remains open\footnote{Our methods actually apply to the images under the standard symbolic coding map of quasi-product measures, i.e. those which are equivalent, up to a shift, on every cylinder set, and with Radon-Nykodim derivative bounded away from $0$ and infinity. This is because in all entropy computations in the proofs, measures which are equivalent in this sense differ in entropy (w.r.t. \textit{any} partition)  by an addivie constant depending only on the bounds on the RN-derivatives; so one can just fix a representative and work with it. When the matrix parts of the IFS preserve a (multi) cone, the Kaenmaki measure is known to be quasi-Bernoulli. Thus, in this case, our methods give an invariant measure of maximal dimension. We thank Pablo Shmerkin for this remark.}  whether there is an ergodic
measure $\mu$ for which $\dim\mu=\dim X$.

The proof of Theorem \ref{thm:main-measures} rests on analysis of
``typical'' projections of $\mu$ to lines. Let $\eta^*$ be a measure
on the space of lines in $\mathbb{R}^{2}$ satisfying $\eta^*=\sum p_{i}\cdot A_{i}^*\eta^*$. 
Under the assumptions of the theorem, there is a unique such measure,
called the Furstenberg measure, which may also be described as the
distribution of the directions of the expanding Oseledets subspace
of the random matrix product in which each matrix is chosen equal
to $A_{i}^*$ with probability $p_{i}$ (see  Section \ref{sub:furstenberg}). For a 1-dimension subspace $W\leq\mathbb{R}^{2}$
write $\pi_{W}:\mathbb{R}^{2}\rightarrow W$ for the orthogonal projection
to $W$. In order to prove Theorem \ref{thm:main-measures}, it suffices,
by the Ledrappier-Young formula for self-affine measures \cite{BaranyKaenmaki2015},
to show that 
\begin{equation}
\dim\pi_{v}\mu=\min\{1,\dim\mu\}\label{eq:maximal-image}
\end{equation}
for $\eta^*$-a.e. $v$. We show that, in fact, (\ref{eq:maximal-image})
holds for all $v$ outside a set of dimension $0$. Since the hypothesis
of the theorem implies $\dim\eta^*>0$, it follows that (\ref{eq:maximal-image})
holds $\eta^*$-a.e. We note that a-postiori, it follows that (\ref{eq:maximal-image})
holds for all $v$ with at most one exception, and in fact, no exceptions if $\{\overline{A}_i\}$ satisfy the hypotheses of the theorem. 

This strategy is similar to other recent works on the topic \cite{Barany2015,FalconerKempton2016,Rapaport2016},
but those required $\eta^*$ to have dimension large with respect to
$\dim\mu$ of $\adim\mu$, at which point one can invoke classical
projection theorems of Marstrand, Falconer and others in order to
prove (\ref{eq:maximal-image}). But in general, the dimension of $\eta^*$ is unrelated to the dimension of $\mu$ and may be very small compared to it.

Our method, in contrast, is related
to the recent works on self-similar measures with overlap. Of course,
$\pi_{W}\mu$ is neither self-similar nor even self-affine, but it
nevertheless decomposes at all scales into statistically related measures
of comparable diameter. More precisely, for every $n$, we can choose
a set $\Phi_{n}\subseteq\Phi^{*}$ such that each $\varphi_\ii$, $\ii\in\Phi_{n}$,
contracts by $q^{-n+O(1)}$, and such that $\mu=\sum_{\ii\in\Phi_{n}}p_{\ii}\cdot\varphi_{\ii}\mu$,
where for a sequence $\ii=i_{1}\ldots i_{k}$ we have written $p_{\ii}=p_{i_{1}}\ldots p_{i_{k}}$
and $\varphi_{\ii}=\varphi_{i_{1}}\ldots\varphi_{i_{k}}$. Thus $\pi_{W}\mu=\sum_{\ii\in\Phi_{n}}p_{\ii}\cdot\pi_{W}\varphi_\ii\mu$,
which can be written, in turn, as $\pi_{W}\mu=\theta_{n}\conv\mu$,
where $\theta_{n}=\sum_{\ii\in\Phi_{n}}p_{\ii}\cdot\pi_{W}\circ\varphi_{\ii}$
is a measure supported on the space of linear maps $\mathbb{R}^{2}\rightarrow\mathbb{R}$,
and $\theta_{n}\conv\mu$ denotes the push-forward of $\theta_{n}\times\mu$
by the action map $(\pi,x)\mapsto\pi(x)$. The hypotheses of the theorems
imply that, after rescaling affine maps to be orthogonal projections, $\theta_{n}$ approaches the distribution of the random projection $\pi_{V}$,
$V\sim\eta^{*}$; and, for most choices of $W$, if $\dim\pi_{W}\mu$
does not satisfy (\ref{eq:maximal-image}), then $\theta_{n}$ will
have positive entropy, in a suitable sense. We have
arrived now at the point where we must analyze the entropy of convolutions.
This can be done by adapting methods developed in \cite{Hochman2014,Hochman2015},
whereby we replace small pieces of the non-linear convolution $\theta_{n}\conv\mu$
by a bona-fide convolution of measures on $\mathbb{R}$ and use results
on entropy growth under convolution to show that this results in more
dimension, or entropy, than there should be.

The property (\ref{eq:maximal-image}) requires less than strong separation. Let $D(\cdot,\cdot)$ denote the metric on the affine group of $\R^2$ induced from the operator norm when affine maps are embedded as linear maps of $\R^{3}$ (alternatively fix a left-invariant Riemannian metric on the affine group).
We say that a system
of affine maps $\{\varphi_{i}\}_{i\in\Lambda}$ satisfies exponential
separation, if there exists a constant $c>0$ such that, for every
$n$ and every pair of distinct sequences $\ii,\jj\in\Lambda^n$, the corresponding compositions satisfy
\[
D(\varphi_{\ii},\varphi_{\jj})>c^{n}.
\]
The strong open set condition (and also the weaker open set condition (OSC)) implies this property.
\begin{theorem}
\label{thm:one-dimensional-projections}Let $\mu=\sum p_{i}\cdot\varphi_{i}\mu_{i}$
be a self-affine measure in $\mathbb{R}^{2}$ such that $\{\varphi_{i}\}$
has exponential separation. Suppose that the normalized linear parts
of $\varphi_{i}$ generate a totally irreducible, non-compact subgroup
of $GL_{2}(\mathbb{R})$. Then for $\eta^*$-a.e. $V\in\RP$, we have \[
\dim\pi_{V}\mu=\min\{1,\dim\mu\}.
\]
\end{theorem}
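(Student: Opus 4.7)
The plan is to argue by contradiction, adapting the entropy-increase method of \cite{Hochman2014,Hochman2015} from self-similar measures with overlap to the present affine setting. Since the hypotheses imply $\dim\eta^*>0$, I would in fact aim at the stronger statement that the exceptional set of $V\in\RP$ has dimension zero: so assume there is a fixed $\delta>0$ and a set $E\subseteq\RP$ of positive dimension, and hence of positive $\eta^*$-measure, on which $\dim\pi_V\mu\le\min\{1,\dim\mu\}-\delta$. Fix a large $q>1$ and, for each $n$, let $\Phi_n\subseteq\bigcup_{k}\Lambda^k$ be the stopping set of cylinders $\ii$ whose normalised linear part contracts at rate $q^{-n+O(1)}$. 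Decomposing $\mu=\sum_{\ii\in\Phi_n}p_\ii\cdot\varphi_\ii\mu$ and projecting gives $\pi_V\mu=\theta_{V,n}\conv\mu$, where $\theta_{V,n}$ is the discrete probability measure on affine maps $\R^2\to\R$ assigning mass $p_\ii$ to $\pi_V\circ\varphi_\ii$. After rescaling each summand to have unit-norm linear part, the directions of the supports are $A_\ii^*V\in\RP$, and by non-compactness and total irreducibility of the matrix group they equidistribute to the Furstenberg measure $\eta^*$ under the weights $p_\ii$.

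Next I would convert the dimension deficit into a scale-$q^{-n}$ entropy deficit for $\pi_V\mu$, and then linearise the non-linear convolution $\theta_{V,n}\conv\mu$: because each piece $\varphi_\ii\mu$ sits in a region of diameter $\asymp q^{-n}$, on that piece the action map $(\pi,x)\mapsto\pi(x)$ is approximated up to an error $\ll q^{-n}$ by a genuine sum. Consequently $\pi_V\mu$ locally behaves like a Euclidean convolution on $\R$ of a rescaled copy of $\mu$ (projected in direction $A_\ii^*V$) with the one-dimensional measure $\widetilde\theta_{V,n}$ obtained by reading off the translation parts of the rescaled maps in the support of $\theta_{V,n}$. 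Trading the original convolution on the affine group for this one-dimensional convolution is where planarity is used essentially.

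To the linearised picture I would apply the inverse theorem for entropy of convolutions on $\R$ from \cite{Hochman2014}: the assumed entropy deficit forces $\widetilde\theta_{V,n}$ to have scale-$q^{-m}$ entropy at most $\ep\, m\log q$ at a positive density of scales $m\le n$, for some $\ep=\ep(\delta)>0$, on a positive-$\eta^*$-measure set of $V$. This will be contradicted by exponential separation: distinct $\ii,\jj\in\Phi_n$ satisfy $D(\varphi_\ii,\varphi_\jj)>c^n$, and choosing $q$ large relative to $1/c$, together with non-atomicity of $\eta^*$ to rule out collapses for typical $V$ (the exceptional $V$ for which a given pair of translation parts coincides below scale $q^{-n}$ form a thin projective arc), forces $\widetilde\theta_{V,n}$ to have essentially full scale-$q^{-n}$ entropy, contradicting the conclusion of the inverse theorem.

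The main obstacle is the linearisation step and its coupling to the inverse theorem. Unlike the self-similar case, the parameter space of maps supporting $\theta_{V,n}$ is the three-dimensional affine group of $\R^2$ rather than a one-parameter family of translations, and both translation parts and directions $A_\ii^*V$ must be tracked simultaneously: one must verify that direction-related errors across different $\ii\in\Phi_n$ remain below the relevant dyadic scale, that the random matrix walk delivers equidistribution of directions at each scale with good large-deviation rates, and that the resulting one-dimensional convolution approximation is accurate on a positive density of scales. Handling these three asymptotic regimes --- directions, translations, and scales --- simultaneously while staying compatible with the entropy bookkeeping of the inverse theorem is the technical heart of the argument.
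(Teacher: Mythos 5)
Your proposal follows the same broad strategy as the paper (cylinder decomposition $\pi_V\mu=\theta_n^V\conv\mu$ with $\theta_n^V=\sum_{\ii\in\Xi_n^V}p_\ii\delta_{\pi_V\varphi_\ii}$, linearization, the inverse theorem for entropy of convolutions, exponential separation plus a projective exceptional-set estimate), but the endgame does not work as you state it. Exponential separation of $\Phi$ separates the maps $\varphi_\ii$, $\ii\in\Xi_n^V$, only at scale $c^{|\ii|}$, and since $|\ii|\asymp n\log q$ this is $q^{-Cn}$ for some constant $C>1$ that does \emph{not} improve by taking $q$ large (the words in the stopping set get proportionally longer), and which degrades further when you pass to the projected maps $\pi_V\varphi_\ii$: in Proposition \ref{prop:separation} the separation constant is tied to the dimension $s$ of the exceptional set, and $s$ must be taken below $\dim\eta^*$. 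Consequently separation yields full entropy $\approx nH(p)/|\chi_1|$ for $\theta_n^V$ only at scale $q^{-Cn}$; at scale $q^{-n}$ the translation parts essentially reproduce $\pi_V\mu$ and therefore have entropy $\approx \alpha n$ --- exactly what the assumed deficit predicts --- so there is no contradiction at the scales $m\le n$ where you try to produce one. The paper's proof is structured around precisely this point: it exploits the entropy of $\theta_n^V$ \emph{between} scales $0$ and $Cn$ in the invariant metric (Lemmas \ref{lem:ent-theta_n-D_Cn} and \ref{lem:ent-theta_n-D_0} give conditional entropy $\gtrsim(\beta-\alpha)n$, hence components of $\theta_n^V$ with positive entropy), feeds those components into the entropy-growth theorem (Theorem \ref{thm:inverse-theorem-for-projections}), and derives the contradiction in $H(\pi_V\mu,\DD_{(C+1)n}\mid\DD_n)$, i.e.\ over the range of scales $[n,(C+1)n]$, not at scale $n$.

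A second genuine gap: you cannot get the "small entropy on the $\theta$-side" conclusion from the inverse theorem of \cite{Hochman2014} using only the dimension deficit. That theorem's conclusion is a dichotomy, and to force the $\theta$-branch you must exclude the branch in which the components of the rescaled projected pieces $\psi\varphi_\ii\mu$ are nearly uniform at most scales; since $\alpha<1$ only controls averages of component entropies, this requires the entropy porosity of the projections $\pi_W\mu$, uniformly over directions and after affine rescaling. This is the content of Section \ref{sec:entropy-porosity} (Proposition \ref{prop:projections-are-porous}), resting on exact dimensionality, uniform continuity across scales, and equidistribution of the matrix walk, and it is absent from your sketch. Two smaller issues: transferring entropy between the three-parameter space $\AAA_{2,1}$ and the line (your "translation parts") is not automatic --- the paper needs the three-$\rho$-independent-points argument of Lemma \ref{lem:ent-on-plane-from-ent-on-aff-gr}, which uses irreducibility of the action --- and to discard the exceptional directions you need $\dim\eta^*>0$ together with a Hausdorff-content bound on a limsup of exponentially many small arcs; non-atomicity of $\eta^*$ is not sufficient, and your inference "positive dimension, hence positive $\eta^*$-measure" runs in the wrong direction.
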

This begs the question of whether exponential separation is enough
for Theorems \ref{thm:main-sets} and \ref{thm:main-measures},
instead of SOSC. The answer is yes, but not easily. The
reason is that in general, by the Ledrappier-Young formula for self-affine meausres \cite{BaranyKaenmaki2015}. equation (\ref{eq:maximal-image}) ensures only that 
\begin{equation}
\dim\mu=\left\{ \begin{array}{cc}
\frac{H(p)-\Delta}{|\lambda_{1}|} & \mbox{if \ensuremath{\frac{H(p)-\Delta}{|\lambda_{1}|}\leq1}}\\
1+\frac{H(p)-|\lambda_{1}|-\Delta}{|\lambda_{2}|} & \mbox{otherwise}
\end{array}\right. .\label{eq:LY-with-overlaps}
\end{equation}
where $\Delta$ is an entropy quantity arising from the amount of
overlap between cylinders (the entropy on the fibers of the projection
from the symbolic coding). The SOSC ensures that $\Delta=0$, and removing this assumption would require us to show directly that if $\dim\mu<\lydim\mu$ then
$\Delta=0$, which requires non-trivial new tools (this is one reason why we cannot replace the SOSC with the OSC in  Theorem \ref{thm:main-sets}: it does not, so far as we know, imply $\Delta=0$. One also must exclude degenerate situations in which the OSC is satisfied but the attractor is a single point).

The other obvious
question raised by our work is the extent to which the results hold
in $\mathbb{R}^{d}$. We will return to both these matters in a separate paper. 

Finally, we note that the conditions on the normalized linear parts of
$\varphi_{i}$ in the theorems above are there in order to ensure
uniqueness of the Furstenberg measure and that it have positive dimension.
The proof actually requires slightly less: we want a stationary measure
$\eta=\sum p_{i}\cdot A_{i}\eta$ to exist on the space of lines,
and that it should attract the associated random walk on lines (driven
by the distribution $\nu=\sum p_{i}\cdot\delta_{A_{i}}\in\mathcal{P}(GL_{2}(\mathbb{R}))$)
when started from any initial line outside a finite set of exceptions. Thus, our methods can handle the case of a non-compact but reducible system, which  is just a system whose linear parts are upper (or lower) triangular matrices in a suitable basis. This is an  important class of self-affine systems, which has been studied by Falconer and Miao~\cite{falcmiao}, Kirat and Kocyigit~\cite{kirat} and B\'ar\'any, Rams and Simon~\cite{brs}. For the  statements and proof see  Section \ref{sub:triangular-case}.

\subsection{Organization}
Section \ref{sec:notation-and-background} sets up definitions and notation, and some background (much of the notation is not standard, and should be read). Section \ref{sec:entropy-porosity} establishes entropy porosity of the projections of self-affine measures. Section \ref{sec:entropy-growth-under-convolutions} proves the version of the inverse theorem for entropy growth needed in this paper. Section \ref{sec:Separation} establishes the necessary exponential separation of projections of cylinders. Section \ref{sec:main-theorem} puts the pieces together and proves Theorem \ref{thm:one-dimensional-projections}. The last section presents some generalizations and applications to dynamical systems.

\subsection{Acknowledgment} This work was supported by ERC grant 306494. B.B. acknowledges support from the grants NKFI PD123970, OTKA K123782, and the J\'anos Bolyai Research Scholarship of the Hungarian Academy of Sciences.

\section{\label{sec:notation-and-background}Setup}
 
We refer the reader to \cite{Mattila1995} for basic background on dimension theory. We write $\mathcal{P}(X)$ for the space of Borel probability measures on a Borel space $X$. We rely on the standard notations $O(f(t)),o(f(t)),\Theta(f(t))$ for asymptotic behavior of functions and sequences.

\subsection{\label{sub:self-affine-measures}Self-affine sets and measures}

Throughout the paper, $\Phi=\{\varphi_i(x)=A_ix+a_i\}_{i\in\Lambda}$ is a system of affine contractions satisfying the strong open set condition, as in the introduction, and $X$ the associated attractor, defined by the relation \[
  X = \bigcup_{i\in\Lambda}\varphi_i(X).
\]
We also fix a strictly positive probability vector $p=(p_i)_{i\in\Lambda}$, and let $\mu$ denote the associated  self-affine measure, defined by the relation \[
  \mu = \sum_{i\in\Lambda}p_i \cdot \varphi_i\mu.
\]
We write $\Lambda^*$ for the set of all finite words over $\Lambda$. For a word $\ii=(i_0,\ldots,i_n)\in\Lambda^*$, let
\[
A_{\ii}=A_{i_0}\cdots A_{i_n},
\]
and similarly write $\varphi_\ii=\varphi_{i_0}\ldots\varphi_{i_n}$, $p_\ii=p_{i_0}\ldots p_{i_m}$, $A^*_{\ii}=A_{i_n}^*\cdots A_{i_0}^*$, etc. We define the ``projection'', or coding map, $\Pi:\Lambda^\N\to X$, by \[
  \Pi(\ii)=\lim_{n\to\infty}\varphi_{i_1\ldots,i_n}(0),
\]
where the limit exists by contraction. We write \[
  \nu=p^\N
\]
for the product measure on $\Lambda^\N$ with marginal $p$, so that $\mu=\Pi\nu$.

\subsection{Dilations, translation, projections}

The operations of dilation and translation in $\R^k$ we denote by $S_c$ and $T_a$ respectively, i.e., for $c\in\R$  we write $S_{c}(x)=c\cdot x$, and for $a\in\R^k$ we write $T_{a}(x)=x+a$.

We shall work extensively with orthogonal projections from $\R^2$ to subspaces $V\leq\R^2$. However, we shall want to choose coordinates on the image $V$, identifying it with $\R$, and turning the orthogonal projection into an affine map $\R^2\to\R$.  Choosing coordinates amounts to choosing, for each $V\in\RP$, a unit vector $u=u(V)\in V$. We choose $u(V)$ to be the unit vector such that $\langle u,\binom{1}{0}\rangle>0$. This leaves $u(V)$ undefined only when $V$ is the $y$-axis, and in this case we set $u=(0,1)$.

We can now associate to each $V\in\RP$ the ``orthogonal projection'' $\pi_V$ given by\[
\pi_V(x)=\langle x,u(V)\rangle.
\]
This gives a measurable (and mostly continuous) embedding of $\RP$ into the set of norm-$1$ affine maps $\R^2\to\R$.

For a linear map $T:\R^k\to \R^m$ and subspace $V\subseteq \R^k$, let $\|T|V\|$ denote the norm of the restricted map $T|_V$. Then for any affine  $\varphi:\R^2\to\R^2$ with $\varphi(x)=Ax+t$, it is easy to check that
\begin{equation}\label{eq:affine-map-to-projection}
\pi_V(\varphi(x))=(\pm1)\|A^*|V\|\cdot\pi_{A^*V}(x)+\pi_V(t),
\end{equation}
where the sign is chosen so that $\pm u(A^*V)=A^*u(V)/\|A^*u(V)\|$. Consequently,
\[
\|\pi_V\circ A_\ii\|=\|A_{i_1}^*|V\|\cdots\|A_{i_n}^*|A_{i_{n-1}}^*\cdots A_{i_0}^*V\|=\|A_{\ii}^*|V\|.
\]
(the equality of the left- and right-hand sides holds in $\R^d$ for any $V\subseteq\R^d$, but the middle expression relies on $V$ being $1$-dimensional).

\subsection{Affine maps and an invariant metric}\label{sub:affine-maps-and-invariant-metric}

Let $\AAA_{k,m}$ denote the space of affine maps $\mathbb{R}^{k}\mapsto\mathbb{R}^{m}$ of full rank.
For $\psi\in \AAA_{k,m}$, let $\|\psi\|$ denote the (operator) norm of the linear part of $\psi$, which is given explicitly by \[
\|\psi\|=\sup_{0\neq x\in\R^k}\frac{\|\psi(x)-\psi(0)\|}{\|x\|}.
\]
Note that the ``projections'' $\pi_V$ of the previous section form a bounded subset of $\AAA_{2,1}$.

The set $\AAA_{2,1}$ is a manifold, and we may parametrize it explicitly as follows. Denote the unit circle by
\[
S^1=\{y\in\mathbb{R}^{2}\::\:|y|=1\},
\]
and associate to $(t,u,a)\in\R\times S^1\times\R$ the map $x\mapsto e^t \langle x,u\rangle +a$. This is a bijection and provides a smooth structure to $A_{2,1}$.

The group $\AAA_{1,1}$ of affine maps of the line act naturally on $\AAA_{2,1}$ by post-composition: for $\varphi\in \AAA_{1,1}$ and $\psi\in \AAA_{2,1}$, let \[
\varphi\psi = \varphi \circ\psi.
\]
This is clearly a smooth action.

\begin{lemma}
There exists a metric $d$ on $\AAA_{2,1}$ which is $\AAA_{1,1}$-invariant, i.e.
\begin{equation}\label{eq:invariant-metric}
d(\phi\psi_{1},\phi\psi_{2})=d(\psi_{1},\psi_{2})\;\;\;\text{ for every }\psi_{1},\psi_{2}\in \AAA_{2,1}\text{ and }\phi\in \AAA_{1,1}\:.
\end{equation}
\end{lemma}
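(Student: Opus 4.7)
The plan is to use the parametrization $(t,u,a)\in\R\times S^{1}\times\R$ of $\AAA_{2,1}$ introduced just above the lemma to write $\AAA_{2,1}$ as a product $\AAA_{1,1}\times S^{1}$ in an $\AAA_{1,1}$-equivariant way, and then to assemble the desired metric from a left-invariant metric on the Lie group $\AAA_{1,1}$ together with any metric on $S^{1}$.

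First I would unpack the action in coordinates. Writing $\phi\in\AAA_{1,1}$ as $\phi(y)=e^{s}y+b$ and $\psi\in\AAA_{2,1}$ as $\psi(x)=e^{t}\langle x,u\rangle+a$, a direct computation yields
\[
\phi\circ\psi(x)=e^{s+t}\langle x,u\rangle+(e^{s}a+b),
\]
so that in $(t,u,a)$-coordinates the action reads $(t,u,a)\mapsto(t+s,u,e^{s}a+b)$. The $S^{1}$-factor is fixed, and on each orbit (where $u$ is held constant) the formula matches the group law of $\AAA_{1,1}$ once the pair $(t,a)$ is identified with the affine map $y\mapsto e^{t}y+a$. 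This produces an $\AAA_{1,1}$-equivariant bijection $\AAA_{2,1}\to\AAA_{1,1}\times S^{1}$ under which $\AAA_{1,1}$ acts by left multiplication on the first factor and trivially on the second.

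To finish, I would take any left-invariant Riemannian metric on the Lie group $\AAA_{1,1}$ (which exists by choosing an inner product on the Lie algebra and translating it by left multiplications) and let $d_{0}$ denote the induced left-invariant distance. Fixing also any metric $d_{S^{1}}$ on $S^{1}$, the sum
\[
d((\phi_{1},u_{1}),(\phi_{2},u_{2}))=d_{0}(\phi_{1},\phi_{2})+d_{S^{1}}(u_{1},u_{2})
\]
defines a metric on $\AAA_{1,1}\times S^{1}$ which is invariant under left multiplication in the first factor, and which pulls back via the equivariant bijection above to the required $\AAA_{1,1}$-invariant metric on $\AAA_{2,1}$. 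There is no genuine obstacle here: the only non-trivial ingredient is the existence of a left-invariant metric on a Lie group, which is standard; the rest is bookkeeping in the chosen coordinates.
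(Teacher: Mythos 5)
Your route is in essence the paper's (build the metric from the free $\AAA_{1,1}$-action over the circle of norm-one maps), but as written it has a concrete gap: you only verify invariance under the orientation-preserving half of $\AAA_{1,1}$. In the paper $\AAA_{1,1}$ is the group of \emph{all} full-rank affine maps of the line, so it contains $\phi(y)=-e^{s}y+b$. For such $\phi$ and $\psi(x)=e^{t}\langle x,u\rangle+a$ one gets $\phi\circ\psi(x)=e^{s+t}\langle x,-u\rangle+(b-e^{s}a)$, i.e.\ in coordinates $(t,u,a)\mapsto(s+t,\,-u,\,b-e^{s}a)$. So the $S^{1}$-factor is \emph{not} fixed (it is moved by the antipodal map), and the induced map on your first factor sends $\chi(y)=e^{t}y+a$ to $\phi\circ\chi\circ\rho$ with $\rho(y)=-y$, a left translation composed with a right translation by the flip, which a general left-invariant $d_{0}$ need not preserve. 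Relatedly, your identification is really with $\AAA_{1,1}^{+}\times S^{1}$ ($\AAA_{1,1}^{+}$ the maps with positive slope), not with $\AAA_{1,1}\times S^{1}$. Consequently, with ``any'' $d_{S^{1}}$ and ``any'' left-invariant $d_{0}$, the sum metric is invariant only under $\AAA_{1,1}^{+}$: e.g.\ a metric on $S^{1}$ that is not antipodally symmetric already breaks invariance under $y\mapsto-y$. Your closing claim that ``there is no genuine obstacle'' skips exactly this point.

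The repair is short and lands on the paper's explicit construction. Take $d_{S^{1}}$ to be arc length (antipodally invariant) and take $d_{0}$ to come from an inner product on the Lie algebra of $\AAA_{1,1}$ in which the dilation and translation directions are orthogonal; then conjugation by $\rho$ is an isometry, hence so is $\chi\mapsto\phi\chi\rho=(\phi\rho)\,(\rho\chi\rho)$, and invariance under all of $\AAA_{1,1}$ follows. This choice is essentially the paper's explicit Riemannian metric $g_{\psi}(u,v)=u_{1}v_{1}+\langle u_{2},v_{2}\rangle+e^{-2t}u_{3}v_{3}$, which one checks directly is invariant under the flip as well. (Alternatively, observe that every later use of invariance in the paper involves only positive dilations $S_{c}$ and translations $T_{a}$, so the $\AAA_{1,1}^{+}$-invariance you do establish would suffice for the applications; but the lemma as stated asserts invariance under the full group, and the paper's first, fundamental-domain construction is itself terse on the same two-fold ambiguity, which is why its explicit formula is the safer reference.)
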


\begin{proof}

  The action of $\AAA_{1,1}$ on $\AAA_{2,1}$ is free (for each $\psi\in \AAA_{2,1}$, the map $\AAA_{1,1}\to \AAA_{2,1}$, $\varphi\to \varphi\psi$, is injective). The set $U_{2,1}\subseteq \AAA_{2,1}$ consisting of norm-1 linear maps is a fundamental domain (every $\psi\in \AAA_{2,1}$ is represented uniquely as $\varphi\pi$ for $\varphi\in \AAA_{1,1}$ and $\pi\in U_{2,1}$). It is clear that $U_{2,1}$ is simply $S^1$ (with $u\in S^1$ identified with $x\mapsto \langle x,u\rangle$). Fix a Riemannian metric $g$ on $U_{2,1}$. We can extend $g$ to $\AAA_{2,1}$ by translating by elements of $\AAA_{2,1}$: at (the tangent space at) $\varphi\pi\in \AAA_{1,1}U_{2,1}$ the inner product is the push-forward by $D\varphi$ of the inner product at $\pi$. It is easy to see that this is a smooth Riemannian structure and the associated metric is $\AAA_{1,1}$-invariant.

Alternatively, an invariant metric can defined explicitly: identify $\AAA_{2,1}$ with $\R\times S^1\times\R$  as above and define the inner product of vectors $u=(u_1,u_2,u_3)$ and $v=(v_1,v_2,v_3)$ in the tangent space at $(t,y,a)$ to be \[
g_{\psi}(u,v)=u_{1}v_{1}+\left\langle u_{2},v_{2}\right\rangle+e^{-2t}u_{3}v_{3},
\]
The reader may check invariance under the action.
\end{proof}

Throughout the paper, unless stated otherwise, we endow $\AAA_{2,1}$ with an $\AAA_{1,1}$-invariant metric $d$ as in the lemma. Balls and diameters are defined with respect to this metric.

\begin{lemma}\label{lem:diameter-bound}
Let $\rho,r,R>0$. Suppose that $E\subseteq \AAA_{2,1}$ and $F\subseteq\R^2$ are sets of diameter at most $r\leq 1$, that $F\subseteq B_R(0)$, and that $\rho=\|\psi\|$ for some $\psi\in E$. Then $EF=\{\psi(x)\,:\,\psi\in E\;,\;x\in F\}$ has diameter $O(\rho\cdot r \cdot R$).
\end{lemma}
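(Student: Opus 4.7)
The plan is to first use the $\AAA_{1,1}$-invariance of $d$ to reduce to the normalized case where $\rho = 1$, and then to bound the diameter of $EF$ directly using the explicit parametrization of $\AAA_{2,1}$ by $(t, u, a) \in \R \times S^1 \times \R$ together with the explicit invariant Riemannian metric exhibited in the preceding lemma.

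First, choose $\psi_0 \in E$ with $\|\psi_0\| = \rho$ and pre-compose with $S_{1/\rho} \in \AAA_{1,1}$. By invariance, $E' := S_{1/\rho} E$ has the same diameter $r$ as $E$, contains the norm-$1$ element $S_{1/\rho}\psi_0$, and satisfies $EF = \rho \cdot (E'F)$. It therefore suffices to show $\diam(E'F) = O(r(R+1))$, which we then multiply through by $\rho$. Now parametrize $\psi = (t, u, a)$ so that $\psi(x) = e^t\langle x, u\rangle + a$. Since the $t$-component of the Riemannian metric is standard Euclidean, every $\psi \in E'$ satisfies $|t| \leq r \leq 1$ (using that the reference element has $t = 0$), and in particular $\|\psi\| = e^t \leq e$. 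For any $\psi_1, \psi_2 \in E'$ and $x_1, x_2 \in F$ the triangle inequality gives
\[
|\psi_1(x_1) - \psi_2(x_2)| \leq \|\psi_1\| \cdot |x_1 - x_2| + |\psi_1(x_2) - \psi_2(x_2)| \leq e\,r + |\psi_1(x_2) - \psi_2(x_2)|.
\]
For the second term, write $\psi_i = (t_i, u_i, a_i)$ and decompose
\[
\psi_1(x_2) - \psi_2(x_2) = (e^{t_1} - e^{t_2})\langle x_2, u_1\rangle + e^{t_2}\langle x_2, u_1 - u_2\rangle + (a_1 - a_2).
\]
The Riemannian distance controls each coordinate: $|t_1 - t_2| \leq r$ directly from the $v_1^2$ term, $|u_1 - u_2| \leq r$ from arc length on $S^1$ via $|v_2|^2$, and, from the conformal factor $e^{-2t}$ in the $a$-direction, $|a_1 - a_2| \leq e^{\max|t_i|}\cdot r \leq e\,r$. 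These yield three contributions of size $O(rR)$, $O(rR)$, and $O(r)$ respectively, hence $|\psi_1(x_2) - \psi_2(x_2)| = O(r(R+1))$. Combining with the first term gives $\diam(E'F) = O(r(R+1))$, and scaling back produces $\diam(EF) = O(\rho r(R+1))$; in particular one has the stated $O(\rho r R)$ whenever $R$ is bounded away from $0$ (as will be the case in applications, where $F$ is a macroscopic set).

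The main technical care is in the bound $|a_1 - a_2| = O(r)$: the Riemannian metric has a non-trivial conformal factor $e^{-2t}$ in the $a$-direction, so a priori an $a$-displacement corresponding to metric distance $r$ is of order $e^{\max t}\cdot r$ and could blow up as $t \to \infty$. This is precisely what makes the reduction of Step 1 essential: after normalizing to $\rho = 1$, one has $|t| \leq 1$ uniformly on $E'$, so the conformal factor is $\Theta(1)$ and the estimate closes. Once this normalization is in place, the rest is a routine triangle-inequality computation from the explicit parametrization.
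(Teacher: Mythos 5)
Your proof is correct, and it shares the paper's key idea (use $\AAA_{1,1}$-invariance of $d$ to rescale so that $\rho=1$, bound the normalized picture, then scale back), but it executes the second step differently. The paper normalizes by $S_{1/\rho}T_{-\psi(0)}$ — scaling \emph{and} translation — so that $E'$ meets the compact set of norm-one linear maps and hence lies in a fixed compact subset of $\AAA_{2,1}$; it then gets $\diam(E'F)=O(rR)$ softly, essentially from the bi-Lipschitz equivalence of $d$ with $D$ on compact sets (Lemma \ref{lem:equivaence-of-metrics}). You skip the translation and instead work in the explicit chart $(t,u,a)$ with the explicit invariant Riemannian metric: the $t$-projection is $1$-Lipschitz, so after rescaling $|t|\le r\le 1$ on $E'$, the conformal factor $e^{-2t}$ is $\Theta(1)$, and each coordinate difference is controlled by $d$, giving the bound by a direct triangle-inequality computation. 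What the paper's route buys is independence from any particular coordinate formula (it only uses invariance plus compactness); what your route buys is an explicit, self-contained estimate that makes visible exactly where the normalization is needed (taming the $e^{-2t}$ factor) and avoids having to normalize away the translation part at all.

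Two small points. First, your bound $|a_1-a_2|=O(r)$ should be justified along a near-geodesic path rather than only via the endpoint values of $t$: a priori $t$ could wander along the path; but since the $t$-coordinate varies by at most the path length $\le r+\epsilon\le 1+\epsilon$, the conformal factor stays $\Theta(1)$ along the path and the estimate closes — so this is a one-line repair, not a gap. (Similarly, if the fixed invariant metric is not literally the explicit one, any two invariant Riemannian metrics from the lemma's construction are globally bi-Lipschitz, so your computation is unaffected up to constants.) Second, your observation that one really obtains $O(\rho r(R+1))$, i.e.\ $O(\rho rR)$ only when $R$ is bounded below, is fair: the paper's own proof makes the same implicit assumption when it writes $O(\diam F)=O(rR)$, and in all applications $F$ lies in a fixed bounded region, so the constant absorbs this.
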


\begin{proof}
  Let $E'=S_{1/\rho} T_{-\psi(0)}E$. By left invariance of the metric on $\AAA_{2,1}$ under left-multiplication by elements of $\AAA_{1,1}$, which $S_{1/\rho}$ and $T_{-\psi(0)}$ are, the diameter of $E'$ is at most $r$, hence at most $1$. Also, $E'$ intersects  the compact set of norm-one linear maps in $\AAA_{2,1}$, because $S_{1/\rho}T_{-\psi(0)}\psi\in E'$ is such a map. Thus, $E'$ lies in a fixed compact subset of $\AAA_{2,1}$, and it follows that $E'F$ has diameter $O(\diam(F))=O(rR)$. But \[
     EF=T_{\psi(0)}S_\rho(E'F),
     \]
     and the diameter of this set is  $\rho\diam(E'F)=O(\rho r R)$.
\end{proof}

In addition to $d$, it is also useful (and natural) to consider the  metric $D$ on $\AAA_{k,m}$ given by
\begin{equation}\label{eq:metricD}
D(\psi_1,\psi_2):=\max_{x\in B(0,1)}\|\psi_1(x)-\psi_2(x)\|,
\end{equation}
where $B(0,1)$ is the unit closed ball on the corresponding space $\R^k$. 

Recall that two metrics on the same space are bi-Lipschitz equivalent if with respect to the two metrics the identity map is bi-Lipschitz.

\begin{lemma}\label{lem:equivaence-of-metrics}
  \begin{enumerate}
    \item The metric $D$ on $\AAA_{2,1}$ is bi-Lipschitz equivalent to the $\AAA_{1,1}$-invariant metric $d$ on every compact subset of $\AAA_{2,1}$.
  \item The metric $D$ on $\AAA_{2,2}$ is bi-Lipschitz equivalent to the metric $d$ defined in the introduction as the pullback of the operator norm via the standard embedding $\AAA_{2,2}\hookrightarrow GL_3(\R)$. In particular, exponential separation can be defined equivalently using the metric $D$.
    \end{enumerate}
\end{lemma}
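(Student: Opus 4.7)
The plan for both parts is a direct coordinate computation showing that $D$ and $d$ are each comparable to an intermediate ``hybrid'' expression, from which the bi-Lipschitz equivalence follows.

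For part (1), I would use the parametrization $\AAA_{2,1}\cong\R\times S^1\times\R$ introduced just above, where $(t,u,a)$ corresponds to $\psi_{t,u,a}(x)=e^t\langle x,u\rangle+a$. Since $\max_{\|x\|\leq 1}|\langle x,v\rangle+c|=\|v\|+|c|$, a direct maximization gives
\[
D(\psi_1,\psi_2)=\|e^{t_1}u_1-e^{t_2}u_2\|+|a_1-a_2|.
\]
The elementary identity $\|e^{t_1}u_1-e^{t_2}u_2\|^2=(e^{t_1}-e^{t_2})^2+e^{t_1+t_2}\|u_1-u_2\|^2$ then shows that on any compact set (where $t$ varies in a bounded interval) $D$ is comparable to $|t_1-t_2|+\|u_1-u_2\|+|a_1-a_2|$. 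On the other hand, the Riemannian form $g_\psi(u,v)=u_1v_1+\langle u_2,v_2\rangle+e^{-2t}u_3v_3$ has eigenvalues bounded above and bounded away from zero on the same compact set, so the associated Riemannian distance $d$ is also comparable to the same hybrid expression (using that chord and arc length on $S^1$ are comparable). Combining the two comparisons yields the bi-Lipschitz equivalence on compact subsets.

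For part (2), writing $\varphi_i(x)=A_ix+b_i$, I would compare both $D$ and the embedded-matrix metric $d$ to the quantity $\max(\|A_1-A_2\|_{\mathrm{op}},\|b_1-b_2\|)$; here the comparisons will be valid globally, since no exponential factor intervenes. For $D$, the upper bound $D\leq\|A_1-A_2\|_{\mathrm{op}}+\|b_1-b_2\|$ is immediate from subadditivity, while setting $x=0$ gives $D\geq\|b_1-b_2\|$ and the triangle inequality applied at a unit vector maximizing $\|(A_1-A_2)x\|$ gives $\|A_1-A_2\|_{\mathrm{op}}\leq 2D$. For the operator norm of
\[
M_1-M_2=\begin{pmatrix}A_1-A_2 & b_1-b_2 \\ 0 & 0\end{pmatrix},
\]
testing the vectors $(x,0)$ with $\|x\|=1$ and $(0,1)$ yields the analogous lower bound, and subadditivity gives the matching upper bound. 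Both metrics are therefore globally bi-Lipschitz to the same hybrid expression, hence to each other. Exponential separation transfers immediately, since a bi-Lipschitz change of metric with constant $K$ only rescales the separation constant $c$ to $c/K$.

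The only real conceptual point is why part (1) must be restricted to compact sets: as $t\to-\infty$, one has $D(\psi_1,\psi_2)\to|a_1-a_2|$, while the $e^{-2t}$ in the Riemannian form forces $d$ to blow up for any fixed change in $a$. This asymmetry is unavoidable on $\AAA_{2,1}$, but is absent in part (2) because the embedding in $GL_3(\R)$ carries no such rescaling. Beyond this, the argument is purely computational.
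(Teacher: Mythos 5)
Your proof is correct; the paper offers no argument for this lemma (it is declared straightforward), and your direct coordinate computation is exactly the routine verification intended. The only points you pass over lightly---comparing the Riemannian distance $d$ to the Euclidean ``hybrid'' expression on a compact set requires the standard remark that paths realizing small distances between points of the compact set stay in a slightly larger compact set (and long paths are handled trivially since the hybrid expression is bounded there), and if the fixed invariant metric is the one built by translating a metric on the fundamental domain rather than the explicit formula, one should note that any two $\AAA_{1,1}$-invariant Riemannian metrics are bi-Lipschitz by invariance---are routine and do not constitute gaps.
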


The proof is straightforward.

\subsection{$q$-adic partitions\label{sub:q-adic-partitions}}

Throughout the paper we will consider $q$-adic partitions of $\mathbb{R}$, where $q$ is a large integer defined in Section \ref{sub:dyadic-partitions} below (in fact all the main statements hold for $q=2$, but for some of the technical lemmas large $q$ is more convenient). The $q$-adic level-$n$ partition of $\R$ is defined by
\[
\DD_{n}=\left\{[\frac{k}{q^{n}},\frac{k+1}{q^{n}})\,:\,k\in\mathbb{Z}\right\}\:.
\]
We write $\DD_{t}=\DD_{[t]}$ when $t\in\mathbb{R}$
is non-integer. In $\mathbb{R}^{d}$ we write,
\[
\DD_{n}^{d}=\{I_{1}\times\ldots\times I_{d}\,:\,I_{i}\in\DD_{n}\},
\]
and generally omit the superscript.

We require similar partitions of $\AAA_{2,1}$. By \cite[Theorem~2.1]{KaenmakiRajalaSuomala2012}, there exists a collection of Borel sets
\[
\{Q_{n,i}\subset \AAA_{2,1}\::\:n\in\mathbb{Z},\:i\in \mathbb{N}\},
\]
having the following properties:
\begin{enumerate}
	\item $\AAA_{2,1}=\cup_{i\in \N}Q_{n,i}$ for every $n\in\mathbb{Z}$,
	\item $Q_{n,i}\cap Q_{m,j}=\emptyset$ or $Q_{n,i}\subset Q_{m,j}$ whenever
	$n,m\in\mathbb{Z}$, $n\ge m$, $i,j\in \N$,
	\item\label{item:ball} there exists a constant $C>0$ such that for every $n\in\mathbb{Z}$
	and $i\in\N$ there exists $\psi\in Q_{n,i}$ with
	\[
	B(\psi,C^{-1}q^{-n})\subset Q_{n,i}\subset B(\psi,Cq^{-n})\;.
	\]
\end{enumerate}
For each $n\in\mathbb{Z}$, denote $\DD_{n}^{\AAA_{2,1}}$
the partition $\{Q_{n,i}\::\:i\in\N\}$ of $\AAA_{2,1}$. With a slight abuse of notation, we usually omit the superscript.

\begin{lemma}\label{lem:Qn-has-bounded-degree}
  There exists a constant $C'$ such that for every $n\geq 0$ and $Q\in\QQ_n$,\[
    \#\{Q'\in\DD_{n+1}\::\:Q'\subset Q\}\leq C'\:.
  \]
\end{lemma}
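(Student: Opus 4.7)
My plan is a volume/doubling argument on a compact subset of $\AAA_{2,1}$, exploiting the $\AAA_{1,1}$-invariance of $d$ to reduce the problem to a region near $U_{2,1}$.

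\textit{Step 1 (reduction to ball-packing).} By property (3) above, every $Q\in\QQ_n$ lies in some $d$-ball $B(\psi_0,Cq^{-n})$, and every child $Q'\in\QQ_{n+1}$ with $Q'\subset Q$ contains an inscribed $d$-ball $B(\psi',C^{-1}q^{-(n+1)})$. These inscribed balls are pairwise disjoint and all lie in $B(\psi_0,Cq^{-n})$, so it suffices to show that any $d$-ball of radius $R:=Cq^{-n}$ in $\AAA_{2,1}$ contains at most $C'$ disjoint balls of radius $r:=C^{-1}q^{-(n+1)}$, uniformly in $n\geq 0$. Crucially, the ratio $R/r=C^2q$ is a universal constant.

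\textit{Step 2 (localization via invariance).} Decomposing $\psi_0=\varphi\pi$ uniquely with $\varphi\in\AAA_{1,1}$ and $\pi\in U_{2,1}$, the map $\varphi^{-1}$ is a $d$-isometry by (\ref{eq:invariant-metric}), and translates the entire configuration into $B(\pi,R)\subseteq B(\pi,C)$ (using $n\geq 0$). Hence it suffices to prove the packing statement for outer balls centered on the compact set $U_{2,1}$, with radii at most $C$.

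\textit{Step 3 (doubling near $U_{2,1}$).} I would use the explicit parametrization $\AAA_{2,1}\cong\R\times S^1\times\R$ and the $\AAA_{1,1}$-invariant Riemannian metric $g=dt^2+g_{S^1}+e^{-2t}\,da^2$ from the proof of the preceding existence lemma to verify that the set $\KK=\{\phi:d(\phi,U_{2,1})\leq C\}$ is relatively compact: any path of length $\leq C$ from $(t,y,a)$ to $(0,y',0)$ forces $|t|\leq C$ (from the $dt^2$ coefficient), and then the bound $e^{-2t}\geq e^{-2C}$ along the path forces $|a|\leq Ce^{C}$. Since $\overline{\KK}$ is a compact subset of a smooth Riemannian manifold, $(\overline{\KK},d)$ is doubling with a uniform doubling constant. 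A standard covering argument in doubling metric spaces then bounds the number of disjoint balls of radius $r$ inside a ball of radius $R$ by a constant depending only on the doubling constant and $\log(R/r)$; with $R/r=C^2q$ fixed, this yields the uniform $C'$.

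The main obstacle is not the doubling estimate itself, which is classical, but rather verifying the relative compactness of a sufficiently large neighborhood of $U_{2,1}$. This is essential because $(\AAA_{2,1},d)$ is globally neither proper nor doubling --- the warping factor $e^{-2t}$ distorts distances in the $a$-direction as $t\to\pm\infty$ --- and the $\AAA_{1,1}$-invariance is exactly the tool that lets us localize every ball-packing problem to the compact, well-behaved region near $U_{2,1}$.
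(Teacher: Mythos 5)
Your proof is correct and follows essentially the same route as the paper's: reduce the child count to a disjoint-ball packing problem via property (3) of the partitions, use the isometric $\AAA_{1,1}$-action to move the whole configuration into a fixed compact neighborhood of the norm-one maps $U_{2,1}$, and conclude with a uniform packing bound there for the fixed radius ratio $C^2q$. The only cosmetic difference is that the paper obtains the packing bound from the bi-Lipschitz equivalence of $d$ with the norm metric $D$ on compact sets, whereas you invoke doubling of compact Riemannian pieces and explicitly check precompactness of the $C$-neighborhood of $U_{2,1}$ via the warped-product metric --- a detail the paper's proof asserts without comment.
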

\begin{proof}
Let $C$ be the constant as in property \eqref{item:ball} of the $q$-adic partitions, above. For any compact  $Z\subseteq \AAA_{2,1}$, there is some $C'=C'(Z)$ such that any  $Cr$-ball $B\subseteq Z$ contains at most $C'$ disjoint sub-$r/Cq$-balls. This is because on $Z$, the metric $d$ is bi-Lipschitz equivalent to $D$, and the statement clearly holds for $D$. Now, let $Q\in\QQ_n$ and let $Q_1,Q_2,\ldots\subset Q$ be disjoint elements with $Q_i\in \QQ_{n+1}$. Fixing some $\psi\in Q$, the set $\widehat{Q}=S_{1/\|\psi\|}T_{-\psi(0)}Q$ contains a norm-$1$ element and has diameter $O(q^{-n})=O(1)$, so it is contained in a fixed compact set $Z\subseteq \AAA_{2,1}$. By invariance of $d$, $\widehat{Q}$ is contained in a $Cq^{-n}$-ball, since this is true for $Q$. But the sets $\widehat{Q}_i$ defined similarly are disjoint subsets of $\widehat{Q}$, and each contains a $q^{-(n+1)}/C$-ball, since this is true for $Q_i$. Thus, the number of the $Q_i$ is bounded by $C'(Z)$, as desired.
\end{proof}

\subsection{\label{q-adic-components}$q$-adic components}

For a partition $\DD$ (in $\R$ or in $\AAA_{2,1}$ respectively) we write $\DD(x)$ for the unique partition element containing $x$. For a probability measure $\theta$, write
\[
\theta_{A}=\frac{1}{\theta(A)}\theta|_{A}
\]
for the conditional measure of $\theta$ on $A$, assuming $\theta(A)>0$. 

For a probability measure $\theta$ on a space equipped with partitions $\DD_{n}$, we define measure valued random variables $\theta_{x,n}$ such that $\theta_{x,n}=\theta_{\DD_{n}(x)}$ with probability $\theta(\DD_{n}(x))$. We call $\theta_{x,n}$ an $n$-th level component of $\theta$. When several components
appear, e.g. $\theta_{x,n}$ and $\tau_{y,n}$, we assume $x,y$ are
chosen independently. Sometimes $n$ is chosen randomly as well. For example, we write for $n_2\geq n_1$ integers and an event $\mathcal{U}$,
\begin{equation}\label{eq:p}
\PP_{n_1\leq i\leq n_2}(\mu_{x,i}\in\mathcal{U})=\frac{1}{n_2-n_1+1}\sum_{n=n_1}^{n_2}\PP(\mu_{x,n}\in\mathcal{U}).
\end{equation}
We write $\EE$ and $\EE_{n_1\leq i\leq n_2}$ for the expected value w.r.t. the probabilities $\PP$ and $\PP_{n_1\leq i\leq n_2}$.

\subsection{Partitions of symbolic space}\label{sub:dyadic-partitions}

The symbolic space $\Lambda^\N$ comes with the natural partitions into level-$n$ cylinder sets. It will be convenient to consider more general partitions into cylinders of varying length. Thus, if  $\Xi\subseteq\Lambda^*$ is a collection of words such that the cylinder sets corresponding to words in $\Xi$ form a partition of $\Lambda^\N$, then we say that $\Xi$ is a partition. In this case we also let $\Xi$ denote the associated ``name'' function $\Xi\colon\Lambda^{\N}\mapsto \Lambda^*$, so $\Xi(\ii)$ is the unique word in $\Xi$ such that $\ii\in[\Xi(\ii)]$.

Returning to our self-affine measure $\mu$, we first note that by iterating the basic identity $\mu=\sum_{\ii\in\Lambda}p_{\ii}\cdot\varphi_{\ii}\mu$,  for any partition $\Xi\subseteq\Lambda^*$, we get \[
\mu=\sum_{\ii\in\Xi} p_{\ii}\varphi_{\ii}\mu,
\]
and if $V\in\RP$ then by applying $\pi_V$ to the above, we get
\begin{equation}\label{eq:iterated-convolution-for-mu}
\pi_V\mu=\sum_{\ii\in\Xi}p_{\ii}\cdot\pi_{V}\varphi_{\ii}\mu.
\end{equation}

Let $q\geq2$ be an integer and let
$$
\Psi_n^q=\left\{(i_0,\ldots,i_m)\in\Lambda^*:\|A_{i_0,\ldots,i_m}\|\leq q^{-n}<\|A_{i_0,\ldots,i_{m-1}}\|\right\}.
$$
Thus, there exists a constant $c_0$, depending on the matrices but independent of $n$ such that for every $n\geq 1$ and for every $\ii\in\Psi_n^q$
$$
c_0 q^{-n} \leq \|A_{\ii}\|\leq q^{-n}.
$$
It is easy to see that $\Psi_{n}^q$ forms a partition of $\Lambda^{\N}$ for every $n\geq1$.

For a subspace $V\in\RP$, let $\Xi_n^{V,q}$ be the finite subset of $\Lambda^*$ such that $\|A_{\ii}^*|V\|\approx q^{-n+O(1)}$. That is,
$$
\Xi_n^{V,q}=\left\{(i_0,\ldots,i_m)\in\Lambda^*:\|A_{i_m}^*\cdots A_{i_0}^*|V\|\leq q^{-n}<\|A_{i_{m-1}}^*\cdots A_{i_{0}}^*|V\|\right\}.
$$
Thus, there exists a constant $c_0>0$ such that for every $\ii\in\Xi_n^{V,q}$ and every $V\in\RP$,
\begin{equation}\label{eq:c0}
c_0q^{-n}\leq\|A_{\ii}^*|V\|\leq q^{-n}.
\end{equation}

\begin{lemma}\label{lem:properties-of-q}
There exists an integer $q\geq2$ such that
\begin{enumerate}
  \item For every $k\neq j$, $\Psi_k^q\cap\Psi_j^q=\emptyset$,
  \item\label{item:disjointness-of-Xi-n} For every $k\neq j$ and $V\in\RP$, $\Xi_k^{V,q}\cap\Xi_j^{V,q}=\emptyset$,
  \item There exist $c_1=c_1(q)>0$ and $c_2=c_2(q)>0$ such that for every $n\geq1$ and $\ii\in\Psi_n^q$, $c_1 n\leq|\ii|\leq c_2n$,
  \item\label{item:that} There exist $c_1=c_1(q)>0$ and $c_2=c_2(q)>0$ such that for every $n\geq1$, $V\in\RP$ and $\ii\in\Xi_n^{V,q}$, $c_1n\leq|\ii|\leq c_2n$.
\end{enumerate}
\end{lemma}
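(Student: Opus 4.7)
The idea is to choose $q$ large enough that the ratio of consecutive operator norms along a word is less than $q$, and then verify each clause by direct matrix-norm estimates. Set $\rho_{\max}=\max_{i\in\Lambda}\|A_i\|<1$ (since $\Phi$ consists of contractions) and $M=\max_{i\in\Lambda}\|A_i^{-1}\|<\infty$ (since $A_i\in GL_2(\mathbb{R})$), and let $q$ be any integer with $q>M$.

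For (1) I would show that each $\ii$ belongs to at most one $\Psi_n^q$. Writing $\ii=(i_0,\ldots,i_m)$, $\ii'=(i_0,\ldots,i_{m-1})$, and using $A_\ii=A_{\ii'}A_{i_m}$ and $A_{\ii'}=A_\ii A_{i_m}^{-1}$, submultiplicativity yields $\|A_\ii\|\le\|A_{\ii'}\|$ and $\|A_{\ii'}\|\le M\|A_\ii\|$. Hence the half-open interval $[\|A_\ii\|,\|A_{\ii'}\|)$ has ratio at most $M<q$, so it contains at most one number of the form $q^{-n}$, and $\ii$ lies in at most one $\Psi_n^q$. The proof of (2) is identical with $\|\cdot\|$ replaced by $\|(\cdot)^*|V\|$: from $A_\ii^*=A_{i_m}^*A_{\ii'}^*$, one gets $\|A_\ii^*|V\|\le\|A_{i_m}^*\|\cdot\|A_{\ii'}^*|V\|\le\|A_{\ii'}^*|V\|$, and $A_{\ii'}^*|V=(A_{i_m}^*)^{-1}(A_\ii^*|V)$ gives $\|A_{\ii'}^*|V\|\le\|A_{i_m}^{-1}\|\cdot\|A_\ii^*|V\|\le M\|A_\ii^*|V\|$.

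For the length estimates in (3) and (4), upper and lower bounds on $|\ii|$ come from different inputs. The upper bound uses the contraction property: if $\ii\in\Psi_n^q$ has $|\ii|=m+1$, then $q^{-n}<\|A_{\ii'}\|\le\rho_{\max}^m$, so $m<n\log q/|\log\rho_{\max}|$, and hence $|\ii|\le c_2 n$ for $n\ge 1$ with a suitable $c_2$; for (4) the same argument applies with $\|A_{\ii'}^*|V\|\le\|A_{\ii'}\|\le\rho_{\max}^m$. The lower bound uses invertibility: from $\|A_\ii^{-1}\|\le M^{|\ii|}$ and the elementary identity $\|A\|\ge\|A^{-1}\|^{-1}$, one obtains $\|A_\ii\|\ge M^{-|\ii|}$, which combined with $\|A_\ii\|\le q^{-n}$ yields $|\ii|\ge n\log q/\log M=:c_1 n$. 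For (4) use the analogous lower bound $\|A_\ii^*|V\|\ge\|(A_\ii^*)^{-1}\|^{-1}=\|A_\ii^{-1}\|^{-1}\ge M^{-|\ii|}$, valid because $\|T|V\|\ge\|T^{-1}\|^{-1}$ for any invertible $T$ and any non-trivial subspace $V$ (apply $|v|=|T^{-1}Tv|\le\|T^{-1}\|\cdot|Tv|$).

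No substantial obstacle is expected. The only genuine input is the choice $q>M$, which is dictated by the disjointness clauses (1) and (2); properties (3) and (4) then follow automatically as two-sided matrix-norm estimates, with explicit constants in terms of $q$, $\rho_{\max}$, and $M$.
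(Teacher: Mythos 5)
Your proposal is correct and follows essentially the same route as the paper: the choice $q>M=\max_i\|A_i^{-1}\|$ is exactly the paper's condition $q>1/\min_i\alpha_2(A_i)$, the disjointness clauses come from the same one-step bound $\|A_{\ii'}^*|V\|\le\|A_{i_m}^{-1}\|\cdot\|A_\ii^*|V\|$ (the paper phrases it as $\|A_\ii^*|V\|>q^{-k-1}$ rather than via the "interval of ratio $<q$" packaging), and the length bounds use the same singular-value/norm estimates $\rho_{\max}^{|\ii|-1}$ and $M^{-|\ii|}$ (the paper writes these as $\max_i\alpha_1(A_i)$ and $\min_i\alpha_2(A_i)$). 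The only cosmetic difference is that you absorb the additive $+1$ into $c_2$ using $n\ge1$ instead of taking $q$ large, and you spell out all four parts where the paper proves (2) and (4) and calls the rest similar.
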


\begin{proof}
We show only \eqref{item:disjointness-of-Xi-n} and \eqref{item:that}, the proof of the remaining parts are similar. To prove \eqref{item:disjointness-of-Xi-n}, it is enough to show that for every $j>k$, and for every $(i_0,\ldots,i_m)=\ii\in\Psi_k^q$, $\|A_{\ii}^*|V\|>q^{-j}$. But
\begin{align*}
\|A_{i_m}^*\cdots A_{i_0}^*|V\|&=\|A_{i_m}^*|A_{i_{m-1}}^*\cdots A_{i_0}^*V\|\cdot\|A_{i_{m-1}}^*\cdots A_{i_0}^*|V\|\\
&>q^{-k}\min_{i\in\Lambda}\|(A_i)^{-1}\|^{-1}>q^{-k-1}\geq q^{-j},
\end{align*}
for $q>1/\min_i\alpha_2(A_i)$. For \eqref{item:that}, observe that for $\ii\in\Xi_n^{V,q}$
$$
q^{-n}<\|A_{i_{m-1}}^*\cdots A_{i_0}^*|V\|\leq (\max_{i\in\Lambda}\alpha_1(A_i))^{|\ii|-1}.
$$
Hence, $|\ii|\leq n\frac{\log q}{-\log\max_i\alpha_1(A_i)}+1\leq2\frac{\log q}{-\log\max_{i}\alpha_1(A_i)}n$ holds for large enough $q$. The lower bound is similar with $|\ii|\geq \frac{\log q}{-\log\min_{i}\alpha_2(A_i)}n$.
\end{proof}

For the rest of the paper, we fix the integer $q$ satisfying the conclusion of Lemma~\ref{lem:properties-of-q}. All $q$-adic partitions are defined with respect to this $q$. We denote $\Psi_n^q$ by $\Psi_n$ and $\Xi_n^{V,q}$ by $\Xi_n^{V}$ for simplicity. In this spirit, when we write $\log$, we mean $\log_q$.

\subsection{\label{sub:random-cylinder-measures}Random cylinder measures}

Every measure on Euclidean space has associated to it its $q$-adic components. For a planar self-affine measure $\mu$, one can also decompose $\mu$ into cylinder measure, i.e. measure of the form $\varphi_\ii\mu$ for $\ii\in\Lambda^*$. As with $q$-adic components it is natural to view the cylinders as random measures, with the naturally defined weights.

For any given $n\in\mathbb{N}$ and $V\in\RP$, we introduce three random words $\II(n), \JJ(n,V)$ and $\UU(n)$ taking values from finite subsets (actually from partitions) of $\Lambda^*$.
\begin{itemize}
\item $\II(n)$ is the random word taking values in $\Psi_{n}$ according to the probability vector $p$, i.e.
  $$
  \PP(\II(n)=\ii)=\begin{cases}p_{\ii}&\text{ if }\ii\in\Psi_n,\\ 0&\text{ otherwise.}\end{cases}.
$$
\item $\JJ(n,V)$ is the random word taking values from $\Xi_n^V$ according to $p$.
\item $\UU(n)$ is the random word taking values from $\Lambda^n$ according to $p$.
\end{itemize}
If it is not confusing, sometimes we omit the second coordinate $V$ in $\JJ(n,V)$ and simply denote in by $\JJ(n)$.

We can also represent $\mu$ as a convex combination of cylinders. That is, equation \eqref{eq:iterated-convolution-for-mu} can be re-interpreted as
\begin{equation}\label{eq:decomposition-of-mu-into-levels}
\mu=\EE(\varphi_{\II(n)}\mu)=\EE(\varphi_{\JJ(n)}\mu)=\EE(\varphi_{\UU(n)}\mu).
\end{equation}

We may randomize $n$ as in the case of components, thus for example for any observable $F$,
\[
\mathbb{E}_{n_1\leq i\leq n_2}(F(\varphi_{\II(i)}\mu))=\frac{1}{n_2-n_1+1}\sum_{n=n_1}^{n_2}\mathbb{E}(F(\varphi_{\II(n)}\mu)).
\]

The random sequences $U(n),I(n)$ and $J(n,V)$ differ but statements which hold with high probability for one over many scales holds for the others. The following lemma describes the direction we need.

\begin{lemma}\label{lem:converting-Un-to-In-or-Jn}
  For every $V\in\RP$, let $\mathcal{U}_V\subseteq\Lambda^*$ be a set of words. Suppose that, for every $\varepsilon>0$ and $n\geq n(\varepsilon)$,  \[
  \inf_{V\in\RP}\PP_{1\leq i\leq n}(\UU(i)\in\mathcal{U}_V)>1-\varepsilon.
  \]
	Then for every $n\geq N'(\varepsilon)$
	\begin{eqnarray*}
&&\inf_{V\in\RP}\PP_{1\leq i\leq n}(\II(i)\in\mathcal{U}_V)>1-\varepsilon,\\
&&\inf_{V\in\RP}\PP_{1\leq i\leq n}(\JJ(i,V)\in\mathcal{U}_V)>1-\varepsilon.
        \end{eqnarray*}
        The same holds if we take the infimum over some fixed set of $V$'s in both the hypothesis and conclusion, or for a single $V$.
\end{lemma}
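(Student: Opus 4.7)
The plan is to realize all three families of random words as stopping-time prefixes of a single random sequence $\ii \sim \nu = p^{\mathbb{N}}$ on $\Lambda^{\mathbb{N}}$, so that comparing their statistics reduces to a change-of-index calculation. Let $L_k(\ii) = (i_0,\ldots,i_{k-1})$ denote the length-$k$ prefix; then $\UU(k)(\ii) = L_k(\ii)$, while $\II(i)(\ii) = L_{T_\II(\ii,i)}(\ii)$ and $\JJ(i,V)(\ii) = L_{T_\JJ(\ii,i,V)}(\ii)$ for random stopping lengths $T_\II, T_\JJ$ determined by the rules of Section \ref{sub:dyadic-partitions}.

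The two facts I would establish first are: (a) by the length bounds in Lemma \ref{lem:properties-of-q}, one has $c_1 i \leq T_\II(\ii,i), T_\JJ(\ii,i,V) \leq c_2 i$ uniformly in $\ii$ and $V$; and (b) both $T_\II(\ii,\cdot)$ and $T_\JJ(\ii,\cdot,V)$ are non-decreasing integer-valued functions of $i$ whose plateaus have length at most a constant $M = M(q, \{A_j\})$, or equivalently each integer $k$ equals $T_\II(\ii,i)$ or $T_\JJ(\ii,i,V)$ for at most $M$ values of $i$. This is the only substantive point: appending one symbol multiplies $\|A_{L_k(\ii)}\|$ by a factor in $[\min_j \alpha_2(A_j), \max_j \|A_j\|]$, and likewise multiplies $\|A_{L_k(\ii)}^*|V\|$ by $\|A_{i_k}^*|A_{L_k(\ii)}^*V\|$, which lies in the same range; so after at most $M$ symbols the norm crosses the next $q$-adic threshold. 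Uniformity in $V$ is automatic because this range does not depend on $V$.

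With (a) and (b) in hand, the rest is bookkeeping. For every $\ii$ and $V$,
\[
\#\{i \in [1,n] : \II(i)(\ii) \notin \mathcal{U}_V\} \leq M \cdot \#\{k \in [1, c_2 n] : L_k(\ii) \notin \mathcal{U}_V\},
\]
and the analogous pointwise bound holds for $\JJ(i,V)(\ii)$. Taking expectation over $\ii \sim \nu$ and dividing by $n$,
\[
\PP_{1 \leq i \leq n}(\II(i) \notin \mathcal{U}_V) \leq Mc_2 \cdot \PP_{1 \leq k \leq c_2 n}(\UU(k) \notin \mathcal{U}_V),
\]
and applying the hypothesis with $\varepsilon/(Mc_2)$ in place of $\varepsilon$ forces the right-hand side below $\varepsilon$ for all $n$ past some $N'(\varepsilon)$, uniformly in $V \in \RP$. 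Since every inequality above is pointwise in $V$, the fixed-subfamily and single-$V$ versions of the conclusion follow verbatim, and I do not anticipate a genuine obstacle beyond verifying (b).
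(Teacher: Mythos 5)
Your proposal is correct and is essentially the paper's argument: both relate the scale-average for $\II(i)$ (resp.\ $\JJ(i,V)$) over $1\leq i\leq n$ to the scale-average for $\UU(k)$ over $1\leq k\leq c_2 n$ via the length bounds of Lemma \ref{lem:properties-of-q}, and then invoke the hypothesis at a reduced $\varepsilon$ (uniformly in $V$, since every step is pointwise in $V$). The only difference is bookkeeping: the paper works with the good events and uses the disjointness of the sets $\Psi_\ell$ (resp.\ $\Xi_\ell^V$), which is built into the choice of $q$ in Lemma \ref{lem:properties-of-q}, to get an exact mass identity with no multiplicative loss, whereas you bound the bad events through an at-most-$M$-to-one stopping-time coupling, in effect re-deriving a weaker form of that disjointness from the per-symbol bound $\|A_{L_k}\|\geq\min_j\alpha_2(A_j)\cdot\|A_{L_{k-1}}\|$ (note it is this lower bound on the one-step factor, not the "cross the next threshold within $M$ symbols" statement, that gives the bounded multiplicity you use).
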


\begin{proof}
	By Lemma~\ref{lem:properties-of-q}, there exists a constant $c\geq1$ such that for every $\ii\in\Psi_n$, $|\ii|\leq n c$ and $\Psi_k\cap \Psi_j=\emptyset$ for $k\neq j$ as subsets of $\Lambda^*$.
	
	Let $\varepsilon>0$ be arbitrary. By assumption, there exists $N\geq1$ such that for every $n\geq N$ and every $V\in\RP$, $\PP_{1\leq i\leq n}(\UU(i)\in\mathcal{U}_V)>1-\varepsilon/2c$. 	Thus,
	\begin{align*}
	(c n-N)(1-\varepsilon/2c)&\leq cn\cdot \PP_{1\leq i\leq cn}(\UU(i)\in\mathcal{U}_V)\\
	&=n\cdot \PP_{1\leq i\leq n}(\II(i)\in\mathcal{U}_V)+cn\cdot \PP_{1\leq i\leq cn}(\UU(i)\in\mathcal{U}_V\setminus\bigcup_{\ell=1}^{n}\Psi_{\ell})\\
	&\leq n\cdot \PP_{1\leq i\leq n}(\II(i)\in\mathcal{U}_V)+cn-n.
	\end{align*}
	Thus, by choosing $\widehat{N}=\widehat{N}(\varepsilon,N(\varepsilon))$ such that $N/\widehat{N}<\varepsilon/2$, we get that for every $n\geq\widehat{N}$, $\inf_V\PP_{1\leq i\leq n}(\II(i)\in\mathcal{U}_V)>1-\varepsilon$. The proof for $\JJ(i,V)$ is similar.
\end{proof}

\subsection{Entropy}\label{sub:entropy}

Denote $H(\mu,\DD)$ the usual entropy w.r.t the partition $\DD$, and denote $H(\mu,\DD'|\DD)$ the usual conditional entropy. That is,
\begin{eqnarray}
  H(\mu,\DD) & = & -\int\log\mu(\DD(x))d\mu(x)\\
  H(\mu,\DD'|\DD) &= & H(\mu,\DD'\vee\DD)-H(\mu,\DD),\label{eq:conditional-entropy-is-increment}\\
  & = & \sum_{I\in \DD}\mu(I)\cdot H(\mu_I,\DD'),\label{eq:cond-entropy-as-expectation}
\end{eqnarray}
where $\DD'\vee\DD$ denotes the common refinement of the partitions $\DD',\DD$. By the definition of the distribution on components,
\begin{eqnarray}
H(\mu,\DD_{n+m}|\DD_n)  & = &   \EE(H(\mu_{x,n},\DD_{n+m})). \label{eq:component-entropy-is-conditional-entropy}
\end{eqnarray}

The entropy functions is concave and almost convex in the measure argument. That is, for any $0\leq\alpha\leq1$ and $\mu_1,\mu_2$ probability measures
\begin{eqnarray*}
	\alpha H(\mu_1,\DD)+(1-\alpha)H(\mu_2,\DD)&\leq&H(\alpha\mu_1+(1-\alpha)\mu_2,\DD)\\
	&\leq&\alpha H(\mu_1,\DD)+(1-\alpha)H(\mu_2,\DD)+H(\alpha),
\end{eqnarray*}
where $H(\alpha)=-\alpha\log\alpha-(1-\alpha)\log(1-\alpha)$.

Scale-$n$ entropy transforms nicely under affine maps: For any $f\in \AAA_{1,1}$
\begin{eqnarray}
H(f\mu,\DD_n)&=&H(\mu,\DD_{n+\log\|f\|})+O(1)\text{ and}\label{eq:entropy-under-transformation-1}\\
H(f\mu,\DD_n)&=&H(\mu,\DD_n)+O(\log\|f\|+1).\label{eq:entropy-under-transformation-2}
\end{eqnarray}
Moreover, the entropy of images is continuous in the map: If $f,g\colon\R\mapsto\R$ are such that $\sup_x|f(x)-g(x)|<q^{-n}$ then
\begin{equation}\label{eq:entropy-under-transformation-3}
|H(f\mu,\DD_n)-H(g\mu,\DD_n)|=O(1).
\end{equation}

The entropy function $\mu\mapsto H(\mu,\QQ_n)$ is continuous in the total variation norm $\|\cdot\|$. In fact, if $\|\mu-\nu\|<\ep$ and $\mu,\nu$ are supported on $k$ atoms of $\DD'$, then (\cite[Lemma 3.4]{Hochman2015}):
\begin{equation}
|H(\mu,\DD'|\DD)-H(\nu,\DD'|\DD)| < 2\log k\ep + 2H(\frac{\ep}{2}).\label{eq:entropy-continuity-of-images}
\end{equation}
In particular, using \ref{eq:cond-entropy-as-expectation} and the fact that each $I\in\DD_m$ intersects $2^{n-m}$ atoms of $\DD_n$, this implies
\begin{equation}
  |\frac{1}{n-m}H(\mu,\QQ_n|\QQ_m)-\frac{1}{n-m}H(\nu,\QQ_n|\QQ_m)| < 2\ep+2H(\ep). \label{eq:scale-n-entropy-continuity-in-TV}
\end{equation}
Entropy  is not continuous in the weak-* topology. Nevertheless it is easy to introduce a continuous substitute. For example, one can replace the integrand $\log\mu(\QQ_n(x))$, which is a step function, by a continuous approximation, using a partition of unity to approximate the indicators of the level sets. Alternatively one can average the entropy over translations, as in \cite{Varju2016}. These ``alternative'' entropy functions can be made so that the difference from the scale-$n$ entropy is of order $O(1)$. Since we shall nearly always deal with the asymptotics of normalized entropies such as $(1/n)H(\mu,\QQ_n)$, such a change is insignificant, and we will freely assume, when the need arises, that we are using such a substitute. 

\subsection{Entropy dimension\label{sub:entropy-dimension}}
For a $\mu\in\mathcal{P}(\R)$, let $\underline{\dim}_e\mu$ be the lower- and let $\overline{\dim}_e\mu$ be the upper-entropy dimension. That is,
\begin{eqnarray*}
  \underline{\dim}_e\mu & = & \liminf_{n\to\infty}\frac{H(\mu,\DD_n)}{n}\\
  \overline{\dim}_e\mu &= &\limsup_{n\to\infty}\frac{H(\mu,\DD_n)}{n}.
\end{eqnarray*}
If the limit exists then we call it the entropy dimension of $\mu$ and we denote it by $\edim\mu$.

\begin{lemma}\label{lem:entropy-dimension}
If $\mu\in\mathcal{P}(\R)$ is exact dimensional then $\dim_e\mu$ exists, moreover,
$$
\dim\mu=\lim_{n\to\infty}\frac{H(\mu,\DD_n)}{n}.
$$
\end{lemma}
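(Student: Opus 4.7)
The plan is to first establish the pointwise convergence
\[
\lim_{n\to\infty}\frac{-\log_q\mu(\DD_n(x))}{n}=\alpha:=\dim\mu \qquad \text{for $\mu$-a.e.\ $x$,}
\]
and then transfer this to $L^1(\mu)$ convergence of the same integrands, which is exactly the assertion of the lemma since $H(\mu,\DD_n)=\int(-\log_q\mu(\DD_n(x)))\,d\mu(x)$.

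For the pointwise statement, the inclusion $\DD_n(x)\subseteq \overline{B}(x,q^{-n})$ combined with exact dimensionality of $\mu$ at once gives $\liminf_{n\to\infty}(-\log_q\mu(\DD_n(x)))/n\geq\alpha$ $\mu$-a.e. The matching $\limsup$ inequality is the technical heart, and the obstruction is that $\DD_n(x)$ may be strictly smaller than $B(x,q^{-n})$ when $x$ lies close to $\partial\DD_n(x)$. To handle this, I would fix $\epsilon>0$, apply Egorov to extract a set $K_\epsilon$ with $\mu(K_\epsilon)>1-\epsilon$ on which the two-sided Frostman estimate $q^{-n(\alpha+\epsilon)}\leq\mu(B(x,q^{-n}))\leq q^{-n(\alpha-\epsilon)}$ holds uniformly, and then use a Borel--Cantelli argument to conclude that for $\mu$-a.e.\ $x\in K_\epsilon$ one has $d(x,\partial\DD_n(x))\geq q^{-n(1+\epsilon')}$ for all large $n$, with $\epsilon'\to 0$ as $\epsilon\to 0$. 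Combining this with the lower Frostman bound gives $\mu(\DD_n(x))\geq\mu(B(x,q^{-n(1+\epsilon')}))\geq q^{-n(1+\epsilon')(\alpha+\epsilon)}$, yielding the pointwise upper bound upon letting $\epsilon\to 0$.

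Having secured the pointwise limit, $\liminf H(\mu,\DD_n)/n\geq\alpha$ is immediate from Fatou's lemma. For the matching upper bound, set $G_n=\{x:-\log_q\mu(\DD_n(x))/n\leq\alpha+\epsilon\}$; the pointwise convergence gives $\mu(G_n)\to 1$. Split
\[
\frac{H(\mu,\DD_n)}{n}=\int_{G_n}\frac{-\log_q\mu(\DD_n(x))}{n}\,d\mu(x)+\int_{G_n^c}\frac{-\log_q\mu(\DD_n(x))}{n}\,d\mu(x).
\]
The first summand is at most $\alpha+\epsilon$. For the second, writing $p_n=\mu(G_n^c)$, the elementary inequality $\mu(\DD_n(x))\geq p_n\cdot (\mu|_{G_n^c}/p_n)(\DD_n(x))$ for $x\in G_n^c$ yields
\[
\int_{G_n^c}(-\log_q\mu(\DD_n(x)))\,d\mu(x)\leq -p_n\log_q p_n+p_n\cdot H(\mu|_{G_n^c}/p_n,\DD_n)\leq -p_n\log_q p_n+p_n(n+O(1)),
\]
where in the last step we used $H(\mu|_{G_n^c}/p_n,\DD_n)\leq n+O(1)$, valid because $\mu$ is supported on a bounded subset of $\R$ so only $O(q^n)$ cells of $\DD_n$ meet the support. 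Dividing by $n$ and using $p_n\to 0$ makes this contribution vanish, so $\limsup H(\mu,\DD_n)/n\leq\alpha+\epsilon$; letting $\epsilon\to 0$ finishes the proof.

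The main obstacle is the pointwise upper bound. The Borel--Cantelli estimate sketched above is clean when $\alpha$ is bounded away from $0$, but for small $\alpha$ the direct calculation is delicate and one may need to iterate the boundary analysis over dyadic scales $q^{-jn}$ or, alternatively, invoke the classical identification (going back to Young, and Fan--Lau--Rao) of the $\mu$-a.s.\ limit of $-\log_q\mu(\DD_n(x))/n$ with the pointwise dimension of an exact dimensional measure.
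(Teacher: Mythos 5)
Your architecture (pointwise statement along $q$-adic cells, then integration) is reasonable, and the easy halves are fine: $\DD_n(x)\subseteq B(x,q^{-n})$ plus Fatou gives both lower bounds, and your splitting over $G_n$ and $G_n^c$, with $\int_{G_n^c}(-\log_q\mu(\DD_n(x)))\,d\mu\le -p_n\log_q p_n+p_n(n+O(1))$, is correct for compactly supported $\mu$ (the relevant case here). The genuine gap is the Borel--Cantelli step. The bad set $E_n=\{x\in K_\varepsilon: d(x,\partial\DD_n(x))<q^{-n(1+\varepsilon')}\}$ is covered by on the order of $q^n$ intervals of length $q^{-n(1+\varepsilon')}$ (one or two per cell meeting $\supp\mu$; note that the projections $\pi_V\mu$ in this paper typically have interval support, so the count really is of order $q^n$), and the upper Frostman bound on $K_\varepsilon$ gives at most roughly $q^{-n(1+\varepsilon')(\alpha-\varepsilon)}$ per interval. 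The estimate you indicate therefore yields $\mu(E_n)\lesssim q^{n[1-(1+\varepsilon')(\alpha-\varepsilon)]}$, which is summable only if $(1+\varepsilon')(\alpha-\varepsilon)>1$. Since $\alpha\le1$ on $\R$, this fails for every $\alpha<1$ once $\varepsilon,\varepsilon'$ are small --- not just for small $\alpha$, as you suggest --- and enlarging $\varepsilon'$ to force summability ($\varepsilon'\approx 1/\alpha-1$) ruins the conclusion, since your final bound $(1+\varepsilon')(\alpha+\varepsilon)$ is then close to $1$ rather than $\alpha$.

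The step can be repaired, but it needs an input your sketch omits: count only those grid points whose $q^{-n(1+\varepsilon')}$-neighborhood actually meets $K_\varepsilon$, and use the \emph{lower} Frostman bound at the intermediate scale $q^{-n(1+\varepsilon'/2)}$ (the balls of that radius about such grid points are disjoint and each carries mass at least $q^{-n(1+\varepsilon'/2)(\alpha+\varepsilon)}$), which caps their number by $q^{n(1+\varepsilon'/2)(\alpha+\varepsilon)}$ and gives $\sum_n\mu(E_n)<\infty$ provided $\varepsilon'\gtrsim\varepsilon/\alpha$; this dependence on $\alpha$ is absent from your sketch and degenerates as $\alpha\to0$. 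Alternatively, one can bypass the pointwise claim entirely: with $\tilde I$ denoting $I$ together with its two neighbouring cells, Jensen's inequality gives $\sum_{I\in\DD_n}\mu(I)\log\frac{\mu(\tilde I)}{\mu(I)}\le\log\sum_{I}\mu(\tilde I)\le\log3$, hence $H(\mu,\DD_n)\le\int-\log\mu(B(x,q^{-n}))\,d\mu+\log3$, and by Markov's inequality $-\frac1n\log_q\mu(\DD_n(x))\to\alpha$ in probability --- which is all your Step 2 actually uses (only $\mu(G_n)\to1$ enters; a.e.\ convergence is never needed). Finally, be aware that the paper itself gives no proof of this lemma and simply cites Young and Fan--Lau--Rao, so your closing fallback of ``invoking the classical identification'' is exactly the paper's route; as a citation it is legitimate, but it cannot fill the missing step of a self-contained argument, since it is precisely the statement to be proved.
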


The proof of the lemma can be found in \cite[Theorem~4.4]{Young} or \cite[Theorem~1.3]{FanLauRao}. We refer the reader to \cite[Section 3.1]{Hochman2015} and \cite[Section~2.3 and 2.4]{hochman2016some} for further properties of entropy.

\subsection{Furstenberg measure}\label{sub:furstenberg}
For $A\in GL_2(\R)$, let $\overline{A}=|\det(A)|^{-1/2}A$. Let $\tau^+=\sum_{i\in\Lambda}p_i\delta_{\overline{A}_i}$, $\tau^-=\sum_{i\in\Lambda}p_i\delta_{\overline{A}_i^{-1}}$ and $\tau^*=\sum_{i\in\Lambda}p_i\delta_{\overline{A}_i^*}$ be measures on $GL_2(\R)$. We let $GL_2(\R)$ act on $\RP$ in the natural way (linear maps take lines to lines), and for a measure $\theta$ on $GL_2(\R)$ and a measure $\tau$ on $\RP$ we write $\theta\conv \tau$ for the push-forward of $\theta\times\tau$ by the map $(A,x)\mapsto Ax$.

Denote $\sigma$ the left-shift operator on $\Lambda^\N$ and $\nu=p^{\N}$ the product measure. We introduce three functions on the space $\Lambda^{\N}\times\RP$ as follows
\begin{eqnarray*}
&&P_+(\ii,V)=(\sigma\ii,A_{i_0}V),\\
&&P_*(\ii,V)=(\sigma\ii,A_{i_0}^*V),\\
&&P_-(\ii,V)=(\sigma\ii,A_{i_0}^{-1}V).
\end{eqnarray*}

\begin{proposition}\label{prop:furstenberg-measure}
  If $\{\overline{A}_i\}_{i\in\Lambda}$ generates a strongly irreducible and unbounded subgroup of $GL_2(\R)$ then  for every choice $a=*,+,-$, there exists a unique probability measure $\eta^a$ on $\RP$ such that $\tau^a.\eta^a=\eta^a$ and the measure $\m\times\eta^a$ is $P_a$-invariant and mixing. Moreover, there exist $0>\chi_1>\chi_2$ satisfying
  \begin{align*}
  &\lim_{n\to\infty}\frac{1}{n}\log\|A_{i_n}^{-1}\cdots A_{i_0}^{-1}|V\|=-\chi_2,\text{ for $\m\times\eta^-$-a.e. }(\ii,V),\\
  &\lim_{n\to\infty}\frac{1}{n}\log\|A_{i_n}\cdots A_{i_0}|V\|=\chi_1,\text{ for $\m\times\eta^+$-a.e. }(\ii,V)\text{, and}\\
  &\lim_{n\to\infty}\frac{1}{n}\log\|A_{i_n}^*\cdots A_{i_0}^*|V\|=\chi_1,\text{ for $\m\times\eta^*$-a.e. }(\ii,V).
  \end{align*}
\end{proposition}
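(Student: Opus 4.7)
The proposition packages together classical results in the theory of random matrix products; the combined assumption of strong irreducibility and unboundedness of $\langle \overline{A}_i\rangle$ yields Furstenberg's contraction (proximality) property, from which everything in the statement follows. My plan is to split the argument into three steps: existence, uniqueness (plus mixing), and the Lyapunov exponent identities.

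\emph{Step 1 (Existence).} The space $\mathcal{P}(\RP)$ is compact and convex in the weak-$*$ topology, and for each $a\in\{+,-,*\}$ the map $\theta\mapsto \tau^a\conv\theta$ is continuous and affine. Markov--Kakutani supplies a fixed point $\eta^a$. The $P_a$-invariance of $\nu\times\eta^a$ is then a direct unpacking of the definitions using $\nu=p^{\N}$ and the stationarity of $\eta^a$.

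\emph{Step 2 (Uniqueness and mixing).} This is where the hypotheses are used and is the main obstacle. By a classical theorem of Furstenberg (see e.g.\ Bougerol--Lacroix, Chapter III), a strongly irreducible and unbounded subgroup of $GL_2(\R)$ is proximal: one can find $M_n$ in the generated semigroup with $M_n/\|M_n\|$ converging to a rank-one matrix. Proximality gives the contraction principle for the projective random walk: for $\nu$-a.e.\ $\ii$ and \emph{every} initial line $V_0$, the iterates $A_{i_{n-1}}\cdots A_{i_0}V_0$ converge in $\RP$ to a random limit $V_\infty(\ii)$ whose law is independent of $V_0$. The law of $V_\infty$ is therefore the unique $\tau^+$-stationary measure $\eta^+$. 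Mixing of $(P_+,\nu\times\eta^+)$ follows from this contraction by a standard skew-product argument: the base $(\sigma,\nu)$ is Bernoulli, hence mixing, and the exponential contraction of pairs of fibers under the cocycle upgrades base mixing to full mixing of the skew product. The arguments for $\eta^*$ and $\eta^-$ are identical, since passing to $A_i^*$ or $A_i^{-1}$ preserves both strong irreducibility and unboundedness of the generated group.

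\emph{Step 3 (Lyapunov exponents).} Define $\chi_1:=\int \log\|A_{i_0}|V\|\,d(\nu\times\eta^+)(\ii,V)$ and $\chi_2:=-\int \log\|A_{i_0}^{-1}|V\|\,d(\nu\times\eta^-)(\ii,V)$. The crucial feature of the one-dimensional setting is the exact chain rule $\|AB|V\|=\|A|BV\|\cdot\|B|V\|$ for a line $V$, which gives the cocycle identity
\[
\log\|A_{i_{n-1}}\cdots A_{i_0}|V\|=\sum_{k=0}^{n-1}(f\circ P_+^k)(\ii,V), \qquad f(\ii,V):=\log\|A_{i_0}|V\|.
\]
Birkhoff applied to the ergodic system $(P_+,\nu\times\eta^+)$ delivers the $\chi_1$ limit. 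The analogous argument with the transposed cocycle $A_i^*$ under $P_*$ gives the $\chi_1$ limit for $\eta^*$-a.e.\ $V$; the value is again $\chi_1$ and not some new constant because $A$ and $A^*$ share their singular values, hence share their Lyapunov spectrum. The same procedure with $A_i^{-1}$ under $P_-$ gives the $-\chi_2$ limit. Negativity of both $\chi_i$ is immediate from $\|A_i\|<1$ and Fekete's lemma applied to $\EE\log\|A_{i_{n-1}}\cdots A_{i_0}\|$, while the strict inequality $\chi_1>\chi_2$ is Furstenberg's simplicity theorem for the top Lyapunov exponent, again invoking strong irreducibility plus unboundedness.

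In a paper of this scope the most natural presentation is to carry out Steps 1 and 3 explicitly and to invoke Step 2 (both uniqueness and mixing, and correspondingly the simplicity statement in Step 3) as a black-box citation to Bougerol--Lacroix or Benoist--Quint, since reproducing the Furstenberg proximality argument would take us far from the combinatorial heart of this paper.
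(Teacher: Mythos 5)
The paper gives no proof of this proposition at all—it simply refers to \cite[Chapter III]{BougerolLacroix1985} and \cite[Theorem~3.4.1]{Arnold}—and your outline assembles exactly that classical material (stationarity via a fixed-point argument, Furstenberg's contraction/proximality for uniqueness and mixing, the cocycle identity $\|AB|V\|=\|A|BV\|\cdot\|B|V\|$ plus Birkhoff for the limits, and simplicity of the top exponent for $\chi_1>\chi_2$), so it is correct and essentially the same approach. The only point stated a bit loosely is the identification of the Birkhoff integrals $\int\log\|A_{i_0}^{a}|V\|\,d(\m\times\eta^a)$ with the top Lyapunov exponent of the corresponding product: this uses Furstenberg's formula under strong irreducibility (so that growth along $\eta^a$-typical lines is the \emph{top} exponent), not merely the fact that $A$ and $A^*$ share singular values—but that, too, is part of the cited Chapter III and so is covered by your intended black-box citation.
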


For the proof of the proposition, we refer to \cite[Chapter III]{BougerolLacroix1985} and \cite[Theorem~3.4.1]{Arnold}. We call $\eta^+$ the forward Furstenberg measure and $\eta^-$ the backward Furstenberg measure.

It is not hard to show that $R\eta^-=\eta^*$, where $R(V)=V^{\perp}$.

\begin{proposition}\label{prop:convergence-to-furstenberg-measure}\label{lem:uniform-convergence-to-furst-measure}
 Assume that $\{\overline{A}_i\}_{i\in\Lambda}$ generates a strongly irreducible and unbounded subgroup of $GL_2(\R)$. Then the distribution of the random line $A^*_{\UU(n)}V$ converges to $\eta^*$, and for every $V\in\RP$
  $$ 
  \lim_{n\to\infty}\frac{1}{n}\sum_{k=0}^{n-1}\delta_{A_{i_k}^*\cdots A_{i_0}^*V}=\eta^*\text{ for $\m$-a.e. $\ii=(i_0,i_1,\ldots)$}.
  $$
Furthermore, the convergence is uniform in $V$, in the sense that if $f\colon\RP\mapsto\R$ is continuous, then
\[
\lim_{n\to\infty}\sup_{V\in\RP}\left|\EE(f(A_{\UU(n)}^*V))-\int fd\eta^*\right|=0.
\]%
In particular, for every $E\subseteq\RP$ open set, for every $\varepsilon>0$, for every $n\geq N(E,\varepsilon)\geq1$ and for every $V\in\RP$
$$
\PP\left(A_{\UU(n)}^*V\in E\right)\geq\eta^*(E)-\varepsilon.
$$
\end{proposition}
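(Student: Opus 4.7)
The key observation is that $\EE f(A^*_{\UU(n)}V)=(P^*)^n f(V)$, where $P^*\colon C(\RP)\to C(\RP)$ is the Markov operator $(P^*f)(V)=\sum_{i\in\Lambda}p_i f(A_i^*V)$. Proposition~\ref{prop:furstenberg-measure} already supplies the crucial inputs: $\eta^*$ is the unique $P^*$-stationary probability, and $\nu\times\eta^*$ is $P_*$-invariant and mixing (and thus ergodic). The whole proposition reduces to two ergodic-theoretic statements: uniform convergence $(P^*)^n f\to\int f\,d\eta^*$ in $C(\RP)$, and $\nu$-a.s.\ convergence of empirical measures along trajectories from every starting line $V$.

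First I would treat the uniform distributional convergence. The ``contracting'' (proximality) property of random matrix products with strongly irreducible, non-compact law \cite[Chapter~III]{BougerolLacroix1985} furnishes measurable maps $\ii\mapsto V^u_*(\ii),V^s_*(\ii)\in\RP$, defined $\nu$-a.e., with $V^u_*(\ii)\sim\eta^*$, such that for every fixed $V$ we have $V\neq V^s_*(\ii)$ $\nu$-a.s. (since $\eta^*$ is non-atomic under the hypotheses) and hence $A^*_{\UU(n)}V\to V^u_*(\ii)$. Bounded convergence gives pointwise $(P^*)^n f(V)\to\int f\,d\eta^*$. The uniformity in $V$ is the one genuinely analytic ingredient, and I would obtain it from Le Page's spectral gap theorem for $P^*$ on H\"older functions \cite[Chapter~V]{BougerolLacroix1985}: $\|(P^*)^n f-\int f\,d\eta^*\|_\infty\le C\rho^n\|f\|_{C^\alpha}$ for some $\rho<1$ and $\alpha>0$, which transfers to $C(\RP)$ by uniformly approximating a given continuous function by H\"older ones on the compact space $\RP$.

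For the a.s.\ empirical convergence from every $V$, I would first apply Birkhoff's theorem to the $P_*$-invariant, mixing (hence ergodic) system $(\Lambda^\N\times\RP,\nu\times\eta^*,P_*)$: for each $f\in C(\RP)$ there is a $(\nu\times\eta^*)$-full set on which $\tfrac1n\sum_{k=0}^{n-1}f(A^*_{i_k}\cdots A^*_{i_0}V)\to\int f\,d\eta^*$. Fubini then yields an $\eta^*$-full set $\Omega_0\subseteq\RP$ of ``good'' starting lines where the convergence holds $\nu$-a.s. To promote this to every $V\in\RP$, fix any $V_0\in\Omega_0$ and invoke the contracting property again: $\nu$-a.s.\ both $A^*_{\UU(n)}V$ and $A^*_{\UU(n)}V_0$ converge to the same line $V^u_*(\ii)$, so their distance tends to zero, and uniform continuity of $f$ on $\RP$ forces the Ces\`aro averages from $V$ and $V_0$ to share a limit. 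Separability of $C(\RP)$ handles all continuous $f$ on a single $\nu$-null set, giving the desired weak convergence of the empirical measures.

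The ``in particular'' clause is then immediate: given an open $E\subseteq\RP$ and $\varepsilon>0$, inner regularity of $\eta^*$ yields a continuous $f\colon\RP\to[0,1]$ with $f\le\mathbf{1}_E$ and $\int f\,d\eta^*\ge\eta^*(E)-\varepsilon/2$; the uniform convergence provides $N=N(E,\varepsilon)$ such that $\EE f(A^*_{\UU(n)}V)\ge\eta^*(E)-\varepsilon$ for all $V$ and $n\ge N$, and $\PP(A^*_{\UU(n)}V\in E)\ge\EE f(A^*_{\UU(n)}V)$ closes the estimate. The hard step will be the uniformity in $V$ of $(P^*)^n f\to\int f\,d\eta^*$, which rests on the spectral-gap machinery for the projective Markov operator; everything else is a routine combination of Furstenberg's theory, Birkhoff's theorem, and a coupling via the contracting property.
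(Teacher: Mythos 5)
The paper offers no argument for this proposition at all---it simply cites \cite[Chapter III]{BougerolLacroix1985}---so your proposal is more detailed than the text, and its overall architecture (reduce to the Markov operator $P^*$, uniformity from a spectral gap, the empirical statement from Birkhoff on the mixing skew product plus a coupling, and the open-set clause via Urysohn and portmanteau) is the right one. There is, however, one genuinely false intermediate claim, and you use it twice: for the order of products appearing in the statement, $A^*_{\UU(n)}V=A^*_{i_{n-1}}\cdots A^*_{i_0}V$, the sequence does \emph{not} converge $\nu$-a.s.\ to a random line $V^u_*(\ii)$. This sequence is exactly the Markov chain on $\RP$ driven by $\tau^*$ started at $V$; if it converged a.s., then, since every letter of $\Lambda$ occurs infinitely often along $\nu$-a.e.\ $\ii$, continuity would force the limit to be a common fixed point of all the maps $A_i^*$ on $\RP$, contradicting strong irreducibility. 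Almost sure convergence to a random attracting line is a statement about the \emph{reversed} products $A^*_{i_0}A^*_{i_1}\cdots A^*_{i_{n-1}}V$ (the usual boundary/martingale convergence, with the exceptional direction $V^s_*$ distributed according to the stationary measure of the associated adjoint walk, not $\eta^*$ itself, though it is likewise non-atomic); these have the same law as $A^*_{\UU(n)}V$ for each fixed $n$ because the letters are i.i.d., and this identification of laws is the legitimate bridge from a.s.\ convergence to convergence in distribution.

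Both uses of the false claim are repairable by standard means, so the gap is local rather than structural. For the pointwise convergence $(P^*)^nf(V)\to\int f\,d\eta^*$, reverse the order as above, or simply note that Le Page's spectral gap (applicable here since the driving measure is finitely supported, hence has all exponential moments) already yields pointwise and uniform convergence simultaneously, making your first step redundant. For the coupling in the empirical statement, what you actually need is only that $d\bigl(A^*_{\UU(n)}V,\,A^*_{\UU(n)}V_0\bigr)\to 0$ for $\nu$-a.e.\ $\ii$, and this does hold even though neither orbit converges: in the plane the projective distance obeys $d(A^*_\ii V,A^*_\ii V_0)\le C\,|\det A_\ii|\,\|A^*_\ii v\|^{-1}\|A^*_\ii v_0\|^{-1}$ for unit vectors $v\in V$, $v_0\in V_0$, and Furstenberg's law of large numbers for an \emph{arbitrary fixed} starting vector (valid under strong irreducibility, and slightly stronger than the $\eta^*$-a.e.\ statement of Proposition \ref{prop:furstenberg-measure}) shows the right-hand side decays at the exponential rate $\chi_2-\chi_1<0$. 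With that substitution, your Fubini--Birkhoff argument and the final Urysohn/portmanteau step go through as written.
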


The statement follows from \cite[Chapter III]{BougerolLacroix1985}.

\subsection{Dimension of projections}

The function $(\ii,V)\mapsto \dim \pi_V\mu$, as a function on $\Lambda^\N\times\RP$, is monotone under the skew-product dynamics of the previous section (it increases or decreases depending on whether we consider the lower or upper Hausdorff dimension of the measure), so by ergodicity, its value is a.s. constant. We record a slightly stronger (and deeper) conclusion in the next lemma.
 
\begin{lemma}\label{lem:alpha}
   Assume that $\{\overline{A}_i\}_{i\in\Lambda}$ generates a strongly irreducible and unbounded subgroup of $GL_2(\R)$. Then there exists $0\leq\alpha\leq1$ such that $\eta^*$-a.e. projection $\pi_V\mu$ is exact dimensional with dimension $\alpha$, and
  $$
  \lim_{n\to\infty}\frac{1}{n}H(\pi_V\mu,\DD_n)=\alpha\;;\text{ for $\eta^*$-a.e. $V$}.
  $$
\end{lemma}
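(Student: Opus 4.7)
The plan is to show that $\alpha(V):=\dim\pi_V\mu$ is a bounded harmonic function for the Markov chain on $\RP$ driven by $\tau^*=\sum_ip_i\delta_{A_i^*}$, and then deduce constancy from uniqueness of the stationary measure $\eta^*$. For this I first need $\alpha(V)$ to be well-defined on a suitably rich set. By the Ledrappier--Young theorem for planar self-affine measures of B\'ar\'any--K\"aenm\"aki \cite{BaranyKaenmaki2015}, $\pi_V\mu$ is exact-dimensional for $V$ in a full $\eta^*$-measure set $\mathcal V\subseteq\RP$. The stationarity $\eta^*=\sum_ip_iA_i^*\eta^*$ forces $A_i^*\eta^*\leq p_i^{-1}\eta^*$ and thus $A_\ii^*\eta^*\ll\eta^*$ for every $\ii\in\Lambda^*$, so the countable intersection $\mathcal V':=\bigcap_{\ii\in\Lambda^*}(A_\ii^*)^{-1}\mathcal V$ also has full $\eta^*$-measure; for every $V\in\mathcal V'$ and every $\ii$, both $V$ and $A_\ii^*V$ lie in $\mathcal V$, so Lemma~\ref{lem:entropy-dimension} gives $\frac1nH(\pi_V\mu,\DD_n)\to\alpha(V)$ and $\frac1nH(\pi_{A_\ii^*V}\mu,\DD_n)\to\alpha(A_\ii^*V)$.

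To verify harmonicity, iterate the self-similarity to obtain $\pi_V\mu=\sum_{\ii\in\Lambda^n}p_\ii\,\pi_V\varphi_\ii\mu$, and observe from (\ref{eq:affine-map-to-projection}) that each $\pi_V\varphi_\ii\mu$ is the push-forward of $\pi_{A_\ii^*V}\mu$ under an affine map of norm $\|A_\ii^*|V\|$. Combining the scaling identity (\ref{eq:entropy-under-transformation-1}) with concavity and almost-convexity of entropy yields
\[
\Bigl|H(\pi_V\mu,\DD_M)-\sum_{\ii\in\Lambda^n}p_\ii\,H(\pi_{A_\ii^*V}\mu,\DD_{M-C_\ii(V)})\Bigr|\;\leq\;nH(p)+O(1),
\]
where $C_\ii(V):=\log_q\|A_\ii^*|V\|^{-1}=O(n)$. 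Dividing by $M$ and sending $M\to\infty$ with $n$ fixed, each inner normalized entropy converges to $\alpha(A_\ii^*V)$, the overhead $(nH(p)+O(1))/M$ vanishes, and exact-dimensionality on the left forces the identity
\[
\alpha(V)\;=\;\EE\bigl[\alpha(A_{\UU(n)}^*V)\bigr]\qquad\text{for every }V\in\mathcal V'\text{ and every }n\geq 1.
\]
Hence $\alpha\colon\RP\to[0,1]$ is a bounded harmonic function for the $\tau^*$-random walk on $\RP$.

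Finally, Proposition~\ref{prop:furstenberg-measure} gives ergodicity of the skew product $(\Lambda^\N\times\RP,\nu\times\eta^*,P_*)$. Applying the pointwise ergodic theorem to the bounded function $(\ii,V)\mapsto\alpha(V)$, for $\nu\times\eta^*$-a.e.\ $(\ii,V)$ one has $\frac1N\sum_{n=0}^{N-1}\alpha(A_{\ii|_n}^*V)\to\int\alpha\,d\eta^*$. Taking $\nu$-expectation (by bounded convergence) and using harmonicity gives
\[
\alpha(V)\;=\;\frac1N\sum_{n=0}^{N-1}\EE\bigl[\alpha(A_{\UU(n)}^*V)\bigr]\;\xrightarrow[N\to\infty]{}\;\int\alpha\,d\eta^*\qquad\text{for }\eta^*\text{-a.e. }V,
\]
so $\alpha(V)$ equals this constant $\alpha$ for $\eta^*$-a.e.\ $V$. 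The substantive obstacle is the cited input: exact-dimensionality of $\pi_V\mu$ from the self-affine Ledrappier--Young theorem. The remainder is routine entropy bookkeeping combined with a classical bounded-harmonic-function argument, with modest care required in the two-scale limit to absorb the $nH(p)$ overhead.
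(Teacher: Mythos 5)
Your proposal is correct, but it reaches the conclusion by a different (and longer) route than the paper. The paper's proof is essentially a citation: \cite[Theorem 2.3]{BaranyKaenmaki2015} already delivers a single constant $\alpha$ such that $\pi_{V^\perp}\mu$ is exact dimensional with dimension $\alpha$ for $\eta^-$-a.e.\ $V$; using $R\eta^-=\eta^*$ with $R(V)=V^\perp$ and Lemma \ref{lem:entropy-dimension} this is exactly the statement. You instead use the Ledrappier--Young input only for exact dimensionality $\eta^*$-a.e., and re-derive the constancy yourself: the decomposition $\pi_V\mu=\sum_{\ii\in\Lambda^n}p_\ii\,\pi_V\varphi_\ii\mu$ together with \eqref{eq:affine-map-to-projection}, \eqref{eq:entropy-under-transformation-1} and (almost-)convexity of entropy gives the harmonicity identity $\alpha(V)=\EE[\alpha(A^*_{\UU(n)}V)]$ on a full-measure set (your absolute-continuity argument $p_\ii A_\ii^*\eta^*\leq\eta^*$ is the right way to make the set $\mathcal{V}'$ work), and then Birkhoff's theorem for the ergodic skew product $P_*$ of Proposition \ref{prop:furstenberg-measure} pins $\alpha(V)$ to $\int\alpha\,d\eta^*$. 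This is sound, modulo the routine remark that $V\mapsto\dim\pi_V\mu$ is Borel (needed for the ergodic theorem), and it buys some self-containedness, since the constancy is not outsourced to the cited theorem. Note, though, that the paper's paragraph immediately preceding the lemma sketches an even lighter version of your constancy step: since $p_\ii\,\pi_V\varphi_\ii\mu\leq\pi_V\mu$ and $\pi_V\varphi_\ii\mu$ is a bi-Lipschitz image of $\pi_{A_\ii^*V}\mu$, the function $(\ii,V)\mapsto\dim\pi_V\mu$ is monotone along $P_*$-orbits, and an a.e.-monotone function of an ergodic transformation is a.e.\ constant; this avoids the two-scale entropy bookkeeping and the $nH(p)$ overhead entirely, while your exact harmonicity identity is stronger than what is needed.
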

\begin{proof}
  By \cite[Theorem 2.3]{BaranyKaenmaki2015}, there exists $\alpha$ such that $\pi_{V^{\perp}}\mu$ is exact dimensional and $\dim_H\pi_{V^{\perp}}\mu=\alpha\text{ for $\eta^-$-a.e. }V\in\RP$. Since $R\eta^-=\eta^*$, where $R(V)=V^{\perp}$, this implies the statement with Lemma~\ref{lem:entropy-dimension}.
\end{proof}

\section{\label{sec:entropy-porosity}Entropy porosity of orthogonal projections}

In order to analyze entropy growth under convolution it will be necessary to show that the projections of  $\mu$ are entropy porous in the sense of \cite{hochman2016some}:

\begin{definition}\label{def:porosity}
  Let $\rho\in\mathcal{P}(\R)$. We say that $\rho$ is {\it $(h,\delta,m)$-entropy porous from scale $n_1$ to $n_2$} if
  $$
  \PP_{n_1\leq i\leq n_2}\left(\frac{1}{m}H(\rho_{x,i},\DD_{i+m})<h+\delta\right)>1-\delta.
  $$
  
\end{definition}
 
Let $\alpha$ be the $\eta^*$-typical (entropy) dimension of projections of $\mu$. The main purpose of this section is to prove:

\begin{proposition}\label{prop:projections-are-porous}
  Assume that $\{\overline{A}_i\}_{i\in\Lambda}$ generates a strongly irreducible and unbounded subgroup of $GL_2(\R)$. For every $\varepsilon>0$, $m\geq M(\varepsilon)$, $k\geq K(\varepsilon,m)$ and $n\geq N(\varepsilon,m,k)$, for every $\psi\in \AAA_{2,1}$ and writing $t=\log\|\psi\|$,
\begin{equation}\label{eq:projections-are-porous}
 \PP_{1\leq i\leq n}\left(\substack{\text{\large $\psi\varphi_{\II(i)}\mu$ is $(\alpha,\varepsilon,m)$-entropy porous}\\ \text{\large from scale $t+i$ to $t+i+k$}}\right)>1-\varepsilon.
\end{equation}

\end{proposition}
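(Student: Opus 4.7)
The plan is to reduce the porosity of $\psi\varphi_{\II(i)}\mu$ to the entropy porosity of suitably chosen projections $\pi_V\mu$, and then to exploit Lemma~\ref{lem:alpha} together with the uniform Furstenberg convergence in Proposition~\ref{prop:convergence-to-furstenberg-measure}.

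First, write $\psi(x)=a\,\pi_W(x)+b$ with $W\in\RP$, $|a|=\|\psi\|=q^{t}$, and $b\in\R$. By \eqref{eq:affine-map-to-projection}, for every $\ii\in\Lambda^*$,
\[
\psi\varphi_\ii \;=\; T_{c_\ii}\circ S_{\pm N_\ii}\circ\pi_{V_\ii},\qquad V_\ii:=A_\ii^*W,\qquad N_\ii:=\|\psi\|\cdot\|A_\ii^*|W\|,
\]
so $\psi\varphi_\ii\mu$ is an affine rescaling of $\pi_{V_\ii}\mu$ by the factor $N_\ii$. Via the entropy transformation rule \eqref{eq:entropy-under-transformation-1}, scale-$j$ components of $\psi\varphi_\ii\mu$ correspond (up to $O(1)$ entropy error) to scale-$(j+\log N_\ii)$ components of $\pi_{V_\ii}\mu$. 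Standard large-deviation estimates for $\tfrac{1}{n}\log\|A_{\II(n)}^*|W\|\to-\chi_1$ (Proposition~\ref{prop:furstenberg-measure}, uniform in $W$ by compactness of $\RP$ and irreducibility) give $\log N_{\II(i)}=t-i+o(i)$ for most $i\in[1,n]$; choosing $k$ and $n$ appropriately, we arrange that the shifted window $[t+i+\log N_{\II(i)},\,t+i+k+\log N_{\II(i)}]$ is concentrated near the $i$-independent interval $[2t,2t+k]$.

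By Lemma~\ref{lem:alpha}, $\pi_V\mu$ is exact dimensional with dimension $\alpha$ for $\eta^*$-a.e.\ $V$, so $H(\pi_V\mu,\DD_n)=\alpha n+o(n)$. Telescoping the identity $H(\pi_V\mu,\DD_{j+m}\mid\DD_j)=H(\pi_V\mu,\DD_{j+m})-H(\pi_V\mu,\DD_j)$ across $j\in[a,a+k]$ yields
\[
\frac{1}{k+1}\sum_{\ell=0}^{k}\frac{1}{m}H(\pi_V\mu,\DD_{a+\ell+m}\mid\DD_{a+\ell}) \;=\; \alpha+o_k(1),
\]
for fixed $m$, provided $k$ is taken large relative to $a$. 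Since $\frac{1}{m}H((\pi_V\mu)_{x,j},\DD_{j+m})\in[0,1]$ has this expression as its conditional expectation in $x$ and $j$, Markov's inequality upgrades the display to $(\alpha,\varepsilon,m)$-entropy porosity of $\pi_V\mu$ on $[a,a+k]$ for all $k\geq k_0(V,\varepsilon,m)$. Egorov's theorem then produces a set $G\subseteq\RP$ with $\eta^*(G)>1-\varepsilon$ on which the threshold $k_0$ is uniform.

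Finally, Proposition~\ref{prop:convergence-to-furstenberg-measure}, applied to a continuous approximation of $\mathbf{1}_G$ (using that $G$ may be chosen with $\eta^*$-negligible boundary), yields $\PP(V_{\II(i)}\in G)\geq\eta^*(G)-\varepsilon$ for every $i\geq i_0$, uniformly in $W$. Averaging over $i\in[1,n]$ and combining with the scale-shift control, for the majority of pairs $(i,\II(i))$ we have $V_{\II(i)}\in G$ \emph{and} the relevant window of scales for $\pi_{V_{\II(i)}}\mu$ lies near $[2t,2t+k]$; for such pairs, the porosity of $\pi_{V_{\II(i)}}\mu$ transfers via the reduction above to the desired porosity of $\psi\varphi_{\II(i)}\mu$ on $[t+i,t+i+k]$. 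The degenerate regime $t\ll -m$ is handled separately: there the diameter of $\psi\varphi_{\II(i)}\mu$ is below $q^{-(t+i)}$, so the scale-$(t+i+m)$ entropy is trivially at most $2t+m<\alpha m$. The main technical obstacle is that entropy porosity is not a continuous function of $V$, so weak convergence alone does not transfer it; the Egorov-based uniformization onto $G$, together with inner approximation of $\mathbf{1}_G$ by continuous functions, is what bridges Lemma~\ref{lem:alpha} and Proposition~\ref{prop:convergence-to-furstenberg-measure}.
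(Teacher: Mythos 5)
Your central step --- deriving $(\alpha,\varepsilon,m)$-entropy porosity of $\pi_V\mu$ on a window $[a,a+k]$ from exact dimensionality alone, via ``telescoping $+$ Markov'' --- does not work, and this is exactly the point where the real work of the proposition lies. Exact dimensionality (Lemma \ref{lem:alpha}) only tells you that the \emph{average} of the normalized component entropies $\frac{1}{m}H((\pi_V\mu)_{x,j},\DD_{j+m})$ over the window is $\approx\alpha$. Porosity is an \emph{upper} bound on most component entropies, and Markov's inequality cannot convert ``mean $\approx\alpha$, values in $[0,1]$'' into $\PP\bigl(Y<\alpha+\delta\bigr)>1-\delta$: take $Y=0$ with probability $1-\alpha$ and $Y=1$ with probability $\alpha$; the mean is $\alpha$ but the porosity bound fails badly (and $\alpha$ may well be close to $1$ here). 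What is missing is a pointwise \emph{lower} bound: at most scales, most components of $\pi_V\mu$ have entropy at least $\alpha-\varepsilon'$. Only once you have that does the average force the matching upper bound (Markov applied to the nonnegative excess over the lower bound). This lower bound is the substance of Section \ref{sec:entropy-porosity}: components of $\pi_V\mu$ are covered by projected cylinders $\pi_V\varphi_{\ii}\mu$, which are affine copies of $\pi_{A_\ii^*V}\mu$ and have entropy $\approx\alpha$ by the Furstenberg equidistribution (Lemma \ref{lower-bound-on-entropy-of-projected-components}, Proposition \ref{thm:lower-bound-on-entropy-of-projected-components-2}); transferring this to $q$-adic components requires the covering lemmas of Section \ref{sub:covering-lemma-and-porosity} together with the boundary-mass control supplied by joint uniform continuity across scales (Lemma \ref{lem:joint-uniform-continuity}). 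None of this is present in your argument, and it cannot be bypassed.

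Two further steps are also gapped. First, your transfer from ``$\eta^*$-a.e.\ $V$'' to the random lines $A_{\II(i)}^*W$ via Proposition \ref{prop:convergence-to-furstenberg-measure}: Egorov gives a measurable (at best compact) good set $G$, but weak convergence only yields lower bounds on hitting probabilities of \emph{open} sets on which the property actually holds; choosing $G$ ``with negligible boundary'' and minorizing $\mathbf{1}_G$ by continuous functions does not help, because points added to $G$ need not enjoy the porosity property. The paper resolves this by showing the good set can be taken open, using continuity of component distributions and \eqref{eq:entropy-under-transformation-3} (the corollary following Proposition \ref{lem:projections-are-porous-from-scale-1-to-n}). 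Second, your window for $\pi_{V_\ii}\mu$ starts near $2t$, and your telescoping needs $k$ large relative to that starting scale; but in the proposition $k=K(\varepsilon,m)$ is fixed \emph{before} $\psi$, so no uniformity in $t$ is available. The paper avoids this by always placing the window of the fresh projection at $[O(1),k]$ and absorbing the scaling into Lemma \ref{lem:porosity-invarance-under-affine-maps}; note that this affine-invariance of porosity (which you invoke implicitly as an ``$O(1)$ correspondence of components'') is itself not free --- affine images of $q$-adic components are not $q$-adic components, and the lemma again needs the mass-near-boundary hypothesis. So the proposal, as it stands, replaces the paper's covering/lower-bound machinery with an invalid averaging argument and leaves the equidistribution and rescaling transfers unjustified.
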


The proof strategy is similar to the proof of porosity for self-similar measures. We first show that at most scales, the components of $\pi_V\mu$ have entropy at least $\alpha$, by covering them with small projections of cylinder measures. But the average of the entropies of components is essentially the entropy dimension of $\pi_V\mu$, which is again $\alpha$. Thus, since the average is not much larger than the pointwise lower bound on component entropy, we get a corresponding pointwise upper bound, which is entropy porosity.

Arguments of this type have appeared elsewhere but under slightly stronger assumptions. We give a self-contained proof, and in the hope of preventing further repetitions we state some general lemmas below which perhaps can be re-used.

\subsection{\label{sub:covering-lemma-and-porosity}Covering arguments and porosity}

In this section we show quite generally that if a measure $\tau$ decomposes as a convex combination of measures $\tau_i$, supported on short intervals, then many properties of the $\tau_i$, and specifically their entropies, are inherited by the $q$-adic components of $\tau$. We then derive sufficient conditions for entropy porosity and invariance of entropy porosity under affine coordinate changes. 

\begin{lemma}\label{lem:total-variation-bound}
For every $\varepsilon>0$ there exists a $\delta>0$ with the following
property. Suppose that a probability measure $\tau\in\mathcal{P}(\mathbb{R})$
can be written as a convex combination $\tau=(1-\delta)\tau'+\delta\tau''$.
Then for every $k$,
\[
\tau\left(x\,:\,\left\Vert \tau_{x,k}-\tau'_{x,k}\right\Vert <\varepsilon\right)>1-\varepsilon.
\]
(in fact we can take any $\delta<\varepsilon^{2}/4$).\end{lemma}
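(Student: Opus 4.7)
The plan is to analyze, cell-by-cell, the $q$-adic decomposition of $\tau = (1-\delta)\tau' + \delta\tau''$ and argue that on most level-$k$ cells $D$, the $\tau''$-contribution is negligible relative to $\tau(D)$, which forces $\tau_D$ to be close in total variation to $\tau'_D$.

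Concretely, I would first write, for any cell $D \in \DD_k$ with $\tau(D)>0$ and $\tau'(D)>0$,
\[
\tau_D = \lambda_D \tau'_D + (1-\lambda_D)\tau''_D, \qquad \lambda_D = \frac{(1-\delta)\tau'(D)}{\tau(D)},
\]
so that
\[
\|\tau_D - \tau'_D\| = (1-\lambda_D)\|\tau''_D - \tau'_D\| \le 2(1-\lambda_D) = \frac{2\delta\,\tau''(D)}{\tau(D)}.
\]
Thus $\|\tau_D - \tau'_D\| < \varepsilon$ is implied by $\delta\,\tau''(D)/\tau(D) < \varepsilon/2$. I would then let $B$ be the union of those $D\in\DD_k$ for which either $\tau'(D)=0<\tau(D)$ (a degenerate case handled separately) or $\delta\,\tau''(D)\ge (\varepsilon/2)\tau(D)$; note the degenerate case is actually included in the second by observing that it forces $\delta\tau''(D)/\tau(D)=1$.

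Summing the inequality $\delta\,\tau''(D)\ge (\varepsilon/2)\tau(D)$ over the cells comprising $B$ gives
\[
\delta \ge \delta\,\tau''(B) \ge \tfrac{\varepsilon}{2}\,\tau(B),
\]
hence $\tau(B) \le 2\delta/\varepsilon$. Choosing $\delta < \varepsilon^2/2$ yields $\tau(B)<\varepsilon$, and for every $x\notin B$ we have by construction $\|\tau_{x,k}-\tau'_{x,k}\|<\varepsilon$, which is exactly the claimed inequality. The margin $\varepsilon^2/4$ quoted in parentheses is a safer choice that absorbs any factor-of-two slack.

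There is essentially no obstacle here; the only point requiring a touch of care is the degenerate situation where $\tau'(D)=0$, which makes $\tau'_{x,k}$ undefined on $D$. This is handled by observing that such a $D$ automatically satisfies $\tau(D) = \delta\tau''(D)$, so it is captured by the bad set $B$ and contributes only to the small-measure exceptional event. The argument does not use any structure of $\R$ beyond the existence of the partitions $\DD_k$, so the same proof works verbatim in any partitioned measurable space — a fact that will be useful when the lemma is applied later to components of projected self-affine measures.
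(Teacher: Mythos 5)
Your proof is correct and follows essentially the same route as the paper: decompose each conditional measure as a convex combination to get $\|\tau_D-\tau'_D\|\le 2\delta\,\tau''(D)/\tau(D)$, then bound the total mass of the bad cells. The only cosmetic difference is that you sum the defining inequality of the bad set directly (a Markov-type bound with threshold $\varepsilon/2$), whereas the paper applies Markov's inequality to the Radon--Nikodym derivative with threshold $\sqrt{\delta}$; both yield $\delta=O(\varepsilon^2)$.
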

\begin{proof}
  Let $\varepsilon$ be given, and fix $\delta<1/2$. For any set $I$ of positive $\tau$-measure, algebraic manipulation shows that \[
  \tau_I = (1-\delta)\frac{\tau'(I)}{\tau(I)}\tau'_I + \delta\frac{\tau''(I)}{\tau(I)}\tau''_I.
  \]
Therefore,\[
  \|\tau_I-\tau'_I\| = \delta\frac{\tau''(I)}{\tau(I)} \|\tau'_I - \tau''_I\| \leq 2\delta\frac{\tau''(I)}{\tau(I)}. 
\]
It remains to show that if $\delta$ is small enough then $\delta\tau''(I)/\tau(I)<\varepsilon/2$ for $\tau$-most intervals $I\in\mathcal{Q}_k$. To this end, consider the measures $\widehat{\tau},\widehat{\tau}'$ and $\widehat\tau''$ induced on the countable probability space $\mathcal{Q}_k$ by $\tau,\tau',\tau''$ on $\R$: that is, $\widehat{\tau}(\{I\})=\tau(I)$ and similarly for $\tau',\tau''$. Clearly $\widehat\tau = (1-\delta)\widehat\tau'+\delta\widehat\tau''$, so $\widehat\tau''\ll\widehat\tau$. Let $f=d\widehat\tau''/d\widehat\tau$, so $f(I)=\tau''(I)/\tau(I)$. Then $\int fd\widehat\tau=1$ and $f\geq 0$, so by Markov's inequality, \[
     \widehat\tau\{I\,:\,\delta f(I)>\sqrt\delta\}<\sqrt\delta.
  \]
This proves the claim for any $\delta<\min\{\varepsilon^2/4\}$.
\end{proof}

\begin{corollary}\label{cor:covering-by-high-entropy-measures}\label{cor:covering-by-low-entropy-measures}
For every $\varepsilon>0$ there exists a $\delta>0$ with the following
property. Suppose that a probability measure $\tau\in\mathcal{P}(\mathbb{R})$
can be written as a convex combination $\tau=(1-\delta)\tau'+\delta\tau''$,
and that for some $\alpha>0$ and  $m,k\in\N$ we have
\[
\frac{1}{m}H(\tau'_{x,k},\mathcal{Q}_{k+m})\geq\alpha\qquad\mbox{for }\tau'\mbox{-a.e. }x.
\]
Then
\[
\mathbb{P}_{i=k}\left(\frac{1}{m}H(\tau_{x,k},\mathcal{Q}_{k+m})\geq\alpha-\varepsilon\right)>1-\varepsilon.
\]
Also, if for some $\beta$ we have 
\[
\frac{1}{m}H(\tau'_{x,k},\mathcal{Q}_{k+m})\leq\beta\qquad\mbox{for }\tau'\mbox{-a.e. }x.
\]
then
\[
\mathbb{P}_{i=k}\left(\frac{1}{m}H(\tau_{x,k},\mathcal{Q}_{k+m})\leq\beta+\varepsilon\right)>1-\varepsilon.
\]
\end{corollary}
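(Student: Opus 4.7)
The plan is to transfer the hypothesis on components of $\tau'$ to components of $\tau$ by combining the total-variation bound from Lemma \ref{lem:total-variation-bound} with continuity of entropy in total variation (estimate \eqref{eq:entropy-continuity-of-images}). Both the lower- and upper-bound versions follow from the same chain of inequalities, so it suffices to handle the lower bound; at the end the same argument works with the inequality reversed.

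Given $\varepsilon>0$, I would first choose an auxiliary $\varepsilon_1>0$ small enough that $2\varepsilon_1 + 2H(\varepsilon_1/2) < \varepsilon$, where $H(\cdot)$ here is the binary entropy function. I would then define $\delta = \min\{\delta_0(\varepsilon_1),\,\varepsilon/2\}$, where $\delta_0(\varepsilon_1)$ is the constant supplied by Lemma \ref{lem:total-variation-bound} for the parameter $\varepsilon_1$. By that lemma, the set $G_1 = \{x : \|\tau_{x,k} - \tau'_{x,k}\| < \varepsilon_1\}$ has $\tau$-measure greater than $1 - \varepsilon_1$. Moreover, since $\tau \geq (1-\delta)\tau'$, any set of full $\tau'$-measure has $\tau$-measure at least $1-\delta$; applying this to the hypothesis gives a set $G_2$ on which $\frac{1}{m}H(\tau'_{x,k},\mathcal{Q}_{k+m}) \geq \alpha$ and with $\tau(G_2) \geq 1-\delta$. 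Thus $\tau(G_1 \cap G_2) > 1 - \varepsilon_1 - \delta \geq 1 - \varepsilon$.

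For $x \in G_1 \cap G_2$, both $\tau_{x,k}$ and $\tau'_{x,k}$ are supported in the single atom $\mathcal{Q}_k(x)$, which contains at most $q^m$ atoms of $\mathcal{Q}_{k+m}$. Applying \eqref{eq:entropy-continuity-of-images} with this pair of measures (using that $H(\rho,\mathcal{Q}_{k+m}|\mathcal{Q}_k) = H(\rho,\mathcal{Q}_{k+m})$ for any $\rho$ supported on a single atom of $\mathcal{Q}_k$) yields
\[
\bigl|H(\tau_{x,k},\mathcal{Q}_{k+m}) - H(\tau'_{x,k},\mathcal{Q}_{k+m})\bigr| < 2m\varepsilon_1 + 2H(\varepsilon_1/2).
\]
Dividing by $m$ gives a difference bounded by $2\varepsilon_1 + 2H(\varepsilon_1/2)/m \leq 2\varepsilon_1 + 2H(\varepsilon_1/2) < \varepsilon$, so $\frac{1}{m}H(\tau_{x,k},\mathcal{Q}_{k+m}) \geq \alpha - \varepsilon$ on the good set, which is the desired conclusion.

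The main subtlety I would check carefully is the uniformity in $m$: the second term on the right-hand side of the continuity estimate does not shrink with $m$, but is instead absorbed by taking $\varepsilon_1$ small, using boundedness of $H(\cdot)$ (in fact $H(\varepsilon_1/2) \to 0$ as $\varepsilon_1 \to 0$). No other obstacle arises; the upper-bound version is obtained by the same bookkeeping with the inequalities reversed.
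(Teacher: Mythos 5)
Your proposal is correct and follows essentially the same route as the paper: Lemma \ref{lem:total-variation-bound} to get that most $\tau$-components are close in total variation to the corresponding $\tau'$-components, combined with continuity of normalized entropy under total-variation perturbations (you re-derive estimate \eqref{eq:scale-n-entropy-continuity-in-TV} from \eqref{eq:entropy-continuity-of-images}, which the paper simply cites). Your explicit handling of passing from the $\tau'$-a.e.\ hypothesis to a large-$\tau$-measure set via $\tau\geq(1-\delta)\tau'$ is a detail the paper leaves implicit, but it is the same argument.
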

\begin{proof}
  By \eqref{eq:scale-n-entropy-continuity-in-TV}, there is a $\rho>0$ such that if a pair of components satisfy $\|\tau'_{x,k}-\tau_{x,k}\|<\rho$, then
  \[
  |\frac{1}{m}H(\tau'_{x,k},\QQ_{k+m}) - \frac{1}{m}H(\tau_{x,k},\QQ_{k+m})| < \varepsilon.
\]
Thus if $\delta$ is small enough compared to this $\rho$, the lemma follows from the previous lemma.
\end{proof}

The following lemma says that if a measure $\tau$ decomposes into
measures on short intervals, each of  which has large entropy, then the
components of $\tau$, at the same scales, have large entropy. Thus, large entropy transfers from an ``arbitrary'' decomposition to the component decomposition. In the statement of the proposition we fix a scale $k$, a shorter scale $k+\ell$ for the intervals supporting the  measures $\tau_i$ which make up $\tau$, and an even shorter scale $k+m$ at which the entropy  appears. The dependence between these parameters is that $\ell$ is large but fixed, $m\gg \ell$, and $k$ is arbitrary. We in fact do not require that $m\gg\ell$ explicitly, but if this fails then the entropy cannot be as large as required in the lemma.
\begin{lemma}\label{lem:entropy-porosity-at-one-level-from-covering}
For every $\varepsilon>0$ there exists a $\delta>0$ with the following
property. Let $\tau\in\mathcal{P}(\mathbb{R})$ be written as a convex
combination $\tau=(\sum_{i=1}^N p_{i}\tau_{i}) + p_0\tau_0$, with $p_0<\delta$, and suppose that for some
$m,k,\ell\in\N$ and $\alpha>0$,
\begin{enumerate}
\item \label{enu:high-entropy}$\frac{1}{m}H(\tau_{i},\mathcal{Q}_{k+m})>\alpha$ for every $i=1,\ldots,N$.
\item \label{enu:short-support}$\tau_{i}$ is supported on an interval of length $\leq q^{-(k+\ell)}$ for every $i=1,\ldots,N$.
\item \label{enu:small-boundaries} $\tau(I)<\delta\tau(J)$ whenever $I\subseteq J$ are concentric intervals, $|J|=q^{-k}$ and $|I|=q^{-\ell}|J|$.
\end{enumerate}
Then
\begin{equation}
\mathbb{P}_{i=k}\left(\frac{1}{m}H(\tau_{x,i},\mathcal{Q}_{i+m})>\alpha-\varepsilon\right)>1-\varepsilon . \label{eq:large-component-entropy-at-single-level}
\end{equation}
\end{lemma}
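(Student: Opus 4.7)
The plan is to split $\sum p_i\tau_i$ according to whether each $\tau_i$ is supported inside a single $\QQ_k$-cell (``good'') or straddles a boundary (``bad''), use hypothesis~(\ref{enu:small-boundaries}) to bound the total bad mass by $O(\delta)$, and then combine concavity of entropy on the good part with Corollary~\ref{cor:covering-by-high-entropy-measures} to transfer a pointwise entropy lower bound from the good approximation to $\tau$ itself.

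In detail, first I would choose $\delta>0$ small enough that Corollary~\ref{cor:covering-by-high-entropy-measures} with tolerance $\varepsilon$ applies to splittings $\tau=(1-\gamma)\rho'+\gamma\rho''$ with $\gamma\leq C\delta$, where $C$ absorbs the constants of the boundary estimate below. Set $\mathcal{B}=\{1\le i\le N:\supp\tau_i\text{ meets two adjacent }\QQ_k\text{-cells}\}$. Since $|\supp\tau_i|\le q^{-k-\ell}$, each $i\in\mathcal{B}$ satisfies $\supp\tau_i\subseteq[b-q^{-k-\ell},b+q^{-k-\ell}]$ for some $b\in\partial\QQ_k$. Such a $2q^{-k-\ell}$-neighborhood can be covered by a bounded number of length-$q^{-k-\ell}$ intervals, each concentric with a length-$q^{-k}$ interval~$J$, so hypothesis~(\ref{enu:small-boundaries}) bounds its $\tau$-mass by $O(\delta\tau(J))$. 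The enclosing intervals $J$, as $b$ ranges over $\partial\QQ_k$, cover $\R$ with bounded multiplicity, so their $\tau$-masses sum to $O(1)$, and hence $\sum_{i\in\mathcal{B}}p_i=O(\delta)$. Writing $P'=\sum_{i\notin\mathcal{B}}p_i$, $\tau'=P'^{-1}\sum_{i\notin\mathcal{B}}p_i\tau_i$, and $\tau''=(1-P')^{-1}(\tau-P'\tau')$, we obtain a decomposition $\tau=P'\tau'+(1-P')\tau''$ with $1-P'\leq C\delta$ (absorbing $p_0<\delta$).

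For $\tau'$-a.e.\ $x$, only those $\tau_i$ with $\supp\tau_i\subseteq\QQ_k(x)$ contribute to $\tau'|_{\QQ_k(x)}$, so $\tau'_{x,k}$ is itself a convex combination of such $\tau_i$'s (each a probability measure supported in $\QQ_k(x)$). Concavity of entropy together with hypothesis~(\ref{enu:high-entropy}) then gives $\tfrac{1}{m}H(\tau'_{x,k},\QQ_{k+m})>\alpha$ almost surely, and Corollary~\ref{cor:covering-by-high-entropy-measures} applied to the decomposition $\tau=P'\tau'+(1-P')\tau''$ transfers this to $\tau$ with loss $\varepsilon$, yielding~\eqref{eq:large-component-entropy-at-single-level}. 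The only step that requires real care is the boundary mass bound: hypothesis~(\ref{enu:small-boundaries}) controls only centered pairs at the exact ratio $|I|/|J|=q^{-\ell}$, so a neighborhood of a boundary point must be tiled by finitely many such pairs before summing over $\partial\QQ_k$ with bounded-multiplicity covers. Once this $O(\delta)$ bound is in hand, the remainder is routine concavity plus a single invocation of the covering corollary.
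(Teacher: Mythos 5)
Your proposal is correct and follows essentially the same route as the paper's proof: split the $\tau_i$ into those contained in a single $\QQ_k$-cell and those straddling a boundary, bound the straddling mass by $O(\delta)$ using hypothesis~(\ref{enu:small-boundaries}) applied near the endpoints of $\QQ_k$-cells, then use concavity of entropy on the good part and Corollary~\ref{cor:covering-by-high-entropy-measures} to transfer the bound to the components of $\tau$. Your extra care in tiling the $2q^{-(k+\ell)}$-neighborhood of each boundary point by admissible concentric pairs is a harmless refinement of the same boundary-mass estimate the paper uses.
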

\begin{remark}
In (\ref{enu:high-entropy}) we could have assumed instead that $\frac{1}{m}H(\tau_{i},\mathcal{Q}_{k+\ell+m})>\alpha$
for every $i$, and that $m>m(\ell)$, since this condition implies
the condition (\ref{enu:high-entropy}) as stated. This may be more natural in applications
since $\tau_{i}$ is supported on an interval of order $q^{-(k+\ell)}$,
so $\mathcal{Q}_{k+\ell+m}$ is ``$m$ scales smaller''. \end{remark}

Also, in (\ref{enu:small-boundaries}), we actually only care about the case that $I$ is a ball centered at a rational $n/q^k$, since this implies that $\tau$ gives small mass to the $q^{-(k+\ell)}$-neighborhood of the set of endpoints of intervals in $\mathcal{Q}_k$. But the condition above is more natural.

\begin{proof}
Let $A'\subseteq\{1,\ldots,N\}$ denote the set of indices $1\leq i\leq N$ such that $\tau_{i}$ is
supported on a $\mathcal{Q}_{k}$-cell, and $A''$ the remaining indices, including $0$.
Let $a'=\sum_{i\in A'}p_{i}$ and $a''=\sum_{i\in A''}p_{i}$, let
$p'_{i}=p_{i}/a'$ and $p''_{i}=p_{i}/a''_{i},$ and finally, $\tau'=\sum_{i\in A'}p'_{i}\tau_{i}$
and $\tau''=\sum_{i\in A''}p''_{i}\tau_{i}$. We thus have $\tau=a'\tau'+a''\tau''$.
Also, every level-$k$ component of $\tau'$ is a convex combination
of $\tau_{i}$'s, so by concavity of entropy and hypothesis (\ref{enu:high-entropy}), $\frac{1}{m}H(\tau'_{x,k},\mathcal{Q}_{k+m})>\alpha$
for $\tau'$-a.e. $x$. By Corollary \ref{cor:covering-by-high-entropy-measures}, we will be done
if we show that $a'$ is arbitrarily close to one when $\delta$ is
small enough. To see this, note that if $i\in A''\setminus\{0\}$, then by hypothesis
(\ref{enu:short-support}), $\tau_{i}$ is supported in a $q^{-(k+\ell)}$-ball centered
at the endpoint of some $\mathcal{Q}_{k}$-cell. But by hypothesis
(\ref{enu:small-boundaries}), the total $\tau$-mass of these balls is at most $2\delta$,
and this mass is at least $a''-p_0$. Using $p_0<\delta$ we get that $1-a'=a''<3\delta$. This proves
the claim.\end{proof}
\begin{lemma}\label{lem:entropy-porosity-from-covering}
For every $\varepsilon>0$ there exists a $\delta>0$ with the following
property. Let $m,\ell\in\N$ and $n>n(m,\ell)$ be given, and suppose
that $\tau\in\mathcal{P}(\mathbb{R})$ is a measure such that for
a $(1-\delta)$-fraction of $1\leq k\leq n$, we can write $\tau=\sum p_{i}\tau_{i}$
so as to satisfy the three conditions of the previous lemma, for the
given $\delta$ and $\ell,m,k$. Assume further that $|\frac{1}{n}H(\tau,\mathcal{Q}_{n})-\alpha|<\delta$.
Then $\tau$ is $(\alpha,\varepsilon,m)$-entropy porous from scale
$1$ to $n$.\end{lemma}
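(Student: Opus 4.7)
The plan is to combine two ingredients: a pointwise lower bound on component entropies at individual scales, supplied by Lemma~\ref{lem:entropy-porosity-at-one-level-from-covering}, and an upper bound on the \emph{average} of those component entropies over scales $1,\dots,n$, obtained from the hypothesis $\tfrac{1}{n}H(\tau,\QQ_n)\approx\alpha$ by a telescoping identity. Since the upper bound on the average nearly matches the pointwise lower bound, there is essentially no room for component entropies to deviate significantly above $\alpha$, and entropy porosity follows.

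First, given $\varepsilon>0$, I would choose an auxiliary $\varepsilon_1=\varepsilon_1(\varepsilon)$ (to be pinned down at the end), and set $\delta$ to be the minimum of $\varepsilon_1$ and the constant produced by Lemma~\ref{lem:entropy-porosity-at-one-level-from-covering} applied with parameter $\varepsilon_1$. For each of the $\geq(1-\delta)n$ ``good'' scales $k\in\{1,\dots,n\}$ on which the covering hypothesis holds, that lemma yields
\[
\PP_{i=k}\!\left(\tfrac{1}{m}H(\tau_{x,i},\QQ_{i+m})>\alpha-\varepsilon_1\right)>1-\varepsilon_1.
\]
Averaging over all $1\leq k\leq n$ and handling the $\delta n$ ``bad'' scales trivially gives
\[
\PP_{1\leq i\leq n}\!\left(\tfrac{1}{m}H(\tau_{x,i},\QQ_{i+m})>\alpha-\varepsilon_1\right)>1-\varepsilon_1-\delta.
\]

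Next I would establish the upper bound on the averaged component entropy. Using \eqref{eq:conditional-entropy-is-increment} and \eqref{eq:component-entropy-is-conditional-entropy},
\[
\sum_{i=1}^{n}\EE\!\left(H(\tau_{x,i},\QQ_{i+m})\right)=\sum_{i=1}^{n}\bigl(H(\tau,\QQ_{i+m})-H(\tau,\QQ_i)\bigr)=\sum_{j=n+1}^{n+m}H(\tau,\QQ_j)-\sum_{j=1}^{m}H(\tau,\QQ_j).
\]
Using $H(\tau,\QQ_{n+j})\leq H(\tau,\QQ_n)+j$, discarding the non-negative second sum, dividing by $nm$, and invoking $|\tfrac{1}{n}H(\tau,\QQ_n)-\alpha|<\delta$ yields
\[
\EE_{1\leq i\leq n}\!\left(\tfrac{1}{m}H(\tau_{x,i},\QQ_{i+m})\right)\leq\alpha+\delta+O(m/n),
\]
which is at most $\alpha+2\delta$ once $n>n(m,\ell)$ is large enough.

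Finally I would combine these via an elementary Markov-type computation. Writing $X=\tfrac{1}{m}H(\tau_{x,i},\QQ_{i+m})$ under the averaged law $\PP_{1\leq i\leq n}$, one has $X\in[0,1]$ (base-$q$ logarithms), $\EE(X)\leq\alpha+2\delta$, and $\PP(X<\alpha-\varepsilon_1)\leq\varepsilon_1+\delta$. Setting $p=\PP(X\geq\alpha+\varepsilon)$ and bounding $\EE(X)$ from below by splitting into the three regions $\{X<\alpha-\varepsilon_1\}$, $\{\alpha-\varepsilon_1\leq X<\alpha+\varepsilon\}$, $\{X\geq\alpha+\varepsilon\}$ yields
\[
\alpha+2\delta\geq(\alpha-\varepsilon_1)(1-\varepsilon_1-\delta-p)+(\alpha+\varepsilon)\,p,
\]
which rearranges to $(\varepsilon+\varepsilon_1)\,p\leq\varepsilon_1+2\delta+(\alpha-\varepsilon_1)(\varepsilon_1+\delta)$. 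Choosing $\varepsilon_1$ and $\delta$ of order $\varepsilon^2$ makes $p<\varepsilon$, establishing that $\tau$ is $(\alpha,\varepsilon,m)$-entropy porous from scale $1$ to $n$. The main (modest) technical obstacle is this parameter book-keeping and ensuring $n\gg m$ so that the $O(m/n)$ telescoping error is absorbed; no conceptual ideas beyond the single-scale lemma and the telescoping identity are needed.
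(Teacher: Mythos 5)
Your proposal is correct and follows essentially the same route as the paper: the single-scale lower bound from Lemma \ref{lem:entropy-porosity-at-one-level-from-covering} at the good scales, the telescoping identity bounding $\mathbb{E}_{1\leq i\leq n}\bigl(\frac{1}{m}H(\tau_{x,i},\mathcal{Q}_{i+m})\bigr)$ by $\frac{1}{n}H(\tau,\mathcal{Q}_n)+O(m/n)\approx\alpha$, and a Markov-type argument converting ``mean close to the pointwise lower bound'' into the required upper bound with high probability. Your explicit $\varepsilon_1,\delta=\Theta(\varepsilon^2)$ bookkeeping just replaces the paper's $\Theta(\sqrt{\varepsilon})$ loss and ``start with a smaller $\varepsilon$'' step.
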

\begin{remark}
  We could again assume the weaker version of (\ref{enu:high-entropy}) in Lemma \ref{lem:entropy-porosity-at-one-level-from-covering}, see remark after that lemma.
\end{remark}

\begin{proof}
Fix $\delta$, and assume it is small enough to satisfy the previous
lemma, and assume also that $\delta<\varepsilon$. Then whenever $1\leq k\leq n$
allows a representation as in the previous lemma, we know that (\ref{eq:large-component-entropy-at-single-level})
holds. By hypothesis such a representation exists for a $(1-\delta)$-fraction
of of $1\leq k\leq n$, hence 
\[
\mathbb{P}_{1\leq i\leq n}\left(\frac{1}{m}H(\tau_{x,i},\mathcal{Q}_{i+m})\geq\alpha-\varepsilon\right)>(1-\delta)(1-\varepsilon)>1-2\varepsilon,
\]
where we used $\delta<\varepsilon$. But 
\begin{eqnarray*}
  \frac{1}{n}H(\tau,\mathcal{Q}_{n}) & = & \frac{1}{n}H(\tau,\DD_1) + \mathbb{E}_{1\leq i\leq n}\left(\frac{1}{m}H(\tau_{x,i},\mathcal{Q}_{i+m})\right)+O(\frac{m}{n})\\
  & \geq & \mathbb{E}_{1\leq i\leq n}\left(\frac{1}{m}H(\tau_{x,i},\mathcal{Q}_{i+m})\right)+O(\frac{m}{n})
\end{eqnarray*}
so, since we assume $n$ large relative to $m$, the error may be
made less than $\varepsilon$, so by hypothesis, 
\[
\mathbb{E}_{1\leq i\leq n}\left(\frac{1}{m}H(\tau_{x,i},\mathcal{Q}_{i+m})\right)<\frac{1}{n}H(\tau,\mathcal{Q}_{n})+\varepsilon<\alpha+2\varepsilon.
\]
Thus the integrand in the expectation above is bounded below by the
mean, up to a $4\varepsilon$ error. A corresponding upper bound follows,
showing that with probability $1-\Theta(\sqrt{\varepsilon})$ component
entropy is bounded above by the mean up to a $\Theta(\sqrt{\varepsilon})$
error. This is what we wanted if we begin with a smaller $\varepsilon$.
\end{proof}
A variant of the Lemma \ref{lem:entropy-upper-bound-at-one-level-from-covering} gives upper bounds on component entropy, as long as there are not too many measures $\tau_i$ in each $q^{-k}$-interval.
\begin{lemma}\label{lem:entropy-upper-bound-at-one-level-from-covering}
For every $\varepsilon>0$ there exists a $\delta>0$ with the following
property. Let $\tau\in\mathcal{P}(\mathbb{R})$ be written as a convex
combination $\tau=\sum p_{i}\tau_{i}$, and suppose that for some
$k,\ell,p\in\N$,  $m>m(\ep,p)$ and $\beta>0$, 
\begin{enumerate}
\item $\frac{1}{m}H(\tau_{i},\mathcal{Q}_{k+m})<\beta$ for every $i$.
\item Every $\tau_{i}$ is supported on an interval of length $\leq q^{-(k+\ell)}$.
\item  $\tau(I)<\delta\tau(J)$ whenever $I\subseteq J$ are concentric intervals, $|J|=q^{-k}$ and $|I|=q^{-\ell}|J|$.
\item Every interval of length $q^{-k}$ intersects the support of at most
$p$ of the measures $\tau_{i}$.
\end{enumerate}
Then
\begin{equation}
\mathbb{P}_{i=k}\left(\frac{1}{m}H(\tau_{x,i},\mathcal{Q}_{i+m})<\beta+\varepsilon\right)>1-\varepsilon .\label{eq:low-component-entropy-at-single-level-1}
\end{equation}
\end{lemma}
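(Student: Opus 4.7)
\medskip

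\noindent\textbf{Proof plan.} The plan is to mirror the argument for Lemma~\ref{lem:entropy-porosity-at-one-level-from-covering}, replacing concavity of entropy with its near-convexity. As there, partition the index set into $A'$, the indices $i$ such that the interval supporting $\tau_i$ lies inside a single cell of $\mathcal{Q}_k$, and $A''$, the remaining indices. Set $a'=\sum_{i\in A'}p_i$, $a''=\sum_{i\in A''}p_i$, $\tau'=(1/a')\sum_{i\in A'}p_i\tau_i$, and $\tau''=(1/a'')\sum_{i\in A''}p_i\tau_i$, so that $\tau=a'\tau'+a''\tau''$.

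First I would bound $a''$. If $i\in A''$ then the $\leq q^{-(k+\ell)}$-interval supporting $\tau_i$ crosses an endpoint of some $\mathcal{Q}_k$-cell, so $\supp\tau_i$ lies in the $q^{-(k+\ell)}$-neighborhood of the set of $\mathcal{Q}_k$-endpoints. Hypothesis (3) bounds the $\tau$-mass of each such neighborhood (one in each $\mathcal{Q}_k$-cell) by $\delta$ times the mass of the containing cell, so $a''\leq 2\delta$ (or a similarly small multiple), which can be made arbitrarily small by choosing $\delta$ small.

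The key new input is the upper bound on component entropy of $\tau'$. Fix $J\in\mathcal{Q}_k$ and consider $(\tau')_J$: since $i\in A'$ means $\supp\tau_i$ lies in a single $\mathcal{Q}_k$-cell, $(\tau')_J$ is a convex combination of those $\tau_i$, $i\in A'$, whose support is contained in $J$. By hypothesis (4), there are at most $p$ such indices. Applying the almost-convexity estimate for entropy,
\[
H((\tau')_J,\mathcal{Q}_{k+m}) \;\leq\; \sum_i q_i\, H(\tau_i,\mathcal{Q}_{k+m}) + \log p \;\leq\; m\beta + \log p,
\]
where $q_i$ are the convex weights. Dividing by $m$ and choosing $m\geq m(\varepsilon,p)$ large enough that $\log p/m<\varepsilon/2$, we obtain $\frac{1}{m}H((\tau')_{x,k},\mathcal{Q}_{k+m})<\beta+\varepsilon/2$ for $\tau'$-a.e.\ $x$.

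Finally I would transfer this pointwise bound from $\tau'$ to $\tau$. The decomposition $\tau=(1-a'')\tau'+a''\tau''$ with $a''$ arbitrarily small (by choosing $\delta$ small) feeds directly into the upper-bound half of Corollary~\ref{cor:covering-by-low-entropy-measures}, which yields
\[
\mathbb{P}_{i=k}\!\left(\tfrac{1}{m}H(\tau_{x,i},\mathcal{Q}_{i+m})<\beta+\varepsilon\right)>1-\varepsilon,
\]
as desired. I expect no real obstacle: the only slightly non-routine point is making sure the quantifier order works, namely that $\delta$ is chosen after $\varepsilon$ (to force both $a''$ small and to invoke the corollary) and $m$ is chosen after $p$ (to absorb the $\log p$ loss from near-convexity), which matches the stated dependence $m>m(\varepsilon,p)$ in the lemma.
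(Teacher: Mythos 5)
Your proposal is correct and follows essentially the same route as the paper: the paper's proof simply repeats the argument of Lemma~\ref{lem:entropy-porosity-at-one-level-from-covering} (the $A'/A''$ splitting, the mass bound on $A''$ from hypotheses (2)--(3), and the transfer via Corollary~\ref{cor:covering-by-low-entropy-measures}), replacing concavity by almost convexity, with hypothesis (4) giving exactly the $O((\log p)/m)$ error you absorb by taking $m$ large relative to $p$.
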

\begin{proof}
The proof is identical to the proof of Lemma \ref{lem:entropy-porosity-at-one-level-from-covering}, the only
difference being that instead of concavity of entropy, we use almost
convexity (see Section \ref{sub:entropy}). This introduces an error term which, by hypothesis (4),
is of order $O((\log p)/m)$. Since $m$ is assumed large relative
to $p$, this can be made negligible. \end{proof}
\begin{lemma}\label{lem:porosity-invarance-under-affine-maps}
For every $\varepsilon>0$ there exists a $\delta>0$ with the following
property. Let $\ell\in\N$ and $m>m(\ep,\ell)$, and let $\tau\in\mathcal{P}(\mathbb{R})$ be a measure such that hypothesis (\ref{enu:small-boundaries}) of the previous lemma
holds for every $k$. Let $n>n(m,\ell)$, and suppose that $\tau$ is $(\alpha,\delta,m)$-entropy
porous from scale $n_{1}$ to $n_{2}=n_{1}+n$. Then for any affine
map $f(x)=ax+b$, the measure $f\tau$ is $(\alpha,\varepsilon,m)$-entropy
porous from scale $n_{1}-\log |a|$ to $n_{2}-\log |a|$.\end{lemma}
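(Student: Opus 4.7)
The plan is to reduce entropy porosity of $f\tau$ at the range $[n_1-\log|a|, n_2-\log|a|]$ to that of $\tau$ at $[n_1, n_2]$, exploiting that an affine map shifts scales by $\log|a|$. Set $s=\log|a|$ and $\sigma=\lfloor s\rfloor$ (WLOG $a>0$; rounding errors are $O(1)$ and absorbed at the end). For each $i$ in the target range, I will decompose $f\tau$ using the image under $f$ of a $\mathcal{D}_{i+\sigma+\ell}$-cylindering of $\tau$, then apply a covering-lemma argument (in the spirit of Lemmas \ref{lem:entropy-upper-bound-at-one-level-from-covering} and \ref{lem:entropy-porosity-at-one-level-from-covering}) to transfer entropy upper bounds from the pieces to the $\mathcal{D}_i$-components of $f\tau$.

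Concretely, write $f\tau = \sum_{J\in \mathcal{D}_{i+\sigma+\ell}} \tau(J)\,f\tau_J$, a convex combination in which each $f\tau_J$ is supported on $f(J)$, an interval of length $O(q^{-i-\ell})$, hence contained in a $q^{-(i+\ell')}$-interval with $\ell'=\ell-O(1)$. By \eqref{eq:entropy-under-transformation-1} and monotonicity of entropy in the refining partition, $(1/m)H(f\tau_J,\mathcal{D}_{i+m})\leq (1/m)H(\tau_J,\mathcal{D}_{k'+m})+O(1/m)$ where $k':=i+\sigma+\ell$. When $k'\in[n_1,n_2]$---which covers all but a final $O(\ell)$-window of the target range for $f\tau$, negligible under $n\gg\ell$---the $(\alpha,\delta,m)$-porosity of $\tau$ yields (by Fubini and Markov) that for a $(1-\sqrt{\delta})$-fraction of such $i$ the ``bad'' $J$'s (those where the component entropy bound fails) have total $\tau$-mass below $\sqrt{\delta}$, while for the ``good'' $J$'s we have $(1/m)H(f\tau_J,\mathcal{D}_{i+m})<\alpha+\delta+O(1/m)$.

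Now I would apply a version of Lemma \ref{lem:entropy-upper-bound-at-one-level-from-covering} that permits a $<\sqrt{\delta}$-mass bad piece (mimicking how Lemma \ref{lem:entropy-porosity-at-one-level-from-covering} extends Corollary \ref{cor:covering-by-low-entropy-measures} for lower bounds). The good pieces verify: (1) entropy bound $<\alpha+2\delta$ at scale $i+m$, (2) support in length-$q^{-(i+\ell')}$ intervals, (4) multiplicity $O(q^\ell)$ per length-$q^{-i}$ interval. Crucially, hypothesis (3) transfers from $\tau$ to $f\tau$: the map $f^{-1}$ sends concentric intervals to concentric intervals, and a pair $I\subset J$ with $|J|=q^{-i}$, $|I|=q^{-(i+\ell)}$ pulls back to concentric intervals of lengths $q^{-i-s}, q^{-i-s-\ell}$, so the boundary condition on $\tau$ at scale $i+s$ yields the corresponding condition for $f\tau$ at scale $i$ (a constant $q$ is absorbed by taking $\ell'$ slightly smaller). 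The covering lemma then gives, for a $(1-\varepsilon)$-fraction of components of $f\tau$ at level $i$, the bound $(1/m)H((f\tau)_{x,i},\mathcal{D}_{i+m})<\alpha+\varepsilon$, which is the required porosity.

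The chief obstacle is adapting Lemma \ref{lem:entropy-upper-bound-at-one-level-from-covering} to tolerate a small bad piece, which I would handle by combining its present proof (almost convexity of entropy) with Lemma \ref{lem:total-variation-bound}, in direct analogy with the way Lemma \ref{lem:entropy-porosity-at-one-level-from-covering} is built from its lower-bound counterpart. Tracking the $O(1/m)$, $O(\ell/n)$, and rounding-$s$-to-$\sigma$ errors is routine, provided the quantifier order is $\varepsilon$ (hence $\delta$), then $\ell$, then $m=m(\varepsilon,\ell)$, then $n=n(m,\ell)$, matching the hypotheses in the lemma statement.
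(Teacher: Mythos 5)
Your argument is correct and is essentially the paper's own (very terse) proof spelled out: decompose $f\tau$ at each scale into the $f$-images of suitably deep $q$-adic components of $\tau$, use the porosity of $\tau$ plus Markov to discard a small-mass bad collection, and apply the upper-bound covering lemma (Lemma \ref{lem:entropy-upper-bound-at-one-level-from-covering}, extended via Lemma \ref{lem:total-variation-bound} to tolerate the discarded piece) with multiplicity $p=O(q^{\ell})$, with the same quantifier order $\varepsilon,\ell,m,n$. The one imprecise point is the transfer of hypothesis (\ref{enu:small-boundaries}): absorbing the rounding of $\log|a|$ by ``taking $\ell'$ slightly smaller'' goes the wrong way, since the boundary condition at a smaller parameter is \emph{stronger} than the given one at $\ell$; the routine fix is the opposite adjustment --- take the components of $\tau$ one or two levels deeper (so supports have length $\leq q^{-(i+\ell+1)}$ and the pulled-back concentric intervals fall under the hypothesis at parameter $\ell$), at the harmless cost of $p=O(q^{\ell+2})$.
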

\begin{proof}
Apply the previous lemma: at a large fraction of scales $q^{-i}$ taking $\tau_{i}$
to be the $f$-images of level-$(i+\ell)$ components of $\tau$; we also need to discard a small fraction of the components at each scale to satisfy assumption (1) of the lemma. The parameter $p$ in (4) of the lemma is $p=q^\ell$. \end{proof}

\subsection{\label{sub:uniform-entropy-across-scales}Uniform continuity across scales} 

To verify the continuity condition (\ref{enu:small-boundaries}) in Lemmas \ref{lem:entropy-porosity-at-one-level-from-covering} and \ref{lem:entropy-upper-bound-at-one-level-from-covering}, we introduce the following stronger notion.

\begin{definition}\label{def:uniform-entropy-across-scales}
	We say that a measure $\tau\in\mathcal{P}(\R)$ is uniformly continuous across scales if for every $\varepsilon>0$ there exists $\delta=\delta(\varepsilon)>0$ such that for any $x\in\R$ and $r>0$
	\[
	\tau(B(x,\delta r))\leq\varepsilon\cdot\tau(B(x,r)).
	\]
        A family of measures is jointly uniformly continuous across scales if all measures in the family satisfy this condition with a common function $\delta(\cdot)$.
\end{definition}

We return now to our self-affine measure $\mu$. We aim to prove uniform continuity across scales of its 1-dimensional projections. We first prove:

\begin{lemma}\label{lem:uniform-continuity-across-scales}
	Assume that $\{\overline{A}_i\}$ act irreducibly on $\R^2$. Then for every $\delta>0$ there exists $\rho>0$ such that for every $V\in\RP$ and $x\in\R$, $\pi_V\mu(B(x,\rho))<\delta$.
\end{lemma}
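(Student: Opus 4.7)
The strategy is contradiction: if small balls could carry uniform positive mass under $\pi_{V}$, a limiting argument would produce an affine line of positive $\mu$-mass, and self-affinity would then force a finite invariant union of directions in $\RP$, contradicting total irreducibility of the group generated by $\{\overline{A}_i\}$.

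First, assume the conclusion fails, so there exist $\delta_{0}>0$ and sequences $V_{n}\in\RP$, $x_{n}\in\R$, $\rho_{n}\downarrow 0$ with $\pi_{V_{n}}\mu(B(x_{n},\rho_{n}))\ge\delta_{0}$. Since $\mu$ is compactly supported the $x_{n}$ are bounded, so after passing to a subsequence $V_{n}\to V_{\infty}$ in $\RP$ and $x_{n}\to x_{\infty}$ in $\R$. The preimage $\pi_{V_{n}}^{-1}(B(x_{n},\rho_{n}))$ is a strip of width $2\rho_{n}$ perpendicular to $V_{n}^{\perp}$; intersected with a fixed bounded neighborhood of $\supp\mu$ these strips converge in Hausdorff distance to the affine line $L:=\pi_{V_{\infty}}^{-1}(\{x_{\infty}\})$. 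Upper semicontinuity of $\mu$ on closed sets then yields $\mu(L)\ge\delta_{0}$.

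Second, I would use the SOSC standing assumption to rule out point atoms of $\mu$: applying the averaging argument below to a maximal point atom $p$ shows that the (finite) collection of maximal atoms is permuted by every $\varphi_{i}$, and since each $\varphi_{i}$ is a strict contraction this forces all $\varphi_{i}$ to share a common fixed point, incompatible with SOSC on a non-degenerate attractor. (Alternatively, under SOSC the coding map $\Pi:\Lambda^{\N}\to X$ is essentially injective and $\m=p^{\N}$ is non-atomic.) Because distinct lines in $\R^{2}$ meet in at most one point, of $\mu$-mass zero, the collection of lines with $\mu$-mass $\ge\delta_{0}$ has cardinality at most $1/\delta_{0}$, and in particular $s:=\sup_{L}\mu(L)\ge\delta_{0}$ is attained on a non-empty finite set $\{L_{1},\ldots,L_{k}\}$.

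Finally, applying the self-affinity identity $\mu=\sum_i p_i\varphi_i\mu$ to each $L_j$ gives
\[
s=\mu(L_{j})=\sum_{i\in\Lambda}p_{i}\,\mu(\varphi_{i}^{-1}L_{j}).
\]
Each summand is at most $s$ while the weighted average equals $s$, so $\mu(\varphi_{i}^{-1}L_{j})=s$ for every $i,j$, whence $\varphi_{i}^{-1}L_{j}\in\{L_{1},\ldots,L_{k}\}$. The finite set of directions $\{\mathrm{dir}(L_{1}),\ldots,\mathrm{dir}(L_{k})\}\subseteq\RP$ is therefore invariant under every $\overline{A}_{i}^{-1}$, producing a finite union of $1$-dimensional subspaces invariant under the group generated by $\{\overline{A}_{i}\}$, contradicting total irreducibility. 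The main technical step is the limit argument of the first paragraph—ensuring that Hausdorff-shrinking strips of uniformly positive mass yield an affine line of positive $\mu$-mass—and ruling out atoms so that the sup over lines is attained finitely; the averaging step is then routine.
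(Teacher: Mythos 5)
Your overall route is the same as the paper's (a compactness/limiting argument producing an affine line of positive $\mu$-mass, then a contradiction via self-affinity), and the first two steps are fine: the strip-limit argument correctly yields $\mu(L)\ge\delta_0$, and your exclusion of atoms (maximal atoms form a finite set permuted by every $\varphi_i$, whence a common fixed point, whence $X$ is a singleton, which SOSC forbids when $|\Lambda|\ge 2$) is correct and is needed to make the family of heavy lines finite. The averaging step showing that the finitely many lines of maximal mass are permuted by every $\varphi_i^{-1}$ is also correct.

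The gap is in the final contradiction: you conclude by contradicting \emph{total} irreducibility, but the lemma assumes only that $\{\overline{A}_i\}$ act irreducibly, i.e.\ that there is no single common invariant line. A finite invariant union of one-dimensional subspaces (e.g.\ two directions swapped by the group) is perfectly compatible with plain irreducibility, so as written your argument only proves the lemma under the stronger hypothesis. The fix is short and uses an idea you already employed for atoms: let $\{L_1,\dots,L_k\}$ be the lines of maximal mass. If they are all parallel, their common direction $W$ satisfies $A_iW=W$ for every $i$ (since each $\varphi_i$ permutes the $L_j$), contradicting irreducibility. Otherwise some two of them are non-parallel; the set of intersection points of non-parallel pairs is a finite nonempty set permuted by every $\varphi_i$, so the same contraction argument as in your atom step forces it to be a single point fixed by all $\varphi_i$, whence $X$ is a singleton, contradicting the SOSC. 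With this case distinction added, your proof establishes the lemma as stated.
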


\begin{proof}
  The family $\{\pi_V\mu\}_{V\in\RP}$ is compact in the weak-* topology, and it is not hard to see that if the statement of the lemma fails, then this would imply that some $\pi_V\mu$ has an atom, i.e., that there exists $V\in\RP$ and $x\in\R$ with $\pi_V\mu(\{x\})>0$. This is the same as $\mu(\pi_V^{-1}(x))>0$, so $\mu$ gives positive mass to an affine line. But it is easy to see this contradicts the irreducibility assumption.
\end{proof}

\begin{lemma}\label{lem:joint-uniform-continuity}
	Assume that $\{\overline{A}_i\}$ act irreducibly on $\R^2$. Then the family of measures $\pi_V\mu$ ($V\in\RP$) is jointly uniformly continuous across scales.
\end{lemma}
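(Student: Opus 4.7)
The plan is to promote the uniform single-scale estimate from Lemma~\ref{lem:uniform-continuity-across-scales} into a multi-scale estimate by decomposing $\pi_V\mu$ self-affinely at a scale matched to $r$. Given $\varepsilon>0$, I first apply that lemma to obtain $\rho_0>0$ such that $\pi_W\mu(B(y,\rho_0))<\varepsilon$ for every $W\in\RP$ and $y\in\R$. I then fix a constant $K>2(1+\diam X)$, depending only on the attractor, and set $\delta:=c_0\rho_0/(qK)$, shrinking further if needed so that $\delta<1/2$.

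For arbitrary $V\in\RP$, $x\in\R$ and $r>0$, I choose $n\in\N$ with $r/(qK)<q^{-n}\leq r/K$, and use \eqref{eq:iterated-convolution-for-mu} to decompose
\[
\pi_V\mu=\sum_{\ii\in\Xi_n^V}p_\ii\cdot\pi_V\varphi_\ii\mu.
\]
By \eqref{eq:affine-map-to-projection}, each factor equals $(f_\ii)_*\pi_{A_\ii^*V}\mu$ for some $f_\ii\in\AAA_{1,1}$ with linear part of modulus $\|A_\ii^*|V\|\in[c_0q^{-n},q^{-n}]$. Consequently, $\pi_V\varphi_\ii\mu$ is supported in an interval of length at most $q^{-n}\cdot\diam X\leq r\cdot\diam X/K<r/2$. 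The key geometric observation is then: if this support meets $B(x,\delta r)$, every point of it lies within $\delta r+r/2<r$ of $x$, so the support is contained in $B(x,r)$. Denoting by $S$ the set of $\ii\in\Xi_n^V$ whose cylinder actually contributes to $\pi_V\mu(B(x,\delta r))$, we thus have $\pi_V\varphi_\ii\mu(B(x,r))=1$ for every $\ii\in S$.

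For such $\ii$, the preimage $f_\ii^{-1}(B(x,\delta r))$ is a ball of radius $\delta r/\|A_\ii^*|V\|\leq \delta r\, q^n/c_0<\delta qK/c_0=\rho_0$, so by the defining property of $\rho_0$,
\[
\pi_V\varphi_\ii\mu(B(x,\delta r))=\pi_{A_\ii^*V}\mu\bigl(f_\ii^{-1}(B(x,\delta r))\bigr)<\varepsilon.
\]
Summing and using $\pi_V\varphi_\ii\mu(B(x,r))=1$ for $\ii\in S$ yields
\[
\pi_V\mu(B(x,\delta r))<\varepsilon\sum_{\ii\in S}p_\ii=\varepsilon\sum_{\ii\in S}p_\ii\pi_V\varphi_\ii\mu(B(x,r))\leq\varepsilon\cdot\pi_V\mu(B(x,r)),
\]
as required. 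The function $\delta(\varepsilon)$ so obtained depends only on $\varepsilon$, the constants $c_0,K,q$, and the attractor, not on $V$, which gives the joint uniformity.

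There is no essential obstacle here; the argument is really a bookkeeping exercise exploiting self-affinity. The only delicate point is arranging $K$ and $\delta$ so that the triangle inequality forces the contributing cylinders inside $B(x,r)$ while simultaneously keeping the pulled-back ball $f_\ii^{-1}(B(x,\delta r))$ below the threshold $\rho_0$ supplied by Lemma~\ref{lem:uniform-continuity-across-scales}. The two-sided estimate $\|A_\ii^*|V\|\asymp q^{-n}$ built into the definition of $\Xi_n^V$ (via Lemma~\ref{lem:properties-of-q}) is exactly what makes both requirements compatible.
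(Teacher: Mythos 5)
Your argument is essentially the paper's own proof: both decompose $\pi_V\mu$ at scale comparable to $r$ into the cylinder projections indexed by $\Xi_n^V$ (the paper phrases this via the random word $\JJ(n,V)$), apply Lemma~\ref{lem:uniform-continuity-across-scales} to each rescaled piece $\pi_{A_\ii^*V}\mu$, and observe that any cylinder whose projection charges $B(x,\delta r)$ has its whole (short) support inside $B(x,r)$, so the total weight of contributing cylinders is at most $\pi_V\mu(B(x,r))$. Your version just makes the constants and the deterministic sum explicit where the paper writes an expectation, so it is correct and matches the paper's approach.
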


\begin{proof}
  For simplicity assume that $\mu$ is supported on the unit ball, the general case can be derived from this or proved similarly. Let $0<c<1$ be such that $cq^{-n}\leq \|\varphi_{\JJ(n,V)} \|\leq q^{-n}$.  Let $\varepsilon>0$ be arbitrary. Then by Lemma~\ref{lem:uniform-continuity-across-scales}, there exists $\delta>0$ such that $\sup_x\sup_V\pi_V\mu(B(x,\delta/c))<\varepsilon$.

Fix a ball $B(x,r)$ and $V\in\RP$. Then
\begin{align*}
  \pi_V\mu(B(x,\frac{\delta r}{3}))&=\EE\left(\pi_V\varphi_{\JJ(-\log_q(r/3),V)}\mu(B(x,\frac{\delta r}{3}))\right).
\end{align*}
The expression in the expectation is of the form $T_aS_t\pi_W\mu(B(x,\delta r/3))$ for some $a\in \R$, $t>0$ and $W\in\RP$. By definiiton of $\JJ(,)$, the scaling $t$ is in the range $(cr/3,r/3]$. Re-scaling this expression by $1/t$, it is the mass of a ball of radius $<\delta/c$ with respect to $\pi_W\mu$, which, by choice of $\delta$, is less than $\varepsilon$. But this is the  contribution only if the mass is positive. Conditioning on this event we have (using the assumption that $\mu$ is supported on $B(1,0)$):
\begin{align*}
&\leq\varepsilon\PP(\pi_V\varphi_{\JJ(-\log_q(r/3),V)}(B(0,1))\cap B(x,\frac{\delta r}{3})\neq\emptyset)\\
&\leq\varepsilon\pi_V\mu(B(x,r)).\qedhere
\end{align*}
\end{proof}

\subsection{\label{sub:lower-bounds-for-projected-component-entropy}\label{sub:porosity}Entropy porosity for $\pi_V\mu$}

Our eventual goal is to apply Lemma \ref{lem:entropy-porosity-from-covering} to  $\pi_V\mu$ and establish its entropy porosity. In order to verify assumption \eqref{enu:high-entropy} of that lemma, we prove in this section that most components of $\pi_V\mu$ have entropy close to $\alpha$, where $\alpha$ denotes the $\eta^*$-a.s. value of $\dim \pi_V\mu$:

We begin with an analysis of  $\|A^*_{\UU(i)} | V \|$.
For every matrix $A$ and subspace $V$, we always have $\|A\|\geq \|A|V\|$. But if we fix $A$ and let $V$ vary, then as long as $V$ stays away from $\ker A$, which it typically does, we will have $\|A\|\leq C \|A|V\|$, where $C$ depends on the distance of $V$ from $\ker A$. Since $\eta^*$ is a non-atomic measure, only a vanishing amount of its mass comes close to any fixed subspace. This is essentially the proof of

\begin{lemma}\label{lem:bounding-norms-on-subspaces}
	For every $\varepsilon>0$ there exist $C=C(\varepsilon)>0$ and $N=N(\varepsilon)\geq1$ such that for every $V\in\RP$ and $n\geq N$,
	\begin{eqnarray*}
		\PP\left(\|A^*_{\II(n)} | V\|\leq \|A^*_{\II(n)}\|  < C\|A_{\II(n)}^*|V\|\right) & > & 1-\varepsilon,\\
                \PP\left(\|A^*_{\UU(n)} | V\|\leq\|A^*_{\UU(n)}\|  <  C\|A_{\UU(n)}^*|V\|\right) & > & 1-\varepsilon.
	\end{eqnarray*}
\end{lemma}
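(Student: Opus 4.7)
The plan is to reduce the statement to a projective non-concentration statement and then invoke classical Furstenberg/Oseledets theory. Using the singular value decomposition $A^{*}_{\II(n)}=U_n\,\mathrm{diag}(\alpha_1,\alpha_2)\,V_n^{T}$, let $V_n^{-}\in\RP$ denote the line spanned by the second right singular vector. For a unit vector $v$ spanning a line $V\in\RP$, decomposing $v=\cos\theta\,v_1+\sin\theta\,v_2$ in the right singular basis gives
\[
\|A^{*}_{\II(n)}\!\mid\! V\|^{2}=\alpha_1^{2}\cos^{2}\theta+\alpha_2^{2}\sin^{2}\theta\;\geq\;\|A^{*}_{\II(n)}\|^{2}\sin^{2}\angle(V,V_n^{-}),
\]
which, together with the trivial upper bound $\|A^{*}_{\II(n)}\!\mid\! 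V\|\le\|A^{*}_{\II(n)}\|$, reduces the lemma to: for every $\varepsilon>0$ there exist $\rho>0$ and $N\geq 1$ such that
\[
\sup_{V\in\RP}\PP\bigl(\angle(V,V^{-}(A^{*}_{\II(n)}))<\rho\bigr)<\varepsilon\qquad(n\geq N),
\]
and the analogous estimate with $\II(n)$ replaced by $\UU(n)$. Then $C=1/\sin\rho$ works.

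Next, I invoke the classical Oseledets/Furstenberg theory for i.i.d.\ totally irreducible, non-compact matrix products (see \cite[Ch.~III]{BougerolLacroix1985}): the bottom right singular direction $V^{-}(A^{*}_{\UU(n)})$ of the $n$-step product converges in distribution, as $n\to\infty$, to a non-atomic probability measure $\zeta$ on $\RP$. In fact one can identify $\zeta$ with the stationary measure on $\RP$ of the random walk driven by the inverses $(A_i^{*})^{-1}$, which inherits irreducibility and non-compactness from $\{\overline{A}_i\}$ and hence produces a non-atomic Furstenberg measure (see Proposition~\ref{prop:furstenberg-measure}). Non-atomicity of $\zeta$ on the compact space $\RP$ yields, by a standard upper-semicontinuity/compactness argument, the uniform-continuity bound $\sup_{W\in\RP}\zeta(\overline{B(W,\rho)})\to0$ as $\rho\downarrow 0$. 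Combined with the weak convergence and a routine approximation of closed balls by open ones, this proves the desired bound for $\UU(n)$.

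Finally, to transfer from $\UU(n)$ to $\II(n)$, I use that $|\II(n)|\in[c_1n,c_2n]$ by Lemma~\ref{lem:properties-of-q}, so that the distribution of $\II(n)$ is a mixture of the stopped distributions $\UU(m)\mid\{\UU(m)\in\Psi_n\}$ for $m$ in this range. The stopping condition depends only on the log-norm $\log\alpha_1(A^{*}_{\UU(m)})$, whereas by the spectral gap / exponential contraction of the $\tau^{*}$-cocycle on $\RP$ the singular direction $V^{-}(A^{*}_{\UU(m)})$ is asymptotically independent of this log-norm; hence the conditional distribution of $V^{-}(A^{*}_{\UU(m)})$ given $\{\UU(m)\in\Psi_n\}$ still converges to $\zeta$ uniformly for $m\in[c_1n,c_2n]$, and averaging over this mixture gives the same bound for $\II(n)$.

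The main obstacle is the decoupling of the singular direction from the log-norm in the final step. In the absence of a single clean reference, one may alternatively repeat the Oseledets convergence argument directly for the stopping-time ensemble $\II(n)$, using the concentration $|\II(n)|\approx n/|\lambda_1|$ together with the exponential mixing of the projective cocycle; this is routine but somewhat technical, and is where the bulk of the work will live.
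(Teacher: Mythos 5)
Your reduction is the right one and, for the $\UU(n)$ statement, your argument is essentially the intended proof: the bad event $\{\|A^*_{\UU(n)}\|\geq C\|A^*_{\UU(n)}|V\|\}$ forces $V$ into a small neighbourhood of the contracted right-singular direction of $A^*_{\UU(n)}$, and that direction equidistributes towards a non-atomic measure, whose uniform non-concentration (compactness of $\RP$) gives a bound uniform in $V$ and in $n\geq N$. The paper omits the proof, indicating only that it follows from Proposition III.3.2 of Bougerol--Lacroix together with Egorov's theorem; that is the almost-sure version of exactly this mechanism.

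The genuine gap is your transfer from $\UU(n)$ to $\II(n)$. The mixture decomposition of the law of $\II(n)$ over the stopped laws of $\UU(m)$ conditioned on $\{\UU(m)\in\Psi_n\}$ is correct, but the step you lean on --- that the conditional law of the contracted singular direction given this stopping event is still close to $\zeta$ ``by spectral gap / asymptotic independence from the log-norm'' --- is an unproved and substantial claim, not a routine one: the event $\{\UU(m)\in\Psi_n\}$ pins the norm cocycle at an exact crossing time, an event of probability of order $n^{-1/2}$, and controlling the direction's conditional law uniformly over such events requires a joint local limit theorem for the norm cocycle with projective targets. It is also unnecessary. The contracted right-singular direction of $A^*_{\UU(n)}$ is the expanding left-singular direction of the backward product $(A^*_{i_0})^{-1}\cdots(A^*_{i_{n-1}})^{-1}$, so under strong irreducibility and non-compactness it converges $\nu$-almost surely (not merely in distribution) to a random line whose law is non-atomic. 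Since $\II(n)$ lives on the same probability space $(\Lambda^{\N},\nu)$ --- it is the prefix of $\ii$ at the stopping time $m_n(\ii)\geq c_1 n\to\infty$ --- the same almost-sure limit applies along these random times, and a single Egorov argument plus non-concentration of the limit law yields both displayed inequalities simultaneously, with $C$ and $N$ uniform in $V$ and with no conditioning entering at all. Replacing your distributional convergence by this almost-sure convergence closes the gap and recovers the paper's intended argument.
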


The formal proof relies on a combination of \cite[Proposition~III.3.2]{BougerolLacroix1985} and Egorov's Theorem, we omit the details.

Next, we show that projections of  typical cylinders have high entropy at smaller scales:

\begin{lemma}\label{lower-bound-on-entropy-of-projected-components}
Assume that $\{\overline{A}_i\}_{i\in\Lambda}$ generates a strongly irreducible and unbounded subgroup of $GL_2(\R)$. Then for every $\varepsilon>0$, $m\geq M(\varepsilon)\geq1$ and $n\geq N(\varepsilon,m)$
\begin{equation}\label{eq:lower-bound-on-entropy-of-projected-components}
\inf_{V\in\RP}\PP\left(\alpha-\varepsilon\leq\frac{1}{m}H(\pi_V\varphi_{\UU(n)}\mu,\DD_{m-\log\|A_{\UU(n)}\|})\leq\alpha+\varepsilon\right)>1-\varepsilon.
\end{equation}

The same statement holds with $\DD_{m-\log\|A_{\UU(n)}^*|V\|}$ instead of  $\DD_{m-\log\|A_{\UU(n)}\|}$.
\end{lemma}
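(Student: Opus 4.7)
The plan is to reduce the left-hand side of \eqref{eq:lower-bound-on-entropy-of-projected-components} to a quantity of the form $\frac{1}{m}H(\pi_{W}\mu,\DD_m)$ with $W=A_{\UU(n)}^*V$, and then to apply the equidistribution statement of Proposition~\ref{prop:convergence-to-furstenberg-measure} together with the $\eta^*$-a.e.\ convergence from Lemma~\ref{lem:alpha}.

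First I would use identity \eqref{eq:affine-map-to-projection} to note that $\pi_V\varphi_\ii\mu$ equals, up to a translation, the image of $\pi_{A_\ii^*V}\mu$ under the dilation $S_{\|A_\ii^*|V\|}$. The scaling rule \eqref{eq:entropy-under-transformation-1}, together with the fact that translations shift entropy only by $O(1)$, then gives
\[
H(\pi_V\varphi_\ii\mu,\DD_{m-\log\|A_\ii\|})=H(\pi_{A_\ii^*V}\mu,\DD_{m+\log\|A_\ii^*|V\|-\log\|A_\ii\|})+O(1).
\]
Since $\|A_\ii\|=\|A_\ii^*\|$, Lemma~\ref{lem:bounding-norms-on-subspaces} guarantees that for $n$ large and uniformly in $V$, with probability $>1-\varepsilon/3$ one has $\log\|A_\ii\|-\log\|A_\ii^*|V\|=O(1)$. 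On that event, writing $W_n=A_{\UU(n)}^*V$, the left-hand side of \eqref{eq:lower-bound-on-entropy-of-projected-components} equals $\frac{1}{m}H(\pi_{W_n}\mu,\DD_m)+O(1/m)$, and this additive error is absorbed into $\varepsilon$ once $m\geq M(\varepsilon)$. The alternative formulation with $\DD_{m-\log\|A_{\UU(n)}^*|V\|}$ in place of $\DD_{m-\log\|A_{\UU(n)}\|}$ is immediate from the same identity and does not even require Lemma~\ref{lem:bounding-norms-on-subspaces}.

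The main task is then to show that $\frac{1}{m}H(\pi_{W_n}\mu,\DD_m)\in(\alpha-\varepsilon,\alpha+\varepsilon)$ with probability $>1-2\varepsilon/3$. By Lemma~\ref{lem:alpha}, $\frac{1}{m}H(\pi_W\mu,\DD_m)\to\alpha$ as $m\to\infty$ for $\eta^*$-a.e.\ $W$, and the hard part will be transferring this pointwise a.e.\ statement to the distribution of $W_n$, because $W\mapsto H(\pi_W\mu,\DD_m)$ need not be weak-$*$ continuous. To overcome this, I would replace $H$ by a continuous surrogate $\widetilde H$ with $|H-\widetilde H|=O(1)$ (as indicated at the end of Section~\ref{sub:entropy}); because $W\mapsto\pi_W\mu$ is weak-$*$ continuous off the single direction where $u(V)$ switches sign, and that direction is an $\eta^*$-null point under strong irreducibility and non-compactness, the surrogate $g_m(W):=\frac{1}{m}\widetilde H(\pi_W\mu,\DD_m)$ is continuous on a set of full $\eta^*$-measure, and by Egorov, for all $m\geq M(\varepsilon)$, the open set $U_m=\{W:|g_m(W)-\alpha|<\varepsilon/4\}$ satisfies $\eta^*(U_m)>1-\varepsilon/3$.

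Finally, the uniform-in-$V$ convergence in Proposition~\ref{prop:convergence-to-furstenberg-measure} applied to a continuous approximation of $\mathbf{1}_{U_m}$ yields $\PP(W_n\in U_m)>1-2\varepsilon/3$ for every $V\in\RP$ and every $n\geq N(\varepsilon,m)$; intersecting with the good event from the first reduction delivers \eqref{eq:lower-bound-on-entropy-of-projected-components}. To summarise, the principal obstacle is the non-continuity of scale-$m$ entropy as a function of the projection direction, which I would handle via the continuous surrogate and the non-atomicity of $\eta^*$; the remainder is bookkeeping with the entropy transformation formulas \eqref{eq:entropy-under-transformation-1}--\eqref{eq:entropy-under-transformation-2}.
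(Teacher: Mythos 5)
Your proposal is correct and follows essentially the same route as the paper's proof: reduce via \eqref{eq:affine-map-to-projection} and Lemma \ref{lem:bounding-norms-on-subspaces} to the entropy of $\pi_{A^*_{\UU(n)}V}\mu$ at scale $m$, then combine the $\eta^*$-a.e.\ convergence of Lemma \ref{lem:alpha} with the uniform equidistribution of Proposition \ref{prop:convergence-to-furstenberg-measure}, using a continuous entropy surrogate. The only difference is that you spell out details the paper leaves implicit (Egorov, openness of the good set, and the single discontinuity direction of $V\mapsto u(V)$ being $\eta^*$-null), all of which are handled correctly.
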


\begin{proof}
  The versions using $\DD_{m-\log\|A_{\UU(n)}^*|V\|}$ and $\DD_{m-\log\|A_{\UU(n)}^*\|}$ are equivalent because of the previous lemma (changing the level of the partition by an additive constant results in an $O(1/m)$ change to the entropy, which can be absorbed in $\varepsilon$). We prove the version with $\DD_{m-\log\|A_{\UU(n)}^*|V\|}$.

First, consider the entropy in the statement. Re-scaling the measure and partition by $\|A^*_{\UU(n)} | V\|$, and then applying a translation, causes the entropy to change by $O(1/m)$. Thus the statement is formally unchanged if we allow such a transformation. Therefore, using the identity \eqref{eq:affine-map-to-projection}, it is enough to show that for large enough $m$,\[
\inf_{V\in\RP}\PP\left(\alpha-\varepsilon\leq\frac{1}{m}H(\pi_{A_{\UU(n)^*V}}\mu,\DD_{m})\leq\alpha+\varepsilon\right)>1-\varepsilon.
  \]
  This would be an immediate consequence of the equidistribution of the random walk on $\RP$ (Proposition \ref{lem:uniform-convergence-to-furst-measure}) if the scale-$m$ entropy were continuous as a function of the measure. It is not, but as discussed at the end of Section \ref{sub:entropy}, one can replace entropy by a continuous variant at the cost of $O(1/m)$, and the resulting $O(1/m)$ error can again be absorbed in $\varepsilon$. This proves the claim.
\end{proof}

Finally, we make a similar statement for projections of the random cylinders $\pi_V\varphi_{\II(n)}\mu$ (note that the cylinder is chosen according to $\II(n)$ instead of $\UU(n)$ as in the previous lemma).

\begin{proposition}\label{thm:lower-bound-on-entropy-of-projected-components-2}
Assume that $\{\overline{A}_i\}_{i\in\Lambda}$ generates a strongly irreducible and unbounded subgroup of $GL_2(\R)$. Then for every $\varepsilon>0$, $m\geq M(\varepsilon)$ and $n\geq N(\varepsilon,m)$,
\[
\inf_{V\in\RP}\PP_{1\leq i\leq n}\left(\alpha-\varepsilon\leq\frac{1}{m}H(\pi_V\varphi_{\II(i)}\mu,\DD_{i+m})\leq\alpha+\varepsilon\right)>1-\varepsilon.
\]
\end{proposition}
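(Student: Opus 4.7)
The plan is to deduce this from Lemma~\ref{lower-bound-on-entropy-of-projected-components} (which gives the analogous bound for the random word $\UU(n)$) in three moves: (a) express the conclusion as membership of $\II(i)$ in a good set $\mathcal{U}_V\subseteq\Lambda^*$ that is defined intrinsically on words, (b) transfer from $\UU$-cylinders to $\II$-cylinders using Lemma~\ref{lem:converting-Un-to-In-or-Jn}, and (c) reconcile the partition scale $m-\log\|A_{\II(i)}^*|V\|$ that arises naturally with the scale $i+m$ appearing in the statement.

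Concretely, for each $V\in\RP$ I would set
\[
\mathcal{U}_V=\left\{\ii\in\Lambda^*\,:\,\alpha-\tfrac{\varepsilon}{3}\leq\tfrac{1}{m}H(\pi_V\varphi_{\ii}\mu,\DD_{m-\log\|A_{\ii}^*|V\|})\leq\alpha+\tfrac{\varepsilon}{3}\right\}.
\]
The second form of Lemma~\ref{lower-bound-on-entropy-of-projected-components} says that for each fixed $i\geq N(\varepsilon/3,m)$, $\inf_{V}\PP(\UU(i)\in\mathcal{U}_V)>1-\varepsilon/3$. Averaging over $1\leq i\leq n$ and discarding the initial at most $N(\varepsilon/3,m)$ indices (whose contribution is $o(1)$ as $n\to\infty$) yields $\inf_{V\in\RP}\PP_{1\leq i\leq n}(\UU(i)\in\mathcal{U}_V)>1-\varepsilon/2$ for all large $n$. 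Lemma~\ref{lem:converting-Un-to-In-or-Jn} then immediately upgrades this bound from $\UU(i)$ to $\II(i)$,
\[
\inf_{V\in\RP}\PP_{1\leq i\leq n}(\II(i)\in\mathcal{U}_V)>1-\varepsilon/2.
\]

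To finish, I would replace $\DD_{m-\log\|A_{\II(i)}^*|V\|}$ by $\DD_{i+m}$ in the entropy appearing in $\mathcal{U}_V$. By definition of $\Psi_i$, every $\ii\in\Psi_i$ satisfies $c_0q^{-i}\leq\|A_\ii\|\leq q^{-i}$, so $\log\|A_\ii\|=-i+O(1)$; and by Lemma~\ref{lem:bounding-norms-on-subspaces}, with probability at least $1-\varepsilon/2$ uniformly in $V$ we also have $\|A_{\II(i)}^*|V\|$ comparable to $\|A_{\II(i)}\|$ up to a multiplicative constant. Hence on this intersection $m-\log\|A_{\II(i)}^*|V\|=i+m+O(1)$. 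Since an $O(1)$ shift of a $q$-adic partition changes $H(\cdot,\DD_\bullet)$ by $O(1)$, and hence changes $\tfrac{1}{m}H(\cdot,\DD_\bullet)$ by $O(1/m)$, this discrepancy is absorbed into $\varepsilon$ once $m\geq M(\varepsilon)$ is sufficiently large. Intersecting with $\{\II(i)\in\mathcal{U}_V\}$ and renaming $\varepsilon$ gives the proposition.

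The only nontrivial step is the passage from $\UU(i)$ to $\II(i)$: the former is a random walk of deterministic length, while the latter is a stopped walk whose length is random. The key observation that makes Lemma~\ref{lem:converting-Un-to-In-or-Jn} applicable is that the good set $\mathcal{U}_V$ is formulated intrinsically, depending on the word $\ii$ via $A_\ii^*|V$ rather than on $|\ii|$; this ensures that the hypothesis of that lemma is literally satisfied. After that observation, the remainder is routine scale-matching, with all estimates made uniform in $V$ thanks to the uniform forms of Lemma~\ref{lower-bound-on-entropy-of-projected-components} and Lemma~\ref{lem:bounding-norms-on-subspaces}.
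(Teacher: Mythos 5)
Your proposal is correct and follows essentially the same route as the paper: define a word-intrinsic good set $\mathcal{U}_V$, verify the hypothesis of Lemma~\ref{lem:converting-Un-to-In-or-Jn} via Lemma~\ref{lower-bound-on-entropy-of-projected-components}, transfer from $\UU(i)$ to $\II(i)$, and absorb the $O(1)$ scale shift into $\varepsilon$ for large $m$. The only (harmless) difference is that the paper defines $\mathcal{U}_V$ with the full norm $\|A_\ii\|$, so that for $\ii\in\Psi_i$ the scale matching to $\DD_{i+m}$ is deterministic, whereas your restricted-norm version $\|A_\ii^*|V\|$ invokes Lemma~\ref{lem:bounding-norms-on-subspaces} once more at the end---the same comparison the paper already uses inside Lemma~\ref{lower-bound-on-entropy-of-projected-components}.
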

\begin{proof}
  Let \[
  \mathcal{U}_V=\left\{\ii\in\Lambda^*: \alpha-\varepsilon\leq\frac{1}{m}H(\pi_V\varphi_{\ii}\mu,\DD_{-\log\|A_{\ii}\|+m})\leq\alpha+\varepsilon\right\},
  \]
  and apply Lemma~\ref{lem:converting-Un-to-In-or-Jn} to it; the hypothesis of the lemma is satisfied by Lemma \ref{lower-bound-on-entropy-of-projected-components}.
\end{proof}

\begin{proposition}\label{lem:projections-are-porous-from-scale-1-to-n}
  Assume that $\{\overline{A}_i\}_{i\in\Lambda}$ generates a strongly irreducible and unbounded subgroup of $GL_2(\R)$. For every $\varepsilon>0$, $m\geq M(\varepsilon)$, for $\eta^*$-a.e. $V$ and $n\geq N(\varepsilon,m,V)$, the projection $\pi_V\mu$ is $(\alpha,\varepsilon,m)$-entropy porous from scale $1$ to $n$.
\end{proposition}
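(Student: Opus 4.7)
The plan is to apply Lemma~\ref{lem:entropy-porosity-from-covering} to $\tau = \pi_V\mu$. Given $\varepsilon > 0$, let $\delta = \delta(\varepsilon) > 0$ be the constant it supplies. By joint uniform continuity across scales (Lemma~\ref{lem:joint-uniform-continuity}), I would first choose $\ell = \ell(\delta)$ large enough so that, for every $V \in \RP$ and every scale $k$, the boundary condition~(3) of Lemma~\ref{lem:entropy-porosity-at-one-level-from-covering} holds with this $\delta$ simultaneously. This step uses nothing about the particular projection beyond the uniform regularity of the family $\{\pi_V\mu\}_{V\in\RP}$.

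At each scale $1 \leq k \leq n$, the decomposition
\[
\pi_V\mu = \sum_{\ii \in \Xi_{k+\ell}^V} p_\ii \cdot \pi_V\varphi_\ii\mu
\]
from~(\ref{eq:iterated-convolution-for-mu}) is the candidate covering. By~(\ref{eq:c0}) and~(\ref{eq:affine-map-to-projection}), each summand is supported on an interval of length $O(q^{-(k+\ell)})$, so after absorbing the implicit constant into $\ell$ we get condition~(2) of Lemma~\ref{lem:entropy-porosity-at-one-level-from-covering}. For condition~(1) I use the stronger variant noted in the remark following that lemma, namely that $\tfrac{1}{m}H(\pi_V\varphi_\ii\mu,\DD_{(k+\ell)+m}) > \alpha - \delta$, which is valid provided $m > m(\ell)$.

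To show this stronger entropy bound holds for most $\ii$, I would apply Lemma~\ref{lem:converting-Un-to-In-or-Jn} (the $\JJ$-branch) to
\[
\mathcal{U}_V = \left\{\jj\in\Lambda^*\,:\,\tfrac{1}{m}H(\pi_V\varphi_\jj\mu, \DD_{-\log\|A_\jj^*|V\|+m})\in[\alpha-\delta,\alpha+\delta]\right\},
\]
whose $\UU$-version is exactly the uniform statement of Lemma~\ref{lower-bound-on-entropy-of-projected-components}. The conclusion gives, uniformly in $V$, $\PP_{1\leq k\leq n}(\JJ(k+\ell,V)\in\mathcal{U}_V) > 1-\delta^2$; by a Markov-type extraction, on a $(1-\delta)$-fraction of scales $k$ the ``bad'' $\ii$'s carry total mass below $\delta$ and can be absorbed into the $p_0\tau_0$ term of Lemma~\ref{lem:entropy-porosity-at-one-level-from-covering}. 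The remaining global entropy hypothesis of Lemma~\ref{lem:entropy-porosity-from-covering}, namely $|\tfrac{1}{n}H(\pi_V\mu,\DD_n)-\alpha|<\delta$, holds for $\eta^*$-a.e. $V$ and all $n$ large (depending on $V$) by Lemma~\ref{lem:alpha}. Feeding all of this into Lemma~\ref{lem:entropy-porosity-from-covering} yields the conclusion.

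The main bookkeeping obstacle is coordinating two distinct scales: the cylinders in the decomposition naturally live at scale $k+\ell$, while entropy porosity is measured relative to the coarser scale $k$. The remark after Lemma~\ref{lem:entropy-porosity-at-one-level-from-covering} bridges the gap, but only if $m$ is taken large relative to $\ell$ \emph{after} $\ell$ has already been fixed in terms of $\delta$, so one has to be careful about the order in which parameters are chosen. The whole scheme hinges on the uniformity in $V$ of both the entropy bounds for cylinders (Lemma~\ref{lower-bound-on-entropy-of-projected-components}) and of the boundary regularity (Lemma~\ref{lem:joint-uniform-continuity}); the restriction to $\eta^*$-a.e. $V$ enters only through the global entropy estimate.
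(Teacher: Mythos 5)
Your proposal is correct and follows essentially the same route as the paper: decompose $\pi_V\mu$ at each scale into projected cylinders of relative size $q^{-\ell}$, get the high-entropy condition for most cylinders at most scales from Lemma~\ref{lower-bound-on-entropy-of-projected-components} via the conversion Lemma~\ref{lem:converting-Un-to-In-or-Jn}, handle the boundary condition by joint uniform continuity across scales, use Markov's inequality to push the bad mass into the $p_0\tau_0$ term, and close the argument with the $\eta^*$-a.e.\ entropy dimension $\alpha$ and Lemma~\ref{lem:entropy-porosity-from-covering}. The only (harmless) deviations are cosmetic: the paper decomposes along $\Psi_{k+\ell}$-cylinders via Proposition~\ref{thm:lower-bound-on-entropy-of-projected-components-2} and uses condition~(1) as stated, whereas you decompose along $\Xi^V_{k+\ell}$-cylinders and invoke the remark-variant of condition~(1) with $m\gg\ell$, with the same parameter ordering $\varepsilon\to\delta\to\ell\to m\to n$.
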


\begin{proof}
  Let $\varepsilon>0$ be given, fix for the moment $V\in\RP$ and write $\tau=\pi_V\mu$. We note that of the parameters introduced later only $n$ will depend on $V$.

  Choose $\delta$ corresponding to $\varepsilon$ as in Lemma \ref{lem:entropy-porosity-from-covering}.

  Since $\tau$ is uniformly continuous across scales, for $\ell$ large enough we have $\tau(I)\leq \delta\tau(J)$ for any concentric intervals $I,J$ with $I\subseteq J$ and $|I|<q^{-\ell}|J|$. Fix such an $\ell$.

  Apply Proposition \ref{thm:lower-bound-on-entropy-of-projected-components-2} with $\delta^2/2$ in the role of $\varepsilon$ there, and let $m,n$ be as required there. We can assume that $m$ is large enough relative to $\ell$ that $m-\ell$ has the same stated property. We thus know that \[
\PP_{1\leq i\leq n}\left(\alpha-\frac{\delta^2}{2}\leq\frac{1}{m}H(\pi_V\varphi_{\II(i)}\mu,\DD_{i+m-\ell})\leq\alpha+\frac{\delta^2}{2}\right)>1-\frac{\delta^2}{2},
\]
and since we can take $n$ large with respect to $\ell$ we can shift the index range by $\ell$, and the change is less that $\delta^2/2$. We get \[
\PP_{1\leq i\leq n}\left(\alpha-\frac{\delta^2}{2}\leq\frac{1}{m}H(\pi_V\varphi_{\II(i+\ell)}\mu,\DD_{i+m})\leq\alpha+\frac{\delta^2}{2}\right)>1-\delta^2.
\]
Applying Markov's inequality, we find that for a $(1-\delta)$-fraction of levels $1\leq k\leq n$, we have \[
\PP_{i=k}\left(\frac{1}{m}H(\pi_V\varphi_{\II(i+\ell)}\mu,\DD_{i+m})\geq\alpha-\delta\right)>1-\delta.
\]
Finally, assuming that $V$ is $\eta^*$-typical, we know that the entropy dimension is $\alpha$, hence $|\frac{1}{n}H(\tau,\QQ_n) - \alpha|<\delta$ for large $n$. Entropy porosity now follows by applying Lemma \ref{lem:entropy-porosity-from-covering}, taking  $\tau=\pi_V\mu$, and for each scale $i$ taking $\tau_j$ to be the components in the event in the last equation with their natural probabilities, and $\tau_0$ the remaining mass of $\tau$. 

\end{proof}

\begin{corollary} Under the assumptions of the last proposition, for every $\ep>0$, $m>M(\ep)$ and $n>N(\ep,m)$, there is an open set $E=E(\ep,m,n)\subseteq\RP$ of measure $\eta^*(E)>1-\ep$ such that for every $V\in E$ the measure $\pi_V\mu$ is $(\alpha,\ep,m)$-entropy porous from scale $1$ to $n$.
\end{corollary}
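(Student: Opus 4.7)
The goal is to upgrade the $\eta^*$-a.e.\ statement of Proposition~\ref{lem:projections-are-porous-from-scale-1-to-n} to a uniform statement and arrange the resulting set to be open, at the cost of a slight loss in the porosity parameter.

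First, apply Proposition~\ref{lem:projections-are-porous-from-scale-1-to-n} with $\varepsilon/2$ in place of $\varepsilon$: for $\eta^*$-a.e.\ $V$ there exists $N(V)$ such that $\pi_V\mu$ is $(\alpha,\varepsilon/2,m)$-entropy porous from scale $1$ to $n$ for all $n\geq N(V)$. The sets $\{V:N(V)\leq k\}$ are Borel, nested, and exhaust a full-$\eta^*$-measure set, so continuity of measure produces an integer $N=N(\varepsilon,m)$ for which the Borel set $F:=\{V\in\RP:N(V)\leq N\}$ satisfies $\eta^*(F)>1-\varepsilon/2$. For every $V\in F$ and every $n\geq N$, the projection $\pi_V\mu$ is $(\alpha,\varepsilon/2,m)$-entropy porous from scale $1$ to $n$.

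To upgrade $F$ to an open set, fix $n\geq N$ and work with the weak-$*$ continuous substitute $H^{\mathrm{cont}}$ for scale-$k$ entropy described at the end of Section~\ref{sub:entropy}; it agrees with $H$ up to $O(1)$, hence up to $O(1/m)<\varepsilon/4$ after normalization by $m$ (for $m$ sufficiently large). Define
\[
\widetilde\Phi_n(V):=\EE_{1\leq i\leq n}\int\phi\Bigl(\tfrac{1}{m}H^{\mathrm{cont}}((\pi_V\mu)_{x,i},\DD_{i+m})\Bigr)\,d(\pi_V\mu)(x),
\]
where $\phi:\R\to[0,1]$ is a fixed Lipschitz bump with $\phi\equiv 1$ on $(-\infty,\alpha+\varepsilon/2]$ and $\phi\equiv 0$ on $[\alpha+3\varepsilon/4,\infty)$. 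The weak-$*$ continuity of $V\mapsto\pi_V\mu$, the continuity of $H^{\mathrm{cont}}$ in its measure argument, and the joint uniform continuity across scales of the family $\{\pi_V\mu\}_V$ from Lemma~\ref{lem:joint-uniform-continuity} (which bounds the mass on dyadic boundaries uniformly in $V$ and hence controls the discontinuities of the component measures) together imply that $\widetilde\Phi_n$ is continuous on $\RP$. Therefore
\[
E=E(\varepsilon,m,n):=\{V\in\RP:\widetilde\Phi_n(V)>1-3\varepsilon/4\}
\]
is open. Every $V\in F$ gives true porosity with parameter $\varepsilon/2$, which translates to $\widetilde\Phi_n(V)>1-\varepsilon/2>1-3\varepsilon/4$; hence $F\subseteq E$, so $\eta^*(E)>1-\varepsilon/2>1-\varepsilon$. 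Conversely, for $V\in E$ the continuous-substitute condition combined with the $\varepsilon/4$ error bound yields the true $(\alpha,\varepsilon,m)$-entropy porosity of $\pi_V\mu$ from scale $1$ to $n$.

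The main obstacle is the continuity of $\widetilde\Phi_n$: the components $(\pi_V\mu)_{x,i}$ depend on $V$ through the normalization $(\pi_V\mu)(\DD_i(x))$, which can jump as $V$ varies and mass crosses a dyadic boundary. Joint uniform continuity across scales provides uniform-in-$V$ control on the mass near any such boundary, and combined with the smoothing built into $H^{\mathrm{cont}}$ and the Lipschitz cutoff $\phi$, this delivers the needed continuity.
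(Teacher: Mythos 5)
Your proposal is correct in outline, but it takes a genuinely different route from the paper. The paper proves that the porosity property itself is stable under perturbation of $V$: using non-atomicity of $\pi_V\mu$ it gets weak convergence of the component distributions of $\pi_{V'}\mu$ to those of $\pi_V\mu$ as $V'\to V$, and then applies \eqref{eq:entropy-under-transformation-3} to the nearby maps $\pi_V,\pi_{V'}$ (if $V'$ is within a distance of $V$ depending on $n$, corresponding scale-$(i+m)$ component entropies agree up to $O(1)$, hence up to $\ep$ after dividing by $m$); the open set is then a neighborhood of the measurable good set, with $(\alpha,\ep/2,m)$-porosity degrading to $(\alpha,2\cdot\ep/2,m)$-porosity. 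You instead build a weak-* continuous proxy $\widetilde\Phi_n$ from the continuous entropy substitute and a Lipschitz cutoff and take a superlevel set, so openness is automatic and no quantitative perturbation estimate is needed; the price is the extra bookkeeping of thresholds. Both arguments rest on the same two ingredients: weak-* continuity of $V\mapsto\pi_V\mu$ and non-atomicity of every $\pi_V\mu$, which make the $q$-adic cell masses and conditional measures vary continuously in $V$. For the continuity of $\widetilde\Phi_n$, pointwise non-atomicity (Lemma \ref{lem:uniform-continuity-across-scales}) already suffices, so your appeal to the joint uniform continuity of Lemma \ref{lem:joint-uniform-continuity} is stronger than necessary, though harmless. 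Your Egorov-type extraction of a uniform $N(\ep,m)$ from the $V$-dependent $N(\ep,m,V)$ is also needed, implicitly, in the paper's proof.

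One bookkeeping slip: with $\phi\equiv1$ only on $(-\infty,\alpha+\ep/2]$ and a two-sided error $|\frac{1}{m}H-\frac{1}{m}H^{\mathrm{cont}}|<\ep/4$, a good component with $\frac{1}{m}H<\alpha+\ep/2$ may have $\frac{1}{m}H^{\mathrm{cont}}\in(\alpha+\ep/2,\alpha+3\ep/4)$, where $\phi$ can vanish; so the inclusion $F\subseteq E$ (i.e.\ $\widetilde\Phi_n(V)>1-\ep/2$ for $V\in F$) does not follow as written, and indeed your forward and converse requirements meet exactly at $\alpha+3\ep/4$, leaving no room for a genuine bump. The fix is immediate: take $m$ large enough that the substitute error is $<\ep/8$, and choose $\phi\equiv1$ on $(-\infty,\alpha+5\ep/8]$ and $\phi\equiv0$ on $[\alpha+7\ep/8,\infty)$. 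Then $V\in F$ forces $\phi=1$ on the good components, so $\widetilde\Phi_n(V)>1-\ep/2$ and $F\subseteq E$, while $\widetilde\Phi_n(V)>1-3\ep/4$ still yields true $(\alpha,\ep,m)$-porosity because $7\ep/8+\ep/8=\ep$. With this adjustment of constants your argument is complete.
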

\begin{proof}
  The existence of a measurable set $E$ as above follows directly from the previous  proposition, assuming $m,n$ are large as specified there. We would like to find such a set $E$ that is open. For any fixed $V\in\RP$ and $n$, observe that as $V'\to V$ we have $(\pi{V'}\mu)(I)\to (\pi_V\mu)(I)$ for every interval $I$, and also $(\pi_{V'}\mu)_I\to\pi_V\mu_I$ (we use here the non-atomicity of $\pi_V\mu$). It follows that as $V'\to V$, the  components distribution of $\pi_{V'}\mu$  at levels $1\leq i \leq n$ converges weakly to the corresponding distribution of $\pi_V\mu$. Furthermore, by \eqref{eq:entropy-under-transformation-3} applied to the maps $\pi_V,\pi_{V'}$,  if $V'$ is close enough to $V$ in a manner depending on $n$ but not on $V$, then the same scale $i+m$ entropy of corresponding level-$i$ components of $\pi_V\mu,\pi_{V'}\mu$ agree up to $O(1)$, which, after dividing by $m$, and assuming $m$ large enough, is an error less than $\ep$. All of this implies that if $m$ is large enough and $V\in E$, then for $V'$ close enough to $V$ we have that $\pi_{V'}\mu$ is  $(\alpha ,2\ep,m)$-entropy porous. Starting from $\ep/2$ instead of $\ep$, we have the claim.
\end{proof}

\begin{proposition}
  Assume that $\{\overline{A}_i\}_{i\in\Lambda}$ generates a strongly irreducible and unbounded subgroup of $GL_2(\R)$. For every $\varepsilon>0$, $m\geq M(\varepsilon)$, $k\geq K(\varepsilon,m)$ and $n\geq N(\varepsilon,m,k)$,
\begin{equation}\label{eq:projections-are-porous-1-to-n}
\inf_{V\in\RP}\PP_{1\leq i\leq n}\left(\substack{\text{\large $\pi_V\varphi_{\II(i)}\mu$ is $(\alpha,\varepsilon,m)$-entropy porous}\\ \text{\large from scale $i$ to $i+k$}}\right)>1-\varepsilon.
\end{equation}

\end{proposition}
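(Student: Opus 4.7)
The plan is to reduce the statement to Proposition~\ref{lem:projections-are-porous-from-scale-1-to-n} (in the open-set form given by the corollary following it), by transporting entropy porosity of $\pi_W\mu$ from the absolute scales $[1,k]$ to the relative scales $[i,i+k]$ of $\pi_V\varphi_{\II(i)}\mu$ using the affine factorization~\eqref{eq:affine-map-to-projection}.

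First I would fix $\varepsilon'=\varepsilon/4$ and invoke the corollary of Proposition~\ref{lem:projections-are-porous-from-scale-1-to-n} (with $k$ in the role of $n$) to obtain, for $m\geq M(\varepsilon')$ and $k\geq K(\varepsilon',m)$, an open set $E=E(\varepsilon',m,k)\subseteq\RP$ with $\eta^*(E)>1-\varepsilon'$ on which $\pi_W\mu$ is $(\alpha,\varepsilon',m)$-entropy porous from scale $1$ to $k$. Next, I need that $A_{\II(i)}^*V\in E$ for most $1\leq i\leq n$, uniformly in $V$. Proposition~\ref{lem:uniform-convergence-to-furst-measure} delivers this at each fixed large $i$ for $\UU(i)$, and averaging over $i$ then applying Lemma~\ref{lem:converting-Un-to-In-or-Jn} to the $V$-indexed family of words $\mathcal{U}_V=\{\ii\in\Lambda^*:A_\ii^*V\in E\}$ transfers it to $\II(i)$. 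Combined with Lemma~\ref{lem:bounding-norms-on-subspaces}, which yields $\|A_{\II(i)}^*|V\|=q^{-i+O(1)}$ on an average-probability $>1-\varepsilon'$ event uniformly in $V$, this provides a good set of $i$'s of average probability exceeding $1-\varepsilon/2$.

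On this good event, identity~\eqref{eq:affine-map-to-projection} identifies $\pi_V\varphi_{\II(i)}\mu$ with an affine image of $\pi_W\mu$ (with $W=A_{\II(i)}^*V\in E$) under a map of contraction ratio $q^{-i+O(1)}$. Since the family $\{\pi_W\mu\}_{W\in\RP}$ is jointly uniformly continuous across scales by Lemma~\ref{lem:joint-uniform-continuity}, the hypothesis of Lemma~\ref{lem:porosity-invarance-under-affine-maps} is met uniformly in $W$, and that lemma transfers $(\alpha,\varepsilon',m)$-entropy porosity from scale $[1,k]$ to scale $[i+O(1),i+k+O(1)]$ for $\pi_V\varphi_{\II(i)}\mu$. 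Choosing $k$ large compared to the resulting $O(1)$ scale-shift makes the discrepancy between $[i+O(1),i+k+O(1)]$ and the target range $[i,i+k]$ a vanishing $O(1/k)$-fraction of indices, which I would absorb into $\varepsilon$ by a slight enlargement of the porosity parameter, yielding the claimed bound~\eqref{eq:projections-are-porous-1-to-n}.

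The main obstacle will be bookkeeping to keep every step uniform in $V$ and to chain the thresholds $M(\varepsilon)$, $K(\varepsilon,m)$, $N(\varepsilon,m,k)$ in the correct order. In particular, the conversion from $\UU(i)$ to $\II(i)$ via Lemma~\ref{lem:converting-Un-to-In-or-Jn} and the uniform-in-$V$ control of the norm shift from Lemma~\ref{lem:bounding-norms-on-subspaces} must both be invoked with care, since each introduces its own constants that ultimately fix how large $k$ must be relative to $m$ and how large $n$ must be relative to both. Beyond this the argument is a routine assembly of the ingredients already established in this section.
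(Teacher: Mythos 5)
Your proposal is correct and follows essentially the same route as the paper: factor $\pi_V\varphi_{\II(i)}\mu$ via \eqref{eq:affine-map-to-projection} as an affine image of $\pi_W\mu$ with $W=A_{\II(i)}^*V$, use the open set $E$ from the corollary together with the uniform equidistribution of Proposition \ref{prop:convergence-to-furstenberg-measure} (and the $\UU\to\II$ conversion), transfer porosity by Lemma \ref{lem:porosity-invarance-under-affine-maps}, and fix the $O(1)$ scale shift between $\log\|A_{\II(i)}^*|V\|$ and $i$ via Lemma \ref{lem:bounding-norms-on-subspaces}, absorbing it by taking $k$ large. Your version merely spells out some details the paper leaves implicit (uniformity in $V$ and the continuity hypothesis of the invariance lemma), so there is nothing substantive to add.
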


\begin{proof}
  For any $V\in\RP$ and $\ii\in\Lambda^*$, the measure $\pi_V\varphi_\ii\mu$ is an affine image of $\pi_W\mu$ where $W=A_{\ii}^*V$, and the affine map scales by $\|A_\ii^* |V \|$. It follows from the previous corollary, from invariance of porosity under coordinate changes (Lemma \ref{lem:porosity-invarance-under-affine-maps}), and from the equidistribution of Proposition \ref{prop:convergence-to-furstenberg-measure}, that for every $\varepsilon>0$, for large enough $m$ and all large enough $k$, all but an arbitrarily small measure of the projected cylinders $\pi_V\varphi_\ii\mu$ are $(\alpha,\varepsilon,m)$-entropy porous from scale $\log\|A_\ii^* |V\|$ to  $\log\|A_\ii^* |V\|+k$.

  This is almost the conclusion we want, but we want porosity not at the scales given above, but rather at scales   $\log\|A_\ii^*\|$ to $\log\|A_\ii^*\|+k$ (because in the definition of $\II(i)$, the maps $\varphi_{\II(i)}$ are chosen so that $\|A_{\II(i)}\|\approx q^{-i}$). But by Lemma \ref{lem:bounding-norms-on-subspaces}, these ranges differ by an additive constant $c$ with high probability  between scales $1\leq k \leq n$, and the  probability can be made arbitrarily close to $1$  if we allow $c$ to be large. Clearly, if a measure is $(\alpha, \varepsilon,m)$-entropy porous at scales $i$ to $i+k$, then it is $(\alpha,\varepsilon+O(c/k),m)$-entropy porous from scales $i\pm c$ to $i+k\pm c$. Thus, by slightly reducing $\varepsilon$ to begin with, we have proved the proposition.
\end{proof}

\begin{proof}[Proof of Proposition \ref{prop:projections-are-porous}]
 Every $\psi\in \AAA_{2,1}$ has the form $x\mapsto r\pi_V(x)+a$ for some $V$. The claim follows formally from the previous proposition combined with Lemma \ref{lem:porosity-invarance-under-affine-maps}.
\end{proof}

\section{\label{sec:entropy-growth-under-convolutions}Entropy growth
of convolutions}

\subsection{\label{sub:convolution-Euclidean-case}Euclidean case}

Denote by $*$ the convolution of measures on $\mathbb{R}$. That is, for any $\theta,\tau$ Borel probability measures on $\R$
$$
\int f(x)d(\tau*\theta)(x)=\iint f(y+z)d\tau(y)d\theta(z),
$$
for any integrable function  $f$.

The entropy of a convolution is generally at least as large as each of
the convolved measures, although due to the discretization involved
there may be a small loss: For every boundedly supported $\tau,\theta\in\mathcal{P}(\R)$,
\[
\frac{1}{n}H(\tau,\DD_{n})-O(\frac{1}{n})\leq\frac{1}{n}H(\theta*\tau,\DD_{n})\leq\frac{1}{n}H(\tau,\DD_{n})+\frac{1}{n}H(\theta,\DD_{n})+O(\frac{1}{n}).
\]
(the error depends on the diameter of the supports; see \cite[Corollary 4.10]{Hochman2014}).
Typically, one expects that $\frac{1}{n}H(\theta*\tau,\DD_{n})$ is close to the upper bound, but
in general this is not the case, and one cannot rule out that the lower
bound is achieved, i.e. there is no entropy growth at all (in fact it is quite
non-trivial to give useful conditions under which the upper bound
is achieved). The following theorem, which follows directly from \cite[Theorem~2.8]{Hochman2014}, provides
a verifiable condition under which at least some entropy growth occurs.\footnote{Theorem 2.8 of \cite{Hochman2014} contains a slight error. The condition $\frac{1}{m}H(\nu,\mathcal{D}_n)>\varepsilon$ there should be replaced by $>2\varepsilon$, or by $>c\varepsilon$., where $c$ is any constant larger than one, but then the dependence of $\delta$ and the other parameters on $\varepsilon$ depend on $c$. Also, the theorem is stated for dyadic partitions rather than $q$-adic, but this modification is harmless.} 

\begin{theorem}\label{thm:entropy-growth-under-convolution}
For every $\varepsilon>0$ and $m\ge1$ there exists $\delta=\delta(\varepsilon,m)>0$,
such that all $n>N(\varepsilon,\delta,m)$ the following holds.

Let $k\ge0$ and $\tau,\theta\in\mathcal{P}(\mathbb{R})$, and suppose that
\begin{enumerate}
  \item $\tau,\theta$ are supported on intervals of length $q^{-k}$. 
  \item $\tau$ is $(1-\varepsilon,\varepsilon/2,m)$-entropy porous from scale $k$ to $k+n$.
  \item $\frac{1}{n}H(\theta,\DD_{k+n})>\varepsilon$.
\end{enumerate}
Then
\[
\frac{1}{n}H(\theta*\tau,\DD_{k+n})\ge\frac{1}{n}H(\tau,\DD_{k+n})+\delta.
\]
\end{theorem}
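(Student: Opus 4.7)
The plan is to deduce this directly from the inverse theorem for entropy growth under Euclidean convolutions of \cite{Hochman2014}. By rescaling by $q^k$ and translating, reduce to the case $k=0$: this operation multiplies the scale labels by a constant and so preserves the entropy-porosity property on the shifted scale range, and the scale-$n$ entropies in the statement change by at most $O(1/n)$, which can be absorbed into the eventual $\delta$.

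Proceed by contrapositive. Assume $\frac{1}{n} H(\theta * \tau, \DD_n) < \frac{1}{n} H(\tau, \DD_n) + \delta$ for a sufficiently small $\delta$, and apply the inverse theorem, which provides, for any auxiliary parameters $\varepsilon', m'$, a dichotomy holding at a $(1-\varepsilon')$-fraction of scales $1 \leq i \leq n$: either a $(1-\varepsilon')$-fraction of the level-$i$ components of $\tau$ satisfy $\frac{1}{m'}H(\tau_{x,i}, \DD_{i+m'}) > 1 - \varepsilon'$ (nearly uniform on a $q^{-i}$-interval), or a $(1-\varepsilon')$-fraction of the level-$i$ components of $\theta$ satisfy $\frac{1}{m'}H(\theta_{y,i}, \DD_{i+m'}) < \varepsilon'$ (nearly atomic).

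Choose $\varepsilon'$ much smaller than $\varepsilon$ and take $m' = m$. The $(1-\varepsilon, \varepsilon/2, m)$-entropy-porosity of $\tau$ says that at a $(1-\varepsilon/2)$-fraction of scales, at least a $(1-\varepsilon/2)$-fraction of level-$i$ components of $\tau$ satisfy $\frac{1}{m}H(\tau_{x,i}, \DD_{i+m}) < 1 - \varepsilon/2$, which rules out the first alternative. Hence at a $(1 - O(\varepsilon))$-fraction of scales the second alternative must hold. Using the standard identity
\[
\frac{1}{n} H(\theta, \DD_n) = \mathbb{E}_{1 \leq i \leq n}\!\left[\tfrac{1}{m} H(\theta_{y,i}, \DD_{i+m})\right] + O(m/n),
\]
and bounding the integrand by $\varepsilon'$ on the good scales and by $1$ on the bad ones, we obtain $\frac{1}{n} H(\theta, \DD_n) \leq \varepsilon' + O(\varepsilon) + O(m/n) < \varepsilon$ for $n$ sufficiently large, contradicting hypothesis (3).

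The main subtlety is calibrating the parameter chain: $\varepsilon'$ must be chosen small in terms of $\varepsilon$, then $m'$ aligned with $m$, then $\delta = \delta(\varepsilon', m') = \delta(\varepsilon, m)$ comes from the inverse theorem, and finally $n$ is taken large enough that the residual $O(m/n)$ errors are negligible. The substantive input is entirely the inverse theorem; everything else is bookkeeping and the routine rescaling step.
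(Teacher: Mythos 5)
Your argument is correct, but it is worth noting that the paper does not actually prove this statement: Theorem~\ref{thm:entropy-growth-under-convolution} is simply quoted as following from \cite[Theorem~2.8]{Hochman2014}, with a footnote correcting a constant there and remarking that the passage from dyadic to $q$-adic partitions is harmless. What you have written is, in effect, the proof of that cited black box: you rederive it from the underlying inverse theorem for entropy of convolutions (\cite[Theorem~2.7]{Hochman2014}) via the contrapositive/dichotomy argument, which is essentially how the cited statement is obtained in the original reference. So your route is legitimate and more self-contained; what the paper's approach buys is brevity, and what yours buys is an explicit derivation in which one can see exactly how the constants in hypotheses (2) and (3) interact.

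Two points need care. First, entropy porosity in Definition~\ref{def:porosity} is a statement about the component distribution averaged over scales, not a scale-by-scale statement as you paraphrase it; either pass to ``most scales'' by Markov's inequality, or run the accounting directly against the averaged form (nothing essential changes). Second, and more substantively, your concluding inequality $\frac{1}{n}H(\theta,\DD_n)\le \varepsilon'+O(\varepsilon)+O(m/n)<\varepsilon$ is not a valid inference as written, since an unspecified $O(\varepsilon)$ cannot be compared with $\varepsilon$; the constant is exactly the crux. The porosity deficit $\varepsilon/2$ in hypothesis (2) forces the fraction of scales on which the ``almost uniform components of $\tau$'' branch can hold to be at most roughly $\tfrac{\varepsilon/2}{1-\varepsilon'}$, so the bad-scale contribution to $\frac{1}{n}H(\theta,\DD_{k+n}\mid\DD_k)$ is about $\varepsilon/2$, and only then, with $\varepsilon'\ll\varepsilon$ and $n$ large, does the total fall strictly below $\varepsilon$, contradicting hypothesis (3). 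This sensitivity is precisely the content of the paper's footnote (in \cite[Theorem~2.8]{Hochman2014} the entropy hypothesis $>\varepsilon$ must be strengthened to $>2\varepsilon$ because the porosity deficit there is $\varepsilon$ rather than $\varepsilon/2$). Since your earlier steps do use the $\varepsilon/2$, the proof goes through once this final bound is made explicit. Minor further slips: rescaling by $q^k$ shifts scale indices by $k$ (it does not multiply them), and the translation into a fixed interval is unnecessary if one invokes the inverse theorem for measures of bounded support, as the remark following the theorem in the paper already indicates.
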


We could replace condition (1) of the theorem by the assumption that the measures are supported on sets of diameter $c\cdot q^{-k}$, if $\delta$ and $n$ are allowed to depending on $c$. This can be done by replacing $k$ with $k-\log c$, which requires one to adjust $\ep$ bu $O_c(1/n)$ in (2) and (3), and is compensated for by the adjusting of $\delta$ and $n$.

\subsection{\label{sub:Convolution-with-measures-on-projections}Convolution
with measures on $\AAA_{2,1}$}

Let $f\colon \AAA_{2,1}\times\R^2\mapsto\R$ be the natural action map. Namely,
$$
f(\psi,x)=\psi(x)\text{ for $\psi\in \AAA_{2,1}$ and $x\in\R^2$}.
$$
For measures $\theta\in\mathcal{P}(\AAA_{2,1})$ and $\tau\in\mathcal{P}(\mathbb{R}^{2})$,
let $\theta.\tau\in\mathcal{P}(\R)$ denote the push-forward measure of $\theta\times\tau$ via the map $f$. That is,
$$
\int g(x)d(\theta.\tau)(x)=\iint g(\psi y)d\theta(\psi)d\tau(y).
$$

Since the map $f$ is not linear, Theorem \ref{thm:entropy-growth-under-convolution} and its multi-dimensional analogues do not apply to the convolution operation $\boldsymbol{.}$. But $f$ is a smooth map, so
at small scales is approximately linear. Fixing any $0<c<1$, if $r$ is small enough, then the action of the map on an $r$-ball,
viewed at resolution $cr$, is very close to the action of its derivative on that ball (or rather on the lift of the ball to the tangent space). This is the idea behind the linearization technique
from \cite[Section 5.6]{Hochman2015}. We state and prove here a special
case of this method, adapted to our setting.

\begin{lemma}\label{lem:linearization}
Let $Z\subseteq \AAA_{2,1}\times\mathbb{R}^{2}$ be a compact set. Then for every $\varepsilon>0$, $k>K(\varepsilon)$ and $0<\rho<\rho(Z,\varepsilon,k)$, the following holds.

For any $(\psi_{0},x_{0})\in Z$ and for any $\theta\in\mathcal{P}(B(\psi_{0},\rho))$, $\tau\in\mathcal{P}(B(x_{0},\rho))$,
\[
\left|\frac{1}{k}H(\theta\boldsymbol{.}\tau,\DD_{k-\log\rho})-\frac{1}{k}H((\theta\boldsymbol{.}x_{0})*(\psi_{0}\tau),\DD_{k-\log\rho})\right|<\varepsilon\:.
\]
\end{lemma}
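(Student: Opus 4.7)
\textbf{Proof plan for Lemma \ref{lem:linearization}.}

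The plan is to use first-order linearization of the action map $f(\psi,x)=\psi(x)$ around $(\psi_0,x_0)$, together with the entropy-continuity estimate \eqref{eq:entropy-under-transformation-3}. Define the linearized action
\[
\tilde f(\psi,x)=\psi(x_0)+\psi_0(x)-\psi_0(x_0).
\]
Because $\tilde f$ is a sum of a function of $\psi$ alone and a function of $x$ alone, the pushforward $\tilde f_*(\theta\times\tau)$ equals the convolution $(\theta\boldsymbol{.}x_0)*(\psi_0\tau)$ translated by $-\psi_0(x_0)$. Translation changes scale-$n$ entropy by $O(1)$, so
\[
H(\tilde f_*(\theta\times\tau),\DD_{k-\log\rho})=H((\theta\boldsymbol{.}x_0)*(\psi_0\tau),\DD_{k-\log\rho})+O(1),
\]
and after dividing by $k$ this becomes a $O(1/k)$ error, absorbable into $\varepsilon$ once $k\geq K(\varepsilon)$.

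Next I would bound the linearization error pointwise. Writing $\psi(x)=\bar\psi x+c_\psi$ with $\bar\psi$ the linear part, a direct computation gives
\[
f(\psi,x)-\tilde f(\psi,x)=(\bar\psi-\bar\psi_0)(x-x_0),
\]
so
\[
|f(\psi,x)-\tilde f(\psi,x)|\leq \|\bar\psi-\bar\psi_0\|\cdot|x-x_0|.
\]
Since $Z$ is compact, its $\AAA_{2,1}$-projection lies in a compact set on which the invariant metric $d$ is bi-Lipschitz equivalent to $D$ (Lemma \ref{lem:equivaence-of-metrics}). In particular, for $\rho$ small compared to the compact containing a neighborhood of $Z$, one has $D(\psi,\psi_0)=O_Z(d(\psi,\psi_0))=O_Z(\rho)$, and since the operator norm of the linear part is controlled by $D$, we obtain $\|\bar\psi-\bar\psi_0\|=O_Z(\rho)$. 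Combined with $|x-x_0|\leq\rho$, this yields
\[
\sup_{(\psi,x)\in B(\psi_0,\rho)\times B(x_0,\rho)}|f(\psi,x)-\tilde f(\psi,x)|\leq C_Z\rho^2.
\]

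Now I would invoke the entropy-continuity estimate \eqref{eq:entropy-under-transformation-3}: the partition $\DD_{k-\log\rho}$ has mesh $\rho q^{-k}$, so choosing $\rho<\rho(Z,k)$ small enough that $C_Z\rho^2<\rho q^{-k}$, i.e.\ $\rho<q^{-k}/C_Z$, the pointwise bound above yields
\[
\bigl|H(\theta\boldsymbol{.}\tau,\DD_{k-\log\rho})-H(\tilde f_*(\theta\times\tau),\DD_{k-\log\rho})\bigr|=O(1)
\]
(since $f_*(\theta\times\tau)$ and $\tilde f_*(\theta\times\tau)$ are images of the common measure $\theta\times\tau$ by maps that differ uniformly by less than the scale of the partition). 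Combining with the translation step above and dividing by $k\geq K(\varepsilon)$ gives the claimed $\varepsilon$-bound.

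The only mildly subtle points are: (i) verifying that $\|\bar\psi-\bar\psi_0\|$ is truly controlled by $d(\psi,\psi_0)$ on the relevant compact set, which is where Lemma \ref{lem:equivaence-of-metrics} is used; and (ii) checking that \eqref{eq:entropy-under-transformation-3}, stated there for two maps of $\R$ into $\R$, applies equally well to any two maps from a common source measure into $\R$ whose images lie uniformly within $q^{-n}$ of each other — this follows from the same proof, since the argument there only uses that one can couple the two image measures so that the coupled points lie within $q^{-n}$. No other obstacle arises, and the dependence $\rho=\rho(Z,\varepsilon,k)$ can be taken of the form $\rho<c_Zq^{-k}$.
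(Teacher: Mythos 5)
Your proposal is correct and follows essentially the same route as the paper: linearize the action map at $(\psi_0,x_0)$, observe that the linearization splits into a function of $\psi$ plus a function of $x$ so its pushforward is the convolution $(\theta\boldsymbol{.}x_0)*(\psi_0\tau)$ up to a translation, and then use the uniform $\rho q^{-k}$-closeness of the two maps together with \eqref{eq:entropy-under-transformation-3} (and division by $k$) to absorb all $O(1)$ errors. The only difference is cosmetic: you compute the remainder explicitly as $(\bar\psi-\bar\psi_0)(x-x_0)$, yielding the quantitative choice $\rho<c_Zq^{-k}$, whereas the paper obtains the bound $\Vert f-h_{z_0}\Vert<\rho q^{-k}$ by compactness of $Z$ and bi-Lipschitz equivalence of the metrics.
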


\begin{proof}
Identify $\AAA_{2,1}$ with $\mathbb{R}^{3}$ so that $(a,b,c)\in\mathbb{R}^{3}$
corresponds to the map $\psi\in \AAA_{2,1}$,
\[
\psi(u,v) = au+bv+c.
\]
(this differs from the parametrization in Section~\ref{sub:affine-maps-and-invariant-metric}, but this does not affect the argument). Thus, $f$ can be identified with a map $f\colon\R^5\mapsto\R$ that $f(a,b,c,u,v)=au+bv+c$. For a $z_{0}=(\psi_{0},x_{0})\in Z$, write
\[
df_{z_{0}}=(A_{z_{0}}\quad B_{z_{0}}),
\]
where $A_{z_{0}}$ is a $1\times3$ real matrix and $B_{z_{0}}$ is
$1\times2$. An elementary calculation shows that if $\psi_{0}(u,v)=a_0u+b_0v+c_0$
and $x_{0}=(s_0,t_0)$ then $A_{z_{0}}=(s_0,t_0,1)$ and $B_{z_{0}}=(a_0,b_0)$.
Thus if we write
\[
h_{z_{0}}(z)=f(z_{0})+df_{z_{0}}(z-z_{0})
\]
for the first-order approximation of $f$ at $z_{0}$, then for $z=(\psi,x)$
we have
\[
h_{z_{0}}(\psi,x)=\psi(x_{0})+\psi_{0}(x)-\psi_{0}(x_{0}).
\]
This gives,
\[
(h_{z_{0}})_*(\theta\times\tau)=\delta_{-\psi_{0}(x_{0})}*(\theta\boldsymbol{.}x_{0})*(\psi_{0}\tau).
\]

Note that the norm metric on $\AAA_{2,1}$, induced by the euclidean
norm on $\mathbb{R}^{3}$, and the invariant metric $d$ on $\AAA_{2,1}$,
are bi-Lipschitz equivalent on compact sets. Hence, by compactness
of $Z$, for every $k\geq1$ there exists $\rho_0=\rho_0(Z,k)$ such that for every $0<\rho<\rho_0$ and every $z_{0}=(\psi_{0},x_{0})\in Z$,
\[
\left\Vert f-h_{z_{0}}\right\Vert _{C(B(\psi_{0},\rho)\times B(x_{0},\rho))}<\rho q^{-k},
\]
where the norm $\|\cdot\|_E$ denotes the supremum norm on the domain $E$. Then by \eqref{eq:entropy-under-transformation-3}
\begin{eqnarray*}
H(f(\theta\times\tau),\DD_{k-\log\rho}) & = & H((h_{z_{0}})_*(\theta\times\tau),\DD_{k-\log\rho})+O(1)\\
 & = & H((\theta\boldsymbol{.}x_{0})*(\psi_{0}\tau),\DD_{k-\log\rho})+O(1).
\end{eqnarray*}
Dividing by $k$, the error term $O(1/k)$ can be made less than $\varepsilon$ by taking $k$ large, which is what we wanted to prove.
\end{proof}

Recall that for an affine transformation $\psi\in \AAA_{2,1}$, $\|\psi\|$ denotes the induced norm of the linear operator $x\mapsto\psi(x)-\psi(0)$. Also recall that $\mu$ denotes the self-affine measure.

It will be convenient to define a random measure $\mu_{\varphi(x,n)}$, and implicitly a random point $x$, in analogy to the component $\mu_{x,n}$. This is just the random measure $\varphi_{\II(n)}\mu$ together with a point $x$ chosen with distribution $\varphi_{\II(n)}\mu$ (conditionally independently of the choice of $\II(n)$). When $\mu_{\varphi(x,n)}$ appears in probabilistic settings, our conventions are the same as for random components. This notation will be only used for the random word $\II(n)$.

\begin{lemma}
\label{lem:multiscale-entropy-action}Let $t>0$ and let $\theta\in\mathcal{P}(\AAA_{2,1})$ 
satisfy $\Vert\psi\Vert=\Theta(q^{-t})$ for all $\psi\in\supp(\theta)$.
Then for every $1\leq k \leq n$,
\[
\frac{1}{n}H(\theta\boldsymbol{.}\mu,\DD_{t+n})\geq\mathbb{E}_{1\leq i\leq n}\left(\frac{1}{k}H(\theta_{\psi,i}\boldsymbol{.}\mu_{\varphi(i,x)},\DD_{t+i+k})\right)-O(\frac{k}{n}+\frac{1}{k})\:.
\]
\end{lemma}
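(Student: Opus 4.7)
The plan is a standard two-scale chain-rule computation. Write $\nu=\theta\boldsymbol{.}\mu$ and $a_j=H(\nu,\DD_j)$. I would first reduce the claim to a bound on conditional entropies at scales $t+i$ and $t+i+k$: the telescoping identity
\begin{equation*}
\frac{1}{k}\sum_{i=1}^{n}(a_{t+i+k}-a_{t+i}) = \frac{1}{k}\sum_{i=t+n+1}^{t+n+k}a_i - \frac{1}{k}\sum_{i=t+1}^{t+k}a_i,
\end{equation*}
combined with monotonicity of $a_j$ and the trivial bound $a_{j+1}\leq a_j+\log q$, yields
\begin{equation*}
\frac{1}{n}H(\nu,\DD_{t+n}) \geq \EE_{1\leq i\leq n}\Bigl(\tfrac{1}{k}H(\nu,\DD_{t+i+k}\,|\,\DD_{t+i})\Bigr) - O(k/n),
\end{equation*}
since the first sum on the right-hand side of the telescope is at most $a_{t+n}+O(k)$ and the second is non-negative.

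Next, because $\theta=\EE(\theta_{\psi,i})$ and $\mu=\EE(\mu_{\varphi(x,i)})$ with the two selections independent, we may write $\nu=\EE(\theta_{\psi,i}\boldsymbol{.}\mu_{\varphi(x,i)})$ at each level $i$. The key geometric input is that each piece $\theta_{\psi,i}\boldsymbol{.}\mu_{\varphi(x,i)}$ is supported on $O(1)$ atoms of $\DD_{t+i}$. Indeed, $\theta_{\psi,i}$ is contained in a $\DD_i$-cell of $\AAA_{2,1}$, which by the construction in Section \ref{sub:q-adic-partitions} has $d$-diameter $O(q^{-i})$ and, by the hypothesis on $\theta$, contains some $\psi$ with $\|\psi\|=\Theta(q^{-t})$; meanwhile $\mu_{\varphi(x,i)}=\varphi_{\II(i)}\mu$ is supported on $\varphi_{\II(i)}X$, which has Euclidean diameter $O(q^{-i})$ and is contained in a fixed ball about the origin, uniformly in $\II(i)$, since $X$ is bounded. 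Lemma \ref{lem:diameter-bound} then gives $\diam(\supp(\theta_{\psi,i}\boldsymbol{.}\mu_{\varphi(x,i)}))=O(q^{-t-i})$, so this support meets only $O(1)$ atoms of $\DD_{t+i}$.

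Finally, by concavity of entropy applied on each atom of $\DD_{t+i}$,
\begin{equation*}
H(\nu,\DD_{t+i+k}\,|\,\DD_{t+i}) \geq \EE\bigl(H(\theta_{\psi,i}\boldsymbol{.}\mu_{\varphi(x,i)},\DD_{t+i+k}\,|\,\DD_{t+i})\bigr),
\end{equation*}
and the support bound yields $H(\theta_{\psi,i}\boldsymbol{.}\mu_{\varphi(x,i)},\DD_{t+i})=O(1)$, so that the conditional entropy on the right differs from the unconditional scale-$(t+i+k)$ entropy by at most $O(1)$. Dividing by $k$, taking expectations over $1\leq i\leq n$, and combining with the first reduction produces the stated inequality with combined error $O(k/n+1/k)$. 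The argument is largely routine; the only slightly delicate point is the uniform $O(q^{-t-i})$ diameter estimate across random cylinders, which is exactly what the $\AAA_{1,1}$-invariance of $d$, encapsulated in Lemma \ref{lem:diameter-bound}, is designed to provide.
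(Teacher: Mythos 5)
Your proof is correct and follows essentially the same route as the paper's: the same decomposition $\theta\conv\mu=\EE\bigl(\theta_{\psi,i}\conv\mu_{\varphi(i,x)}\bigr)$, concavity of (conditional) entropy, and the diameter bound of Lemma \ref{lem:diameter-bound} to drop the conditioning at cost $O(1/k)$. The only difference is cosmetic: you telescope $H(\nu,\DD_{t+i+k})-H(\nu,\DD_{t+i})$ over all $1\leq i\leq n$ and control the boundary sums directly, whereas the paper runs the chain rule along blocks of length $k$ for each residue $r$ and then averages over $r$ -- the two computations are identical in substance.
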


\begin{proof}
  Since for any pair of component $\theta_{\psi,i}$ and cylinder $\mu_{\varphi(i,x)}$, by definition we have (a) $\psi\in\supp\theta_{\psi,i}$ and $x\in\supp\mu_{\varphi(i,x)}$, (b) the components  are supported on sets of diameter $O(q^{-i})$, and (c) $\supp(\theta_{\psi,i}\conv\mu _{\varphi(i,x)}\subseteq (\supp\theta_{\psi,i})\conv(\supp\mu_{\varphi(i,x)})$, by Lemma \ref{lem:diameter-bound} and the hypothesis on $\|\psi\|$ we have \[
  \diam(\theta_{\psi,i}\boldsymbol{.}\mu_{\varphi(i,x)})=O(\|\psi\|q^{-i}) = O(q^{-(i+t)}).
  \]

Let $\ell$ be the integral part of $\frac{n}{k}$. From the identity
\[
\theta\boldsymbol{.}\mu=\mathbb{E}_{i}\left(\theta_{\psi,i}\boldsymbol{.}\mu_{\varphi(i,x)}\right),
\]
(which is valid for each $i\ge 1$), we get that for every residue $0\leq r < k$,

\begin{equation}\label{eq:4}
\begin{split}
  H(\theta\boldsymbol{.}\mu,\DD_{t+n})&=\sum_{m=0}^{\ell-2}H\left(\theta\boldsymbol{.}\mu,\DD_{t+(m+1)k+r}\mid\DD_{t+mk+r}\right)\\
  &\;\;\;\;+H(\theta\conv\mu,\QQ_{t+r})+H(\theta\conv\mu,\QQ_{t+n}|\QQ_{t+(\ell-1)k+r})\\
  & \geq  \sum_{m=0}^{\ell-2}H\left(\theta\boldsymbol{.}\mu,\DD_{t+(m+1)k+r}\mid\DD_{t+mk+r}\right)\\
&\geq \sum_{m=0}^{\ell-2}\mathbb{E}_{i=mk+r}\left(H\left(\theta_{\psi,i}\boldsymbol{.}\mu_{\varphi(i,x)},\DD_{t+k+i}\mid\DD_{t+i}\right)\right)\:-O(1).
\end{split}
\end{equation}
where in the last line we used concavity of entropy to deal with the main term. Also, since $\theta_{\psi,i}\boldsymbol{.}\mu_{\varphi(i,x)}$ is supported on a set of diameter $O(q^{-(t+mk+r)})$ (since  $i=mk+r$), we can dispose of the conditioning in \eqref{eq:4} at the cost of an $O(1)$ error in each summand, and obtain
\[
H(\theta\boldsymbol{.}\mu,\DD_{t+n})\geq\sum_{m=0}^{\ell-2}\mathbb{E}_{i=mk+r}\left(H\left(\theta_{\psi,i}\boldsymbol{.}\mu_{\varphi(i,x)},\DD_{t+k+i}\right)\right)+O(\ell)\:.
\]
Now by averaging over $0\le r\le k-1$ and dividing by $n$, and recalling that $\ell/n\leq1/k$, we get,
\begin{multline*}
\frac{1}{n}H(\theta\boldsymbol{.}\mu,\DD_{t+n})\geq\frac{1}{n}\sum_{r=0}^{k-1}\sum_{m=0}^{\ell-2}\mathbb{E}_{i=mk+r}\left(\frac{1}{k}H\left(\theta_{\psi,i}\boldsymbol{.}\mu_{\varphi(i,x)},\DD_{t+k+i}\right)\right)-O(\frac{\ell}{n})\\
=\mathbb{E}_{1\leq i\leq n}\left(\frac{1}{k}H\left(\theta_{\psi,i}\boldsymbol{.}\mu_{\varphi(i,x)},\DD_{t+k+i}\right)\right)-O(\frac{k}{n}+\frac{1}{k}),
\end{multline*}
which completes the proof of the lemma.
\end{proof}
The proof of the following lemma is similar to the one above or to \cite[Lemma~3.4]{Hochman2014}, and so
it is omitted.
\begin{lemma}
\label{lem:multiscale-entropy-affine-space}Let $\theta\in\mathcal{P}(\AAA_{2,1})$
and $n\ge k\ge1$ be given. Set $r=\diam(\supp(\theta))$, then
\[
\frac{1}{n}H(\theta,\DD_{n})=\mathbb{E}_{1\le i\le n}\left(\frac{1}{k}H\left(\theta_{\psi,i},\DD_{i+k}\right)\right)+O(\frac{k}{n}+\frac{\log(1+r)}{n})\:.
\]
\end{lemma}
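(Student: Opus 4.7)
The plan is to mimic the strategy of Lemma \ref{lem:multiscale-entropy-action}, but without the convolution structure the argument is considerably simpler: no linearization or concavity step is needed, and the chain rule for entropy gives an exact equality up to explicit boundary errors. First, for each residue $s\in\{0,1,\ldots,k-1\}$, I would apply the chain rule for conditional entropy to the increasing sequence of partitions $\DD_s\prec \DD_{k+s}\prec \DD_{2k+s}\prec\ldots$ and write
\[
H(\theta,\DD_n)=H(\theta,\DD_s)+\sum_{m=0}^{M_s-1}H(\theta,\DD_{(m+1)k+s}\,|\,\DD_{mk+s})+H(\theta,\DD_n\,|\,\DD_{M_s k+s}),
\]
where $M_s=\lfloor(n-s)/k\rfloor$. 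The final conditional entropy is $O(k)$ since $M_s k+s\ge n-k$ and each refinement step contributes $O(1)$ by Lemma \ref{lem:Qn-has-bounded-degree}.

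Next, by \eqref{eq:component-entropy-is-conditional-entropy}, each middle summand equals $k\cdot\EE_{i=mk+s}\bigl(\tfrac{1}{k}H(\theta_{\psi,i},\DD_{i+k})\bigr)$. Averaging the identity over $s\in\{0,\ldots,k-1\}$ and dividing by $k$, the indices $\{mk+s : 0\le s<k,\ 0\le m<M_s\}$ cover $\{0,1,\ldots,n-1\}$ except for an $O(k)$-sized right tail, so the double sum telescopes (up to $O(k^2)$ boundary error across the $k$ residues) to
\[
\sum_{s=0}^{k-1}\sum_{m=0}^{M_s-1}H(\theta,\DD_{(m+1)k+s}\,|\,\DD_{mk+s})=kn\cdot\EE_{1\le i\le n}\!\left(\tfrac{1}{k}H(\theta_{\psi,i},\DD_{i+k})\right)+O(k^2).
\]

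For the error arising from the initial terms $H(\theta,\DD_s)$, note that $\supp(\theta)$ has diameter $r$ and sits inside the $3$-dimensional manifold $\AAA_{2,1}$; together with property \eqref{item:ball} of the partitions $\DD_s$ this gives a covering bound by $O\bigl((1+rq^s)^3\bigr)$ cells, hence
\[
H(\theta,\DD_s)=O(\log(1+r)+s)=O(\log(1+r)+k)
\]
uniformly in $s\in\{0,\ldots,k-1\}$. Summing over $s$, the total initial-term contribution is $O(k\log(1+r)+k^2)$, and the final-term contribution is $O(k^2)$.

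Putting the pieces together, I obtain
\[
k\,H(\theta,\DD_n)=kn\cdot\EE_{1\le i\le n}\!\left(\tfrac{1}{k}H(\theta_{\psi,i},\DD_{i+k})\right)+O(k\log(1+r)+k^2),
\]
and dividing through by $kn$ yields the claimed estimate with error $O(k/n+\log(1+r)/n)$. The only mildly delicate point is the covering bound for $H(\theta,\DD_s)$, which uses the dimensionality of $\AAA_{2,1}$ and property \eqref{item:ball} of the adapted partitions; everything else is bookkeeping and the chain rule.
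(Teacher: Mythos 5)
Your decomposition is exactly the intended route: the paper omits this proof, pointing to the proof of Lemma \ref{lem:multiscale-entropy-action} and to \cite[Lemma~3.4]{Hochman2014}, and your chain rule over the $k$ residues, the identity $H(\theta,\DD_{i+k}\mid\DD_{i})=\EE(H(\theta_{\psi,i},\DD_{i+k}))$, and the $O(k)$ control of the head and tail terms via the bounded refinement degree of Lemma \ref{lem:Qn-has-bounded-degree} are precisely that argument; all of this bookkeeping is correct.

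The gap is in the one step you yourself flag as delicate: the bound $H(\theta,\DD_s)=O(\log(1+r)+s)$ via a covering of $\supp\theta$ by $O((1+rq^s)^3)$ cells. That count tacitly assumes polynomial (Euclidean) volume growth, but diameters here are taken in the $\AAA_{1,1}$-invariant metric $d$, and $(\AAA_{2,1},d)$ has exponential volume growth: in the coordinates $(t,u,a)$ of Section \ref{sub:affine-maps-and-invariant-metric} the metric on the $(t,a)$-factor is $dt^{2}+e^{-2t}da^{2}$, a hyperbolic plane. A $d$-ball of radius $r$ has volume of order $e^{r}$ and can meet $e^{\Theta(r)}$ cells of $\DD_0$, so $H(\theta,\DD_s)$ can genuinely be of order $r+s$ rather than $\log(1+r)+s$. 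Concretely, take $\theta$ uniform on roughly $e^{cr}$ maps inside a $d$-ball of radius $r$ which are pairwise separated by more than the diameter of a $\DD_0$-cell: then every component $\theta_{\psi,i}$ with $i\ge1$ is a point mass, so the right-hand average vanishes, while $\frac{1}{n}H(\theta,\DD_n)\asymp r/n$; hence no covering argument can produce the $\log(1+r)$ error for arbitrary $r$, and your claimed bound $O((1+rq^{s})^{3})$ is false in this metric. What your argument honestly yields is $H(\theta,\DD_s)=O(r+s)$ and therefore the error term $O(\frac{k}{n}+\frac{r}{n})$. That weaker form is all the paper ever uses (the lemma is applied in the proof of Theorem \ref{thm:inverse-theorem-for-projections} with $\diam(\supp\theta)<1/\ep$ fixed and $n\to\infty$), and the $\log(1+r)$ in the statement is inherited from the Euclidean analogue \cite[Lemma~3.4]{Hochman2014}, where the polynomial covering count is valid. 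So either prove and use the $O((k+r)/n)$ version, or justify the level-$0$ entropy bound differently (e.g.\ after normalizing so that the support lies in a fixed compact subset of $\AAA_{2,1}$, where $d$ and the norm metric $D$ are bi-Lipschitz and your dimension count is legitimate); as written, the covering step does not hold in the invariant metric.
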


 The proof of the following lemma is similar to the one given in \cite[Corollary~5.10]{Hochman2015}, but for completeness we give the details.

\begin{lemma}
\label{lem:ent-on-plane-from-ent-on-aff-gr}For every compact set $Z\subseteq \AAA_{2,1}$ there exists a constant
$C=C(Z,\mu)\ge1$ such that for every $\theta\in\mathcal{P}(\AAA_{2,1})$ supported on $Z$ and every
$k,i\ge1$,
\[
\mu\{x\::\:\frac{1}{k}H(\theta\boldsymbol{.}x,\DD_{i+k})\ge\frac{1}{Ck}H(\theta,\DD_{i+k}^{\AAA_{2,1}})-\frac{C}{k}\}\ge C^{-1}\:.
\]
\end{lemma}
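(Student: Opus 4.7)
The plan is to exploit that an affine functional $\psi\in\AAA_{2,1}$ is determined by its values at any three non-collinear points, combined with subadditivity of entropy on product spaces. First I would parametrize $\AAA_{2,1}$ as $\R^3$ via $\psi(u,v)=au+bv+c\leftrightarrow(a,b,c)$; under this identification the evaluation $\psi\mapsto\psi(y)$ becomes a linear functional on $\R^3$. Under the standing strong irreducibility assumption, the attractor $X=\supp\mu$ cannot lie on any affine line (otherwise the linear parts $A_i$ would share a common invariant one-dimensional subspace). I would therefore fix three non-collinear points in $\supp\mu$ together with three pairwise disjoint open balls $B_1,B_2,B_3$ around them of positive $\mu$-measure, chosen so that every triple $(y_1,y_2,y_3)\in B_1\times B_2\times B_3$ remains non-collinear with a uniform quantitative bound.

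For such a triple, the joint evaluation $T=T_{y_1,y_2,y_3}\colon\AAA_{2,1}\to\R^3$ given by $T(\psi)=(\psi(y_1),\psi(y_2),\psi(y_3))$ is a linear isomorphism whose norm and inverse norm are bounded uniformly in terms of $Z$, the balls $B_j$, and the quantitative non-collinearity. By Lemma~\ref{lem:equivaence-of-metrics}, the invariant metric $d$ is bi-Lipschitz equivalent to the Euclidean metric on the compact set $Z$, so $T$ is bi-Lipschitz on $Z$ with constants depending only on $Z$ and $\mu$. A standard partition comparison then yields $|H(\theta,\DD_{i+k}^{\AAA_{2,1}})-H(T_*\theta,\DD_{i+k}^3)|=O(1)$ uniformly in $\theta,i,k$. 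Combining this with coordinate subadditivity $H(T_*\theta,\DD_{i+k}^3)\leq\sum_{j=1}^{3}H(\pi_j T_*\theta,\DD_{i+k})=\sum_{j=1}^{3}H(\theta\boldsymbol{.} y_j,\DD_{i+k})$, where $\pi_j\colon\R^3\to\R$ is the $j$-th coordinate projection, gives
\[
H(\theta,\DD_{i+k}^{\AAA_{2,1}})\leq\sum_{j=1}^{3}H(\theta\boldsymbol{.} y_j,\DD_{i+k})+O(1)
\]
for every $(y_1,y_2,y_3)\in B_1\times B_2\times B_3$, with the implicit constant depending only on $Z$ and $\mu$.

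Next I would integrate this pointwise inequality against the normalized product measure $\prod_{j=1}^3\mu|_{B_j}/\mu(B_j)$; by independence of the three coordinates some index $j$ satisfies $\mathbb{E}_{y\sim\mu|_{B_j}/\mu(B_j)}[H(\theta\boldsymbol{.} y,\DD_{i+k})]\geq\tfrac{1}{3}(H(\theta,\DD_{i+k}^{\AAA_{2,1}})-O(1))$. The same partition comparison applied to a single evaluation also yields the matching pointwise upper bound $H(\theta\boldsymbol{.} y,\DD_{i+k})\leq H(\theta,\DD_{i+k}^{\AAA_{2,1}})+O(1)$ for $y\in B_j$. A one-sided first-moment bound (if $0\leq X\leq M$ and $\mathbb{E}X\geq m$, then $\mathbb{P}(X\geq m/2)\geq m/(2M)$) then produces, once $H(\theta,\DD_{i+k}^{\AAA_{2,1}})$ exceeds a $C$-dependent threshold, a subset of $B_j$ whose $\mu$-measure is bounded below by a positive constant depending only on $Z$ and $\mu$, on which $H(\theta\boldsymbol{.} y,\DD_{i+k})\geq\tfrac{1}{C}H(\theta,\DD_{i+k}^{\AAA_{2,1}})-C$. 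The sub-threshold case is vacuous because the desired right-hand side is then non-positive. Dividing by $k$ delivers the lemma.

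The hard part will be the uniformity bookkeeping: every implicit $O(1)$ must be independent of $\theta$, $i$, and $k$, and depend only on $Z$ and $\mu$. This reduces to the joint compactness of $Z$ and of $\bigcup_j\overline{B}_j$ together with the quantitative non-collinearity across $B_1\times B_2\times B_3$, which jointly bound the operator norms of $T_{y_1,y_2,y_3}$ and its inverse, and hence control the combinatorial comparison between cells of $\DD_{i+k}^{\AAA_{2,1}}$ meeting $Z$ and the pullback partition $T^{-1}\DD_{i+k}^3$.
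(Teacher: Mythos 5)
Your proposal is correct, and its core ingredient is the same as the paper's: evaluation of $\psi$ at three affinely independent points, which in the $(a,b,c)$-coordinates is a linear isomorphism onto $\R^3$ that is bi-Lipschitz on compact sets with constants controlled by the quantitative non-collinearity, followed by the partition comparison $|H(\theta,\DD_{i+k}^{\AAA_{2,1}})-H(T_*\theta,\DD^3_{i+k})|=O(1)$ and coordinatewise subadditivity, yielding $H(\theta,\DD_{i+k}^{\AAA_{2,1}})\le\sum_j H(\theta\conv y_j,\DD_{i+k})+O(1)$. Where you diverge is in how the measure bound is extracted. The paper does not fix the three points in advance: it invokes the uniform transversality of $\mu$ to lines (Lemma \ref{lem:uniform-continuity-across-scales}, via a variant of \cite[Lemma~5.9]{Hochman2015}) to show that \emph{any} set of $\mu$-measure above a fixed threshold contains a $\rho$-independent triple, and then runs a contradiction argument: if the bad set were large it would contain such a triple, violating the three-point inequality. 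This gives the stronger conclusion that the good set has measure at least $2/3$, with the sharper constant $\tfrac{1}{3k}H-\tfrac{C'}{k}$. You instead fix three disjoint balls of positive $\mu$-measure around non-collinear points of $\supp\mu$, integrate the three-point inequality, pigeonhole on the best index, and apply a reverse-Markov first-moment bound together with the pointwise upper bound $H(\theta\conv y,\DD_{i+k})\le H(\theta,\DD_{i+k}^{\AAA_{2,1}})+O(1)$ (which, as you implicitly use, needs only that evaluation is Lipschitz, not bi-Lipschitz). This buys a more elementary argument -- you need only that $\supp\mu$ is not contained in a line (which requires, in addition to irreducibility, that $X$ is not a singleton; this is guaranteed by the standing SOSC assumption), not the uniform non-concentration of $\mu$ near lines -- at the cost of a weaker conclusion: a positive-proportion good set with constants like $\tfrac1{6k}H-\tfrac{O(1)}{k}$ on a set of measure $\gtrsim\min_j\mu(B_j)$, rather than measure close to $1$. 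Since the lemma as stated, and its use in the proof of Theorem \ref{thm:inverse-theorem-for-projections}, only require a positive proportion, your weaker bound suffices; the only bookkeeping worth making explicit is the Borel measurability and finiteness of $y\mapsto H(\theta\conv y,\DD_{i+k})$ for the integration step (immediate from compact support of $\theta$ and boundedness of the balls), and the fact that the pigeonholed index $j$ may depend on $\theta,i,k$, which is harmless because $\min_j\mu(B_j)>0$ depends only on $\mu$.
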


\begin{proof}
For a subset $W$ of a metric space write $W^{(\ep)}$ for its $\ep$-neighborhood. By Lemma~\ref{lem:uniform-continuity-across-scales}, for every $\varepsilon>0$ there exists a $\rho>0$ such that for every hyperplane $W\subseteq\R^2$, $\mu(W^{(\ep)})<\varepsilon$. Thus, similarly to \cite[Lemma~5.9]{Hochman2015}, for every Borel set $A\subseteq\R^2$ with $\mu(A)>(3\varepsilon)^{1/3}$ there exist $x_1,x_2,x_3\in A$ such that the distance between $x_i$ and the hyperplane defined by the two points $\{x_1,x_2,x_3\}\setminus\{x_i\}$ is at least $\rho$. We say in this case that the points $\{x_1,x_2,x_3\}$ is $\rho$-independent.

\begin{claim}
There exists a constant $C'=C'(Z,\rho)$ such that if $\{x_1,x_2,x_3\}$ are $\rho$-independent points, then $$H(\theta,\DD_{i+k}^{\AAA_{2,1}})\leq\sum_{i=1}^3H(\theta.x_i,\DD_{i+k})+C'.$$
\end{claim}
\begin{proof}
  Let $\pi_i:\R^3\mapsto\R$ be the coordinate projections. Define $g:\AAA_{2,1}\mapsto\R^3$ function as $g(\psi)=(\psi(x_1),\psi(x_2),\psi(x_3))$. It is easy to see that $g$ is a diffeomorphism and that its restriction to $Z$ is bi-Lipschitz to its image g(Z) with constants which depend only on $Z$ and $\rho$.  Thus, 
there is a constant $C'$ depending on $Z,\rho$ such that 
$$
\left|H(\theta,\DD_{i+k}^{\AAA_{2,1}})-H(f\theta,\QQ_{i+k}^{\R^3})\right|\leq C'.
$$
 Since $\QQ_{i+k}^{\R^3}=\bigvee_{j=1}^3\pi_j^{-1}\QQ_{i+k}^\R$, this is the same as
$$
\left|H(\theta,\DD_{i+k}^{\AAA_{2,1}})-H(f\theta,\bigvee_{j=1}^3\pi_i^{-1}\QQ_{i+k})\right|\leq C'.
$$ 
The statement now follows by
$$
H(f\theta,\bigvee_{i=1}^3\pi_i^{-1}\DD_n)\leq\sum_{i=1}^3H(f\theta,\pi_i^{-1}\DD_n)=\sum_{i=1}^3H(\theta.x_i,\DD_n).
$$
\end{proof}
Let $C'$ be as in the claim and set
$$
B=\left\{x\in\R^2:\frac{1}{k}H(\theta.x,\DD_{i+k})>\frac{1}{3k}H(\theta,\DD_{i+k}^{\AAA_{2,1}})-\frac{C'}{k})\right\}.
$$
\begin{claim}
 $\mu\left(B\right)>1-(3\varepsilon)^{1/3}$.
\end{claim}
\begin{proof}
We argue by contradiction. Suppose that $\mu(\R^2\setminus B)>(3\varepsilon)^{1/3}$. Thus, there exist $\rho$-independent points $x_1,x_2,x_3\in\R^2\setminus B$. Hence, by applying the claim
$$
H(\theta,\DD_{i+k}^{\AAA_{2,1}})\leq\sum_{i=1}^3H(\theta.x_i,\DD_{i+k})+C'\leq H(\theta,\DD_{i+k}^{\AAA_{2,1}})-2C',
$$
which is a contradiction.
\end{proof}

The lemma follows by choosing $\varepsilon=1/81$ and $C=\max\{3,C'\}$.

\end{proof}

\begin{theorem}
\label{thm:inverse-theorem-for-projections} Assume that $\{\overline{A}_i\}$ generates a non-compact and totally irreducible subgroup of $GL_2(\R)$. Let $\alpha$ be as defined
in Lemma~\ref{lem:alpha} and assume
that $\alpha<1$. Then for every $\varepsilon>0$ there exists $\delta=\delta(\varepsilon)>0$ such that for $n\ge N(\varepsilon,\delta)$ the following holds:

Let
$t\ge1$ and let $\theta\in\mathcal{P}(\AAA_{2,1})$ satisfy $\Vert\psi\Vert=\Theta(q^{-t})$
for all $\psi\in\supp(\theta)$. Assume that $\diam(\supp(\theta))<1/\varepsilon$
and $\frac{1}{n}H(\theta,\DD_{n})>\varepsilon$, then $\frac{1}{n}H(\theta\boldsymbol{.}\mu,\DD_{t+n})>\alpha+\delta$.
\end{theorem}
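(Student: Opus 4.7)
The plan is to combine multi-scale decomposition, linearization, and the Euclidean inverse theorem (Theorem~\ref{thm:entropy-growth-under-convolution}), in the spirit of \cite{Hochman2014,Hochman2015} for self-similar measures. Given $\varepsilon>0$, select $\varepsilon'\ll\varepsilon$, then $m\ge M(\varepsilon')$, then $k\gg m$, and finally $N\gg k$; the eventual $\delta$ is produced at the end. Lemma~\ref{lem:multiscale-entropy-action} gives
\[
\tfrac{1}{n}H(\theta\boldsymbol{.}\mu,\DD_{t+n})\ge\EE_{1\le i\le n}\tfrac{1}{k}H(\theta_{\psi,i}\boldsymbol{.}\mu_{\varphi(i,x)},\DD_{t+i+k})-O(\tfrac{k}{n}+\tfrac{1}{k}),
\]
while Lemma~\ref{lem:multiscale-entropy-affine-space}, together with $\diam(\supp\theta)<1/\varepsilon$, converts the hypothesis $\tfrac{1}{n}H(\theta,\DD_n)>\varepsilon$ into $\EE_{1\le i\le n}\tfrac{1}{k}H(\theta_{\psi,i},\DD_{i+k}^{\AAA_{2,1}})>\varepsilon/2$. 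Markov then supplies a positive-density set of scales $i$ at which a positive-$\theta$-mass of components satisfies $\tfrac{1}{k}H(\theta_{\psi,i},\DD_{i+k}^{\AAA_{2,1}})>\varepsilon/4$.

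For each component pair I fix measurable centres $\psi_0=\psi_0(\psi,i)\in\supp\theta_{\psi,i}$ and $x_0=x_0(x,i)\in\supp\mu_{\varphi(i,x)}$; using the $\AAA_{1,1}$-invariance of $d$ to push $(\psi_0,x_0)$ into a fixed compact set $Z$ depending only on $\varepsilon$ and $\supp\mu$, Lemma~\ref{lem:linearization} at $\rho=\Theta(q^{-i})$ replaces, up to a per-scale error $o_k(1)$,
\[
\tfrac{1}{k}H(\theta_{\psi,i}\boldsymbol{.}\mu_{\varphi(i,x)},\DD_{t+i+k})\ \text{by}\ \tfrac{1}{k}H\bigl((\theta_{\psi,i}\boldsymbol{.}x_0)\ast(\psi_0\mu_{\varphi(i,x)}),\DD_{t+i+k}\bigr),
\]
a genuine convolution on $\R$ whose factors sit on intervals of length $O(q^{-(t+i)})$ by Lemma~\ref{lem:diameter-bound}. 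I now verify the two non-trivial hypotheses of Theorem~\ref{thm:entropy-growth-under-convolution} with $k_{\mathrm{there}}=t+i$, $n_{\mathrm{there}}=k$. The base $\tau=\psi_0\mu_{\varphi(i,x)}=(\psi_0\circ\varphi_{\II(i)})\mu$ is an affine image of $\mu$ under an element of $\AAA_{2,1}$ of norm $\Theta(q^{-(t+i)})$, so Proposition~\ref{prop:projections-are-porous} (applied with $\psi_0$ in place of $\psi$) shows that for most $i$ and $\II(i)$, $\tau$ is $(\alpha,\varepsilon',m)$-entropy porous from scale $t+i$ to $t+i+k$; since $\alpha<1$ and $\varepsilon'$ is small, this upgrades to the $(1-\varepsilon,\varepsilon/2,m)$-porosity required. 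For the slider $\theta_{\psi,i}\boldsymbol{.}x_0$, Lemma~\ref{lem:ent-on-plane-from-ent-on-aff-gr}, applied after the $\AAA_{1,1}$-rescaling $S_{q^t}$ that normalises $\|\psi\|\sim 1$, yields, for each high-entropy component found above, a set of $\mu_{\varphi(i,x)}$-measure $\ge C^{-1}$ of centres $x_0$ with $\tfrac{1}{k}H(\theta_{\psi,i}\boldsymbol{.}x_0,\DD_{t+i+k})\ge\varepsilon/(5C)$.

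On the intersection $\mathcal{E}$ of these two events, which has $(i,\psi,x,\II(i))$-probability $\ge c_0=c_0(\varepsilon)>0$ across a positive-density set of scales, Theorem~\ref{thm:entropy-growth-under-convolution} produces an additive gain $\delta_1=\delta_1(\varepsilon)>0$ beyond the base entropy, which by Lemma~\ref{lower-bound-on-entropy-of-projected-components} equals $\alpha\pm\varepsilon'$ on a further high-probability set. Off $\mathcal{E}$, the trivial concavity estimate $H(\tau\ast\rho,\DD_n)\ge H(\tau,\DD_n)-O(1)$ still gives $\tfrac{1}{k}H(\theta_{\psi,i}\boldsymbol{.}\mu_{\varphi(i,x)},\DD_{t+i+k})\ge\alpha-\varepsilon'$. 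Averaging over $i$,
\[
\tfrac{1}{n}H(\theta\boldsymbol{.}\mu,\DD_{t+n})\ge\alpha+c_0\delta_1-O\!\left(\varepsilon'+\tfrac{k}{n}+\tfrac{1}{k}\right)\ge\alpha+\delta,
\]
with $\delta:=c_0\delta_1/2$ once $\varepsilon'$, $k^{-1}$, $k/n$ are small enough.

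The principal technical obstacle is the bookkeeping for $\mathcal{E}$. The porosity event holds with probability close to one in $(i,\II(i))$ uniformly in $\psi_0$, but the positive-entropy event for the slider holds only on a positive-probability subset of $(\psi,i,x_0)$; aligning them requires the $S_{q^t}$-rescaling that identifies the $\AAA_{2,1}$-scale $\DD^{\AAA_{2,1}}_{i+k}$ used by Lemma~\ref{lem:ent-on-plane-from-ent-on-aff-gr} with the Euclidean scale $\DD_{t+i+k}$ used by Theorem~\ref{thm:entropy-growth-under-convolution}, and verifying that the combined event has uniformly positive probability independent of $t$ and of the location of $\supp\theta$ in $\AAA_{2,1}$ is the most delicate part of the argument.
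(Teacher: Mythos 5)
Your proposal is correct and follows essentially the same route as the paper's proof: the same multi-scale decomposition (Lemmas \ref{lem:multiscale-entropy-action} and \ref{lem:multiscale-entropy-affine-space}), the $S_{q^t}T_{-\psi(0)}$ rescaling plus Lemma \ref{lem:ent-on-plane-from-ent-on-aff-gr} for the slider entropy, Proposition \ref{prop:projections-are-porous} and Proposition \ref{thm:lower-bound-on-entropy-of-projected-components-2} for porosity and the base bound $\alpha$, linearization via Lemma \ref{lem:linearization}, and Theorem \ref{thm:entropy-growth-under-convolution} on the intersection event, with the final union-bound/averaging identical to the paper's (the only cosmetic slip is that Lemma \ref{lem:ent-on-plane-from-ent-on-aff-gr} gives a $\mu$-positive set of good points rather than a $\mu_{\varphi(i,x)}$-positive one, which is exactly how the paper's joint-probability bookkeeping resolves the alignment you flag as delicate).
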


\begin{remark}
The proof can be adapted to other measures than $\mu$. In the proof we use the cylinder structure of $\mu$ but can be adapted to any measure $\mu$ such that $\psi(\mu_{x,i})$ is entropy porous, at suitable scales, for most components $\mu_{x,i}$ and all $\psi\in \AAA_{2,1}$.
\end{remark}

\begin{proof}
Let $\varepsilon>0$, and let $n\ge k\ge1$ be integers such that $\frac{1}{k}$
and $\frac{k}{n}$ are small in a manner described later in the proof.
In particular, these quantities are small with respect to $\varepsilon$.
Let $\theta\in\mathcal{P}(\AAA_{2,1})$ and  $t\ge1$ be as stated.
By the assumption $\frac{1}{n}H(\theta,\DD_{n})>\varepsilon$ and Lemma \ref{lem:multiscale-entropy-affine-space},
\begin{equation}
\mathbb{E}_{1\le i\le n}\left(\frac{1}{k}H\left(\theta_{\psi,i},\DD_{i+k}\right)\right)>\varepsilon-O(\frac{k+\log(\ep^{-1})}{n})>\frac{\varepsilon}{2}\:.\label{eq:6}
\end{equation}

\begin{claim} There exists a constant $C$ such that
  \begin{equation}
\mathbb{P}_{1\le i\le n\,,\,x\sim\mu}\left(\frac{1}{k}H\left(\theta_{\psi,i}\boldsymbol{.}x,\DD_{i+k+t}\right)>\frac{\varepsilon}{C}\right)>\frac{\varepsilon}{C}\:.\label{eq:7}
\end{equation}
\end{claim}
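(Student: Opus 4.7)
The claim is essentially an application of Lemma~\ref{lem:ent-on-plane-from-ent-on-aff-gr} to the random components $\theta_{\psi,i}$, pushed through Markov's inequality to upgrade the expectation bound \eqref{eq:6} into a pointwise positive-probability bound. The only non-trivial point is that Lemma~\ref{lem:ent-on-plane-from-ent-on-aff-gr} requires its input measure to be supported on a fixed compact set, so a preliminary normalization of $\theta_{\psi,i}$ is needed.

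First I would note that each component $\theta_{\psi,i}$ has $d$-diameter $O(q^{-i})$, so by iterating Lemma~\ref{lem:Qn-has-bounded-degree} the normalized entropy $\frac{1}{k}H(\theta_{\psi,i},\DD_{i+k}^{\AAA_{2,1}})$ is bounded by an absolute constant uniformly in $(\psi,i)$. Combining with \eqref{eq:6}, Markov's inequality then yields an absolute constant $c_1>0$ such that the event
\[
\mathcal{E}:=\left\{(\psi,i)\,:\,\tfrac{1}{k}H(\theta_{\psi,i},\DD_{i+k}^{\AAA_{2,1}}) > \tfrac{\varepsilon}{4}\right\}
\]
has probability at least $c_1\varepsilon$ under $\PP_{1\le i\le n,\,\psi}$.

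Next, for each $(\psi,i)$ I would normalize by left-multiplication: set $\varphi_{\psi,i}:=S_{\|\psi\|^{-1}}T_{-\psi(0)}\in \AAA_{1,1}$ and $\widetilde\theta_{\psi,i}:=\varphi_{\psi,i}\theta_{\psi,i}$. By $\AAA_{1,1}$-invariance of $d$, the measure $\widetilde\theta_{\psi,i}$ has diameter $O(q^{-i})=O(1)$, and by construction its support contains a norm-one affine map, so there is a single compact $Z\subseteq \AAA_{2,1}$ (depending only on $q$) containing $\supp(\widetilde\theta_{\psi,i})$ for every $(\psi,i)$. Using invariance of $d$ together with the bounded-degree property of the $q$-adic partitions of $\AAA_{2,1}$, and using \eqref{eq:entropy-under-transformation-1} with $\log\|\varphi_{\psi,i}\|=t+O(1)$ together with the identity $\widetilde\theta_{\psi,i}\boldsymbol{.}x=\varphi_{\psi,i}(\theta_{\psi,i}\boldsymbol{.}x)$, the following two identities hold up to additive $O(1)$ errors:
\begin{align*}
H(\widetilde\theta_{\psi,i},\DD_{i+k}^{\AAA_{2,1}}) &= H(\theta_{\psi,i},\DD_{i+k}^{\AAA_{2,1}})+O(1),\\
H(\widetilde\theta_{\psi,i}\boldsymbol{.}x,\DD_{i+k}) &= H(\theta_{\psi,i}\boldsymbol{.}x,\DD_{i+k+t})+O(1).
\end{align*}

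Finally, applying Lemma~\ref{lem:ent-on-plane-from-ent-on-aff-gr} to $\widetilde\theta_{\psi,i}$ on the fixed compact $Z$ yields a constant $C_0=C_0(Z,\mu)$ such that for every $(\psi,i)$ a $\mu$-mass of at least $C_0^{-1}$ of points $x$ satisfies $\frac{1}{k}H(\widetilde\theta_{\psi,i}\boldsymbol{.}x,\DD_{i+k})\geq\frac{1}{C_0k}H(\widetilde\theta_{\psi,i},\DD_{i+k}^{\AAA_{2,1}})-\frac{C_0}{k}$. Substituting the two identities above, restricting to the event $\mathcal{E}$, and taking $k$ large enough that the $O(1/k)$ errors are dominated by $\varepsilon/(8C_0)$, gives $\frac{1}{k}H(\theta_{\psi,i}\boldsymbol{.}x,\DD_{i+k+t})>\varepsilon/(8C_0)$ with joint probability at least $c_1\varepsilon\cdot C_0^{-1}$; taking $C:=\max(8C_0,\,C_0/c_1)$ completes the proof. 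I expect no genuine obstacle: the only care required is the bookkeeping between the partitions $\DD_n^{\AAA_{2,1}}$ and $\DD_n$ on $\R$ under the rescaling, and this is handled by the two identities above.
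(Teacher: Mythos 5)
Your proof is correct and follows essentially the same route as the paper: bound the component entropy via Lemma \ref{lem:Qn-has-bounded-degree}, apply Markov's inequality to \eqref{eq:6}, renormalize each component by an element of $\AAA_{1,1}$ (the paper uses $S_{q^t}T_{-\psi(0)}$, you use $S_{\|\psi\|^{-1}}T_{-\psi(0)}$, which differ by a bounded factor) so that it lies in a fixed compact set, apply Lemma \ref{lem:ent-on-plane-from-ent-on-aff-gr} there, and translate the entropies back using invariance of $d$ and \eqref{eq:entropy-under-transformation-1}, absorbing the $O(1/k)$ errors for $k$ large relative to $\varepsilon$.
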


\begin{proof}
By Lemma \ref{lem:Qn-has-bounded-degree} and the trivial bound on entropy in terms of partition size, the integrand on the left hand side of \eqref{eq:6}
is $O(1)$, and so for some global constant $C_{1}>1$,
\[
\mathbb{P}_{1\le i\le n}\left(\frac{1}{k}H\left(\theta_{\psi,i},\DD_{i+k}\right)>\frac{\varepsilon}{3}\right)>\frac{\varepsilon}{C_{1}}\:.
\]
Thus, it follows from the invariance of the metric (specifically, Lemma \ref{lem:Qn-has-bounded-degree}, and the fact that each $q$-adic cell is bounded within and without by a ball of comparable diameter), and the fact that $k$ is large relative to $\varepsilon$, that
\[
\mathbb{P}_{1\le i\le n}\left(\frac{1}{k}H\left(S_{q^{t}}T_{-\psi(0)}\theta_{\psi,i},\DD_{i+k}\right)>\frac{\varepsilon}{4}\right)>\frac{\varepsilon}{C_{1}}\:.
\]
As in e.g. the proof of Lemma \ref{lem:diameter-bound}, the maps $S_{q^{t}}T_{-\psi(0)}\theta_{\psi,i}$ lie in a compact set in $\AAA_{2,1}$, hence by Lemma \ref{lem:ent-on-plane-from-ent-on-aff-gr} there exists a global constant $C>1$ with
\[
\mathbb{P}_{1\le i\le n\,,\,x\sim\mu}\left(\frac{1}{k}H\left(S_{q^{t}}T_{-\psi(0)}\theta_{\psi,i}\boldsymbol{.}x,\DD_{i+k}\right)>\frac{\varepsilon}{C}\right)>\frac{\varepsilon}{C}\:.\label{eq:7a}
\]
Taking into account how scaling and translation affect entropy, this is the same (up to an error that is absorbed in the constants) as \eqref{eq:7}.
\end{proof}

Let $C$ be as in the claim and set
\[
\varepsilon'=\min\{\frac{\varepsilon}{C},\frac{1-\alpha}{2}\}.
\]
Let $m\ge 1$. By assuming $m$ is large enough,
in a manner depending only on $\alpha,\ep'$,
and by assuming $k,n$ are large enough, in a manner depending on
all parameters, we get from Proposition \ref{prop:projections-are-porous}
that for each $\psi\in\supp(\theta)$,
\begin{equation}
\mathbb{P}_{1\leq i\leq n}\left(\begin{array}{c}
\psi\mu_{\varphi(i,x)}\mbox{ is }(\alpha,\frac{\ep'}{2},m)\text{-entropy}\\
\text{porous from scale \ensuremath{t+i} to \ensuremath{t+i+k}}
\end{array}\right)>1-\frac{\ep'}{2}\:.\label{eq:8}
\end{equation}
Fix a parameter $\rho>0$, which will later be taken small with respect to $\varepsilon,m$.  From Proposition \ref{thm:lower-bound-on-entropy-of-projected-components-2}, and by assuming
$k,n$ are large enough, we get that for each $\psi\in\supp(\theta)$,
\begin{equation}
\mathbb{P}_{1\leq i\leq n}\left(\frac{1}{k}H(\psi\mu_{\varphi(i,x)},\DD_{t+i+k})\ge\alpha-\rho\right)>1-\rho\:.\label{eq:9}
\end{equation}
Let $\delta=\delta(\varepsilon',m)>0$ be as obtained from Theorem \ref{thm:entropy-growth-under-convolution}.
Note that $\delta$ does not depend on $\rho$, hence we can assume
that $\rho$ is small with respect to $\varepsilon$ and $\delta$.

By Lemma \ref{lem:multiscale-entropy-action} we get,
\begin{multline}
\frac{1}{n}H(\theta\boldsymbol{.}\mu,\DD_{t+n})\ge\mathbb{E}_{1\leq i\leq n}\left(\frac{1}{k}H(\theta_{\psi,i}\boldsymbol{.}\mu_{\varphi(i,x)},\DD_{t+i+k})\right)-O(\frac{k}{n}+\frac{1}{k})\\
=\mathbb{E}_{1\leq i\leq n}\left(\frac{1}{k}H((S_{q^t}T_{-\psi(0)}\theta_{\psi,i})\boldsymbol{.}\mu_{\varphi(i,x)},\DD_{i+k})\right)-O(\frac{k}{n}+\frac{1}{k}),\label{eq:5}
\end{multline}
where we emphasize that the ``re-scaling'' is chosen so that $S_{q^{t}}T_{-\psi(0)}\psi$ is a linear map, and since $\|\psi\|=\Theta(q^{-t})$, this map lies in a fixed compact set in $\AAA_{2,1}$. Hence,
by assuming that $\frac{1}{k}$ and $\frac{k}{n}$ are sufficiently
small and by applying Lemma \ref{lem:linearization} to most $1\le i\le n$,
we get from \eqref{eq:5},
\begin{multline}
  \frac{1}{n}H(\theta\boldsymbol{.}\mu,\DD_{t+n})\\
  \ge\mathbb{E}_{1\leq i\leq n}\left(\frac{1}{k}H((S_{q^t}T_{-\psi(0)}\theta_{\psi,i}\boldsymbol{.}x)*(S_{q^t}T_{-\psi(0)}\psi\mu_{\varphi(i,x)}),\DD_{i+k})\right)-\rho/2\\
    \geq\mathbb{E}_{1\leq i\leq n}\left(\frac{1}{k}H((\theta_{\psi,i}\boldsymbol{.}x)*(\psi\mu_{\varphi(i,x)}),\DD_{i+k+t})\right)-\rho\:.
    \label{eq:endgame}
\end{multline}

Note that, by assuming $k$ is sufficiently large with respect to
$\rho$, the integrand in the last expectation is at least
\begin{equation}
\frac{1}{k}H(\psi\mu_{\varphi(i,x)},\DD_{i+k+t})-\rho\:,\label{eq:14}
\end{equation}
and by \eqref{eq:9}, outside an event of probability $<\rho$ over the cylinders $\mu_{\varphi(i,x)}$, $1\leq i \leq n$, this lower bound  is itself bounded below by $\alpha-2\rho$. Thus, if we write \begin{equation}
  p = \PP_{1\leq i \leq n}\left( \frac{1}{k}H((\theta_{\psi,i}\boldsymbol{.}x)*(\psi\mu_{\varphi(i,x)}),\DD_{i+k+t}) > \alpha + \delta\right),\label{eq:13}
  \end{equation}
  we get the lower bound
  \begin{eqnarray}
    \frac{1}{n}H(\theta\boldsymbol{.}\mu,\DD_{t+n}) & \geq & (\alpha+\delta)p + (1-p-\rho)(\alpha-2\rho)\\
    & = & \alpha + \delta p - \rho(\alpha + 2(1-p-\rho))\\
    & \ge & \alpha + \delta p - 3\rho. \label{eq:15}
  \end{eqnarray}
  (the probability $1-p-\rho$ comes from intersecting the event in \eqref{eq:9}, whose whose probablity is $>1-\rho$, with the complement of the event in \eqref{eq:13}, whose probability is $1-p$).
    
To complete the proof we estimate $p$ by applying Theorem \ref{thm:entropy-growth-under-convolution}. For this we can assume that $k$ is large with respect to $m$ and $\delta$.
Observe that for $x\in\supp(\mu)$ and $\psi\in\supp(\theta)$,
\[
\diam(\supp(\theta_{\psi,i}\boldsymbol{.}x))\text{ and }\diam(\supp(\psi\mu_{\varphi(i,x)}))\leq O(q^{-i-t})\:.
\]
so (by the hypotheses of Theorem \ref{thm:entropy-growth-under-convolution}), the event in \eqref{eq:13} is contained in the intersection of the events in \eqref{eq:7} and \eqref{eq:8}, and so \[
  p>\frac{\ep}{C}-\frac{\ep'}{2}>\frac{\ep}{2C}\:.
  \]
Inserting this into the lower bound \eqref{eq:15}, and assuming $\rho$ is sufficiently small with respect to $\varepsilon$ and $\delta$, the theorem is proved.
\end{proof}

\section{\label{sec:Separation}Separation}

Let $D$ be the metric on $\AAA_{2,2}$ defined in \eqref{eq:metricD}, and  recall that it is bi-Lipschitz equivalent to the metric induced by the operator norm when $\AAA_{2,2}$ is identified in the standard way with a subgroup of $GL_3(\R)$. Recall that $\Phi=\{\varphi_i\}$ is exponentially separated if there is a constant $c>0$ such that $D(\varphi_\ii,\varphi_\jj)>c^n$ for every $n$ and distinct $\ii,\jj\in\Lambda^n$, and this property is independent of the the bi-Lipschitz equivalence class of $D(\cdot,\cdot)$. Note that since the $\varphi_i\in\Phi$ are contractions, by \eqref{eq:metricD} we have $D(\varphi_\ii\psi',\varphi_\ii\psi'')\leq\|\varphi_\ii\|D(\psi',\psi'')\leq D(\psi',\psi'')$ for all $\ii\in\Lambda^*$ and $\psi',\psi''\in\AAA_{2,2}$, and also, since $D(\varphi_\jj,\id)\leq C$ for some $C=C(\Phi)$ and all $\jj\in\Lambda^*$, we have $D(\psi'\varphi_\jj, \psi''\varphi_\jj)\leq CD(\psi',\psi'')$ for all $\jj\in\Lambda^*$.

 \begin{lemma}
	\label{lem:extended-sep}Let $\Phi=\{\varphi_{i}\}$
	be exponentially separated. Then there exists $b>0$ so that for every
	$\ii,\jj\in\Lambda^*$ with $|\ii|\geq |\jj|$ such that $\jj$ not a prefix of $\ii$, we have $D(\varphi_{\ii},\varphi_{\jj})\ge b^{|\ii|}$.
\end{lemma}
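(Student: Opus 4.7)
The plan is to handle equal-length words directly from exponential separation and reduce the unequal-length case by combining a left-side common-prefix strip with a right-side padding of the shorter word. I will work throughout in the $\AAA_{2,2}\hookrightarrow GL_3(\R)$ embedding from Lemma~\ref{lem:equivaence-of-metrics}(2), under which $D$ is bi-Lipschitz equivalent to the operator-norm distance; write $\hat\varphi$ for the lift of $\varphi$, and set $r=\max_i\|A_i\|<1$ and $\alpha_2=\min_i\alpha_2(A_i)>0$, noting $\alpha_2\leq r$.

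If $|\ii|=|\jj|$, the hypothesis that $\jj$ is not a prefix of $\ii$ forces $\ii\neq\jj$ and exponential separation gives $D(\varphi_\ii,\varphi_\jj)>c^{|\ii|}$, so any $b\leq c$ works. Assume now $|\ii|>|\jj|$, let $\mathbf{w}$ be the longest common prefix of $\ii,\jj$ with length $\ell<|\jj|$, and write $\ii=\mathbf{w}\ii'$, $\jj=\mathbf{w}\jj'$, so that $\ii'$ and $\jj'$ begin with distinct letters. The factorisation $\hat\varphi_\ii-\hat\varphi_\jj=\hat\varphi_{\mathbf{w}}(\hat\varphi_{\ii'}-\hat\varphi_{\jj'})$, together with $\|PQ\|_{\mathrm{op}}\geq\alpha_{\min}(P)\|Q\|_{\mathrm{op}}$ and $\alpha_{\min}(\hat\varphi_{\mathbf{w}})\asymp\alpha_2(A_{\mathbf{w}})\geq\alpha_2^{\ell}$, reduces the problem to lower-bounding $D(\varphi_{\ii'},\varphi_{\jj'})$ with a multiplicative loss of $\alpha_2^{\ell}$.

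For the remaining case I pad $\jj'$ on the right by an arbitrary word $\mathbf{k}\in\Lambda^{|\ii'|-|\jj'|}$, so that $\ii'$ and $\jj'\mathbf{k}$ are two length-$|\ii'|$ words with distinct first letters, hence distinct; exponential separation then gives $D(\varphi_{\ii'},\varphi_{\jj'\mathbf{k}})>c^{|\ii'|}$. The padding error $D(\varphi_{\jj'},\varphi_{\jj'\mathbf{k}})=\|\hat\varphi_{\jj'}(I-\hat\varphi_{\mathbf{k}})\|_{\mathrm{op}}$ admits the sharp bound $C_0\|A_{\jj'}\|\leq C_0 r^{|\jj'|}$, because the last row of $I-\hat\varphi_{\mathbf{k}}$ vanishes and so the product depends only on $A_{\jj'}$ (not on $b_{\jj'}$); here $C_0=C_0(\Phi)$. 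The triangle inequality then yields $D(\varphi_{\ii'},\varphi_{\jj'})\geq c^{|\ii'|}-C_0 r^{|\jj'|}$.

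The main obstacle is that this bound may be nonpositive, most delicately when $c\leq r$ and $|\ii'|$ is close to $|\jj'|$. In that regime I will use the complementary estimate
\[
  D(\varphi_{\ii'},\varphi_{\jj'})\geq\|A_{\ii'}-A_{\jj'}\|\geq \alpha_2(A_{\jj'})-\|A_{\ii'}\|\geq \alpha_2^{|\jj'|}-r^{|\ii'|},
\]
which is positive precisely when $|\ii'|$ is not too small relative to $|\jj'|$. Together the two estimates cover all admissible pairs $(|\ii'|,|\jj'|)$, and after reincorporating the $\alpha_2^{\ell}$ factor a single constant $b>0$, chosen small with respect to $c,\alpha_2,r$ and the implicit bi-Lipschitz constants, will give $D(\varphi_\ii,\varphi_\jj)\geq b^{|\ii|}$ uniformly. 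The delicate part is verifying that the ranges in which the two estimates are useful actually overlap for every valid $(|\ii'|,|\jj'|)$, and absorbing finitely many short-word boundary cases into the final choice of $b$.
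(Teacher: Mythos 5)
There is a genuine gap, and it sits exactly at the hard case of the lemma. After your (correct) reduction to words $\ii',\jj'$ with distinct first letters and $|\ii'|\geq|\jj'|$, you propose to cover all cases by two estimates, but their ranges of validity do not meet. Note first that the exponential-separation constant necessarily satisfies $c\leq r:=\max_i\|A_i\|$: taking distinct words of length $n$ sharing a prefix of length $n-1$ gives $c^n<D(\varphi_\ii,\varphi_\jj)\leq C r^{n-1}$ for all $n$ (and $c$ can only be decreased WLOG, never increased). Consequently your padding bound $c^{|\ii'|}-C_0r^{|\jj'|}$ is nonpositive whenever $|\ii'|\geq|\jj'|$ and the lengths are large, i.e.\ it is vacuous in the entire admissible range. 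Your complementary bound $\alpha_2^{|\jj'|}-r^{|\ii'|}$ is positive only when $|\ii'|\gtrsim\frac{\log\alpha_2}{\log r}|\jj'|$, and in the genuinely self-affine situation $\alpha_2:=\min_i\alpha_2(A_i)<r$ this factor is strictly larger than $1$. So for instance the case $|\ii'|=|\jj'|+1$ with both lengths large is covered by neither estimate, and no tuning of $b$ or of the constants repairs this: the second estimate discards the translation parts entirely, and there are IFSs (e.g.\ all $A_i$ equal) in which distinct words of different lengths have linear parts that are not separated at all, so any argument for this regime must use the full affine maps, i.e.\ must re-enter exponential separation, which your scheme does not do there.

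For comparison, the paper's proof handles precisely this regime by a symmetrization trick: since $\jj$ is not a prefix of $\ii$ and $|\jj|\leq|\ii|$, the concatenations $\ii\jj$ and $\jj\ii$ are distinct words of the same length $|\ii|+|\jj|$, so exponential separation gives $c^{|\ii|+|\jj|}\leq D(\varphi_{\ii\jj},\varphi_{\jj\ii})$; the triangle inequality through $\varphi_{\jj\jj}$, together with $D(\varphi_\jj\psi,\varphi_\jj\psi')\leq D(\psi,\psi')$ (left composition by a contraction) and $D(\psi\varphi_\jj,\psi'\varphi_\jj)\leq C\,D(\psi,\psi')$ (right composition by a bounded map), converts this into $c^{2|\ii|}\leq 2C\,D(\varphi_\ii,\varphi_\jj)$, yielding the lemma with any $b$ slightly smaller than $c^{2}$. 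Your common-prefix stripping and the equal-length case are fine, but you need an idea of this kind to treat unequal lengths that are comparable.
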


\begin{proof}
	Let $c$ denote the constant verifying exponential separation and suppose that the lemma were false. Let $0<b<1$ and let $\ii,\jj\in\Lambda^*$ with $n=|\ii|\geq |\jj|=m$, such that $\jj$ is not a prefix of $\ii$ and $D(\varphi_{\ii},\varphi_{\jj})<b^{n}$. Since $\jj$ is not a prefix of $\ii$ we have $\ii\jj\ne\jj\ii$, hence by the remarks before the lemma, for some $C=C(\Phi)$,
	\begin{eqnarray*}
	  c^{2n} &\leq &c^{n+m}\\
          &\le & D(\varphi_{\ii\jj},\varphi_{\jj\ii})\\
          &\le & D(\varphi_{\jj\ii},\varphi_{\jj\jj})+D(\varphi_{\jj\jj},\varphi_{\ii\jj})\\
          &<&2Cb^n.
	\end{eqnarray*}
	Since for every $0<b<1$ there exists an $n$ for which the above holds, we have arrived at a contradiction.
\end{proof}
Recall that we have is endowed  $\AAA_{2,1}$ with an $\AAA_{1,1}$-invariant metric $d$.

\begin{proposition}
	\label{prop:separation}Assume $\Phi=\{\varphi_{i}\}$
	is exponentially separated and let $0<s<1$. Then there is a $c=c(s)>0$
	and a set $ E= E(s)\subseteq \RP$ such that:
	\begin{enumerate}[(a)]
		\item $\dim E\le s$,
		\item For all $V\in\RP\setminus E$ and $n$ large
	enough, $\{\pi_V\varphi_{\ii}\}_{\ii\in\Xi^V_n}$ are $c^{n}$-separated in $\AAA_{2,1}$ with respect to $d$.
	\end{enumerate}
\end{proposition}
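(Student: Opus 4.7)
The plan is to proceed in three steps: obtain a lower bound on $D(\varphi_\ii,\varphi_\jj)$ from exponential separation, show this survives under $\pi_V$ for $V$ outside an exceptional set of small dimension, and convert $D$-separation to $d$-separation via the $\AAA_{1,1}$-invariance of $d$.

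First, by Lemma~\ref{lem:extended-sep}, any incomparable $\ii,\jj\in\Lambda^*$ with $|\ii|\geq|\jj|$ satisfy $D(\varphi_\ii,\varphi_\jj)\geq b^{|\ii|}$. Since elements of $\Xi_n^V$ are pairwise incomparable with length at most $c_2 n$ by Lemma~\ref{lem:properties-of-q}, it suffices to establish the conclusion uniformly for all pairs of incomparable words in $\Lambda^{\leq c_2 n}$, a $V$-independent collection. Writing $C=A_\ii-A_\jj$ and $w=b_\ii-b_\jj$, one computes
\begin{equation*}
D(\pi_V\varphi_\ii,\pi_V\varphi_\jj)=\|C^T u(V)\|+|\langle w,u(V)\rangle|,
\end{equation*}
and $D(\varphi_\ii,\varphi_\jj)\leq\|C\|+\|w\|$ forces $\max(\|C\|,\|w\|)\geq b^{c_2 n}/2$. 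Using the SVD of $C$ in the first subcase and the explicit inner product in the second, outside a ball $F_{\ii,\jj}\subseteq\RP$ of radius $O(\gamma^n)$ centered at the appropriate ``degenerate direction'', one obtains $D(\pi_V\varphi_\ii,\pi_V\varphi_\jj)\geq \gamma^n b^{c_2 n}/2$, for any chosen parameter $\gamma>0$ to be tuned.

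Let $E_n=\bigcup_{\ii,\jj} F_{\ii,\jj}$ range over all incomparable pairs in $\Lambda^{\leq c_2 n}$; the number of such pairs is at most $|\Lambda|^{2c_2 n}$, so $E_n$ is covered by this many balls of radius $\gamma^n$ in $\RP$. Taking $E=\limsup_n E_n$, the standard cover bound gives $\mathcal{H}^s(E)\lesssim\sum_n (|\Lambda|^{2c_2}\gamma^s)^n$, which is finite provided $\gamma<|\Lambda|^{-2c_2/s}$; fixing such a $\gamma=\gamma(s)$ yields $\dim E\leq s$. For $V\notin E$ there exists $N(V)$ such that every $n\geq N(V)$ and every $\ii,\jj\in\Xi_n^V$ satisfy $D(\pi_V\varphi_\ii,\pi_V\varphi_\jj)\geq \gamma^n b^{c_2 n}/2$.

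Finally, to transfer from $D$ to the invariant metric $d$, set $\psi_\ii=\pi_V\varphi_\ii$ and $\psi_\jj=\pi_V\varphi_\jj$; by \eqref{eq:c0} both have linear-part norm in $[c_0 q^{-n}, q^{-n}]$. Apply $\phi=S_{q^n}T_{-\psi_\ii(0)}\in\AAA_{1,1}$: then $\phi\psi_\ii$ has zero translation and linear norm in $[c_0,1]$, so it lies in a fixed compact $K_0\subseteq\AAA_{2,1}$. Either $d(\psi_\ii,\psi_\jj)\geq 1$ (done), or $\AAA_{1,1}$-invariance gives $d(\phi\psi_\ii,\phi\psi_\jj)\leq 1$, putting $\phi\psi_\jj$ in a fixed compact neighborhood $K_1$ of $K_0$. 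On $K_1$, Lemma~\ref{lem:equivaence-of-metrics}(1) gives $d\asymp D$, and $D(\phi\psi_\ii,\phi\psi_\jj)=q^n D(\psi_\ii,\psi_\jj)$, so $d(\psi_\ii,\psi_\jj)\gtrsim q^n D(\psi_\ii,\psi_\jj)\gtrsim (q\gamma b^{c_2})^n$. Choosing $c$ slightly smaller than $q\gamma b^{c_2}$ (e.g.\ $c=q\gamma b^{c_2}/2$) absorbs all remaining constants for $n$ large. The main subtlety is that a priori $\phi\psi_\jj$ could have translation of size up to $q^n|\pi_V(w)|$, which is unbounded in $n$, so the naive normalization need not land in a compact set; the point is that we only require the bi-Lipschitz equivalence of $d$ and $D$ in the regime $d(\psi_\ii,\psi_\jj)\leq 1$, and in that regime invariance of $d$ automatically forces $\phi\psi_\jj$ into a bounded region.
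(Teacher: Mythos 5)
Your proof is correct and follows essentially the same route as the paper: for each incomparable pair and each scale you delete an exponentially small set of directions, bound the Hausdorff content of the limsup of these sets with the same tuning of the parameter against $|\Lambda|^{2c_2}$, and then transfer the resulting $D$-separation to $d$-separation by rescaling into a compact subset of $\AAA_{2,1}$ and using $\AAA_{1,1}$-invariance. The only cosmetic differences are that the paper locates the bad directions via a witness point $w^n_{\ii,\jj}$ with $\|\varphi_\ii(w^n_{\ii,\jj})-\varphi_\jj(w^n_{\ii,\jj})\|\ge b^n$ rather than your split into linear and translation parts with an SVD case analysis, and your treatment of the $D$-to-$d$ conversion (normalizing by $S_{q^n}T_{-\psi_\ii(0)}$ and the ``either $d\ge 1$ or bounded'' dichotomy) is if anything slightly more careful than the paper's, which rescales by $S_{q^n}$ alone.
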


\begin{proof}
	For $n\ge1$ define
	\[
	\mathcal{U}_{n}=\left\{ (\ii,\jj)\in\Lambda^*\times\Lambda^*\::\:\begin{array}{c}
	|\ii|,|\jj|\le n,\:\ii\text{ is not a prefix of $\jj$,}\\
	\text{and $\jj$ is not a prefix of $\ii$}
	\end{array}\right\} \:.
	\]
	Let $0<b<1$ be as in Lemma \ref{lem:extended-sep}, so that  $D(\varphi_{\ii},\varphi_{\jj})\ge b^{n}$ for each $(\ii,\jj)\in\mathcal{U}_{n}$. For such $\ii,\jj$ there exists a vector $w^{n}_{\ii,\jj}\in \R^2$ with $\|w^{n}_{\ii,\jj}\|\leq 1$, satisfying
	\begin{equation}
	\|\varphi_{\ii}(w^{n}_{\ii,\jj})-\varphi_{\jj}(w^{n}_{\ii,\jj}))\|\ge b^{n}\:.\label{eq:16}
	\end{equation}
	Let $0<c<b/|\Lambda|^{2/s}$, and for each $n\ge1$ and $(\ii,\jj)\in\mathcal{U}_{n}$ set
	\[
	 E_{\ii,\jj}^{n}=\{V\in \RP\::\:d(\pi_V\varphi_{\ii}(w^{n}_{\ii,\jj}),\pi_V\varphi_{\jj}(w^{n}_{\ii,\jj}))<c^{n}\}\:.
	\]
	Now observe an elementary fact: there exists a constant $C>0$
	such that for each $x,x'\in\mathbb{R}^{2}$ and $\delta>0$, the set
	of $V\in \RP$ such that $d(\pi_Vx,\pi_Vx')<\delta$ is of diameter
	at most $C\delta/d(x,x')$. Applying this with $x=\varphi_\ii(w_{\ii,\jj}^n)$, $x'=\varphi_\jj(w^n_{\ii,\jj})$ and $\delta=c^n$, we find that \[
          \diam E_{\ii,\jj}^n < C(\frac{c}{b})^n\:.
        \]
	Next, define
	\[
	 E=\bigcap_{N=1}^{\infty}\bigcup_{n=N}^{\infty}\bigcup_{(\ii,\jj)\in\mathcal{U}_{n}} E_{\ii,\jj}^{n}.
	\]
	For each $n\ge1$ we have $|\mathcal{U}_{n}|\le |\Lambda|^{2(n+1)}$. Combining these facts with  $|\Lambda|^{2}c^{s}/b^{s}<1$ and the bound on the earlier diameter of $E^n_{\ii\jj}$, and writing  $\mathcal{H}_{\infty}^{s}$ for the $s$-dimensional Hausdorff content on $\RP$, we get
	\begin{eqnarray}
	  \mathcal{H}_{\infty}^{s}( E) & \le & \lim_{N\to\infty}\sum_{n=N}^{\infty}\:\sum_{(\ii,\jj)\in\mathcal{U}_{n}}(\diam\{ E_{\ii,\jj}^{n}\})^{s}\\
          &\le&\lim_{N\to\infty}\sum_{n=N}^{\infty}C^{s}|\Lambda|^{2(n+1)}\frac{c^{sn}}{b^{sn}}\\
          &=&0\;.
	\end{eqnarray}
	Thus, $\dim_H E\le s$.
	
	It remains to show that if $V\in\RP\setminus E$ and $\ii,\jj\in\Xi^V_n$ are distinct, then $\pi_V\varphi_\ii,\pi_V\varphi_\jj$ are exponentially far apart. Fixing $V\in\RP\setminus E$, it follows from the definition of $E$ that for every $V\in\RP\setminus E$ there exists $N=N(V)\geq1$ such that $D(\pi_V\varphi_{\ii},\pi_V\varphi_{\jj})\ge c^{n}$
	for all $n\ge N$ and $(\ii,\jj)\in\mathcal{U}_{n}$. This is almost what we want, except that we want to know it for $\ii,\jj\in\Xi^V_n$ instead of $\mathcal{U}_n$. To deduce this, observe that the $\varphi_i$ contract uniformly, so there exists a constant $\ell\geq1$ such that for every $n\geq1$
	\begin{equation}
	\Vert\pi_V\varphi_{\ii}\Vert\le q^{-n}\text{ for all }V\in \RP,\:k\ge \ell n\text{ and }\ii\in\Lambda^{k}\:.\label{eq:12}
	\end{equation}
So,  given $\ii,\jj\in\Xi_n^V$, if $\ii\ne\jj$ then $(\ii,\jj)\in\mathcal{U}_{n\ell}$. Hence, for every $n\ge N(V)$, $\{\pi_V \varphi_{\ii}\}_{\ii\in\Xi_n^V}$ are $c^{n\ell}$-separated in $\AAA_{2,1}$ with respect to $D$. Thus, by Lemma~\ref{lem:equivaence-of-metrics},
	\begin{align*}
	d(\pi_V \varphi_{\ii},\pi_V \varphi_{\jj})&=d(S_{q^n} \pi_V \varphi_{\ii},S_{q^n} \pi_V \varphi_{\jj})\\
	&\geq CD(S_{q^n} \pi_V \varphi_{\ii},S_{q^n} \pi_V \varphi_{\jj})\\
	&=Cq^nD(\pi_V \varphi_{\ii},\pi_V \varphi_{\jj})\\
        &\geq C(qc^\ell)^n,
	\end{align*}
	which completes the proof of the proposition.
\end{proof}
Since $\dim\eta^*>0$ (see \cite[Chapter VI.4]{BougerolLacroix1985}), we obtain the following corollary.
\begin{corollary}
	\label{cor:sep}Assume $\Phi=\{\varphi_{i}\}$ is exponentially separated, and $\{\overline{A}_i\}$ generate a non-compact and  strongly irreducible subgroup of $GL_2(\R)$.
	
	Then there exist $C>1$ and $\mathcal{Y}\subset\mathbb{RP}^{1}$,
	with $\eta^*(\mathcal{Y})=1$, so that for every $V\in\mathcal{Y}$ there exists $N=N(V)$ such that for each $n>N$ and 	$\ii,\jj\in\Xi_n^V$ with $\ii\neq\jj$
        $$
        \DD_{Cn}^{\AAA_{2,1}}(\pi_V\varphi_{\ii})\ne\DD_{Cn}^{\AAA_{2,1}}(\pi_V\varphi_{\jj}).
        $$
\end{corollary}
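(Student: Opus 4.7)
The plan is to combine Proposition~\ref{prop:separation} with the positivity of $\dim\eta^*$ and with the ball-inclusion property of the $q$-adic partitions of $\AAA_{2,1}$. First, since $\dim\eta^*>0$ (by Bougerol--Lacroix, as cited just before the corollary), I would pick any $s$ with $0<s<\dim\eta^*$ and apply Proposition~\ref{prop:separation} with this $s$ to obtain a constant $c=c(s)>0$ and an exceptional set $E=E(s)\subseteq\RP$ with $\dim_H E\le s$. Since sets of Hausdorff dimension strictly less than $\dim\eta^*$ are $\eta^*$-null, the set $\mathcal{Y}:=\RP\setminus E$ will satisfy $\eta^*(\mathcal{Y})=1$, and for every $V\in\mathcal{Y}$ Proposition~\ref{prop:separation}(b) provides an $N(V)$ such that the family $\{\pi_V\varphi_\ii\}_{\ii\in\Xi_n^V}$ is $c^n$-separated in the invariant metric $d$ for all $n>N(V)$.

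Next I would convert $c^n$-separation in $d$ into separation of $q$-adic cells. By property~(\ref{item:ball}) of the partitions $\DD_n^{\AAA_{2,1}}$ recalled in Section~\ref{sub:q-adic-partitions}, every cell of $\DD_{Cn}^{\AAA_{2,1}}$ is contained in a $d$-ball of radius $O(q^{-Cn})$, so any two points lying in a common level-$Cn$ cell have $d$-distance at most $O(q^{-Cn})$. It therefore suffices to choose $C>1$ large enough that $q^{C}c>1$; then $c^n$ grows strictly faster than the diameter bound $O(q^{-Cn})$, and for all $n$ sufficiently large (depending only on $c$, $q$, and the implicit constant, not on $V$) the $c^n$-separation forces the distinct projections $\pi_V\varphi_\ii$ and $\pi_V\varphi_\jj$ to lie in distinct cells of $\DD_{Cn}^{\AAA_{2,1}}$. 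Enlarging $N(V)$ so that both this inequality and the conclusion of Proposition~\ref{prop:separation} hold for $n>N(V)$ will complete the argument.

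There is no serious obstacle; the only mildly delicate point is the measure-theoretic implication $\dim_H E<\dim\eta^*\Rightarrow\eta^*(E)=0$, which uses the standard fact that the lower local dimension of $\eta^*$ is positive under strong irreducibility and non-compactness of $\{\overline{A}_i\}$, so that any set of Hausdorff dimension less than $\dim\eta^*$ is $\eta^*$-null. All of the quantitative work of the argument is already contained in Proposition~\ref{prop:separation}; the corollary is essentially a bookkeeping step that translates $d$-separation on a set of full $\eta^*$-measure into distinct $q$-adic addresses at a comparable scale.
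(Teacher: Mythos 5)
Your proposal is correct and follows essentially the same route as the paper: apply Proposition~\ref{prop:separation} with an $s$ below $\dim\eta^*$ (the paper takes $s=\dim_H\eta^*/2$), set $\mathcal{Y}=\RP\setminus E$ and use positivity of $\dim\eta^*$ to get $\eta^*(\mathcal{Y})=1$, then use property~(\ref{item:ball}) of the partitions $\DD_n^{\AAA_{2,1}}$ to turn $c^n$-separation in $d$ into distinct level-$Cn$ cells for a suitable $C$ (the paper's explicit choice $C=\frac{-\log c+\log K}{\log q}$ plays the same role as your condition $q^Cc>1$). No gaps; the bookkeeping with $N(V)$ and the null-set argument are as in the paper.
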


\begin{proof}
	Let $s=\dim_H\eta^*/2$. Let $E=E(s)$ be the set and let $c=c(s)$ be the constant defined in Proposition~\ref{prop:separation}, and set $\mathcal{Y}=\RP\setminus E$. Thus, for every $V\in\mathcal{Y}$ there exists $N=N(V)$ such that
	$$
	d(\pi_V\varphi_{\ii},\pi_V\varphi_{\jj})>c^n\text{ for every }n\geq N\text{ and }\ii\neq\jj\in\Xi_n^V.
	$$
	 By property \eqref{item:ball} of the partitions $\QQ_i$ (Section~\ref{sub:q-adic-partitions}), there exists $K>1$ such that $\DD_n(\psi)\subset B_{Kq^{-n}}(\psi)$ for all $\psi\in\AAA_{2,1}$ and $n$. Thus, by choosing $C=\frac{-\log c+\log K}{\log q}$, $\DD_{Cn}^{\AAA_{2,1}}(\pi_V\varphi_{\ii})\ne\DD_{Cn}^{\AAA_{2,1}}(\pi_V\varphi_{\jj})$ for every large enough $n$ and $\ii\neq\jj\in\Xi_n^V$.
	
	Moreover, $\dim_H\mathcal{E}\leq s$ and thus, by the definition of Hausdorff dimension of measures, $\eta^*(\mathcal{Y})=1$.
\end{proof}


\section{\label{sec:main-theorem}Proof of the main theorems}

\subsection{Proof of Theorem \ref{thm:one-dimensional-projections}}
In this section we prove Theorem \ref{thm:one-dimensional-projections}, whose statement We recall for convenience:

\begin{thm*}
Let $\mu=\sum p_{i}\cdot\varphi_{i}\mu_{i}$
be a self-affine measure in $\mathbb{R}^{2}$ such that $\{\varphi_{i}\}$
has exponential separation. Suppose that the normalized linear parts
of $\varphi_{i}$ generate a totally irreducible, non-compact subgroup
of $GL_{2}(\mathbb{R})$. Then for $\eta^*$-a.e. $V\in\RP$, we have $\dim\pi_{V}\mu=\min\{1,\dim\mu\}$.
\end{thm*}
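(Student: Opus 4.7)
The plan is to use Lemma~\ref{lem:alpha} to reduce the theorem to showing that the common value $\alpha := \dim \pi_V \mu$ (for $\eta^*$-a.e.\ $V$) equals $\min\{1, \dim\mu\}$. The inequality $\alpha \leq \min\{1, \dim\mu\}$ is automatic since projection cannot increase dimension and $\dim \le 1$ for subsets of $\R$; the task is to rule out strict inequality. I will argue by contradiction: assume $\alpha < \min\{1, \dim\mu\}$, so in particular $\alpha < 1$, which makes Theorem~\ref{thm:inverse-theorem-for-projections} applicable, and aim for a contradiction.

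First I would fix $V$ in the intersection of the full $\eta^*$-measure sets from Lemma~\ref{lem:alpha} and the set $\mathcal{Y}$ from Corollary~\ref{cor:sep}. For each large $n$, the decomposition to work with is
\[
\pi_V \mu = \theta_n \conv \mu, \qquad \theta_n := \sum_{\ii \in \Xi_n^V} p_\ii \, \delta_{\pi_V \varphi_\ii}\in \mathcal{P}(\AAA_{2,1}).
\]
By the definition of $\Xi_n^V$ each atom of $\theta_n$ has norm $\Theta(q^{-n})$, so Lemma~\ref{lem:diameter-bound} gives $\theta_n$ bounded diameter in the $\AAA_{1,1}$-invariant metric. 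The critical quantitative step is to show $\theta_n$ carries positive entropy at an appropriate scale: by Corollary~\ref{cor:sep} its atoms lie in distinct $\QQ_{Cn}^{\AAA_{2,1}}$-cells, so
\[
H(\theta_n, \QQ_{Cn}) \ge H\bigl((p_\ii)_{\ii \in \Xi_n^V}\bigr).
\]
The Shannon--McMillan theorem for the Bernoulli measure $p^{\N}$, combined with the linear bound $|\ii| \asymp n$ from Lemma~\ref{lem:properties-of-q}\eqref{item:that}, would give this a lower bound of the form $\ge c_0 n$ for a constant $c_0 > 0$ depending only on $\Phi$ and $p$. Hence $\tfrac{1}{Cn} H(\theta_n, \QQ_{Cn}) \ge c_0/C$ uniformly in $n$.

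Next I would apply Theorem~\ref{thm:inverse-theorem-for-projections} with $\varepsilon = c_0/C$, $t = n$, and with the theorem's ``$n$'' taken to be $N = Cn$. This produces some $\delta > 0$ with
\[
\tfrac{1}{Cn} H(\pi_V\mu, \QQ_{(C+1)n}) > \alpha + \delta \qquad \text{for all large } n.
\]
Exact dimensionality of $\pi_V\mu$ (Lemma~\ref{lem:entropy-dimension}) forces $\tfrac{1}{(C+1)n} H(\pi_V\mu, \QQ_{(C+1)n}) \to \alpha$, so the left-hand side above tends to $\tfrac{C+1}{C}\alpha$, and the two estimates are incompatible unless $\alpha \ge C\delta$, a non-trivial lower bound on $\alpha$.

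\textbf{The main obstacle} will be turning this lower bound into an outright contradiction with $\alpha < \min\{1,\dim\mu\}$: a single application of the inverse theorem yields only $\alpha \geq C\delta(c_0/C)$, which need not reach $\dim \mu$. To close the gap I would exploit the freedom in choosing the parameters (one may push $\varepsilon$ up to essentially $H(p)/(C|\chi_1|)$, the Lyapunov scale of the random walk) and apply the inverse theorem simultaneously at a positive-density set of scales via Lemma~\ref{lem:multiscale-entropy-action}. Aggregating the per-scale $\delta$-increments, and using the entropy porosity of Proposition~\ref{prop:projections-are-porous} to feed the hypothesis of the inverse theorem at each scale, should produce a linear-in-$n$ excess of entropy of $\pi_V\mu$ over $\alpha n$, contradicting its exact dimension $\alpha$ and completing the proof.
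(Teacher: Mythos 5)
Your setup (reduce to the constant $\alpha$ of Lemma \ref{lem:alpha}, assume $\alpha<\min\{1,\dim\mu\}$, decompose $\pi_V\mu=\theta_n\conv\mu$ with $\theta_n=\sum_{\ii\in\Xi_n^V}p_\ii\delta_{\pi_V\varphi_\ii}$, and use Corollary \ref{cor:sep} plus Shannon--McMillan to see that $H(\theta_n,\DD_{Cn})\gtrsim c_0 n$) matches the paper, but the central quantitative step fails. You cannot apply Theorem \ref{thm:inverse-theorem-for-projections} to $\theta_n$ itself: its hypothesis requires $\diam(\supp\theta)<1/\varepsilon$, and in the $\AAA_{1,1}$-invariant metric the support of $\theta_n$ has diameter of order $n$, not $O(1)$ --- the atoms $\pi_V\varphi_\ii$ all have norm $\Theta(q^{-n})$ but their translation parts spread over a set of diameter $\Theta(1)$, and in the invariant (hyperbolic-type) metric two such maps are at distance $\Theta(n)$. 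Lemma \ref{lem:diameter-bound} does not say what you claim: it bounds the diameter of the image set $EF$ assuming $E$ already has small diameter; it gives no bound on $\diam(\supp\theta_n)$. This is precisely why the paper never convolves $\theta_n$ wholesale with $\mu$ but instead decomposes $\theta_n$ into its level-$0$ components $(\theta_n)_{\psi,0}$, which are supported in single $\DD_0$-cells of $\AAA_{2,1}$ and hence have $O(1)$ diameter, and applies the inverse theorem to those.

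The second, deeper gap is the one you flag as the ``main obstacle'': your single application yields only $\alpha\ge C\delta$, and the proposed repair (pushing $\varepsilon$ up, ``aggregating $\delta$-increments over many scales'' via Lemma \ref{lem:multiscale-entropy-action}) is not a proof and does not supply the missing mechanism --- the inverse theorem's gain $\delta$ is a fixed small number unrelated to $\varepsilon$, and repeated application to the same measure $\pi_V\mu$ at nested scales has no way to compound beyond its total entropy, which is perfectly consistent with dimension $\alpha$. The ingredient you are missing is the paper's Lemma \ref{lem:ent-theta_n-D_0}: $\frac1n H(\theta_n^V,\DD_0)\to\alpha$, i.e.\ at unit scale $\theta_n$ carries only the entropy of $\pi_V\mu$ itself (via $f(\psi)=\psi(0)$ and $\pi_V\Pi_n\nu$). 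Together with Lemma \ref{lem:ent-theta_n-D_Cn} this puts the assumed gap into the \emph{conditional} entropy, $H(\theta_n,\DD_{Cn}\mid\DD_0)\gtrsim(\beta-\alpha)n$, so a positive fraction of the bounded-diameter level-$0$ components of $\theta_n$ have entropy $>\varepsilon$ at scale $Cn$; the inverse theorem boosts the convolutions of those components to $\alpha+\delta$, Lemma \ref{lem:ent_psi.mu} keeps the remaining components' convolutions above $\alpha-\tau$, and averaging contradicts the bound $\limsup\mathbb{E}_{i=0}\bigl(\frac1{Cn}H((\theta_n)_{\psi,i}\conv\mu,\DD_{(C+1)n})\bigr)\le\alpha$, which comes from exact dimensionality through the \emph{conditional} entropy $H(\pi_V\mu,\DD_{(C+1)n}\mid\DD_n)\approx\alpha Cn$ and concavity. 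Note also that the comparison must be made against this conditional entropy, not against $H(\pi_V\mu,\DD_{(C+1)n})\approx\alpha(C+1)n$ as in your sketch; using the full entropy dilutes the gain by the extra $\alpha n$, which is exactly why your version degenerates to $\alpha\ge C\delta$ instead of a contradiction.
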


We begin the proof. Write $\beta=\min\{1,\dfrac{H(p)}{{|\chi_1|}}\}$ and let $\alpha$ be with $\dim\pi_V\mu=\alpha$
	for $\eta^*$-a.e. $V\in\mathbb{RP}^{1}$, as in Lemma~\ref{lem:alpha}. Assume by way of contradiction	that $\alpha<\beta$ (it is enough to show that $\alpha=\beta$  because $\dim\pi_V\mu\leq \dim\mu\leq H(p)/|\chi_1|$).
	
	Let $C$ and $\mathcal{Y}$ be as obtained in
	Corollary \ref{cor:sep}, and let $V\in\mathcal{Y}$ with 
	\[
	\lim_{n\to\infty}\frac{1}{n}H(\pi_V\mu,\DD_{n})=\alpha.
	\]
	Since  conditional entropy is the difference of the entropies (eq. \ref{eq:conditional-entropy-is-increment}),
	\begin{equation}
	\lim_{n\to\infty}\frac{1}{Cn}H(\pi_V\mu,\DD_{(C+1)n}|\DD_{n})=\alpha.\label{eq:1}
	\end{equation}
	Write $\theta_n=\theta_n^V$ for the discrete measure on $\AAA_{2,1}$ given by \[
        \theta_n=\sum_{\ii\in\Xi_n^V}p_{\ii}\delta_{\pi_{V}\circ\varphi_{\ii}}
        \]
        (we suppress the dependence on $V$ when it is not needed). By the definition of $\Xi_n^V$ and \eqref{eq:iterated-convolution-for-mu}, we have \[
        \pi_V\mu=\theta_n^V\conv \mu.
        \]
        Since $\theta_{n}=\mathbb{E}_{i=0}((\theta_{n}^V)_{\psi,i})$, also
	$\theta_{n}\boldsymbol{.}\mu=\mathbb{E}_{i=0}((\theta_{n}^V)_{\psi,i}\boldsymbol{.}\mu)$,
	and so by \eqref{eq:1} and by the concavity of conditional entropy,
	\[
	\limsup_{n\to\infty}\:\mathbb{E}_{i=0}\left(\frac{1}{Cn}H((\theta_{n}^V)_{\psi,i}\boldsymbol{.}\mu,\DD_{(C+1)n}|\DD_{n})\right)\leq\alpha\:.
	\]
	For $n\ge1$ and $\psi\in\supp(\theta_{n}^V)$ we have $\Vert\psi\Vert=\Theta(q^{-n})$,
	hence
	\[
	\diam(\supp((\theta_{n}^V)_{\psi,0}\boldsymbol{.}\mu))=O(q^{-n}),
	\]
	and so the conditioning can be removed, since it results in an $O(1)$ change to entropy, which is killed by the denominator $n$:
	\begin{equation}
	\limsup_{n\to\infty}\:\mathbb{E}_{i=0}\left(\frac{1}{Cn}H((\theta_{n}^V)_{\psi,i}\boldsymbol{.}\mu,\DD_{(C+1)n})\right)\leq\alpha\:.\label{eq:11}
	\end{equation}

	\begin{lemma} With $C$ and $V\in\mathcal{Y}$ as above,

		\label{lem:ent-theta_n-D_Cn}
		$$
		\lim_{n\to\infty}\frac{1}{n}H(\theta_n^V,\DD_{Cn}) = \frac{H(p)}{{|\chi_1|}}.
		$$
	\end{lemma}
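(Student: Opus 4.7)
The plan is to use Corollary~\ref{cor:sep} to identify the $\DD_{Cn}$-entropy of $\theta_n^V$ with the Shannon entropy of the weights $\{p_{\ii}\}_{\ii\in\Xi_n^V}$, and then to compute this Shannon entropy by a Wald-type stopping-time argument driven by the Furstenberg-Kesten ergodic theorem. No nontrivial obstacle is expected: Corollary~\ref{cor:sep} does the real work.

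First I would apply Corollary~\ref{cor:sep}: since $V\in\mathcal{Y}$, for all sufficiently large $n$ the cells $\DD_{Cn}^{\AAA_{2,1}}(\pi_V\varphi_\ii)$ are pairwise distinct as $\ii$ ranges over $\Xi_n^V$. Because $\theta_n^V=\sum_{\ii\in\Xi_n^V}p_\ii\delta_{\pi_V\varphi_\ii}$, this yields
\[
H(\theta_n^V,\DD_{Cn})=-\sum_{\ii\in\Xi_n^V}p_\ii\log p_\ii,
\]
i.e.\ the Shannon entropy of the discrete distribution $(p_\ii)_{\ii\in\Xi_n^V}$.

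Second, I would rewrite this Shannon entropy as a stopping-time expectation. On $(\Lambda^\N,\m)$, define
\[
T=T_n^V(\ii)=\min\{m\geq 0\,:\,\|A_{i_m}^*\cdots A_{i_0}^*|V\|\leq q^{-n}\}.
\]
This is a stopping time for the natural filtration, and by Lemma~\ref{lem:properties-of-q} it is bounded by $c_2 n$. The map $\ii\mapsto(i_0,\ldots,i_T)$ pushes $\m$ onto the distribution $(p_\jj)_{\jj\in\Xi_n^V}$. Hence, since the increments $(-\log p_{i_k})_{k\geq 0}$ are i.i.d.\ with finite mean $H(p)$, Wald's identity gives
\[
-\sum_{\jj\in\Xi_n^V}p_\jj\log p_\jj=\EE\Bigl[-\sum_{k=0}^{T}\log p_{i_k}\Bigr]=(\EE[T]+1)\cdot H(p).
\]

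Third, I would show $\EE[T]/n\to 1/|\chi_1|$. Under the strong irreducibility and non-compactness assumption, Furstenberg's theorem applied to the i.i.d.\ matrices $A_{i_k}^*$ gives, for every $V\in\RP$,
\[
\frac{1}{m}\log\|A_{i_m}^*\cdots A_{i_0}^*|V\|\longrightarrow\chi_1\qquad\m\text{-a.s.}
\]
(this is the per-direction statement underlying Proposition~\ref{prop:furstenberg-measure}). Combined with the two-sided definition of $T$, namely $\|A_{i_T}^*\cdots A_{i_0}^*|V\|\leq q^{-n}<\|A_{i_{T-1}}^*\cdots A_{i_0}^*|V\|$, an elementary squeeze yields $T/n\to 1/|\chi_1|$ $\m$-a.s. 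The uniform bound $0\leq T/n\leq c_2$ then upgrades this to convergence in $L^1$ by dominated convergence. Dividing the displayed Wald identity by $n$ gives
\[
\frac{1}{n}H(\theta_n^V,\DD_{Cn})=\frac{\EE[T]+1}{n}\cdot H(p)\longrightarrow\frac{H(p)}{|\chi_1|},
\]
which is the claim.
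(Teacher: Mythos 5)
Your proposal is correct and follows essentially the same route as the paper: Corollary \ref{cor:sep} identifies $H(\theta_n^V,\DD_{Cn})$ with the Shannon entropy of $(p_\ii)_{\ii\in\Xi_n^V}$, and the asymptotics $T_n/n\to 1/|\chi_1|$ of the stopping time (i.e.\ of the word length in $\Xi_n^V$), coming from the top Lyapunov exponent, do the rest. The only cosmetic difference is that you evaluate the Shannon entropy exactly via Wald's identity, whereas the paper uses Shannon--McMillan--Breiman (writing $p_{\JJ(n,V)}=q^{-(H(p)+o(1))|\JJ(n,V)|}$) together with bounded convergence; both finishes are routine.
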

	
	\begin{proof}
	By choice of $V$, for every $n$ large enough  $\DD_{Cn}(\psi_{1})\ne\DD_{Cn}(\psi_{2})$
		for distinct $\psi_{1},\psi_{2}\in\supp\{\theta_{n}^V\}$. Moreover, since $\theta^V_n$ is the distribution of $\pi_V\varphi_{\JJ (n,V)}$, where $\JJ (n,V)$ is a random sequence from $\Xi_n^V$ (see \ref{sub:random-cylinder-measures}), we have \begin{eqnarray*}
                  \frac{1}{n}H(\theta^V_n,\QQ_{Cn}) & = & - \EE(\frac{1}{n}\log p_{\JJ (n,V)})\\
                 & = & - \EE(\frac{|\JJ (n,V)|}{n}\cdot\frac{1}{|\JJ (n,V)|}\log p_{\JJ (n,V)})
                \end{eqnarray*}
                By Proposition \ref{prop:furstenberg-measure} and the definition of $\JJ (n,V)$, we know that $|\JJ (n,V)|/n\to 1/|\chi_1|$, and by Shannon-McMillan-Breiman we know that \[
                p_{|\JJ (n,V)|}=q^{-(H(p)+o(1))|\JJ (n,V)|}\;\;\textrm{as }n\to\infty.
                \]
                Furthermore, since $p_i$ and $\|\varphi_i\|$ are uniformly bounded away from $0,\infty$, both terms in the expectation above are bounded. The lemma follows  by bounded convergence.

	\end{proof}

	\begin{lemma}
		\label{lem:ent_psi.mu}Let $C$ be as above and $\tau>0$. Then for $\eta^*$-a.e. $V\in\RP$,
		\[
		\limsup_{n\to\infty}\theta_{n}^V(\{\psi\::\:\frac{1}{Cn}H(\psi\mu,\DD_{(C+1)n})>\alpha-\tau\})=1.
		\]
	\end{lemma}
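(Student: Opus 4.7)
The plan is to reduce the claim to a statement about $\eta^*$-typical directions and then exploit the convergence of the random walk $A_\ii^*V$ to $\eta^*$. For $\ii\in\Xi_n^V$ and $\psi=\pi_V\varphi_\ii$, identity \eqref{eq:affine-map-to-projection} together with \eqref{eq:c0} expresses $\psi\mu$ as an affine image of $\pi_{A_\ii^*V}\mu$ with scaling factor $\|A_\ii^*|V\|=q^{-n+O(1)}$. Applying \eqref{eq:entropy-under-transformation-1} yields
\[
\frac{1}{Cn}H(\psi\mu,\DD_{(C+1)n})=\frac{1}{Cn}H(\pi_{A_\ii^*V}\mu,\DD_{Cn})+O(1/n),
\]
so it suffices to produce a sequence $n_j\to\infty$ along which the $\theta_{n_j}^V$-mass of $\{\ii:\tfrac{1}{Cn_j}H(\pi_{A_\ii^*V}\mu,\DD_{Cn_j})>\alpha-\tau/2\}$ tends to $1$.

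Next I build a ``good direction'' set. Lemma \ref{lem:alpha} gives $k^{-1}H(\pi_W\mu,\DD_k)\to\alpha$ for $\eta^*$-a.e. $W$; and since $\pi_W\mu$ is non-atomic (Lemma \ref{lem:uniform-continuity-across-scales}), the map $W\mapsto H(\pi_W\mu,\DD_k)$ is continuous on $\RP$ for each fixed $k$, so the sets $B_k:=\{W:\tfrac{1}{k}H(\pi_W\mu,\DD_k)>\alpha-\tau/4\}$ are open and, by dominated convergence, $\eta^*(B_k)\to 1$. Fix $\varepsilon>0$. I choose $k_0$ with $\eta^*(\bigcap_{k\ge k_0}B_k)>1-\varepsilon$, and then choose a finite $k_1\ge k_0$ so that the open set $E:=\bigcap_{k=k_0}^{k_1}B_k$ satisfies $\eta^*(E)>1-2\varepsilon$ (using continuity of measure on decreasing intersections).

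Finally, I transfer from fixed-length walks $\UU(L)$ to the stopping-time words $\JJ(i,V)$. Proposition \ref{lem:uniform-convergence-to-furst-measure} gives $\inf_V\PP(A_{\UU(L)}^*V\in E)>1-3\varepsilon$ for all sufficiently large $L$; setting $\mathcal U_V:=\{\ii:A_\ii^*V\in E\}$, Cesaro averaging and Lemma \ref{lem:converting-Un-to-In-or-Jn} yield $\inf_V\PP_{1\le i\le n}(\JJ(i,V)\in\mathcal U_V)>1-4\varepsilon$ for every $n$ large. A Markov-type pigeonhole restricted to the scale window $i\in[k_0/C,k_1/C]$ (which can be made arbitrarily long by enlarging $k_1$ in proportion to $n$) then furnishes arbitrarily large $i$ with $Ci\in[k_0,k_1]$ and $\PP(\JJ(i,V)\in\mathcal U_V)>1-\sqrt{4\varepsilon}$. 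For such $i$, $A_{\JJ(i,V)}^*V\in E\subseteq B_{Ci}$, and through the first reduction this becomes $\theta_i^V(\{\psi:\tfrac{1}{Ci}H(\psi\mu,\DD_{(C+1)i})>\alpha-\tau\})>1-\sqrt{4\varepsilon}$. Letting $\varepsilon\to 0$ yields $\limsup=1$.

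The main obstacle is that the fine-scale parameter $k=Ci$ is forced to grow linearly with the cylinder depth $i$, while Furstenberg convergence (Proposition \ref{lem:uniform-convergence-to-furst-measure}) is formulated for a fixed open target. I sidestep this by enforcing the entropy bound simultaneously on a \emph{finite} window of scales $[k_0,k_1]$ and then matching $Ci$ to that window by pigeonhole, thereby avoiding a direct treatment of the $G_\delta$ intersection $\bigcap_{k\ge k_0}B_k$. A secondary technical point is that the transfer from $\UU(L)$ to $\JJ(i,V)$ is only available on Cesaro averages (Lemma \ref{lem:converting-Un-to-In-or-Jn}), which forces the use of pigeonhole to extract a single-scale conclusion; this is the reason the lemma asserts $\limsup=1$ rather than a limit.
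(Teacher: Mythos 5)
Your first two steps are sound and coincide with the paper's reduction: for $\ii\in\Xi_i^V$ the passage from $\frac{1}{Ci}H(\pi_V\varphi_\ii\mu,\DD_{(C+1)i})$ to $\frac{1}{Ci}H(\pi_{A_\ii^*V}\mu,\DD_{Ci})$ costs only $O(1/i)$, and the sets $B_k$ are indeed open with $\eta^*(B_k)\to1$. The gap is in the final step, and it sits exactly at the crux of the lemma: the entropy event needed at cylinder scale $i$ lives at partition scale $Ci$, so the target set of directions is $B_{Ci}$, which moves with $i$. Your finite window $E=\bigcap_{k=k_0}^{k_1}B_k$ only helps when $Ci\in[k_0,k_1]$, and your own pigeonhole forces $k_1$ to grow in proportion to the averaging horizon $n$: the window of length $(k_1-k_0)/C$ must exceed the up-to-$\sqrt{4\varepsilon}\,n$ possible bad scales, and it must reach above any prescribed $m$ to yield arbitrarily large good $i$. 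But then $E$ depends on $n$, while both tools you invoke at horizon $n$ have thresholds depending on $E$: Proposition \ref{lem:uniform-convergence-to-furst-measure} requires $L\geq N(E,\varepsilon)$, and the threshold in Lemma \ref{lem:converting-Un-to-In-or-Jn} is inherited from the $\UU$-hypothesis and hence also depends on $E$. Nothing controls $N(E^{(k_1)},\varepsilon)$ as $k_1\to\infty$: the sets $B_k$ shrink, and the only available modulus of continuity of $W\mapsto\frac{1}{k}H(\pi_W\mu,\DD_k)$ (via \eqref{eq:entropy-under-transformation-3}) operates at scale $q^{-k}$, so the time the random walk needs to charge $E^{(k_1)}$ with mass $>1-3\varepsilon$ uniformly in $V$ may grow much faster than $k_1$. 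You therefore need ``$n$ at least a threshold that depends on $E(Cn)$'', which is circular, and the argument does not close. (A secondary point: Lemma \ref{lem:converting-Un-to-In-or-Jn} as stated assumes the $\UU$-probabilities tend to $1$ for every $\varepsilon$, whereas you only have the single level $1-3\varepsilon$; its proof does give a one-level version, but this should be said explicitly.)

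This scale-dependence is precisely why the paper does not argue through a fixed open target. After the same initial reduction, it regards the indicators $f_n$ of the events $\{W:\frac{1}{Cn}H(\pi_W\mu,\DD_{Cn})>\alpha-\tau\}$ evaluated along the orbit of the random walk, notes that $f_n\to1$ boundedly $\eta^*$-a.e.\ (Lemma \ref{lem:alpha}), and applies Maker's generalized ergodic theorem to conclude that the Ces\`aro averages of the corresponding probabilities tend to $1$; Lemma \ref{lem:converting-Un-to-In-or-Jn} then converts from $\UU(n)$ to $\JJ(n,V)$, and the limsup statement follows. To rescue your scheme you would need a rate of convergence of $A^*_{\UU(L)}V$ to $\eta^*$ that is quantitative and uniform over the shrinking family $E^{(k_1)}$, which the paper's hypotheses do not provide; absent that, some device like Maker's theorem for the sequence of scale-dependent events is the missing ingredient rather than an optional alternative.
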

	
	\begin{proof}
          Recalling the definition of $\theta_n^V$ again, what we need to show is \[
          \limsup_{n\to\infty}\PP(\frac{1}{Cn}H(\pi_{V}\varphi_{\JJ(n,V)}\mu,\QQ_{(C+1)n})>\alpha-\tau) = 1.
          \]
          The affine map $\pi_V\varphi_{\JJ (n,V)}$ differs from the projection $\pi_{A^*_{\JJ (n,V)V}}$ by a translating and scaling by $\|A^*_{\JJ (n,V)}|V\|$. Doing the same to the partition $\QQ_{(C+1)n}$, the entropy incurs an error of $O(1/n)$, which we can absorb in $\tau$. With this transformation, the claim becomes\[
          \limsup_{n\to\infty}\PP(\frac{1}{Cn}H(\pi_{A^*_{\JJ (n,V)}V}\mu,\QQ_{((C+1)n-\log \|A^*_{\JJ (n,V)}|V\|)})>\alpha-\tau) = 1.
          \]
          Using $\|A^*_{\JJ (n,V)}|V\|=q^{-(1+o(1))n}$, the partition above differs from $\QQ_{Cn}$ by $o(n)$ scales, which incurs an $o(n)$ error in entropy, and after dividing by $n$ this can again be absorbed in $\tau$, so we in fact must show \[
          \limsup_{n\to\infty}\PP(\frac{1}{Cn}H(\pi_{A^*_{\JJ (n,V)}V}\mu,\QQ_{Cn})>\alpha-\tau) = 1.
          \]
          The last limsup will follow if we show that \[
          \limsup_{n\to\infty}\frac{1}{N}\sum_{n=1}^N \PP( \frac{1}{Cn}H(\pi_{A^*_{\JJ (n,V)}V}\mu,\QQ_{Cn})>\alpha-\tau) = 1.    
          \]
          By Lemma \ref{lem:converting-Un-to-In-or-Jn}, this follows from the same expression with $\UU(n)$ instead of $\\JJ (n,V)$, that is, from \[
          \limsup_{N\to\infty}\frac{1}{N}\sum_{n=1}^N\PP(\frac{1}{Cn}H(\pi_{A^*_{\II(n)V}}\mu,\QQ_{Cn})>\alpha-\tau) = 1.          
          \]
          (in Lemma \ref{lem:converting-Un-to-In-or-Jn},  $\PP_{1\leq n \leq N}(\ldots)$ appears instead of the explicit average over scales). We now note that, writing $f_n$ for the indicator function of the event in the last equation, the functions $f_n$ converge boundedly $\eta^*$-a.e. to $1$, and the average above is an integral of ergodic average of the form $(1/N)\sum_{n=1}^N f_n(T^nx)$, the transformation being the one underlying the random walk. This converges to $1$ by Maker's ergodic theorem.
          
	\end{proof}

                 Recall that $\nu=p^\N$ is the product measure on $\Lambda^\N$, that  $\Pi:\Lambda^\N\to\R^2$ is the coding map, and that $\mu=\Pi\nu$ (Section \ref{sub:self-affine-measures}). Also, given $V\in\RP$ let $\Pi_n=\Pi^V_{n}:\Lambda^{\N}\rightarrow\mathbb{R}^{2}$ be the map
	\[
	\Pi_{n}(\ii)=\varphi_{\Xi_n^V(\ii)}(0).
	\]
        We suppress $V$ in the notation when it is fixed. The choice to evaluate the maps at $0$ is arbitrary.
        \begin{lemma}
          For $\eta^*$-a.e. $V\in\RP$,
          \[
            \frac{1}{n}H(\pi_V\Pi_{n}\m,\DD_{n}) = \alpha + o(1) \;\;\textrm{as } n\to\infty.
          \]
        \end{lemma}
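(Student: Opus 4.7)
The plan is to compare $\pi_V\Pi_n\nu$ to $\pi_V\mu$ at scale $q^{-n}$, showing they differ in entropy by only $O(1)$, and then invoke Lemma \ref{lem:alpha}.

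First I would couple the two distributions via the map $\ii\mapsto(\pi_V\Pi(\ii),\pi_V\Pi_n(\ii))$ with $\ii\sim\nu$, so that the first coordinate is distributed as $\pi_V\mu$ and the second as $\pi_V\Pi_n\nu$. Writing $\jj=\Xi_n^V(\ii)$ for the initial word and $\kk=\sigma^{|\jj|}\ii$ for the tail, we have $\Pi(\ii)=\varphi_\jj(\Pi(\kk))$ and $\Pi_n(\ii)=\varphi_\jj(0)$, so
\[
|\pi_V\Pi(\ii)-\pi_V\Pi_n(\ii)|=\|\pi_V A_\jj\Pi(\kk)\|\le\|A_\jj^*|V\|\cdot\|\Pi(\kk)\|\le R\cdot q^{-n},
\]
where $R$ bounds the diameter of $\supp\mu$ and we used the identity $\|\pi_V\circ A_\jj\|=\|A_\jj^*|V\|$ from Section \ref{sub:affine-maps-and-invariant-metric} together with the defining inequality $\|A_\jj^*|V\|\le q^{-n}$ for $\jj\in\Xi_n^V$.

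Second, this pointwise $Rq^{-n}$ coupling implies $|H(\pi_V\mu,\QQ_n)-H(\pi_V\Pi_n\nu,\QQ_n)|=O(1)$. Indeed, conditional on $\QQ_n(\pi_V\Pi(\ii))$, the value of $\QQ_n(\pi_V\Pi_n(\ii))$ lies in at most $\lceil R\rceil+2$ possible cells, and vice versa, so both conditional entropies between these partitions are bounded by $\log(\lceil R\rceil+2)=O(1)$, from which the claim follows.

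Third, dividing by $n$ and using Lemma \ref{lem:alpha}, which states that for $\eta^*$-a.e. $V$ we have $\frac{1}{n}H(\pi_V\mu,\QQ_n)\to\alpha$, we conclude $\frac{1}{n}H(\pi_V\Pi_n\nu,\QQ_n)=\alpha+o(1)$. There is no serious obstacle here: the whole argument is a routine discretization estimate, with the only substantive input being the control of $\|\pi_V\circ A_\jj\|$ in terms of $\|A_\jj^*|V\|$ that makes the $R q^{-n}$ coupling work uniformly across words $\jj\in\Xi_n^V$.
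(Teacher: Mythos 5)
Your proposal is correct and follows essentially the same route as the paper: the paper also observes that $|\pi_V\Pi(\ii)-\pi_V\Pi_n(\ii)|=O(q^{-n})$ uniformly in $\ii$, concludes $H(\pi_V\Pi_n\nu,\DD_n)=H(\pi_V\mu,\DD_n)+O(1)$ (by citing the entropy-continuity estimate \eqref{eq:entropy-under-transformation-3}, where you instead prove the $O(1)$ comparison directly via bounded conditional entropies — the same content), and then invokes Lemma \ref{lem:alpha}.
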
  
         \begin{proof}
           Observe that
		\[
		|\pi_V\Pi(\ii)-\pi_V\Pi_{n}(\ii)|=O(q^{-n})\text{ for all }\ii\in\Lambda^{\N},
		\]
		hence the identity $\mu=\Pi\nu$, together with equation \eqref{eq:entropy-under-transformation-3}, give
		\begin{eqnarray*}
		  H(\pi_V\Pi_n\nu,\DD_{n}) & = & H(\pi_V\Pi\nu,\DD_n)+O(1)\\
                  & = & H(\pi_V\mu,\DD_{n})+O(1)\:.
		\end{eqnarray*}
		Since an $\eta^*$-typical $V$ satisfies $\frac{1}{n}H(\pi\mu,\DD_{n})\overset{n}{\rightarrow}\alpha$, the lemma is proved
        \end{proof}

        \begin{lemma} \label{lem:ent-theta_n-D_0}For $\eta^*$-a.e. every $V\in\RP$, we have
		\[
		\underset{n\to\infty}{\lim}\:\frac{1}{n}H(\theta_{n}^V,\DD_{0})=\alpha\:.
		\]
	\end{lemma}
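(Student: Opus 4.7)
The plan is to compare the scale-$0$ entropy of $\theta_n^V$ on $\AAA_{2,1}$ with the scale-$n$ entropy of its push-forward $\pi_V\Pi_n\nu$ on $\R$, then invoke the preceding lemma. The key observation is that in the parametrization of $\AAA_{2,1}$ by $(t,u,a)\in\R\times S^1\times\R$ from Section~\ref{sub:affine-maps-and-invariant-metric}, under which $(t,u,a)$ represents the map $x\mapsto e^t\langle x,u\rangle+a$, the evaluation map $e_0\colon \AAA_{2,1}\to\R$, $e_0(\psi)=\psi(0)$, is simply the projection $(t,u,a)\mapsto a$. A direct computation from the definitions then gives $e_0\theta_n^V=\pi_V\Pi_n\nu$.

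Next I would localize on the relevant shell in $\AAA_{2,1}$. For every $\ii\in\Xi_n^V$, \eqref{eq:c0} shows $\|\pi_V\circ\varphi_\ii\|=\|A_\ii^*|V\|\in[c_0q^{-n},q^{-n}]$, so $t=\log\|\psi\|$ lies in an interval of length $O(1)$ around $-n\log q$ for every $\psi\in\supp\theta_n^V$. On this shell the explicit Riemannian formula $g_\psi(u,v)=u_1v_1+\langle u_2,v_2\rangle+e^{-2t}u_3v_3$ shows that any path whose $a$-component has total variation $L$ has Riemannian length at least $e^{-n\log q+O(1)}L=\Omega(q^n)L$; hence $e_0$ restricted to $\supp\theta_n^V$ is $O(q^{-n})$-Lipschitz with respect to $d$. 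Therefore each $\DD_0$-cell of $\AAA_{2,1}$ maps under $e_0$ into an interval of length $O(q^{-n})$, contained in $O(1)$ consecutive $\DD_n$-cells of $\R$. Conversely, for any $J\in\DD_n$ and any two $\psi_i=(t_i,u_i,a_i)\in e_0^{-1}(J)\cap\supp\theta_n^V$, connecting them by a piecewise path moving successively in the $t$, $u$, and $a$ directions yields total length $O(|t_1-t_2|+\|u_1-u_2\|_{S^1}+e^{-t_2}|a_1-a_2|)=O(1+1+q^n\cdot q^{-n})=O(1)$; hence $e_0^{-1}(J)\cap\supp\theta_n^V$ has $d$-diameter $O(1)$ and meets only $O(1)$ elements of $\DD_0$.

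Combining these two inclusions gives $|H(\theta_n^V,\DD_0)-H(\pi_V\Pi_n\nu,\DD_n)|=O(1)$. Dividing by $n$ and letting $n\to\infty$, the previous lemma yields $\frac{1}{n}H(\theta_n^V,\DD_0)\to\alpha$ for $\eta^*$-a.e. $V$, as required.

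The main technical point to verify is the comparability of the geodesic metric $d$ with the coordinate-wise product metric on the $O(1)$-thick shell $\{\|\psi\|\in[c_0q^{-n},q^{-n}]\}$, so that the diameter and Lipschitz estimates above are clean and uniform in $n$. This follows directly from the explicit Riemannian formula: once $t$ is pinned to $-n\log q+O(1)$, the conformal factor $e^{-2t}$ varies by at most a bounded factor, so the coordinate-wise product metric and the geodesic metric $d$ are bi-Lipschitz equivalent on each such shell with constants independent of $n$.
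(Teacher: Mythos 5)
Your proof is correct and follows essentially the same route as the paper: identify $e_0\theta_n^V=\pi_V\Pi_n\nu$, use the $\AAA_{1,1}$-invariance of $d$ to show that on $\supp\theta_n^V$ the partition $\DD_0$ of $\AAA_{2,1}$ and the pullback $e_0^{-1}\DD_n$ are commensurable (each atom of one meets $O(1)$ atoms of the other), and then invoke the preceding lemma; the paper carries out the metric estimates by rescaling with $S_{q^n}$ and isometric translations rather than via the explicit Riemannian formula, but the content is identical. Two harmless quibbles: the intermediate expression $e^{-n\log q+O(1)}L$ should read $e^{n\log q+O(1)}L$ (your conclusion $\Omega(q^n)L$ is the right one), and the ``$O(q^{-n})$-Lipschitz on $\supp\theta_n^V$'' claim is only valid for pairs at distance $O(1)$, since a long path may climb in $t$ where the $a$-direction is cheap --- but that local form is all your argument actually uses.
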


        \begin{remark}
          This lemma is strongly sensitive to the metric on $\AAA_{2,1}$. If we were working with the norm-induced metric, we would use $\QQ_n$ instead of $\QQ_0$ as above, because the maps in the support of $\theta_n^V$ contract by $q^n$. But we are using a metric left-invariant by scaling, and as the proof shows, the correct scale is $\QQ_0$. 
        \end{remark}
        
	\begin{proof}
		Recall that dilation by $c$ is denoted $S_c$ and translation by $s$ is denoted $T_s$. Let $n\ge1$ and $V\in\RP$. 
		Consider the evaluation map $f:\AAA_{2,1}\rightarrow\mathbb{R}$ given by $f(\psi)=\psi(0)$. Since $\Vert\psi\Vert=\Theta(1)$ for $\psi\in\supp(S_{q^{n}}\theta_{n}^V)$, the linear part of $\psi\in S_{q^n}\supp\theta_n^V$ is contained in a compact subset $K\subseteq \AAA_{2,1}$ which intersects $O(1)$ level-$0$ $q$-adic cells in $\AAA_{2,1}$. Furthermore, every $\psi\in\supp S_{q^n}\theta_n^V$ can be represented uniquely as $T_{\psi(0)}\pi$ where $\pi\in K$. It follows that for every $Q\in\QQ_0^\R$, the pre-image $f^{-1}(Q)\cap\supp S_{q^n}\theta_n^V$ is contained in $\cup_{s\in Q} T_sK$. Since $\{T_s\}$ act isometrically on $\AAA_{2,1}$, the diameter  of this set is uniformly bounded in $Q$ (because one easily shows that the diameter of $\{T_s\psi\,:\,s\in [a,b]\}$ is $O(b-a)$ for any interval $[a,b]$ and $\psi\in\AAA_{2,1}$), and it follows that each $f^{-1}Q\cap\supp S_{q^n}\theta_n^V$ intersects $O(1)$ atoms of $\QQ_0^{\AAA_{2,1}}$. Also, it is clear that there exists an open set $U\subseteq \AAA_{2,1}$ such that  $f^{-1}Q$ contains $T_sU$ for some $s\in\R$, which shows that each atom of $\QQ_0$ intersects boundedly many sets of the form $f^{-1}Q$. In short, on the supports of $S_{q^n}\theta_n^V$, the partitions $\QQ_0$ and $f^{-1}\QQ_0$ are commensurable. Therefore, \begin{eqnarray*}
                H(S_{q^n}\theta_n^V,\QQ_0) & =& H(S_{q^n}\theta_n^V,f^{-1}(\QQ_0)) + O(1)\\
		  & = &  H(fS_{q^{n}}\theta_{n}^V,\DD_{0})+O(1)\\
                  & = & H(f\theta_{n}^V,\DD_{n})+O(1)                  
                \end{eqnarray*}
                where in the last transition we used the fact that $f$ and $S_q^n$ commute, and that entropy of a scaled measure in $\R$ should be measured in a correspondingly smaller level. Now,                since we are working with the invariant metric, the left hand side of the last equation is just $H(\theta_{n}^V,\DD_{0}) + O(1)$, the quantity we are interested in. As for the right hand side, we have $f\theta_n^V = \pi_V\Pi_n\nu$, where $\Pi_n=\Pi_n^V$ is  as defined before the previous lemma, so by that lemma, after dividing the last equation by $n$, we obtain the desired result for $\eta^*$-a.e. choice of $V$.
	\end{proof}

	We now return to the main proof, with $C,\mathcal{Y}$ as before. Let $\tau>0$ be small, it follows from Lemma~\ref{lem:ent-theta_n-D_Cn} and Lemma \ref{lem:ent-theta_n-D_0} that for any $n$ large enough
	\begin{eqnarray*}
		\mathbb{E}_{i=0}\left(\frac{1}{Cn}H((\theta_{n}^V)_{\psi,i},\DD_{Cn})\right) & = & \frac{1}{Cn}H(\theta_{n}^V,\DD_{Cn}\mid\DD_{0})\\
		& = & \frac{1}{Cn}H(\theta_{n}^V,\DD_{Cn})-\frac{1}{Cn}H(\theta_{n}^V,\DD_{0})\\
		& > & \frac{\beta-\alpha-\tau}{C}.
	\end{eqnarray*}
	We may assume $\tau$ is so small such that $\beta-\alpha-\tau>0$,
	hence there exists $\varepsilon>0$ such that for every $n$
	large enough,
	\begin{equation}
	\mathbb{P}_{i=0}\left(\frac{1}{Cn}H((\theta_{n}^V)_{\psi,i},\DD_{Cn})>\varepsilon\right)>\varepsilon\:.\label{eq:14a}
	\end{equation}
	Let $\delta=\delta(\varepsilon)>0$ be as obtained in Theorem \ref{thm:inverse-theorem-for-projections}.
	From this theorem, the inequality \eqref{eq:14a}, and Lemma~\ref{lem:ent_psi.mu}, it follows that for every $N$ large enough there exists $n\geq N$
	\[
	\mathbb{E}_{i=0}\left(\frac{1}{Cn}H((\theta_{n}^V)_{\psi,i}\boldsymbol{.}\mu,\DD_{(C+1)n})\right)\ge\alpha+\varepsilon\cdot\delta-O(\tau)\:.
	\]
	Since $\tau$ can be taken to be arbitrarily small with respect to
	$\varepsilon$ and $\delta$, we have thus reached a contradiction with
	\eqref{eq:11}. Hence we must have $\alpha=\beta$, which completes
	the proof of Theorem \ref{thm:one-dimensional-projections}.

\subsection{Proof of Theorems \ref{thm:main-sets} and \ref{thm:main-measures}}

The result on the Hausdorff dimension of the self-affine set follows from the corresponding statement for self-affine measures and \cite[Proposition~4.1]{MorrisShmerkin2016}.

The result on self-affine measures follows by \cite[Corollary~2.7, Corollary~2.8]{BaranyKaenmaki2015} from Theorem~\ref{thm:one-dimensional-projections}, and the fact that the SOSC implies exponential separation. Let us show this. Let $U$ be  as in the definition of the SOSC.  Let $x\in X\cap U$, so that $d(x,\partial U)>0$. Therefore, for every
$\ii\in\Lambda^*$, $d(\varphi_{\ii}(x),\partial \varphi_\ii U)\geq \alpha^{|\ii|}d(x,\partial U)$, where
$0<\alpha=\min_i\|A_i^{-1}\|^{-1}<1$. Now let $\ii\jj\in\Lambda^n$, and suppose that $\ii\neq\jj$. Let $\uu\in\Lambda^k$ be a maximal common initial segment of $\ii,\jj$, so that $\ii=\uu\ii'$ and $\jj=\uu\jj'$ and $\ii',\jj'$ differ in their first symbol. Then 
\begin{eqnarray*}
d(\varphi_{\ii}(x), \varphi_{\jj}(x)) & = & d(\varphi_\uu(\varphi_{\ii'}(x)),\varphi_\uu(\varphi_{\jj'}(x))) \\
& \geq & \alpha^k d(\varphi_{\ii'}(x),\varphi_{\jj'}(x))\\
& \geq & \alpha^k d(\varphi_{\ii}(x),\partial \varphi_\ii U)\\
& \geq & \alpha^nd(x,\partial U).
\end{eqnarray*}
This is exponential separation.

\subsection{\label{sub:triangular-case}The triangular case}

Our previous results imposed conditions on the matrices $A_i$ which ensure that the Furstenberg measure $\eta^*$ is unique. More than uniqueness itself, we used the implication that the random sequence $A_{\UU(n)}V$ converge in distribution to a measure $\eta^*$ for every initial $V\in\RP$, and  that $\eta^*$ has positive dimension. However, if ``every $V$'' is relaxed to ``all but one'', the entire argument carries through unchanged (using the fact that the exceptional $V$ has $\eta^*$-measure zero). This observation enables us to treat another important class of self-affine sets and measures.

\begin{proposition}\label{prop:triang}
	Let $\Psi$ be a self-affine IFS on the plane in the form
	$$
	\Psi=\left\{\varphi_i(x)=\left(\begin{matrix}
	a_i & 0 \\
	b_i & c_i
	\end{matrix}\right)x+\left(\begin{matrix}
	u_i \\
	v_i 
	\end{matrix}\right)\right\}_{i\in\Lambda}.
	$$
	Suppose that $a_i<c_i$ for every $i\in\Lambda$, $\Psi$ satisfies the strong open set condition and the matrices are not simultaneously diagonalizable. Then for every self-affine measure $\mu=\sum_{i\in\Lambda}p_i\cdot \varphi_i\mu$,
	$$
	\\dim\mu=\lydim\nu,
	$$
	and in particular $\dim X=\adim\Psi$. Moreover, for every $V\in\RP$ except the span of $(0,1)$,
	$$
	\dim\pi_V\mu=\min\{1,\dim\mu\},
	$$
	and $\dim\pi_VX=\min\{1,\dim X\}$.
\end{proposition}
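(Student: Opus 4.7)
The plan is to verify the two properties of the Furstenberg measure $\eta^{*}$ highlighted in the remark preceding the proposition, and then run the argument of Theorem~\ref{thm:one-dimensional-projections} unchanged. Specifically, the remark reduces the task to showing: (i) a stationary measure $\eta^{*}=\sum p_i\cdot A_i^{*}\eta^{*}$ exists on $\RP$ and the random walk $V\mapsto A_{\UU(n)}^{*}V$ converges in distribution to $\eta^{*}$ from every $V$ outside a finite exceptional set; and (ii) $\dim_H\eta^{*}>0$. Granting these, $\dim\pi_V\mu=\min\{1,\dim\mu\}$ for every non-exceptional $V$ follows by the argument of Theorem~\ref{thm:one-dimensional-projections}; the Ledrappier-Young formula \cite{BaranyKaenmaki2015} combined with SOSC (which kills the overlap term $\Delta$ in \eqref{eq:LY-with-overlaps}) then yields $\dim\mu=\lydim\nu$; the Morris-Shmerkin approximation \cite{MorrisShmerkin2016}, applied to the present setting, produces sub-systems $\Psi'\subseteq\bigcup_n\Psi^n$ that retain lower-triangularity with $a_i<c_i$ and non-simultaneous-diagonalizability, giving $\dim X=\adim\Psi$; and the set-projection statement is obtained by applying the measure version to the K\"aenm\"aki measure, whose support is $X$.

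Since each $A_i$ is lower triangular, each $A_i^{*}$ is upper triangular and fixes the common line $V_{0}:=\mathrm{span}(1,0)$, which is the exceptional direction for the dual dynamics. Parametrizing $\RP\setminus\{V_{0}\}$ by the chart $V=\mathrm{span}(s,1)$ with $s\in\R$, the dual action of $A_i^{*}$ reads
\[
s\;\mapsto\;\frac{a_i}{c_i}\,s+\frac{b_i}{c_i}.
\]
Since $a_i<c_i$ this is an affine contraction on $\R$ with ratio $a_i/c_i\in(0,1)$ and fixed point $b_i/(c_i-a_i)$; moreover, non-simultaneous-diagonalizability of the $A_i$ is equivalent to these fixed points not all being equal. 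Thus $\widetilde\Psi:=\{s\mapsto(a_i/c_i)s+b_i/c_i\}_{i\in\Lambda}$ is a genuinely non-trivial contracting IFS on $\R$, and standard results on random products of affine contractions (see \cite[Chap.~III]{BougerolLacroix1985}) yield a unique stationary probability measure $\eta^{*}$ on $\RP\setminus\{V_0\}$, with $A_{\UU(n)}^{*}V$ converging weakly to $\eta^{*}$ as $n\to\infty$ uniformly for $V$ in compact subsets of $\RP\setminus\{V_0\}$. This gives (i).

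For (ii), $\eta^{*}$ viewed in the slope chart is precisely the self-similar measure for $(\widetilde\Psi,p)$ on $\R$. Exponential separation of $\Psi$ on $\R^{2}$ (implied by SOSC, as shown earlier in this section) transfers in a standard way to exponential separation for compositions in $\widetilde\Psi$, and Hochman's theorem \cite{Hochman2014} then gives
\[
\dim_H\eta^{*}\;=\;\min\!\Bigl\{1,\ \frac{H(p)}{\sum_i p_i\log(c_i/a_i)}\Bigr\}\;>\;0,
\]
since $\sum_i p_i\log(c_i/a_i)>0$ (because $a_i<c_i$). With (i) and (ii) established, all invocations of Proposition~\ref{prop:convergence-to-furstenberg-measure} in Sections~\ref{sec:entropy-porosity} and \ref{sec:main-theorem} remain valid, and the proof of Theorem~\ref{thm:one-dimensional-projections} runs verbatim to complete the argument. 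The main technical obstacle is the descent of exponential separation from $\Psi$ on $\R^{2}$ to $\widetilde\Psi$ on $\R$; this is where the non-simultaneous-diagonalizability hypothesis plays an essential role, ruling out the degenerate case where $\widetilde\Psi$ reduces to a family of contractions with a common fixed point and $\eta^{*}$ collapses to a Dirac mass of zero dimension.
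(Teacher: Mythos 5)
Your overall strategy is the paper's: verify that the dual walk $V\mapsto A^*_{\UU(n)}V$ converges to a stationary measure $\eta^*$ from every line except the common fixed line of the $A_i^*$, verify $\dim\eta^*>0$, and then invoke the remark preceding the proposition to run the machinery of Theorem \ref{thm:one-dimensional-projections} with one exceptional direction. Your chart computation ($s\mapsto (a_i/c_i)s+b_i/c_i$, contracting since $a_i<c_i$, with non-simultaneous diagonalizability equivalent to the fixed points $b_i/(c_i-a_i)$ not all coinciding) is exactly what the paper does. The genuine gap is in your step (ii), the positivity of $\dim\eta^*$. You claim that exponential separation of $\Psi$ on $\R^2$ (from the SOSC) ``transfers in a standard way'' to the slope IFS $\widetilde\Psi=\{s\mapsto (a_i/c_i)s+b_i/c_i\}$ and then apply Hochman's theorem. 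No such transfer exists: $\widetilde\Psi$ is determined by the matrices alone, while exponential separation of $\Psi$ is a statement about the affine maps including their translation parts. Concretely, take three indices with $A_1=A_2$ (identical matrices, translations chosen so the images are disjoint and the SOSC holds) and a third lower-triangular $A_3$ whose second eigendirection differs from that of $A_1$; every hypothesis of the proposition is satisfied, yet $\widetilde\Psi$ has the exact overlap $\tilde f_1=\tilde f_2$, so exponential separation fails for $\widetilde\Psi$ at every level, Hochman's theorem does not apply, and the formula $\min\bigl\{1,H(p)/\sum_ip_i\log(c_i/a_i)\bigr\}$ is false in general in this situation. The ``main technical obstacle'' you flag is therefore not surmountable by this route. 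It is also unnecessary: the paper only uses that the slope maps are contractions of $\R$ without a common fixed point, hence the self-similar measure $\eta^*$ is not a point mass, and a non-trivial self-similar measure on the line automatically has positive dimension --- no separation hypothesis on $\widetilde\Psi$ is needed. Replacing your step (ii) by this soft fact repairs the argument and brings it in line with the paper.

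A secondary looseness: the ``every non-exceptional $V$'' projection assertions do not follow from the argument of Theorem \ref{thm:one-dimensional-projections} alone, which yields the statement only for $\eta^*$-a.e.\ $V$; the paper derives them from (the argument of) Theorem \ref{thm:mainsep2}, i.e.\ the suspension-flow/equidistribution upgrade of Section 6.1, and your alternative route to $\dim\pi_V X$ via the K\"aenm\"aki measure is not immediate, since that measure is not a self-affine (Bernoulli) measure to which the measure statement applies directly.
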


\begin{proof}
  Let $V$ denote the horizontal line through the origin (spanned by $(1,0)$). This is a common eigenvector for the $A^*_i$, where $A_i$ is the linear part of $\varphi_i$. Parametrizing all lines other than $V$ by the representatives $(z,1)$, we see that the action of $A_i^*$ on  $\RP\setminus\{V\}$ is given by $z\mapsto \frac{a_iz+b_i}{c_i}$. Considered as maps of the real variable $z$, these maps are strictly contracting due to the hypothesis $a_i<c_i$, and so they define a self-similar measure $\eta^* = \sum_{i\in\Lambda}p_I\cdot A_i^*\eta^*$ (we use the same notation for the measure on $\RP$ and on the line). Since $A_i^*$ were assumed to not be jointly diagonalizable, they do not share a common fixed point in $\RP$ other than $V$, hence no common fixed point in $\RP\setminus\{V\}$. This implies that $\dim\eta^*>0$. Furthermore, for any $z\in\RP\setminus\{V\}$ the random walk started from $z$ converges to $\eta^*$. 	
Thus, the proof of the theorem follows in the same way as the proof of   Theorem~\ref{thm:main-measures}, as discussed above.

The assertions on the orthogonal projections follow from Theorem \ref{thm:mainsep2} in the next section.
\end{proof}

\section{\label{sec:applications}Applications}

In this section, we present some applications of our main theorems: to the orthogonal projection of self-affine measures along every direction, dimension of the graph of the Takagi function, and the dimension of attractors of skew-product Horseshoes. 

\subsection{Dimension of every projection} The main theorem of this section is the following.

\begin{theorem}\label{thm:mainsep2}
	If $\Phi=\{\varphi_{i}\}_{i\in\Lambda}$ satisfies SOSC and $\{A_i\}_{i\in\Lambda}$ is strongly irreducible then for every self-affine measure  $\mu$ and every $V\in\RP$
	$$
	\dim_H\pi_V\mu=\min\{1,\dim_H\mu\}.
	$$
	Moreover, for the attractor $X$ of $\Phi$
	$$
	\dim_H\pi_VX=\min\{1,\dim_H X\}.
	$$
\end{theorem}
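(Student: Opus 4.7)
The plan is to deduce Theorem \ref{thm:mainsep2} from Theorem \ref{thm:main-measures} together with a dynamical argument promoting the conclusion from $\eta^*$-almost every $V$ to every $V\in\RP$. Set $\alpha=\min\{1,\dim_H\mu\}$. The proof of Theorem \ref{thm:one-dimensional-projections} in Section \ref{sec:main-theorem} actually yields the sharper conclusion, noted in the introduction, that $\dim_H\pi_V\mu=\alpha$ holds for every $V$ outside an exceptional set $\mathcal{E}\subseteq\RP$ of Hausdorff dimension zero: the only $V$-dependent ingredient in that proof, namely the exponential separation of the projected cylinders $\{\pi_V\varphi_\ii\}_{\ii\in\Xi_n^V}$ supplied by Corollary \ref{cor:sep} and Proposition \ref{prop:separation}, excludes only such a set. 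So it suffices to prove $\mathcal{E}=\emptyset$.

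First I would record the forward-invariance of $\mathcal{E}$ under the semigroup generated by $\{A_i^*\}_{i\in\Lambda}$. From $\mu=\sum_i p_i\varphi_i\mu$ together with \eqref{eq:affine-map-to-projection}, we have
\[
\pi_V\mu=\sum_{i\in\Lambda}p_i\cdot T_{\pi_V(a_i)}S_{\pm\|A_i^*|V\|}\,\pi_{A_i^*V}\mu,
\]
a convex combination, with strictly positive weights $p_i$, of non-degenerate affine images of the measures $\pi_{A_i^*V}\mu$. Non-degenerate affine maps preserve Hausdorff dimension of measures, and for any finite convex combination of probability measures with positive weights one has $\dim_H\left(\sum p_i\mu_i\right)=\max_i\dim_H\mu_i$ (since $A\subseteq\R$ with $\mu(A)=1$ is the same as $\mu_i(A)=1$ for all $i$, and one can realise the maximum on the right by $A=\bigcup_i A_i$). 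Hence $\dim_H\pi_V\mu=\max_{i\in\Lambda}\dim_H\pi_{A_i^*V}\mu$, so $V\in\mathcal{E}$ forces $A_i^*V\in\mathcal{E}$ for every $i$, and iterating, the whole forward orbit $\{A_\ii^*V:\ii\in\Lambda^*\}$ of any $V\in\mathcal{E}$ remains in $\mathcal{E}$.

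To show $\mathcal{E}=\emptyset$, I would combine forward-invariance with the dynamics on $\RP$. Suppose for contradiction $V_0\in\mathcal{E}$. By Proposition \ref{prop:convergence-to-furstenberg-measure} the distribution of $A^*_{\UU(n)}V_0$ converges weakly, uniformly in $V_0$, to $\eta^*$, which under our hypotheses has full support on $\RP$ and satisfies $\eta^*(\mathcal{E})=0$ because $\dim\eta^*>0>\dim\mathcal{E}$. The goal is to produce some $\ii\in\Lambda^*$ with $A_\ii^*V_0\notin\mathcal{E}$; by forward-invariance this contradicts $V_0\in\mathcal{E}$. The main obstacle, and the essential step of the proof, is that $\mathcal{E}^c$ need not be open, so weak convergence alone does not suffice. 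The approach is to use the explicit $G_\delta$ description of $\mathcal{E}$ from the lim-sup construction in Proposition \ref{prop:separation} (an intersection of nested open sets corresponding to pairs of long cylinders whose projections nearly coincide) to locate, at some finite level $N$, an open subset $U\subseteq\RP$ of positive $\eta^*$-mass contained in $\mathcal{E}^c$; a Portmanteau-type estimate, applied to $U$ via the uniform weak convergence of $A^*_{\UU(n)}V_0$ to $\eta^*$, then yields some $n$ and $\ii\in\Lambda^n$ with $A_\ii^*V_0\in U\subseteq\mathcal{E}^c$.

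The attractor statement follows easily: given $\ep>0$, choose, as in the proof of Theorem \ref{thm:main-sets} via Morris--Shmerkin \cite{MorrisShmerkin2016}, a Bernoulli self-affine measure $\mu_\ep$ on an iterate of $\Phi$ whose matrix parts still satisfy the hypotheses and with $\dim_H\mu_\ep>\dim_H X-\ep$. Since $\mu_\ep$ is supported on $X$, the measure part of Theorem \ref{thm:mainsep2} gives $\dim_H\pi_V X\geq\dim_H\pi_V\mu_\ep=\min\{1,\dim_H\mu_\ep\}$ for every $V\in\RP$; letting $\ep\to 0$, together with the trivial upper bound $\dim_H\pi_V X\leq\min\{1,\dim_H X\}$, completes the proof.
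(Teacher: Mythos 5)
Your reduction has two genuine gaps, and they sit exactly at the points carrying all the weight. First, you assert that the proof of Theorem \ref{thm:one-dimensional-projections} "actually yields" $\dim\pi_V\mu=\min\{1,\dim\mu\}$ for all $V$ outside a set $\mathcal{E}$ of Hausdorff dimension zero, the only $V$-dependent input being the separation of Corollary \ref{cor:sep}. That is not what the proof establishes. Beyond separation, the argument in Section \ref{sec:main-theorem} uses, for the chosen $V$, that $\frac{1}{n}H(\pi_V\mu,\DD_n)\to\alpha$ (known only for $\eta^*$-a.e.\ $V$, via Lemma \ref{lem:alpha}), Lemma \ref{lem:ent_psi.mu} (proved by Maker's ergodic theorem, hence only $\eta^*$-a.e.), and Lemma \ref{lem:ent-theta_n-D_0} (again only for $\eta^*$-typical $V$). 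The sentence in the introduction about a dimension-zero exceptional set cannot be cited as a proved strengthening; to get it you would have to rerun the whole contradiction argument for a fixed atypical $V$ with $\alpha$ replaced by $\underline{\dim}_e\pi_V\mu$ and supply $V$-uniform substitutes for those a.e.\ lemmas, which is substantial new work, not a remark.

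Second, even granting a forward-invariant exceptional set $\mathcal{E}$ with $\dim\mathcal{E}=0$ (hence $\eta^*(\mathcal{E})=0$), the step you yourself flag as essential does not go through. Weak convergence of the distribution of $A^*_{\UU(n)}V_0$ to $\eta^*$ only controls open sets from below and closed sets from above; it says nothing about an $\eta^*$-null set with no topological structure. The lim-sup construction in Proposition \ref{prop:separation} makes $E$ a $G_\delta$ which may perfectly well be dense, so $\mathcal{E}^c$ need not contain \emph{any} open set, and the closed sets of positive $\eta^*$-measure that the $F_\sigma$ complement does provide are useless here, since the Portmanteau inequality for closed sets goes the wrong way. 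All one can extract from your setup is $\eta^*(\overline{\mathcal{E}})=1$, which is no contradiction because the closure of a dimension-zero set can be all of $\RP$. This is why the paper proves Theorem \ref{thm:mainsep2} by a different route (following Falconer--Kempton): for an arbitrary fixed $V'$ it studies the entropies of the projected cylinders $\pi_{V'}\varphi_{\Xi^{V'}_{Nk}(\ii)}\mu$, shows via the suspension flow, ergodicity of the time-$N$ maps (Lemma \ref{lem:erg}), the comparison Lemmas \ref{lem:finite} and \ref{lem:generate}, and the contraction $d_{\RP}(A^*_{\ii|_n}V,A^*_{\ii|_n}V')\to0$ that their scale-$N$ averages converge to $\int\frac{1}{N}H(\pi_V\mu,\DD_N)\,d\gamma^*(V)$, which by the a.e.\ Theorem \ref{thm:one-dimensional-projections} tends to $\min\{1,\lydim\mu\}$, and then converts this into a Hausdorff-dimension lower bound for that very $V'$ via the local-entropy-averages criterion of Theorem \ref{thm:lowerbound}. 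That last ingredient also handles the passage from entropy to Hausdorff dimension for a possibly non-exact-dimensional $\pi_{V'}\mu$, an issue your proposal never confronts. (Your treatment of the attractor statement via Morris--Shmerkin approximation is fine in spirit, but it is moot until the measure statement for every $V$ is actually established.)
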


The proof is a slight modification of the result of Falconer and Kempton \cite{FalconerKempton2017}. For the convenience of the reader, we give sketch of the proof with the required modifications. First, we state the variant of the result Hochman
and Shmerkin \cite{HochmanShmerkin}, which is suitable for the projections of self-affine systems. 

Throughout this section we always assume that $\{\bar{A}_i\}_{i\in\Lambda}$ is strongly irreducible and the generated subgroup is unbounded.

\begin{theorem}\label{thm:lowerbound}
Let $V\in\RP$. If for $\m$-a.e. $\ii\in\Lambda^\N$
$$
\lim_{N\to\infty}\liminf_{n\to\infty}\frac{1}{Nn}\sum_{k=1}^nH(\pi_V\varphi_{\Xi_{Nk}^V(\ii)}\mu,\DD_{N(k+1)})\geq \beta,
$$
then $\dim_H\pi_V\mu\geq\beta$.
\end{theorem}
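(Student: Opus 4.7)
The strategy is to turn the pointwise hypothesis—concerning the entropy of the projected cylinders $\pi_V\varphi_{\Xi^V_{Nk}(\ii)}\mu$—into a global entropy lower bound for $\pi_V\mu$, and then upgrade this to a Hausdorff dimension bound using exact dimensionality.

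First I would apply the chain rule for entropy at the scales $0, N, 2N, \ldots, NM$ to obtain
\[
\frac{1}{NM} H(\pi_V\mu, \DD_{NM}) = \frac{1}{NM}\sum_{k=0}^{M-1} H\!\left(\pi_V\mu, \DD_{N(k+1)} \mid \DD_{Nk}\right) + O(1/NM).
\]
By the decomposition \eqref{eq:iterated-convolution-for-mu}, for each $k$ I have $\pi_V\mu = \sum_{\ii \in \Xi^V_{Nk}} p_\ii\,\pi_V\varphi_\ii\mu$, and concavity of conditional entropy (Section \ref{sub:entropy}) gives
\[
H(\pi_V\mu, \DD_{N(k+1)} \mid \DD_{Nk}) \geq \sum_{\ii \in \Xi^V_{Nk}} p_\ii\, H(\pi_V\varphi_\ii\mu, \DD_{N(k+1)} \mid \DD_{Nk}).
\]
By the defining property of $\Xi^V_{Nk}$ (equation \eqref{eq:c0}), each cylinder measure $\pi_V\varphi_\ii\mu$ is supported on an interval of length $O(q^{-Nk})$, so it meets only $O(1)$ atoms of $\DD_{Nk}$. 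This lets me remove the conditioning at an additive $O(1)$ cost per summand, which is harmless after normalization.

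Rewriting the sum as an integral against $\nu$ yields
\[
\frac{1}{NM} H(\pi_V\mu, \DD_{NM}) \geq \int \frac{1}{NM}\sum_{k=1}^{M} H\!\left(\pi_V\varphi_{\Xi^V_{Nk}(\ii)}\mu, \DD_{N(k+1)}\right) d\nu(\ii) - O(1/N).
\]
Taking $\liminf_{M\to\infty}$ and applying Fatou's lemma (the integrand is uniformly $O(1)$ thanks to Lemma \ref{lem:Qn-has-bounded-degree}-type bounds and the controlled support diameter) produces
\[
\liminf_M \frac{1}{NM} H(\pi_V\mu, \DD_{NM}) \geq \int f_N(\ii)\, d\nu(\ii) - O(1/N),
\]
where $f_N(\ii)$ is the inner $\liminf$ appearing in the hypothesis. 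A second application of Fatou as $N \to \infty$, using the hypothesis $\lim_N f_N \geq \beta$ $\nu$-a.s., gives $\liminf_N \int f_N\, d\nu \geq \beta$. A routine comparison of the scales $n$ and $N\lfloor n/N\rfloor$, exploiting the fact that $H(\pi_V\mu, \DD_n)$ is non-decreasing in $n$, then converts the subsequence bound into $\liminf_{n\to\infty} \tfrac{1}{n} H(\pi_V\mu, \DD_n) \geq \beta$.

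The final step is the passage from entropy dimension to Hausdorff dimension. Under the standing irreducibility and non-compactness hypotheses, the Ledrappier–Young formula of Bárány–Käenmäki (as used in Lemma \ref{lem:alpha}) ensures exact dimensionality of $\pi_V\mu$ for $\eta^*$-a.e. $V$, and Lemma \ref{lem:entropy-dimension} then converts the entropy bound into $\dim_H \pi_V\mu \geq \beta$. The main obstacle is making the last step work for every $V$ rather than only $\eta^*$-almost every $V$; one bypasses this by proving the local lower bound $\liminf_{r\to 0} \log \pi_V\mu(B(\pi_V\Pi(\ii),r))/\log r \geq \beta$ pointwise for $\nu$-a.e.\ $\ii$, which is accessible from the same entropy estimate applied to individual projected cylinders together with joint uniform continuity across scales (Lemma \ref{lem:joint-uniform-continuity}), and which suffices to conclude $\dim_H \pi_V\mu \geq \beta$ by the mass distribution principle.
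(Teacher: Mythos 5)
There is a genuine gap, and it sits exactly at the step you defer. Your first half is fine as far as it goes: concavity of conditional entropy plus the $O(1)$ removal of conditioning (the cylinders in $\Xi^V_{Nk}$ project into intervals of length $O(q^{-Nk})$) and two applications of Fatou do yield $\liminf_n \frac{1}{n}H(\pi_V\mu,\DD_n)\geq\beta$. But this is a lower bound on the \emph{entropy} dimension, and for a general measure one has $\dim_H\pi_V\mu\leq\underline{\dim}_e\pi_V\mu$, so the inequality you produce points the wrong way; it gives no information about Hausdorff dimension unless $\pi_V\mu$ is exact dimensional. Exact dimensionality (Lemma \ref{lem:alpha}) is only available for $\eta^*$-a.e.\ $V$, whereas the theorem is stated for a fixed, arbitrary $V$ -- and indeed its whole role in Section \ref{sec:applications} is to handle \emph{every} direction (Theorem \ref{thm:mainsep2}), so the a.e.\ route would defeat the purpose.

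Your proposed repair -- proving $\liminf_{r\to0}\log\pi_V\mu\bigl(B(\pi_V\Pi(\ii),r)\bigr)/\log r\geq\beta$ for $\nu$-a.e.\ $\ii$ "from the same entropy estimate together with joint uniform continuity across scales" -- is precisely the nontrivial content of the theorem, and it is asserted rather than proved. Passing from averaged entropies of the projected cylinders to a pointwise decay rate of $\pi_V\mu$ along the $q$-adic cells containing $\pi_V\Pi(\ii)$ requires the local entropy averages machinery: a martingale/law-of-large-numbers argument bounding $-\frac{1}{n}\log\pi_V\mu(\DD_n(x))$ from below by the averaged conditional entropies of the \emph{components} $(\pi_V\mu)_{x,Nk}$, together with a genuine comparison between those components and the cylinder measures $\pi_V\varphi_{\Xi^V_{Nk}(\ii)}\mu$ (which are different objects: a cylinder straddles several cells and many cylinders pile onto one cell). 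Uniform continuity across scales does not substitute for either ingredient. The paper itself does not reprove any of this; it invokes \cite[Theorem~4.3]{FalconerKempton2017}, whose proof (following Hochman--Shmerkin's local entropy averages) is exactly the missing argument. As written, your proof establishes $\underline{\dim}_e\pi_V\mu\geq\beta$ but not $\dim_H\pi_V\mu\geq\beta$.
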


The proof of the theorem can be found in \cite[Theorem~4.3]{FalconerKempton2017}.

Let
$$
\Gamma=\left\{(\ii,V,t)\in\Lambda^\N\times\RP\times\R:0\leq t\leq-\log\|A_{i_0}^*|V\|\right\}.
$$
We identify the points $(\ii,V,-\log\|A_{i_0}^*|V\|)$ in $\Gamma$ with $(\sigma\ii,A_{i_0}^*V,0)$. By using the identification, we can extend the suspension flow 
$$
F_s(\ii,V,t)=(\ii,V,t+s)\text{ for }s\leq-\log\|A_{i_0}^*|V\|-t
$$ 
to every $s>0$. It is easy to see that with respect to the normalized measure $(\m\times\eta^*\times\mathcal{L})_\Gamma$, the flow $F_s$ is invariant and ergodic. For simplicity, denote $\lambda$ the measure $(\m\times\eta^*\times\mathcal{L})_\Gamma$.

\begin{lemma}\label{lem:erg}
For every $N\geq1$, the time-$N$ map $F_N\colon\Gamma\mapsto\Gamma$ is ergodic w.r.t the measure $\lambda$.
\end{lemma}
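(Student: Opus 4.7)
My plan is to identify $F_s$ as the standard suspension flow built over the base transformation $P_* \colon \Lambda^\N \times \RP \to \Lambda^\N \times \RP$ preserving $\nu \times \eta^*$, with roof function $r(\ii, V) = -\log\|A_{i_0}^*|V\|$. Under the standing assumptions of the section, Proposition \ref{prop:furstenberg-measure} gives that the base system $(\nu \times \eta^*, P_*)$ is mixing, and $r$ is bounded and positive, so the flow $F_s$ is ergodic with respect to $\lambda$.

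To upgrade ergodicity of the flow to ergodicity of every $F_N$, I would show that $F_s$ is weakly mixing; this is equivalent to every $F_t$ with $t \neq 0$ being ergodic, because a non-trivial $F_N$-invariant $L^2$ function would produce a non-trivial eigenvalue of $F_s$ lying in $\frac{2\pi}{N}\mathbb{Z}$, and conversely such an eigenvalue gives a non-trivial $F_N$-invariant function. By the classical criterion for weak mixing of suspension flows over mixing bases, it then suffices to show that for every $\lambda \neq 0$ the cohomological equation
\[
u(P_*(\ii, V)) = e^{i\lambda r(\ii, V)} u(\ii, V)
\]
admits no measurable solution $u \colon \Lambda^\N \times \RP \to S^1$.

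Suppose such a solution existed. Iterating the equation,
\[
u(\sigma^n \ii, A_{i_{n-1}}^* \cdots A_{i_0}^* V) = \|A_{i_{n-1}}^* \cdots A_{i_0}^*|V\|^{-i\lambda} \, u(\ii, V).
\]
Since $u$ depends essentially only on the base variable while the phase on the right oscillates according to the Lyapunov cocycle, pairing the equation for two words $\ii, \jj \in \Lambda^\ast$ whose compositions place $A_\ii^*$ and $A_\jj^*$ in generic position (using non-compactness plus strong irreducibility of $\langle \overline{A}_i \rangle$, which prevents the log-norms of all finite compositions from lying in a common coset of a discrete subgroup of $\R$) yields incompatible phase relations. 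Combined with the mixing of $P_*$, which decorrelates the symbolic and line variables, this contradicts the existence of $u$.

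The main obstacle is this last non-arithmeticity step: ruling out the cohomological solution for every $\lambda \neq 0$. This is a Livsic/Guivarc'h--Raugi-type argument, exploiting the periodic orbits of $\sigma$ together with the Zariski-density properties forced on $\langle \overline{A}_i \rangle$ by the unbounded, strongly irreducible hypothesis. An alternative, cleaner path is to invoke directly the mixing of the Lyapunov suspension flow for unbounded, strongly irreducible random walks on $GL_2(\R)$ (as in \cite{BougerolLacroix1985}, Chapter III), which immediately yields weak mixing and hence ergodicity of every $F_N$.
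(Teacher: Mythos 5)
Your framing of the problem is correct and standard: $(\Gamma,\lambda,F_s)$ is the suspension flow over $P_*$ on $(\Lambda^\N\times\RP,\nu\times\eta^*)$ with roof $r(\ii,V)=-\log\|A_{i_0}^*|V\|$, ergodicity of every $F_N$ reduces to the absence of non-trivial flow eigenvalues, and an eigenvalue $\lambda\neq 0$ is equivalent to a measurable solution of $u\circ P_*=e^{i\lambda r}u$. The problem is that this reduction is the easy part; the entire content of the lemma is the non-arithmeticity step, and your proposal does not actually prove it. The argument you sketch -- evaluating the iterated equation along two finite words ``in generic position'' to get incompatible phases -- cannot be run as stated, because $u$ is only measurable: you cannot evaluate a measurable solution along chosen words or periodic orbits. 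To make that idea work one must first upgrade $u$ to a continuous (Hölder) solution, e.g.\ by passing to the natural extension, where $\nu\times\eta^*$ lifts to a Gibbs-type measure for the two-sided shift and $r$ becomes a Hölder function of the past, and invoking Livsic-type measurable rigidity; and even then one must still show that the periodic data $\{\log|\lambda_1(A_\ii)|\}$ of the (non-normalized) semigroup is not contained in a coset of a discrete subgroup of $\R$, which is a genuine theorem of Guivarc'h--Le Page/Benoist--Quint type exploiting Zariski density of $\langle\overline{A}_i\rangle$ forced by strong irreducibility and unboundedness. None of this is carried out; the parenthetical claim that non-compactness plus strong irreducibility ``prevents the log-norms of all finite compositions from lying in a common coset of a discrete subgroup'' is precisely what has to be proved. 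Also, the sentence ``$u$ depends essentially only on the base variable'' is confused, since $u$ is by definition a function on the base.

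Your fallback, citing \cite[Chapter III]{BougerolLacroix1985} for ``mixing of the Lyapunov suspension flow,'' does not close the gap either: that chapter contains uniqueness of the stationary measure, simplicity/positivity of exponents and convergence in direction, but no spectral statement about this suspension flow or its time-$N$ maps. For comparison, the paper does not prove the lemma at all; it cites \cite[Lemma~5.3]{FalconerKempton2017}, which is exactly this statement (and whose proof exploits that the roof genuinely depends on $V$ together with non-atomicity of $\eta^*$, rather than a soft mixing citation). So while your outline is a plausible route, as written the decisive step is either missing or attributed to a reference that does not contain it.
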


For the proof of the lemma, we refer to \cite[Lemma~5.3]{FalconerKempton2017}. 

\begin{lemma}\label{lem:finite}
For every $V,V'\in\RP$,
$$
\m\left(\left\{\ii\in\Lambda^*:\sup_{n\geq1}\frac{\left|\log\|A_{\ii|_n}^*|V\|-\log\|A_{\ii|_n}^*|V'\|\right|}{d_\RP(V,V')}<\infty\right\}\right)=1.
$$
\end{lemma}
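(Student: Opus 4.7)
The plan is to telescope along the projective trajectory, combining a uniform Lipschitz bound for the maps $W\mapsto\log\|A_i^*|W\|$ with the exponential contraction of the random walk on $\RP$.

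Set $V_0=V$, $V_0'=V'$ and $V_k=A_{i_{k-1}}^*\cdots A_{i_0}^*V$, $V_k'=A_{i_{k-1}}^*\cdots A_{i_0}^*V'$ for $k\geq 1$. Multiplicativity of operator norms along a line gives $\|A_{\ii|_n}^*|V\|=\prod_{k=0}^{n-1}\|A_{i_k}^*|V_k\|$, hence
\[
\log\|A_{\ii|_n}^*|V\|-\log\|A_{\ii|_n}^*|V'\|=\sum_{k=0}^{n-1}\Bigl(\log\|A_{i_k}^*|V_k\|-\log\|A_{i_k}^*|V_k'\|\Bigr).
\]
Since $\Lambda$ is finite and each $W\mapsto\log\|A_i^*|W\|$ is a smooth function on the compact manifold $\RP$, there is a uniform constant $C_0$ for which every summand is bounded by $C_0\,d_\RP(V_k,V_k')$. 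Hence, uniformly in $n$,
\[
\left|\log\|A_{\ii|_n}^*|V\|-\log\|A_{\ii|_n}^*|V'\|\right|\leq C_0\sum_{k=0}^{\infty}d_\RP(V_k,V_k').
\]

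It remains to check that $\sum_k d_\RP(V_k,V_k')$ is $\m$-a.s.\ finite. Under strong irreducibility and unboundedness of $\{\overline A_i\}$ the top Lyapunov exponent is simple (Proposition~\ref{prop:furstenberg-measure}), and the Furstenberg contraction principle (see \cite[Ch.~III]{BougerolLacroix1985}) gives, for every fixed $V,V'\in\RP$ and $\m$-a.e.\ $\ii$,
\[
\limsup_{k\to\infty}\frac{1}{k}\log d_\RP(V_k,V_k')\leq\chi_2-\chi_1<0.
\]
Fixing any $\lambda\in(0,\chi_1-\chi_2)$, this yields an $\m$-a.s.\ finite random constant $K(\ii)$ with $d_\RP(V_k,V_k')\leq K(\ii)e^{-\lambda k}$ for all $k$, so the series converges. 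Dividing by the fixed positive constant $d_\RP(V,V')$ then gives the claim. The only technical point is deducing the pointwise-in-$k$ bound from the asymptotic $\limsup$ statement; this is immediate from Egorov's theorem (or, alternatively, from the large-deviation estimates available in the Furstenberg--Kesten theory), and the rest of the argument is just telescoping together with a Lipschitz bound on finite compact data.
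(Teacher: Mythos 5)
Your proof is correct and follows essentially the same route as the paper: telescope the log-norm difference along the orbit, bound each summand by a Lipschitz constant times $d_\RP(V_k,V_k')$, and conclude from the $\m$-a.s.\ exponential contraction of the projective distance (which the paper makes explicit via the identity $d_\RP(B V,B V')=d_\RP(V,V')\,\alpha_1(B)\alpha_2(B)/(\|B|V\|\,\|B|V'\|)$ together with \cite[III.3.2]{BougerolLacroix1985} and Oseledets, exactly the content of your contraction-principle citation). One small remark: no appeal to Egorov is needed for the last step, since the a.s.\ bound $\limsup_k \frac1k\log d_\RP(V_k,V_k')\leq \chi_2-\chi_1<0$ already gives, pointwise in $\ii$, a finite random constant $K(\ii)$ with $d_\RP(V_k,V_k')\leq K(\ii)e^{-\lambda k}$ for all $k$.
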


\begin{proof}
Since the map $V\mapsto\log\|A|V\|$ is Lipschitz for any matrix $A$, we have
\begin{align*}
\left|\log\|A_{\ii|_n}^*|V\|-\log\|A_{\ii|_n}^*|V'\|\right|&\leq\sum_{k=1}^n\left|\log\|A_{i_k}^*|A_{\ii|_{k-1}}^*V\|-\log\|A_{i_k}^*|A_{\ii|_{k-1}}^*V'\|\right|\\
&\leq C\sum_{k=1}^nd_\RP(A_{\ii|_{k-1}}^*V,A_{\ii|_{k-1}}^*V')\\
&\leq Cd_{\RP}(V,V')\sum_{k=1}^n\frac{\alpha_2(A_{\ii|_{k-1}}^*)}{\alpha_1(A_{\ii|_{k-1}}^*)}\cdot\frac{\|A_{\ii|_{k-1}}^*\|^2}{\|A_{\ii|_{k-1}}^*|V\|\|A_{\ii|_{k-1}}^*|V'\|}.
\end{align*}
Thus, the statement follows by \cite[III.3.2]{BougerolLacroix1985} and the Oseledets's Theorem.
\end{proof}

For simplicity, let $R_N(\ii,V,t)=\frac{1}{N}H(\pi_V\mu,\DD_N)$. Moreover, we extend the definition of $\Xi_n^{V,q}$ to real valued $n$. That is 
$$
\Xi_s^V=\{\ii=(i_0,\ldots,i_m)\in\Lambda^*:\|A_{(i_0,\ldots,i_m)}^*|V\|\leq q^{-s}<\|A_{(i_0,\ldots,i_{m-1})}^*|V\|\}.
$$

\begin{lemma}\label{lem:generate}
For every $V'\in\RP$, and for every $\varepsilon>0$ there exists a set $\Omega_{V',\varepsilon}$ and a constant $C=C(\varepsilon)$ such that $\m(\Omega_{V',\varepsilon})>1-\varepsilon$ and for every $\ii\in\Omega_{V',\varepsilon}$
$$
\lim_{n\to\infty}\frac{1}{n}\sum_{k=1}^nR_N(F_{Nk}(\ii,V',0))=\int R_N(\ii,V,t)d\lambda(\ii,V,t)+O(\frac{C}{N}).
$$
\end{lemma}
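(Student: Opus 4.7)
The plan is to establish Lemma~\ref{lem:generate} by combining Birkhoff's ergodic theorem for the time-$N$ map $F_N$ on $(\Gamma,\lambda)$ with the uniform cocycle continuity provided by Lemma~\ref{lem:finite}.

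First, I would invoke Lemma~\ref{lem:erg}, which asserts that $F_N$ is ergodic with respect to $\lambda$. Since the observable $R_N$ is uniformly bounded (one has $R_N=N^{-1}H(\pi_V\mu,\DD_N)\le\log q + O(1/N)$), Birkhoff's ergodic theorem yields
$$\lim_{n\to\infty}\frac{1}{n}\sum_{k=1}^n R_N(F_{Nk}(\ii,V,t))=\int R_N\,d\lambda$$
for $\lambda$-a.e.\ $(\ii,V,t)\in\Gamma$. By Fubini there is a set $A\subseteq\Lambda^{\N}$ with $\m(A)=1$ such that for every $\ii\in A$ the above limit holds for $(\eta^*\times\mathcal{L})$-a.e.\ $(V,t)$ in the fiber over $\ii$.

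Next, I would apply Lemma~\ref{lem:finite} in tandem with Egorov's theorem. For the given $V'\in\RP$ and $\varepsilon>0$ this produces a subset $\Omega_{V',\varepsilon}\subseteq A$ with $\m(\Omega_{V',\varepsilon})>1-\varepsilon$ on which the cocycle-Lipschitz quantity $\sup_n|\log\|A_{\ii|_n}^*|V\|-\log\|A_{\ii|_n}^*|V'\||/d_\RP(V,V')$ is uniformly bounded by some constant $C=C(\varepsilon)$, and on which the Birkhoff convergence above is uniform over $(V,t)$ in a fixed small neighbourhood of $(V',0)$.

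The heart of the argument is then a comparison between the orbit of the specific point $(\ii,V',0)$ and orbits of nearby typical points $(\ii,V,t)$. Each iteration of $F_N$ advances the symbolic coordinate by the minimal number of letters $m$ required so that $\|A^*_{\ii|_m}|V_{\mathrm{current}}\|\le q^{-N}$. On $\Omega_{V',\varepsilon}$ the uniform cocycle bound forces the shift counts coming from the two starting lines to agree up to an additive constant $O(C)$, so the two orbits remain at a bounded symbolic distance at every step. The lines at which $R_N$ is evaluated therefore differ by $O(C)$ many shift-steps of the random walk, contributing at most $O(C)$ to each entropy $H(\pi_\cdot\mu,\DD_N)$ and hence at most $O(C/N)$ to each $R_N$-value after normalization. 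Averaging over $k$ preserves this bound, so the Birkhoff limit starting at $(\ii,V',0)$ differs from $\int R_N\,d\lambda$ by at most $O(C/N)$, which is exactly the claim.

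The principal obstacle is the quantitative comparison of $F_N$-orbits coming from different starting lines: one must ensure that the initial perturbation from $V$ to $V'$ does not accumulate unboundedly along the orbit. Lemma~\ref{lem:finite} is tailored precisely for this, converting the Lipschitz continuity of $V\mapsto\log\|A|V\|$ into a uniform-in-$n$ cocycle estimate that controls the symbolic drift for all time and that survives the ergodic averaging, leaving only the $O(C/N)$ discretization error.
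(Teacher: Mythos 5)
Your overall architecture (Birkhoff's theorem for the time-$N$ map via Lemma~\ref{lem:erg}, Fubini to fix the symbolic coordinate, Lemma~\ref{lem:finite} plus an Egorov-type selection to get the set $\Omega_{V',\varepsilon}$ and the constant $C(\varepsilon)$, and then a comparison of the orbit of $(\ii,V',0)$ with that of a $\lambda$-typical $(\ii,V,t)$) is the same as the paper's. But the heart of your comparison contains a genuine gap. After $k$ steps of $F_N$ the two orbits sit at lines $A^*_{\ii|_{m_k}}V$ and $A^*_{\ii|_{m'_k}}V'$; the cocycle bound from Lemma~\ref{lem:finite} (together with the norm-ratio estimate of \cite[III.3.2]{BougerolLacroix1985}, which the paper also invokes) only controls the discrepancy $|m_k-m'_k|=O(1+\log C)$ of the stopping times. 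It says nothing about the second source of discrepancy: the two lines are images of \emph{different} initial lines $V\neq V'$ under the same long matrix product, and this is not ``$O(C)$ many shift-steps''. Moreover, the blanket claim that a bounded symbolic perturbation ``contributes at most $O(C)$ to each entropy $H(\pi_\cdot\mu,\DD_N)$'' is itself unjustified: entropies of projections of $\mu$ onto two distinct lines can a priori differ by order $N$, and the only tool available to compare them, \eqref{eq:entropy-under-transformation-3}, requires the two projection maps (equivalently the two lines) to be close, which your argument never establishes.

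The paper closes exactly this hole with an ingredient absent from your proposal: the almost sure contraction of the projective random walk, $d_{\RP}(A^*_{\ii|_n}V,A^*_{\ii|_n}V')\to 0$ (\cite[Theorem~III.4.3]{BougerolLacroix1985}). This lets one first pass (via \eqref{eq:affine-map-to-projection} and scale-matching of the nested cylinders $\Xi^{V}_{Nk+t}(\ii)$ and $\Xi^{V'}_{Nk}(\ii)$, which is where the $O((1+\log C)/N)$ term really comes from) to the line $A^*_{\Xi^{V'}_{Nk}(\ii)}V$, and then replace it by $A^*_{\Xi^{V'}_{Nk}(\ii)}V'$ with an error controlled by $d_{\RP}\bigl(A^*_{\Xi^{V'}_{Nk}(\ii)}V,\,A^*_{\Xi^{V'}_{Nk}(\ii)}V'\bigr)$; this last error is \emph{not} $O(C/N)$ termwise but only tends to $0$, and it is killed in the Ces\`aro average over $k$. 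Without this projective-contraction step your orbit comparison does not go through, so as written the proposal does not prove the lemma. (Minor further points: the uniformity of Birkhoff convergence ``over a neighbourhood of $(V',0)$'' is neither provided by Egorov nor needed -- what is needed is a positive $\m\times\eta^*$-measure set of pairs $(\ii,V)$ that are simultaneously Birkhoff-typical and satisfy the cocycle bounds, which is how the paper organizes $\Omega^1_{V'}\cap\Omega^2_{V'}$.)
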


\begin{proof}
Let us fix $V'\in\RP$. Then by Lemma~\ref{lem:finite},
$$
\Omega_{V'}^1=\left\{(\ii,V)\in\Lambda^*\times\RP:\sup_{n\geq1}\frac{\left|\log\|A_{\ii|_n}^*|V\|-\log\|A_{\ii|_n}^*|V'\|\right|}{d_\RP(V,V')}<\infty\right\}
$$
has full measure w.r.t $\m\times\eta^*$. For every $\varepsilon>0$, one can choose $C_1=C_1(\varepsilon,V')>0$ such that $\sup_{n\geq1}\frac{\left|\log\|A_{\ii|_n}^*|V\|-\log\|A_{\ii|_n}^*|V'\|\right|}{d_\RP(V,V')}<C$ with measure at least $1-\varepsilon/2$.
By \cite[Chapter III.3.2]{BougerolLacroix1985}, we can choose $C_2=C_2(\varepsilon,V')>0$ such that
$$
\Omega_{V'}^2=\left\{(\ii,V)\in\Lambda^*\times\RP:\sup_{n\geq1}\frac{\|A_{\ii|_n}^*|V\|}{\|A_{\ii|_n}^*|V'\|}<C_2\right\},
$$
and $\m\times\eta^*(\Omega_{V'}^2)>1-\varepsilon/2$. Let $\Omega_{V'}=\Omega_{V'}^1\cap\Omega_{V'}^2$ and $C=\max\{C_1,C_2\}$.
 
Hence, for every $(\ii,V)\in\Omega_{V'}$
\begin{align*}
\frac{1}{N}H(\pi_{A_{\Xi_{Nk+t}^V(\ii)}^*V}\mu,\DD_{N})&=\frac{1}{N}H(\pi_{V}\varphi_{\Xi_{Nk+t}^V(\ii)}\mu,\DD_{N(k+1)+t})+O(\frac{1}{N})\\
&=\frac{1}{N}H(\pi_{V}\varphi_{\Xi_{Nk}^{V'}(\ii)}\mu,\DD_{N(k+1)})+O(\frac{1+\log C}{N})\\
&=\frac{1}{N}H(\pi_{A_{\Xi_{Nk}^{V'}(\ii)}^*V}\mu,\DD_{N})+O(\frac{1+\log C}{N})\\
&=\frac{1}{N}H(\pi_{A_{\Xi_{Nk}^{V'}(\ii)}^*V'}\mu,\DD_{N})+O(\frac{1+\log C+d_{\RP}(A_{\Xi_{Nk}^{V'}(\ii)}^*V,A_{\Xi_{Nk}^{V'}(\ii)}^*V')}{N}).
\end{align*}
By Birkhoff's ergodic theorem and Lemma~\ref{lem:erg},
$$
\{F_{Nk}(\ii,V,t)\}_{k=1}^{\infty}\text{ equidistributes for $\lambda$-a.e. $(\ii,V,t)\in\Omega_{V'}\times\R$}.
$$
On the other hand, by \cite[Theorem~III.4.3]{BougerolLacroix1985}, $d_\RP(A_{\ii|_n}^*V,A_{\ii|_n}^*V')\to0$ as $n\to\infty$ for $\m$-a.e. $\ii$. Thus, for every a.e. $(\ii,V)\in\Omega_{V'}$
\begin{align*}
\int R_Nd\lambda&=\lim_{n\to\infty}\frac{1}{n}\sum_{k=1}^nR_N(F_{Nk}(\ii,V,t))\\
&=\lim_{n\to\infty}\frac{1}{n}\sum_{k=1}^n\left(\frac{1}{N}H(\pi_{A_{\Xi_{Nk}^{V'}(\ii)^*}V'}\mu,\DD_{N})+\frac{d_{\RP}(A_{\Xi_{Nk}^{V'}(\ii)}^*V,A_{\Xi_{Nk}^{V'}(\ii)}^*V')}{N}\right)+O(\frac{1+\log C}{N}).
\end{align*}
\end{proof}

\begin{proof}[Proof of Theorem~\ref{thm:mainsep2}]
Let us denote the measure $\lambda\circ\pi_{\RP}$ by $\gamma^*$. It is easy to see that $\gamma^*$ is equivalent to $\eta^*$. Thus, by \cite[III.3.2]{BougerolLacroix1985} and Theorem~\ref{thm:one-dimensional-projections}, 
$$
\lim_{N\to\infty}\frac{1}{N}H(\pi_{V}\mu,\DD_{N})=\min\{1,\lydim\mu\}\text{ for $\gamma^*$-a.e. $V$}.
$$

Let $V'\in\RP$ be arbitrary but fixed. Let $\varepsilon>0$ and let $C>0$ and $\Omega_{V'}$ as in Lemma~\ref{lem:generate}. Then for every $\ii\in\Omega_{V'}$, Fatou's lemma
\begin{align*}
\lim_{N\to\infty}\liminf_{n\to\infty}\frac{1}{Nn}\sum_{k=1}^n&H(\pi_{V'}\varphi_{\Xi_{Nk}^{V'}(\ii)}\mu,\DD_{N(k+1)})\\
&=\lim_{N\to\infty}\liminf_{n\to\infty}\frac{1}{Nn}\sum_{k=1}^nH(\pi_{A_{\Xi_{Nk}^{V'}(\ii)}^*V'}\mu,\DD_{N})+O(\frac{1}{N})\\
&=\lim_{N\to\infty}\frac{1}{N}\int H(\pi_{V}\mu,\DD_{N})d\gamma^*(V)+O(\frac{1+C}{N})\\
&\geq\min\{1,\lydim\mu\}.
\end{align*}
Since $\varepsilon>0$ was arbitrary, the statement follows by Theorem~\ref{thm:lowerbound}.
\end{proof}

\subsection{Self-affine systems with reducible linear part}  

An important example for triangular self-affine set is the graph of the Takagi function, introduced by Takagi~\cite{takagi}. Namely, let $\phi(x)=d(x,\mathbb{Z})$ and $\lambda\in(1/2,1)$. Then the function $\Phi(x)=\sum_{n=0}^{\infty}\lambda^n\phi(2^n)$ is nowhere differentiable, $1$-periodic function. Let 
$$
\varphi_0(x,y)=\left(\begin{matrix}
\frac{x}{2} \\
\lambda y+\frac{x}{2}
\end{matrix}\right)\text{ and }\varphi_1(x,y)=\left(\begin{matrix}
\frac{x+1}{2} \\
\lambda y+\frac{1-x}{2}
\end{matrix}\right).
$$
Let $\Gamma$ be the graph of $\Phi$ over $[0,1]$, see Figure~\ref{fig:2}. It is easy to see that $\Gamma=\varphi_0(\Gamma)\cup\varphi_1(\Gamma)$. Thus,
\begin{equation}\label{eq:takagi}
\dim_H\Gamma\leq2+\frac{\log\lambda}{\log2}
\end{equation}
for every $\lambda\in(1/2,1)$.

\begin{figure}
	\centering
	\includegraphics[width=0.6\textwidth]{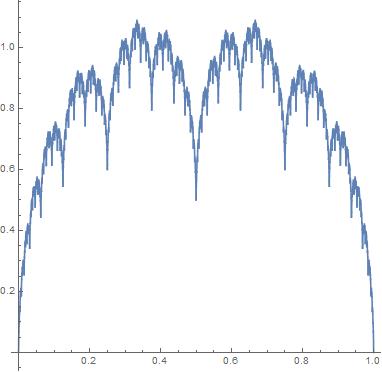}
	\caption{The graph of the function $\Phi$.}\label{fig:2}
\end{figure}
 
Ledrappier~\cite{ledgraph} gave a sufficient condition, which implies equality in \eqref{eq:takagi}. Using this condition, Solomyak~\cite{solomyakfamily} showed that equality holds in \eqref{eq:takagi} for Lebesgue almost every $\lambda\in(1/2,1)$. By applying Proposition~\ref{prop:triang}, we get the following corollary.

\begin{corollary}
	For {\bf every} $\lambda\in(1/2,1)$,
	$$
	\dim_H\Gamma=2+\frac{\log\lambda}{\log2}.
	$$
\end{corollary}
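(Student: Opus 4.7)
The plan is to apply Proposition~\ref{prop:triang} directly to the IFS $\Psi = \{\varphi_0, \varphi_1\}$ whose attractor is $\Gamma$. The linear parts are
\[
A_0 = \begin{pmatrix} 1/2 & 0 \\ 1/2 & \lambda \end{pmatrix}, \qquad A_1 = \begin{pmatrix} 1/2 & 0 \\ -1/2 & \lambda \end{pmatrix},
\]
which are lower triangular with $a_i = 1/2 < \lambda = c_i$ since $\lambda \in (1/2,1)$. I would check non-simultaneous diagonalizability by computing the $(1/2)$-eigenspaces: for $A_0$ it is spanned by $(1-2\lambda, 1)$, and for $A_1$ by $(2\lambda-1, 1)$. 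These directions are distinct for every $\lambda \neq 1/2$, so $A_0$ and $A_1$ cannot share a basis of eigenvectors.

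For the SOSC I would take $U = (0,1) \times (c,d)$ with $c \le 0$ and $d \ge 1/(2(1-\lambda))$. A direct check shows $\varphi_0(U) \subseteq (0,1/2)\times (\lambda c, \lambda d + 1/2) \subseteq U$ and $\varphi_1(U) \subseteq (1/2,1)\times(\lambda c, \lambda d + 1/2) \subseteq U$; the images are disjoint because their $x$-projections are. Continuity of $\Phi$ guarantees $\Gamma \cap U \neq \emptyset$ once $c,d$ are chosen generously.

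Having verified the three hypotheses, Proposition~\ref{prop:triang} applied with uniform weights $p_0 = p_1 = 1/2$ yields $\dim \mu = \lydim \mu$, where $\mu$ is the associated self-affine measure. I would then compute the Lyapunov exponents. Since both $A_0, A_1$ are lower triangular with diagonal $(1/2,\lambda)$, the vertical line $\mathrm{span}(e_2)$ is $A_i$-invariant with both matrices acting as multiplication by $\lambda$, while the induced action on $\R^2/\mathrm{span}(e_2)$ is multiplication by $1/2$. Hence the Lyapunov exponents are $\chi_1 = \log \lambda$ and $\chi_2 = -\log 2$ (with $|\chi_1| < |\chi_2|$ because $\lambda > 1/2$). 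With $H(p)=\log 2$, the ratio $H(p)/|\chi_1| = \log 2 / (-\log\lambda) > 1$, so we fall in the second case of the Lyapunov dimension:
\[
\lydim \mu \;=\; 1 + \frac{\log 2 - (-\log\lambda)}{\log 2} \;=\; 2 + \frac{\log\lambda}{\log 2}.
\]

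Since $\mu$ is supported on $\Gamma$, we get the lower bound $\dim_H \Gamma \ge \dim \mu = 2 + \log\lambda/\log 2$, and combined with the upper bound \eqref{eq:takagi} we obtain equality for every $\lambda \in (1/2,1)$. All the pieces are essentially routine verifications; the only non-trivial issue is confirming SOSC, but this is handled cleanly by the explicit rectangle above. The real content of the corollary is packaged entirely inside Proposition~\ref{prop:triang}.
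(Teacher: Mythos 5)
Your proposal is correct and follows exactly the paper's route: the paper proves this corollary simply by invoking Proposition~\ref{prop:triang} for the IFS $\{\varphi_0,\varphi_1\}$ defining $\Gamma$, and your verifications (triangular linear parts with $a_i=\tfrac12<\lambda=c_i$, distinct $\tfrac12$-eigendirections ruling out simultaneous diagonalization, the explicit SOSC rectangle, and the Lyapunov-dimension computation $2+\log\lambda/\log 2$ for $p=(\tfrac12,\tfrac12)$) are precisely the routine checks the paper leaves implicit. Combining the resulting lower bound with the upper bound \eqref{eq:takagi} gives the claim, as you say.
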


\subsection{Dimension of skew-product horseshoes}
In this section, we apply Theorem~\ref{prop:triang} for skew product horseshoe dynamical systems. That is, let $I=[0,1]$ and let $\{I_i\}_{i\in\Lambda}$ be a finite collection of disjoint closed intervals in $I$ such that $\diam(I_i)=1/\alpha_i<1$ and let the left-endpoint of $I_i$ be $t_i$. For a point $(x,y)\in I^2$ let
\begin{equation}\label{eq:F}
F(x,y)=\begin{cases}
         \left(\alpha_i(x-t_i),\beta_iy+\gamma_ix+\zeta_i\right), & \mbox{if } x\in I_i \\
         (-1,-1), & \mbox{otherwise},
       \end{cases}
\end{equation}
where $\beta_i<1$ for every $i\in\Lambda$. Without loss of generality, we can choose the parameters so that $I_i\times I$ is mapped into $I^2$ for every $i\in\Lambda$. For a visualization of the map $F$, see Figure~\ref{fig:1}. We not that $F$ is not invertible in general. 

\begin{figure}
	 \centering
		\includegraphics[width=0.8\textwidth]{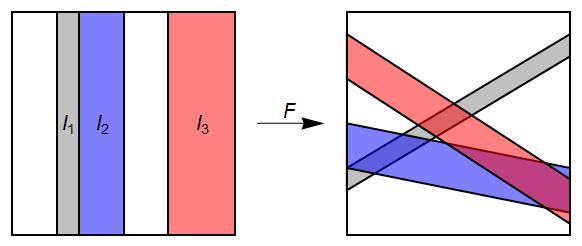}
	\caption{The first iterate of the function $F$.}\label{fig:1}
\end{figure}

Such systems have been already studied in several aspects. Falconer \cite{Falconerover} considered the dimension of the attarctors of a special family called "slanting Baker's transformation". Later, Simon~\cite{simon_1993}, Rams and Simon \cite{Rams_Simon} studied the non-linear generalizations of this type of systems in the aspect of dimension theory, and Tsujii \cite{Tsujii} and Rams \cite{Rams} studied in the aspect of absolute continuity. All these examples agree in the assumption that $\bigcup_{i\in\Lambda}I_i=I$. In the case of $\bigcup_{i\in\Lambda}I_i\subsetneq I$ there are only very few results are available, see for example Dysman \cite{Dysman}.

Skew-product Horseshoes are strongly related to hyperbolic planar endomorphisms with singularities, see Schmeling and Troubetzkoy \cite{schmeling1998dimension} and Persson \cite{Persson}.

The reason of the study of the system $F$ for us is that it is related to self-affine iterated function systems with reducible linear parts. Especially, the orthogonal projections of the attractors. Namely, the intersection of $\bigcap_{k=0}^{\infty}F^k(I^2)$ with vertical lines $\{x\}\times I$ corresponds to the projection of a self-affine set onto the subspace with slope $x$. Before we show this fact, we need several notations.

Observe that except a Cantor set, every point $(x,y)$ will eventually land at point $(-1,-1)$. So it is natural to restrict $F$ to this Cantor set. Namely, let
$Y$ be the self-similar set w.r.t the IFS $\{x\mapsto x/\alpha_i+t_i\}_{i\in\Lambda}$. Then for every $(x,y)\in Y\times[0,1]$, the map $F^n$ is defined for every $n\geq0$. Let $G=F|_{Y\times[0,1]}$. It is easy to see that $G$ is conjugated $(\Lambda^{\mathbb{Z}},\sigma)$. 

For simplicity, for $\ii\in\Lambda^{\mathbb{Z}}$, let us introduce the notation $\ii|_a^b=(i_a,\ldots,i_b)$ if $a\leq b$ and  $\ii|_a^b=\emptyset$ if $a>b$. We define the empty products as $1$ and the empty sums as $0$. So $G\circ\Pi=\Pi\circ\sigma$, where $\Pi(\ii)=(\Pi_1(\ii),\Pi_2(\ii))$ and
$$
\Pi_1(\ii)=\sum_{k=0}^{\infty}\dfrac{t_{i_k}}{\alpha_{\ii|_0^{k-1}}}\text{ and }\Pi_2(\ii)=\sum_{k=1}^{\infty}\beta_{\ii|^{-1}_{-(k-1)}}\left(\zeta_{i_{-k}}+\gamma_{i_{-k}}\Pi_1(\sigma^{-k}\ii)\right).
$$
Observe that $\Pi_1$ depends only on the positive coordinates of $\ii\in\Lambda^{\mathbb{Z}}$. Let us introduce two notations. Let
\begin{eqnarray*}
sl(\ii)&=&\sum_{k=1}^{\infty}\frac{\gamma_{i_{-k}}\beta_{\ii|^{-1}_{-(k-1)}}}{\alpha_{\ii|^{-1}_{-k}}}\\
tr(\ii)&=&\sum_{k=1}^{\infty}\beta_{\ii|^{-1}_{-(k-1)}}\left(\zeta_{i_{-k}}+\gamma_{i_{-k}}\sum_{n=1}^k\frac{t_{i_{-n}}}{\alpha_{\ii|_{-k}^{-n-1}}}\right).
\end{eqnarray*}
Observe that $sl(\ii)$ and $tr(\ii)$ depends only on the negative coordinates of $\ii$.
Simple algebraic manipulations show
\begin{align*}
\Pi_2(\ii)=\Pi_1(\ii)\cdot sl(\ii)+tr(\ii).
\end{align*}

for a $z\in\R$, let us introduce the projection $p_z(x,y)=xz+y$. We note that $p_z$ is bi-Lipschitz equivalent with the orthogonal projection onto the $1$ dimensional subspace containing $\binom{x}{1}$. 

Hence,  we can define a linear map $\omega^u(\ii_-):I\mapsto\R$ as
$$
\omega^u(\ii_-,x)=x\cdot sl(\ii_i)+tr(\ii_-).
$$

\begin{definition}
	We say that $\omega^u$ is a transversal family, if for any $\ii_-\neq\jj_-$ there exists $x\in[0,1]$ such that $\omega^u(\ii_-,x)=\omega^u(\jj_-,x)$ then $sl(\ii_-)\neq sl(\jj_-)$.
\end{definition}

\begin{proposition}\label{prop:induced}
	Let $F$ be as in \eqref{eq:F}. Suppose that $\alpha_i>1>\beta_i$ and $\omega^u$ is a transversal family. Let $\Psi$ be the self-affine IFS on the plane of the form
	$$
	\Psi=\left\{\varphi_i(x)=\left(\begin{matrix}
	\beta_i/\alpha_i & 0 \\
	\beta_it_i & \beta_i
	\end{matrix}\right)x+\left(\begin{matrix}
	\gamma_i/\alpha_i \\
	\zeta_i+\gamma_it_i 
	\end{matrix}\right)\right\}_{i\in\Lambda}.
	$$
	Let $X$ be its attractor and let $\widetilde{\Pi}$ be the natural projection from $\Lambda^\N$ to $X$. Then for $\Pi_1\nu$-a.e. $x\in Y$, $p_x\widetilde{\Pi}\nu=\mu_x$, and $p_x(X)=\{x\}\times[0,1]\cap\left(\bigcap_{k=0}^{\infty}F^k(I^2)\right)$ for every $x\in[0,1]$. Moreover, $\Psi$ satisfies the strong open set condition.
\end{proposition}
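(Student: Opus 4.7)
All three assertions should follow from a single algebraic identity linking the two-sided coding $\Pi$ for $F$ with the one-sided coding $\widetilde{\Pi}$ for $\Psi$. My plan is first to establish this identity: for $\ii\in\Lambda^{\mathbb{Z}}$, set $x=\Pi_1(\ii)$ and $\jj=(i_{-1},i_{-2},\ldots)\in\Lambda^{\mathbb{N}}$. A direct check with the given form of $A_i$ gives $(x,1)A_i=\beta_i(T_i(x),1)$, where $T_i(u)=u/\alpha_i+t_i$ is the $i$-th generator of the IFS whose attractor is $Y$; iterating,
\[
(x,1)A_{j_0}\cdots A_{j_{k-1}}=\beta_{j_0\cdots j_{k-1}}\bigl(T_{j_{k-1}}\cdots T_{j_0}(x),\,1\bigr).
\]
Expanding $\widetilde{\Pi}(\jj)=\sum_{k\ge 0}A_{j_0\cdots j_{k-1}}b_{j_k}$ and applying $p_x$ yields
\[
p_x\widetilde{\Pi}(\jj)=\sum_{k=0}^{\infty}\beta_{j_0\cdots j_{k-1}}\bigl(\gamma_{j_k}T_{j_k}\cdots T_{j_0}(x)+\zeta_{j_k}\bigr).
\]
Since $\Pi_1(\sigma^{-k}\ii)=T_{i_{-k}}\cdots T_{i_{-1}}(x)$, re-indexing the defining sum for $\Pi_2(\ii)$ should reproduce the same expression, giving $\Pi_2(\ii)=p_x\widetilde{\Pi}(\jj)$; as a by-product, $\widetilde{\Pi}(\jj)=(sl(\ii_-),tr(\ii_-))$.

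The first claim will then follow by lifting $\nu$ to the two-sided Bernoulli measure $p^{\mathbb{Z}}$ on $\Lambda^{\mathbb{Z}}$: since $\Pi_1$ depends only on positive coordinates and the positive and negative halves of $\ii$ are independent, conditioning on $\Pi_1=x$ leaves the reversed-negative sequence $\jj$ distributed as $\nu$, and the identity above identifies $\mu_x$ with $p_x\widetilde{\Pi}\nu$. For the second claim, any $(x,y)\in\{x\}\times[0,1]\cap\bigcap_{k\ge 0}F^k(I^2)$ admits, via an inverse-limit (K\"onig's lemma) argument applied to the finite sets of level-$k$ cell histories, a consistent infinite backward code $\jj=(j_0,j_1,\ldots)\in\Lambda^{\mathbb{N}}$ labelling the cells through which the backward orbit passes; unfolding the $y$-recursion (with $\beta_i<1$ ensuring convergence) then gives $y=p_x\widetilde{\Pi}(\jj)$ via the same sum. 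Conversely, for any $\jj$ one verifies directly that $(x_k,y_k)=(T_{j_{k-1}}\cdots T_{j_0}(x),\,p_{x_k}\widetilde{\Pi}(\sigma^k\jj))$ defines a valid $k$-step backward orbit inside $I^2$, the $y$-coordinate being bounded because $F$ preserves the strip $I^2$.

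For the SOSC, I would use the identification $\widetilde{\Pi}(\jj)=(sl(\ii_-),tr(\ii_-))$ to recast the transversality of $\omega^u$ as injectivity of $\widetilde{\Pi}\colon\Lambda^{\mathbb{N}}\to X$: two distinct codes yield either distinct slopes or, if slopes coincide, parallel but non-coincident lines, hence distinct intercepts. Combined with continuity of $\widetilde{\Pi}$ and compactness of $X$, injectivity forces the closed cylinders $\{\varphi_i(X)\}_{i\in\Lambda}$ to be pairwise disjoint---any common point would produce $\widetilde{\Pi}(i\jj)=\widetilde{\Pi}(j\jj')$ with $i\ne j$---and the SOSC then follows by taking $U$ to be a sufficiently small open neighborhood of $X$. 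The main anticipated obstacle is the bookkeeping inside the central algebraic identity, namely matching indices carefully between $\ii\in\Lambda^{\mathbb{Z}}$ and $\jj\in\Lambda^{\mathbb{N}}$ and checking term-by-term agreement of the two sums; once that identification is pinned down, all three claims fall into place by the routes above.
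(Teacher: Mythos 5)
Your proposal is correct and follows essentially the same route as the paper: the key identity $\Pi_2(\ii)=sl(\ii_-)\Pi_1(\ii)+tr(\ii_-)$ with $\widetilde{\Pi}=(sl,tr)$, independence of the positive and negative coordinates for the conditional-measure claim, and transversality $\Rightarrow$ injectivity of $\widetilde{\Pi}$ $\Rightarrow$ strong separation $\Rightarrow$ SOSC. You merely spell out steps the paper leaves implicit (the backward-coding argument for the fiber-set identity and the neighborhood construction for the SOSC), and these fillings-in are sound.
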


\begin{proof}
	We can identify $\Lambda^{\N}$ with $\Lambda^{\mathbb{Z}_-}$ and with a slight abuse of notation, we denote the natural projection from $\Lambda^{\mathbb{Z}_-}$ to $X$ by $\widetilde{\Pi}$. Denote $\widetilde{\Pi}_1$ the first and $\widetilde{\Pi}_2$ the second coordinate of $\widetilde{\Pi}$. Thus,
	\begin{eqnarray*}
		\widetilde{\Pi}_1(\ii)&=& sl(\ii)\\
		\widetilde{\Pi}_2(\ii)&=& tr(\ii).
	\end{eqnarray*}
	
	Let us observe that the corresponding Furstenberg measure generated by the transpose matrices, is equivalent to the self-similar measure generated by the IFS $\Phi=\{z\mapsto \frac{z}{\alpha_i}+t_i\}_{i\in\Lambda}$. Denote the natural projection w.r.t the IFS $\Phi$ by $\widetilde{\Pi}_F$. Thus, $	\widetilde{\Pi}_F(\ii_+)=\Pi_1(\ii)$ and
	$$
	\Pi_2(\ii)=\widetilde{\Pi}_1(\ii_-)\cdot\widetilde{\Pi}_F(\ii_+)+\widetilde{\Pi}_2(\ii_-),
	$$
	which gives the first claim of the proof. 
	
	By using the definition of $\omega^u(\ii_-)$, we get $sl(\ii_-)=\widetilde{\Pi}_1(\ii_-).$ So for any $\ii_-\neq \jj_-$, if there is no $x\in[0,1]$ so that $\omega^u(\ii_-,x)=\omega^u(\jj_-,x)$ then, in particular  $\widetilde{\Pi}_2(\ii_-)=\omega^u(\ii_-,0)\neq\omega^u(\jj_-,0)=\widetilde{\Pi}_2(\jj_-)$. If there exists such an $x\in[0,1]$ then $\widetilde{\Pi}_1(\ii_-)\neq\widetilde{\Pi}_1(\jj_-)$ by the transversality assumption, thus the strong open set condition holds.
\end{proof}

Denote the Hausdorff dimension of $Y$ by $s_{\alpha}$. That is, $\sum_{i\in\Lambda}\alpha_i^{-s_{\alpha}}=1$. Then $0<\mathcal{H}^s(Y)<\infty$. 

For a Bernoulli measure $\nu$ with probability vector $(p_i)_{i\in\Lambda}$, let $\chi_\alpha=\sum_{i\in\Lambda}p_i\log\alpha_i$ and $\chi_\beta=\sum_{i\in\Lambda}p_i\log\beta_i$.

Let $s_\beta$ be the unique solution of the equation $\sum_{i\in\Lambda}\beta_i^{s_\beta}=1$.

\begin{theorem}\label{thm:dynam}
Let $F$ be as in \eqref{eq:F}. Suppose that $\alpha_i>1>\beta_i$ for every $i\in\Lambda$ and $\omega^u$ is a transversal family. Then the following hold
\begin{enumerate}
	\item For every Bernoulli measure $\nu$ with probability vector , 
	$$
	\dim_H\Pi\nu=\frac{h_{\nu}}{\chi_{\alpha}}+\min\{1,\frac{h_{\nu}}{-\chi_{\beta}}\},
	$$
	\item for every $x\in[0,1]$, 
	$$
	\dim_H(\{x\}\times I)\cap\left(\bigcap_{k=0}^{\infty}F^k(I^2)\right)=\min\{1,s_{\beta}\}.
	$$ In particular,
	$$\dim_H\bigcap_{k=0}^{\infty}F^k(I^2)=1+\min\{1,s_{\beta}\}\text{ and  }\dim_H\bigcap_{k=0}^{\infty}G^k(Y\times I)=s_{\alpha}+\min\{1,s_{\beta}\}.$$
\end{enumerate}
\end{theorem}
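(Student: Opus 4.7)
The plan is to reduce Theorem~\ref{thm:dynam} to the triangular self-affine IFS $\Psi$ from Proposition~\ref{prop:induced}, apply Proposition~\ref{prop:triang} to $\Psi$ (whose hypotheses follow from $\alpha_i>1>\beta_i$, the disjointness of the $I_i$ in $I$, and the transversality of $\omega^u$), and combine its dimension and projection clauses via a slicing/Fubini-type argument.

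For part (1), identify $\Lambda^\N\cong\Lambda^{\mathbb{Z}_+}\times\Lambda^{\mathbb{Z}_-}$ and $\nu=\nu_+\times\nu_-$. Then $\Pi(\ii)=(\Pi_1(\ii_+),\,p_{\Pi_1(\ii_+)}(\widetilde{\Pi}(\ii_-)))$ by Proposition~\ref{prop:induced}, so the first marginal of $\Pi\nu$ is the self-similar measure $\Pi_1\nu$ on $Y$, with $\dim_H\Pi_1\nu=h_\nu/\chi_\alpha$ by the SOSC, and conditionally on $x=\Pi_1(\ii_+)$ the second coordinate has law $p_x\widetilde{\Pi}\nu$. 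Proposition~\ref{prop:triang} gives that $\widetilde{\Pi}\nu$ is exact dimensional of dimension $\lydim\nu$, and, for every $V\in\RP$ distinct from the one line excluded there, $\dim_H\pi_V\widetilde{\Pi}\nu=\min\{1,\lydim\nu\}$. Since $p_x$ is a nonzero scalar multiple of the orthogonal projection onto $\spn(x,1)$, at most one value of $x$ corresponds to the excluded direction, and that is a zero-mass singleton for the non-atomic $\Pi_1\nu$; hence $\dim_H p_x\widetilde{\Pi}\nu=\min\{1,\lydim\nu\}$ for $\Pi_1\nu$-a.e.\ $x$. A short Lyapunov-exponent computation for $\Psi$ gives $\lambda_1=\chi_\beta$ and $\lambda_2=\chi_\beta-\chi_\alpha$, whence case analysis shows $\min\{1,\lydim\nu\}=\min\{1,h_\nu/(-\chi_\beta)\}$. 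Finally a dimension-conservation principle---an exact dimensional marginal together with almost-everywhere exact dimensional fibers of a common dimension force the sum to be the dimension of the total measure---yields the claimed formula.

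For part (2), Proposition~\ref{prop:induced} identifies $(\{x\}\times I)\cap\bigcap_k F^k(I^2)$ with $p_x(X)$ for every $x\in[0,1]$, and the projection clause of Proposition~\ref{prop:triang} gives $\dim_H p_x(X)=\min\{1,\dim_H X\}=\min\{1,\adim\Psi\}$ for every non-excluded $x$. A direct singular-value pressure computation for the triangular matrices of $\Psi$ shows $\min\{1,\adim\Psi\}=\min\{1,s_\beta\}$ (when $s_\beta\le 1$, $\adim\Psi=s_\beta$; otherwise both sides equal $1$). The possibly single excluded value $x_0\in[0,1]$ is handled by the affine recursion $p_{x_0}\circ\varphi_i=\beta_i\,p_{t_i+x_0/\alpha_i}+\text{const}$, which writes $p_{x_0}(X)$ as a finite union of bi-Lipschitz copies of $p_t(X)$ for various $t$; the SOSC on $Y$ forces at least one $t\ne x_0$ in the union, so applying the non-excluded case to that term provides the lower bound $\min\{1,s_\beta\}$, matching the immediate upper bound $\dim_H p_{x_0}(X)\le\min\{1,\dim_H X\}$. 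The two corollaries follow by Marstrand-type slicing: for $k\ge 1$, $F^k(I^2)$ is surjective onto the first coordinate, so $\pi_1(\bigcap_k F^k(I^2))=I$ and every vertical fiber has dimension $\min\{1,s_\beta\}$, giving $\dim_H\bigcap_k F^k(I^2)=1+\min\{1,s_\beta\}$; the $G$-case replaces $I$ by $Y$ in the first coordinate, contributing $s_\alpha$ instead of $1$.

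The main obstacle is the dimension-conservation step at the end of part (1): $\Pi\nu$ is a skew product of the independent factors $\nu_+$ and $\nu_-$ rather than a plain product, so the identity $\dim_H\Pi\nu=\dim_H\Pi_1\nu+\dim_H p_x\widetilde{\Pi}\nu$ requires uniform-in-$x$ exact dimensionality of the fibers together with an additivity statement for this mildly non-product structure. One can either carry out a direct covering argument using the fiber exact dimensionality just established, or equivalently invoke \cite[Corollary~2.7 and Corollary~2.8]{BaranyKaenmaki2015} applied to $\Psi$.
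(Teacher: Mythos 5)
Your overall route -- reducing to the triangular IFS $\Psi$ of Proposition~\ref{prop:induced}, applying Proposition~\ref{prop:triang}, and combining marginal and fiber information -- is the same as the paper's, and your part (2) is sound (including the verification that disjointness of the $I_i$ rules out simultaneous diagonalizability, the computation $\min\{1,\adim\Psi\}=\min\{1,s_\beta\}$, and the workaround for a possibly exceptional slope, which is actually more careful than the paper). The genuine gap is in part (1), at the step you yourself flag: the ``dimension-conservation principle'' you invoke is false. An exact dimensional marginal together with a.e.\ exact dimensional conditional fiber measures of constant dimension does \emph{not} force $\dim_H$ of the total measure to equal the sum; only the one-sided inequality $\dim_H\Pi\nu\ge\dim_H\Pi_1\nu+\dim_H(\mathrm{fiber})$ holds, and that inequality is exactly what the paper quotes for the lower bound. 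A counterexample to your principle occurs in this very paper: the lift of Lebesgue measure to the graph of the Takagi function has Lebesgue marginal (dimension $1$), Dirac conditionals on vertical fibers (dimension $0$), yet dimension $2+\log\lambda/\log 2>1$. So the equality in part (1) cannot follow from exact dimensionality of the marginal and fibers alone; the matching upper bound needs its own argument.

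Your two proposed remedies do not close this. Citing \cite[Corollaries 2.7, 2.8]{BaranyKaenmaki2015} for $\Psi$ yields the Ledrappier--Young formula for the self-affine measure $\widetilde{\Pi}\nu$, but $\Pi\nu$ is not a self-affine measure (it is invariant under the endomorphism $F$, which expands horizontally), so those corollaries say nothing about $\dim_H\Pi\nu$. A direct covering argument is the right idea, but it must exploit the skew structure, not just fiber exact dimensionality: for $\nu$-a.e.\ $\ii$, since the slopes $sl(\jj_-)$ are uniformly bounded, every $\jj$ whose forward coordinates lie in the cylinder of $\ii$ of horizontal diameter about $r$ and whose past satisfies $p_x(\widetilde{\Pi}(\jj_-))\in B(\Pi_2(\ii),r)$, with $x=\Pi_1(\ii_+)$, has $\Pi(\jj)\in B(\Pi(\ii),Cr)$; by Shannon--McMillan--Breiman and the exact dimensionality of $p_x\widetilde{\Pi}\nu$ at the Fubini-typical point $\Pi_2(\ii)$, this event has $\nu$-measure at least $r^{h_\nu/\chi_\alpha+\min\{1,h_\nu/(-\chi_\beta)\}+o(1)}$, which bounds the local dimension from above. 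It is precisely the Lipschitz dependence of the vertical position on the horizontal coordinate within a cylinder (absent in the Takagi example) that makes this work, and it is the content behind the paper's remark that ``the upper bound is straightforward''; the same caveat applies to the set-dimension corollaries, where Marstrand slicing gives only the lower bounds and the upper bounds come from an analogous covering by slanted strips.
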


\begin{proof}
	Let $\nu$ be a Bernoulli measure on $\Lambda^{\mathbb{Z}}$. Denote the conditional measure of $\Pi\nu$ on the foliation $\{x\}\times[0,1]$ by $\mu_{(x,y)}^s$. It is known that
	$$
	\dim_H\Pi\nu\geq\dim_H\mu^s_{(x,y)}+\dim_H\Pi_1\nu\text{ for $\Pi\nu$-a.e. $(x,y)$}.
	$$
	Thus, the lower bound in the first claim follows by Proposition~\ref{prop:induced} and Proposition~\ref{prop:triang}. The upper bound is straightforward.
	
	The second claim follow by choosing $\nu$ with the probability vector $(\beta_i^{s_{\beta}})_{i\in\Lambda}$, and applying Proposition~\ref{prop:induced} and Proposition~\ref{prop:triang}. The upper bounds for the dimension are straightforward.
\end{proof}

\bibliographystyle{abbrv}
\bibliography{Bibliography}

\end{document}